\newtheorem{theorem}{Theorem}[section]
\newtheorem{proposition}[theorem]{Proposition}
\newtheorem{corollary}[theorem]{Corollary}
\newtheorem{lemma}[theorem]{Lemma}
\newtheorem{conjecture}[theorem]{Conjecture}
\newtheorem{open}[theorem]{Open problem}
\theoremstyle{definition}
\newtheorem{definition}[theorem]{Definition}
\theoremstyle{remark}
\newtheorem{remark}[theorem]{Remark}
\newtheorem{example}[theorem]{Example}
\newcommand{\vol}{\mathrm{vol}}
\newcommand{\Sp}{\mathbb{S}^2}
\newcommand{\mE}{\mathcal E}
\newcommand{\mC}{\mathcal C}
\newcommand{\mM}{\mathcal M}
\newcommand{\mZ}{\mathcal Z}
\newcommand{\dist}{\mathrm{dist}}
\newcommand{\eps}{\epsilon}
\newcommand{\R}{\mathbb{R}}
\newcommand{\goto}{\underset{\epsilon\rightarrow 0}{\longrightarrow}}
\newcommand{\Met}{\mathrm{Met}}
\newcommand{\Diff}{\mathrm{Diff}}
\newcommand{\can}{\mathrm{can}}
\newcommand{\conf}{\mathrm{conf}}
\newcommand{\Conf}{\mathrm{Conf}}
\DeclareMathOperator{\area}{\mathrm{Area}}
\DeclareMathOperator{\ind}{\mathrm{ind}}
\DeclareMathOperator{\nul}{\mathrm{nul}}
\numberwithin{equation}{section}
\begin{document}

\title[Stability of isoperimetric inequalities]{Stability of  isoperimetric inequalities for  Laplace eigenvalues on surfaces}

\author[M. Karpukhin]{Mikhail Karpukhin}
\address[Mikhail Karpukhin]{Mathematics 253-37, California Institute of Technology, 
Pasadena, CA 91125, USA}
\email{mikhailk@caltech.edu}
\author[M. Nahon]{Micka\"el Nahon}
\address[Micka\"el Nahon]{Univ. Savoie Mont Blanc, CNRS, LAMA, 73000 Chamb\'ery, France}
\email{mickael.nahon@univ-smb.fr}
\author[I. Polterovich]{Iosif Polterovich}
\address[Iosif Polterovich]{D\'epartement de math\'ematiques et de 
statistique, Universit\'e de Montr\'eal, CP 6128 Succ. Centre-Ville, 
Montr\'eal, Qu\'ebec, H3C 3J7, Canada}
\email{iosif.polterovich@umontreal.ca}
\author[D. Stern]{Daniel Stern}
\address[Daniel Stern]{Department of Mathematics, University of Chicago,
5734 S University Ave
Chicago IL, 60637, USA}
\email{dstern@uchicago.edu}
\date{}
\maketitle
\begin{abstract}
We prove stability estimates for the isoperimetric inequalities for the first and the second nonzero Laplace eigenvalues on surfaces, both globally and in a  
fixed conformal class.  We employ the notion of eigenvalues of measures and show that if a normalized eigenvalue is close
to its maximal value, the corresponding measure must be close in the Sobolev space $W^{-1,2}$ to the set of maximizing measures. In particular, this implies a qualitative stability result: metrics almost maximizing the normalized eigenvalue must be $W^{-1,2}$--close to a maximal metric.
Following this
approach, we prove sharp quantitative stability of the celebrated Hersch's inequality for the first eigenvalue on the sphere, as well as of its
counterpart for the second eigenvalue. Similar results are also obtained
for the precise isoperimetric eigenvalue inequalities on the projective
plane, torus, and Klein bottle. The square of the $W^{-1,2}$ distance to a maximizing measure in these stability estimates
is controlled by the difference between the normalized eigenvalue and its 
maximal value, indicating that the maxima are in a sense nondegenerate.
We construct examples showing that the power of the distance can not be improved,
and that the choice of the 
Sobolev space $W^{-1,2}$ is optimal.

\end{abstract}
\section{Introduction and main results}
\subsection{Stability of  isoperimetric inequalities} 
Let $M$ be a compact smooth surface without boundary. Given a conformal class $\mC$ on $M$,  we set
\begin{equation}
\label{eq:confmax}
\Lambda_k(M, \mC) = \sup_{g\in {\mC}} \bar\lambda_k(M,g),
\end{equation}
where $g$ denotes a Riemannian metric, and 
\begin{equation}
\label{eq:normalized}
\bar{\lambda}_k(M,g)=\lambda_k(M,g) \area(M,g)
\end{equation}
is the $k$-th nonzero normalized Laplace eigenvalue on the Riemannian manifold $(M,g)$. The quantities $\Lambda_k(M,\mC)$ are sometimes referred to as the {\it conformal spectrum}, see \cite{CE}.
We also set 
\begin{equation}
\label{eq:globalmax}
\Lambda_k(M)=\sup_g \bar \lambda_k(M,g),
\end{equation}
where the supremum runs over all Riemannian metrics $g$ on $M$.
It is well-known  (see \cite{Kor93, Has11,  KNPP2})  that $\Lambda_k(M) < +\infty$ for all surfaces. Moreover, as was shown for $k=1$ in \cite{Petrides} and \cite{MS}, 
the suprema in \eqref{eq:confmax}  and \eqref{eq:globalmax} are  attained on metrics  which are  smooth except at a possibly  finite number of conical singularities.
We say that a metric is  {\it conformally maximal}  for the eigenvalue $\lambda_k$ if it  realizes the supremum in \eqref{eq:confmax} for the given conformal class $\mC$.
Similarly, 
a metric is called  {\it globally maximal} for $\lambda_k$  if it realizes the supremum in \eqref{eq:globalmax}.

In  the present paper  we focus on the  following question: suppose that $g\in \mC$ is a Riemannian metric such that the difference $\Lambda_1(M,\mC)-\bar \lambda_1(M,g)$
(respectively, the difference  $\Lambda_1(M)-\bar \lambda_1(M,g)$ )  is small. In which sense the metric $g$ is close to a conformally (respectively, globally) maximal metric?
In other words, we would like to find an appropriate distance on the space of  Riemannian metrics in which the isoperimetric inequalities for $\lambda_1$  on surfaces are {\it stable}. We also discuss some related  results for higher eigenvalues, notably for $k=2$.  To our knowledge,   stability of spectral isoperimetric inequalities have not  been previously investigated  in a general Riemannian setting.

The prototypical problem of this kind is the stability of the classical (geometric) isoperimetric inequality.  It is well-known (see  \cite{FusMagPrat08} and references therein)  that 
the isoperimetric deficit of a Euclidean domain  $\Omega$  (i.e. the difference
$\vol_{d-1}(\partial\Omega)-\vol_{d-1}(\partial \mathbb{D})$, where $\Omega \subset  \mathbb{R}^d$ and  $\vol_d(\Omega)=\vol_d(\mathbb{D})$)
controls the Fraenkel asymmetry of $\Omega$, i.e. a certain measure of its deviation from a ball.  Fraenkel asymmetry also  turns out  to be the right  quantity in the stability results for the Faber--Krahn and Szeg\H{o}--Weinberger inequalities for Dirichlet and Neumann eigenvalue problems on Euclidean domains. Fraenkel asymmetry can be used as well as a measure of 
stability for the Brock inequality, stating that the first nonzero Steklov eigenvalue of a Euclidean domain attains its maximum if the domain is a ball. We refer to \cite{BrasDePhil16} for
a recent  overview of  these and other spectral stability results in the Euclidean case.
 
At the same time, as was recently shown in \cite{BuNa}, the Weinstock inequality stating that the disk maximizes the first nonzero Steklov eigenvalue among all simply-connected planar domains of fixed {\it perimeter} is not stable with respect to Fraenkel asymmetry. However, it is stable with respect to the Sobolev $W^{-\frac{1}{2},2}(\partial \mathbb{D})$--distance between  the boundary measures.
Here $\mathbb{D}$ is a unit disk, and to each simply connected domain $\Omega$ we associate the appropriate  boundary measure  arising from a  conformal map $\Omega \to \mathbb{D}$.
While this distance a priori does not appear to be natural, it turns out to be indicative of what happens in the case of surfaces. 
\subsection{Eigenvalues of measures}  
 Let us recall the definition of the eigenvalues corresponding to Radon measures on surfaces.
Given a  conformal class $\mC$ on a surface  $M$,  we fix a background  metric $g_\mC$ with the corresponding Riemannian volume density $dv_{g_\mC}$. 
 Let $\mu$ be a Radon measure on $M$. Following \cite{KS} we say that $\mu$ is {\it admissible} if  
the identity map on $C^\infty(M)$ extends to a compact operator $W^{1,2}(M,g_\mC) \to L^2(M,\mu)$; it is easy to check that this property depends only on the conformal class but not on the choice of a particular background metric. Define 
\begin{equation}
\label{eq:eigmes}
\lambda_k(M,\mC,\mu)=\inf_{E_{k+1}} \,\, \sup_{0 \ne f \in E_{k+1}}\frac{\int_M |\nabla f|_{g_\mC}^2 dv_{g_\mC}}{\int_M f^2 d\mu},
\end{equation}
where the infimum is taken over all  $(k+1)$-dimensional subspaces $E_{k+1} \subset C^\infty (M)$ which are also $(k+1)$-dimensional in $L^2(M,\mu)$.
We refer to $\lambda_k(M,\mC,\mu)$ as the $k$-th eigenvalue of the measure $\mu$ in the conformal class $\mC$. We also set 
$$
\bar\lambda_k(M,\mC,\mu) = \lambda_k(M,\mC,\mu)\mu(M).
$$ 

Let  $g=\rho(x) g_\mC \in \mC$  be a Riemannian metric, where $\rho(x)>0$ is a smooth conformal factor, and set $d\mu:=dv_g=\rho(x) dv_{g_\mC}$.
Then by the conformal invariance of the Dirichlet energy,  the eigenvalues $\lambda_k(M,\mC,\mu)$ are precisely the Laplace eigenvalues on the Riemannian manifold $(M,g)$.  

This approach goes back to \cite{Kokarev},  and it has been  immensely useful in the study of extremal metrics for Laplace eigenvalues. In particular, it is shown in~\cite{KS} that for $k=1,2$ one has
$$
\Lambda_k(M,\mC) = \sup_{\mu}\bar\lambda_k(M,\mC,\mu).
$$
This allows us to consider distances between measures in order to investigate the stability of isoperimetric eigenvalue inequalities.

\begin{remark}
\label{rem:conformal}
There is a natural conformal invariance associated with admissible measures. Let $\Phi\colon(M,\mC)\to (M,\mC)$ be a conformal automorphism and $\mu$ be an admissible measure. Since the Dirichlet integral is conformally invariant, it is easy to see that $\Phi_*\mu$ is also admissible, $\lambda_k(M,c,\mu) = \lambda_k(M,\mC,\Phi_*\mu)$ and $\bar\lambda_k(M,\mC,\mu) = \bar\lambda_k(M,\mC,\Phi_*\mu)$. Furthermore, if $\phi_k$ is the $k$-th eigenfunction of $\mu$, then $(\Phi^{-1})^*\phi_k$ is the $k$-th eigenfunction of $\Phi_*\mu$. In particular, the set of $\bar\lambda_k$-conformally maximal measures is invariant under the action of the group of conformal automorphisms. 
\end{remark}

\subsection{Stability of eigenvalue inequalities on the sphere} 
\label{subsec:stabsphere}
To introduce  the main results of the present paper, consider first the isoperimetric 
eigenvalue inequalities on the sphere $\mathbb{S}^2$. Note that that  there is a unique conformal structure on $\mathbb{S}^2$,
and therefore a conformally maximal metric for any eigenvalue is also  a globally maximal one.

The celebrated Hersch's inequality \cite{Hersch} states that 
\begin{equation}
\label{eq:hersch}
\bar \lambda_1(\mathbb{S}^2,g) \leqslant 8\pi
\end{equation}
with the equality attained  if and only if $g$ is a round metric.

Recall that the Sobolev space $W^{-1,2}(M,g)$ is defined as the dual space 
to $W^{1,2}(M,g)$. Given a Radon measure $\mu \in \left(C^0(M)\right)^*$, we say that $\mu\in W^{-1,2}(M,g)$ if
\begin{equation}
\label{eq:sobolnorm}
\|\mu\|_{W^{-1,2}(M,g)} := \sup_{ \|f\|_{W^{1,2}(M,g)}=1}\,\, \int_M f(x) d\mu<\infty,
\end{equation}
where the supremum is taken over all $f\in C^\infty(M)$. Here $\left(C^0(M)\right)^*$ denotes the space of all Radon measures, i.e. the dual to the space of continuous functions on $M$. One of the principal findings of the present paper is that the distance between measures in the space $W^{-1,2}$ induced by the norm \eqref{eq:sobolnorm} can be used to  control the stability of isoperimetric inequalities for Laplace eigenvalues on surfaces. Moreover, the choice of this distance is essentially optimal.

Note that for $g\in \mC$ the norm $W^{1,2}(M,g)$ is not conformally invariant.  At the same time, for $M\ne\mathbb{S}^2$ the group of conformal automorphisms is compact and, moreover, it is known that the set of $\bar\lambda_1$-conformally maximal measures of fixed area  is compact as well \cite[Section 6.1]{Kokarev}. Thus, given a fixed metric $g \in \mC$,  one can use the norm $W^{1,2}(M,g)$ for stability estimates near {\em any} $\bar\lambda_1$-conformally maximal metric at the expense of constants depending on $g$. However, for $M=\mathbb{S}^2$ the group of conformal automorphisms is not compact and, therefore, by Remark~\ref{rem:conformal} neither is the space of $\bar\lambda_1$-conformally maximal metrics. As a result, 
one has to either be more precise with the choice of  the conformally maximal metric  or take the action of  the conformal group into account, see~\eqref{eq:S2stabnew1} and~\eqref{eq:S2stabnew2} below. 

Let us now state the first main result. In what follows, we set $\lambda_k(\mathbb{S}^2,\mu):=\lambda_k(\mathbb{S}^2,\mC,\mu)$, given that
the conformal structure on the sphere is unique.
\begin{theorem}[Stability of Hersch's inequality]
\label{S2stability:thm}
 Let $g_0$ be a round metric of curvature one on $\mathbb{S}^2$. Then for any admissible measure $\mu$ on $\mathbb{S}^2$ such that $\lambda_1(\mathbb{S}^2,\mu)=2$, 
there exists a conformal automorphism $\Phi$ of 
$\mathbb{S}^2$ such that 
\begin{equation}
\label{eq:S2stabnew1}
8\pi - \bar \lambda_1(\mathbb{S}^2, \mu) \geqslant  2 \| \Phi_* \mu - dv_{g_0}\|^2_{W^{-1,2}(\mathbb{S}^2, g_0)}.
\end{equation}
\end{theorem}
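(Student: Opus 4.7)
The plan is to combine Hersch's balancing argument with a spherical-harmonic decomposition of the signed measure $\nu := dv_{g_0} - \mu$, followed by a refined Rayleigh-quotient estimate with mixed test functions. First, by Hersch's classical center-of-mass lemma, there exists a Möbius transformation $\Phi$ of $\mathbb{S}^2$ with $\int x_i\,d(\Phi_*\mu) = 0$ for $i = 1, 2, 3$. Since $\bar\lambda_1$ is invariant under the action of the conformal group by Remark~\ref{rem:conformal}, I may replace $\mu$ by $\Phi_*\mu$ and assume from the outset that $\mu$ is itself centered. Using $\bar\lambda_1(\mu) = 2\mu(\mathbb{S}^2)$, the target inequality is equivalent to
\[
4\pi - \mu(\mathbb{S}^2) \;\geq\; \|\nu\|^2_{W^{-1,2}(\mathbb{S}^2, g_0)}.
\]

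Next, I would decompose $\nu$ in the $L^2(g_0)$-orthonormal basis of eigenfunctions $\{Y_{km}\}$ of $-\Delta_{g_0}$ with eigenvalues $k(k+1)$. By the Plancherel formula
\[
\|\nu\|^2_{W^{-1,2}(\mathbb{S}^2, g_0)} = \sum_{k,m}\frac{|\hat\nu_{km}|^2}{1+k(k+1)},
\]
the $k=0$ contribution equals $(4\pi - \mu(\mathbb{S}^2))^2/(4\pi)$ and the $k=1$ contributions vanish by centering. Applying the Rayleigh characterization with the test functions $x_i$ (now $\mu$-mean zero) recovers the classical Hersch bound $\mu(\mathbb{S}^2) \leq 4\pi$, whence $(4\pi - \mu(\mathbb{S}^2))^2/(4\pi) \leq 4\pi - \mu(\mathbb{S}^2)$. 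After this subtraction, the target reduces to the core estimate
\[
\sum_{k \geq 2,\, m}\frac{|\hat\mu_{km}|^2}{1+k(k+1)} \;\leq\; \frac{\mu(\mathbb{S}^2)(4\pi - \mu(\mathbb{S}^2))}{4\pi}.
\]

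For the core estimate, my approach is to exploit the identities $\sum_i x_i^2 = 1$ and $\sum_i x_i \nabla x_i = 0$ on the unit sphere through test functions of the form $f_i := x_i h$, for a well-chosen $h \in W^{1,2}(\mathbb{S}^2, g_0)$. A direct computation yields the clean sums $\sum_i \int|\nabla(x_i h)|^2\,dv_{g_0} = 2\int h^2\,dv_{g_0} + \int|\nabla h|^2\,dv_{g_0}$ and $\sum_i\int (x_i h)^2\,d\mu = \int h^2\,d\mu$, so that summing the Rayleigh inequalities for the $f_i$ with their $\mu$-mean corrections produces the amplified bound
\[
\int|\nabla h|^2\,dv_{g_0} + 2\int h^2\,d\nu + \frac{2}{\mu(\mathbb{S}^2)}\sum_i\left(\int x_i h\,d\mu\right)^2 \;\geq\; 0.
\]
The natural choice is to take $h := (1 - \Delta_{g_0})^{-1}\nu$, the Sobolev potential of $\nu$; this satisfies $\|h\|^2_{W^{1,2}} = \|\nu\|^2_{W^{-1,2}}$ and $\int h\,dv_{g_0} = \nu(\mathbb{S}^2)$, so that the core estimate is exactly equivalent to the positivity $\int h\,d\mu \geq 0$.

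The main obstacle is to extract this positivity from the amplified Rayleigh inequality (possibly combined with the plain Rayleigh inequality applied directly to $h$). This should hinge on the specific structure of $h$ as the solution of the elliptic equation $(1 - \Delta_{g_0})h = \nu$, and the sharpness of both the constant $2$ and of the Sobolev exponent $-1$ ought to emerge in this step. A technical complication is that $h \in W^{1,2}$ need not be bounded in dimension two, so the term $\int h^2\,d\nu$ requires careful interpretation, likely by smoothing $\nu$ by a small amount of heat flow and passing to the limit at the end of the argument.
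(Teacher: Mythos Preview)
Your setup is correct through the reduction to $\int h\,d\mu\geq 0$, but the proof stops precisely at the point where the content lies, and the ``amplified Rayleigh inequality'' you derive is too weak to close it. What you are using is the \emph{positivity} of the quadratic form
\[
Q_1(v,v)=\int_{\Sp}|dv|_{g_0}^2\,dv_{g_0}-2\int_{\Sp}|v|^2\,d\mu
\]
on $\mu$-mean-zero vector-valued test functions, applied to $v=(x_ih)_i$. This produces an inequality that is cubic/quartic in $\nu$ and does not control the quadratic quantity $\int h\,d\mu$; your own ``main obstacle'' paragraph is an accurate diagnosis.

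The paper's proof uses the same test functions but replaces positivity by the associated \emph{Cauchy--Schwarz inequality}: since $Q_1$ is positive semidefinite on the $\mu$-mean-zero subspace and $u=(x_1,x_2,x_3)$ lies there, one has $Q_1(u,v)^2\leqslant Q_1(u,u)\,Q_1(v_1,v_1)$ for the mean-zero part $v_1$ of any $v$ (this is the content of Lemma~\ref{main:lemma}). Taking $v=\varphi u$ gives $Q_1(u,v)=2\int\varphi\,d\nu$, $Q_1(u,u)=8\pi-\bar\lambda_1(\mu)$, and $Q_1(v_1,v_1)\leqslant\|d(\varphi u)\|_{L^2}^2=2\|\varphi\|_{L^2}^2+\|d\varphi\|_{L^2}^2\leqslant 2\|\varphi\|_{W^{1,2}}^2$, which yields \eqref{eq:S2stabnew1} immediately after taking the supremum over $\varphi$. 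In your language, the fix is one line: instead of $f_i=x_ih$, take $f_i=x_i(1+th)$ (recentered), apply your Rayleigh inequality, and optimize over $t\in\mathbb{R}$; the discriminant condition is exactly the Cauchy--Schwarz step and gives $(2\int h\,d\nu)^2\leqslant (8\pi-\bar\lambda_1(\mu))(2\|h\|_{L^2}^2+\|dh\|_{L^2}^2)$, which with $h=(1-\Delta_{g_0})^{-1}\nu$ is the desired bound. No spherical-harmonic decomposition or regularization of $h$ is needed.
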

Note that \eqref{eq:S2stabnew1} can be equivalently rewritten as
\begin{equation}
\label{eq:S2stabnew2}
8\pi - \bar \lambda_1(\mathbb{S}^2,\mu) \geqslant 2 \|  \mu - dv_{\Phi^*g_0}\|^2_{W^{-1,2}(\mathbb{S}^2, \Phi^*g_0)}.
\end{equation}
The proof of  Theorem \ref{S2stability:thm} is presented in subsection \ref{sec:S2proof}.
\begin{remark}
\label{rem:normaliz0}
 Here and further on,  we normalize the measure $\mu$ by the first eigenvalue. Alternatively, one can normalize the area of $\mu$: 
 up to a constant factor  it yields the same stability estimates, see Remark \ref{rem:normaliz} for details.
\end{remark}
Note that inequality \eqref{eq:S2stabnew1} gives a  sharp {\it quantitative stability} result: the power $2$ on the right hand side  of  \eqref{eq:S2stabnew1} can not be replaced by any smaller power, see Proposition \ref{prop:optpower}. The choice of the Sobolev $W^{-1,2}$ norm  is sharp as well, see Theorem \ref{thm:spacesharp} and Remark \ref{rem:stabcont} below. 
\begin{remark}[{\em Stability vs. continuity}]
\label{rem:stabcont}
It was shown in \cite{GKL} that the functional $\mu \mapsto \bar \lambda_1(\mathbb{S}^2,\mu)$ is continuous in the dual of the Orlicz-Sobolev space 
$W^{1,2,-\frac{1}{2}}:=W^{1,2,-\frac{1}{2}}(\mathbb{S}^2 ,g_0)$, which lies between $W^{1, 2-\epsilon}$ and $W^{1,2}$.
 Note that $(W^{1,2, -\frac{1}{2}})^* \subset W^{-1,2}$ and the eigenvalue functional above  is not continuous in  $W^{-1,2}$. In other words, the choice of the space  $(W^{1,2, -\frac{1}{2}})^*$ is essentially optimal for the continuity of eigenvalues.  At the same time, we show in Theorem \ref{thm:spacesharp}  that Hersch's inequality is {\it not} stable in the space  $(W^{1,2, -\frac{1}{2}})^*$.  It therefore appears that we may have either stability or continuity but not both. 
This dichotomy seems to be a general phenomenon for spectral isoperimetric inequalities: in particular, it is easy to check that the Dirichlet and Neumann eigenvalues are not continuous with respect to Fraenkel's asymmetry.
\end{remark}
\begin{remark}[{\em Stability in the Wasserstein distance}]
\label{rem:wasser}
As was shown in \cite{Peyre}, the $W^{-1,2}$ distance between measures can be bounded below (and under some additional assumptions, can be bounded above as well) in terms of the {\it quadratic Wasserstein distance}. Given two unit area measures $\mu, \nu \in W^{-1,2}(M,g)$, 
\[2\Vert \mu-\nu\Vert_{W^{-1,2}(M,g)}\geqslant W_2(\mu,\nu):=\inf_{\gamma}\Vert d_g(\cdot,\cdot)\Vert_{L^2(M\times M,\gamma)} ,\]
where 
$\gamma$ is taken among all Borel measures on $M \times M$ with marginals $\mu$ and $\nu$, and $d_g$ is the distance induced by the metric $g$.

As a consequence, stability  in $W^{-1,2}$  norm  implies stability in the quadratic Wasserstein distance.  The Wasserstein distance is one of the most commonly used metrics between measures, notably in the optimal transport theory. In the spectral context it has previously appeared, for instance, in \cite{KurBurIv, SS}. 
\end{remark}
Let us now state the analogue of Theorem \ref{S2stability:thm} for the second nonzero Laplace eigenvalue on the sphere. It was shown in \cite{NadirashviliS2, PetridesS2} that 
$\bar \lambda_2(\mathbb{S}) \leqslant 16 \pi$ with the equality attained in the limit as  a   maximizing sequence of metrics tends to  a disjoint union of identical round spheres. 
This phenomenon is called ``bubbling'', see \cite{NaSi2, Petrides2, KNPP2}. In the language of  measures it can be viewed as a sum of the Riemanninan measure on a round sphere and a Dirac measure  (i.e. a  bubble) of the same mass. 
Our next result is the following quantitative stability estimate.
\begin{theorem}
\label{thm:stab2S2}
Let $g_0$ be a round metric of curvature one on $\mathbb{S}^2$. Then for any admissible measure $\mu$ 
such that $\lambda_2(\Sp, \mu)=2$, there exists a conformal automorphism 
$\Phi\colon\mathbb{S}^2\to\mathbb{S}^2$ and a point $p\in\mathbb{S}^2$ such that
\begin{equation}
16\pi-\bar\lambda_2(\mathbb{S}^2, \mu) \geqslant c\, \|dv_{g_0}+ 4\pi \delta_p- \Phi_*\mu\|^2_{(C^1(\mathbb{S}^2))^*}
\end{equation}
for some constant $c>0$. Here $\delta_p$ is the unit Dirac measure at $p$, and  $(C^1(\mathbb{S}^2))^*$ denotes the dual space to $C^1(\mathbb{S}^2)$.
\end{theorem}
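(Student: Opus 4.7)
The plan is to extend the stability strategy used for Theorem~\ref{S2stability:thm} to the bubble configuration that extremizes $\bar\lambda_2(\mathbb{S}^2)$. The formal maximizer is $\mu_0 := dv_{g_0} + 4\pi\delta_p$ of total mass $8\pi$, obtained as the degenerate limit of two merging round spheres; see \cite{NadirashviliS2, PetridesS2, NaSi2, KNPP2}. The proof should quantify how close $\Phi_*\mu$ is to $\mu_0$ by running the Petrides-type min-max argument that gives $\bar\lambda_2 \leq 16\pi$ and tracking the surplus terms.

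\textbf{Step 1: joint conformal normalization.} I would first produce a conformal automorphism $\Phi\colon\mathbb{S}^2\to\mathbb{S}^2$ and a bubble point $p\in\mathbb{S}^2$ such that the renormalized measure $\tilde\mu:=\Phi_*\mu$ is orthogonal, in a suitable sense, to the infinitesimal deformations of $\mu_0$ generated by the Möbius group and by translations of $p$. Concretely, a Hersch-type balancing fixes the three-dimensional non-compact part of the Möbius group (so that $\int(x_i - x_i(p))\,d\tilde\mu = 0$ for $i=1,2,3$), while a concentration-compactness or degree argument in the spirit of \cite{PetridesS2} locates $p$ at the peak of the concentrated part of $\mu$. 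The parameter count matches: three Hersch parameters plus two for $p$.

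\textbf{Step 2: Rayleigh bound with shifted coordinate functions.} With $\tilde\mu$ normalized as above, apply the min-max characterization \eqref{eq:eigmes} of $\lambda_2(\mathbb{S}^2,\tilde\mu)$ to the three-dimensional test subspace
\[
E_3 := \mathrm{span}\bigl(x_1-x_1(p),\,x_2-x_2(p),\,x_3-x_3(p)\bigr) \subset C^\infty(\mathbb{S}^2).
\]
Each generator vanishes at $p$, so the Dirac part $4\pi\delta_p$ of $\mu_0$ contributes nothing to the denominators. Using $\sum_i|\nabla x_i|_{g_0}^2 = 2$ and $\sum_i(x_i-x_i(p))^2 = 2(1-\la x,p\ra)$, together with $\lambda_2(\mathbb{S}^2,\tilde\mu) = 2$, summing the Courant--Fischer inequalities for the three basis functions yields an estimate of the form
\[
2\int_{\mathbb{S}^2} \bigl(2 - 2\la x,p\ra\bigr)\,d\tilde\mu \leq 8\pi,
\]
i.e.\ $4\tilde\mu(\mathbb{S}^2) - 4\int\la x,p\ra\,d\tilde\mu \leq 8\pi$, which after substituting $\tilde\mu = \mu_0 + \nu$ and using $\int\la x,p\ra\,dv_{g_0} = 0$ and $\la p,p\ra = 1$ reproduces $\bar\lambda_2(\mathbb{S}^2,\tilde\mu)\leq 16\pi$, with an explicit quadratic surplus in $\nu$.

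\textbf{Step 3: extracting the $(C^1)^*$ estimate.} Expanding the full Rayleigh quotient to second order in $\nu$ and invoking the orthogonality from Step~1 to kill the first-order terms, I would show that the quadratic surplus dominates $c\|\nu\|^2_{(C^1(\mathbb{S}^2))^*}$ for some $c>0$. The appearance of $(C^1)^*$ rather than $W^{-1,2}$ is natural: the coordinate functions are smooth, and $\delta_p$ lies in $(C^0)^*\subset (C^1)^*$ but not in $W^{-1,2}(\mathbb{S}^2)$ (since $W^{1,2}$ fails to embed into $C^0$ in dimension two). Equivalence of the quadratic form with the $(C^1)^*$ norm on the subspace transverse to the symmetry orbits follows from a standard coercivity argument, exploiting the nondegenerate second variation of $\bar\lambda_2$ at $\mu_0$ along directions transverse to the Möbius-plus-$p$ symmetries.

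\textbf{Main obstacle.} The principal difficulty is Step~1: the simultaneous selection of $(\Phi,p)$ across the non-compact family of deformations of $\mu_0$ requires a delicate topological argument going beyond the classical Hersch lemma, and bubbling variants of this centering were developed in \cite{PetridesS2, KNPP2}. A second nontrivial point is establishing coercivity of the quadratic form in Step~3 with respect to the weaker $(C^1)^*$ norm in directions transverse to the symmetry orbits; this is where the choice of the dual space $(C^1(\mathbb{S}^2))^*$ enters, and where any weaker norm would likely fail.
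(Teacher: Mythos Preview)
Your Steps 1--2 contain a genuine gap that cannot be fixed by sharpening the topological argument. First, the balancing condition $\int(x_i-x_i(p))\,d\tilde\mu=0$ asks that the center of mass of $\tilde\mu$ lie at $p\in\mathbb{S}^2$; but for any admissible non-Dirac measure on $\mathbb{S}^2$ the center of mass lies strictly inside $\mathbb{B}^3$, so no such pair $(\Phi,p)$ exists. Second, and more fundamentally, the Rayleigh inequality $2\int f_i^2\,d\tilde\mu\le\int|\nabla f_i|^2\,dv_{g_0}$ that you sum in Step~2 requires each $f_i=x_i-x_i(p)$ to be $L^2(\tilde\mu)$-orthogonal to \emph{both} constants and the first eigenfunction $\phi_1$ of $\tilde\mu$. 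Your five parameters (three M\"obius, two for $p$) cannot enforce the three additional conditions $\int\phi_1 f_i\,d\tilde\mu=0$; without them the summed inequality simply fails. Indeed, even granting your balancing, one has $\int\langle x,p\rangle\,d\tilde\mu=\tilde\mu(\mathbb{S}^2)$, so the inequality in Step~2 collapses to $0\le 8\pi$ and carries no information about $\bar\lambda_2$. Step~3's coercivity argument is therefore never reached.

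The paper's route is entirely different. It uses Nadirashvili's \emph{fold maps} $R_Z\colon\mathbb{S}^2\to\mathbb{S}^2$, equal to the identity outside a spherical cap $Z$ and to the conformal reflection across $\partial Z$ inside $Z$; this is a genuine sphere-valued map with $2E(R_Z)=16\pi-4\,\area(Z)$. The space of caps is three-dimensional and the M\"obius group supplies three more parameters, so a degree argument \cite{NadirashviliS2, PetridesS2} produces $(Z,\Phi)$ with $\int R_Z\,d(\Phi_*\mu)=\int\phi_1 R_Z\,d(\Phi_*\mu)=0$, exactly the six conditions needed to apply Lemma~\ref{main:lemma} with $k=2$. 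That lemma then gives both $\area(Z)\le 4\pi-\tfrac14\bar\lambda_2(\mu)$ and a bound on $\||dR_Z|^2\,dv_{g_0}-\lambda_2(\mu)\Phi_*\mu\|_{(C^0\cap W^{1,2})^*}$. The bubble point $p$ is the center of $Z$, and the $(C^1)^*$-closeness of $|dR_Z|^2\,dv_{g_0}$ to $2\,dv_{g_0}+8\pi\delta_p$ follows from the explicit radius bound $r\lesssim\sqrt{16\pi-\bar\lambda_2(\mu)}$ by an elementary estimate; no second-variation or coercivity analysis is involved.
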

Theorem \ref{thm:stab2S2} is proved in subsection \ref{subsec:stab2S2}. We remark that the constant $c$ on the right-hand side can  be computed explicitly. We also note the stability is proved in the  
norm of the  space $(C^1(\mathbb{S}^2))^*$  which is larger than $W^{-1,2}(\mathbb{S}^2,g)$, and hence the stability estimate is weaker. While we do not know whether the choice of the space $(C^1(\mathbb{S}^2))^*$ is optimal, due to the presence of Dirac measures  the isoperimetric inequality for the second eigenvalue can not be stable in $W^{-1,2}(\mathbb{S}^2,g)$, as $\delta_p\not\in W^{-1,2}(\mathbb{S}^2,g)$.

As was shown in \cite{KNPP1}, $\bar \lambda_k(\mathbb{S}^2,g) \le 8 \pi k$ for any $ k\ge 2$, with the equality attained in the limit as a maximizing sequence of metrics tends to $k$ disjoint identical round spheres (or, equivalently, a sphere with $k-1$ bubbles). It would be interesting to extend the stability result  to any $k\geqslant 2$. However, the proof of  Theorem \ref{thm:stab2S2} relies on the min-max energy characterization of maximal metrics (see the key  Lemma \ref{main:lemma}) which has been so far proved only for $k=1,2$ \cite{KS}.  Still, we propose the following
\begin{conjecture}
\label{conj:highersphere}
Let $g_0$ be a round metric of curvature one  on $\mathbb{S}^2$. Given $k \ge 2$, 
let $\mu$ be an admissible measure such that $\lambda_k(\Sp,\mu)=2$.
Then there exists a conformal automorphism 
$\Phi \colon\mathbb{S}^2\to\mathbb{S}^2$ and $m \leqslant k-1$ distinct points   $p_1, \dots, p_m \in\mathbb{S}^2$ such that
\begin{equation}
8 \pi k -\bar\lambda_k(\mathbb{S}^2, \mu) \geqslant c \left\|dv_{g_0}+ \sum_{i=1}^m 4 \pi \alpha_i \delta_{p_i}- \Phi_*\mu \right\|^2_{(C^1(\mathbb{S}^2))^*}
\end{equation}
for some constant $c>0$ and positive integers $\alpha_i$ satisfying the condition $\sum_{i=1}^m \alpha_i=k-1$.
\end{conjecture}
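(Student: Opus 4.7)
The plan is to extend the blueprint of the proof of Theorem \ref{thm:stab2S2} to the higher-eigenvalue regime. The decisive missing ingredient is an analogue of Lemma \ref{main:lemma} identifying $\Lambda_k(\mathbb{S}^2)=8\pi k$ with the value of a min-max procedure for the Dirichlet energy of admissible test maps $F\colon\mathbb{S}^2\to\mathbb{S}^N$ whose components serve as test functions in \eqref{eq:eigmes}. The natural candidate, suggested by the bubbling picture of \cite{KNPP1, NadirashviliS2, PetridesS2}, would be a $(k-1)$-parameter min-max (or an appropriate $k$-sweepout) whose critical configurations correspond to $k$ round spheres joined at nodes and realize energy $4\pi k$. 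Granted such a characterization, any admissible $\mu$ with $\lambda_k(\mathbb{S}^2,\mu)=2$ and small deficit $\delta:=8\pi k-\bar\lambda_k(\mathbb{S}^2,\mu)$ should yield a near-minimizing map $F$ with energy at most $4\pi k+C\delta$, together with quantitative $L^2(\mu)$ control on the almost-harmonicity of its components, in direct analogy with the $k=2$ argument.

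Next, one would exploit Remark \ref{rem:conformal} to choose a convenient representative in the conformal orbit of $\mu$. Since the conformal group of $\mathbb{S}^2$ is non-compact, a generalization of the Hersch--Petrides balancing argument is needed: apply a conformal automorphism $\Phi$ so that $\Phi_*\mu$ has barycenter at the origin and so that the (at most $k-1$) concentration points of an extracted bubble tree occupy prescribed positions $p_1,\dots,p_m$. The bubble-extraction procedure itself would follow the analysis of degenerating eigenfunctions from \cite{KNPP2}, producing positive integer weights $\alpha_i$ with $\sum_i\alpha_i=k-1$ that record how many of the $k-1$ excess units of concentrated $\bar\lambda_k$-mass $4\pi$ accumulate at each bubble point $p_i$, while the remaining $4\pi$ of mass is carried by the smooth background modelled on $dv_{g_0}$.

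The quantitative estimate would then come from a quadratic lower bound on $\delta$ in terms of the $(C^1(\mathbb{S}^2))^*$-norm of $dv_{g_0}+\sum_{i=1}^m 4\pi\alpha_i\delta_{p_i}-\Phi_*\mu$, obtained by pairing this difference against $C^1$ test functions and interpreting the resulting bilinear form as a Hessian of $\delta$ along directions transverse to the conformal orbit, exactly as in the proof of Theorem \ref{thm:stab2S2}. The use of $(C^1)^*$ rather than $W^{-1,2}$ is forced by the presence of the Dirac masses $\delta_{p_i}$, as already noted after that theorem.

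The principal obstacle is precisely the one acknowledged by the authors: a min-max or harmonic-replacement characterization of $\Lambda_k(\mathbb{S}^2)$ for $k\geqslant 3$ is presently unavailable, and establishing it seems to require substantial new input concerning the compactness and bubble-tree structure of almost-maximizing sequences of eigenfunctions. A secondary difficulty is a quantitative no-neck estimate ensuring that the extracted bubbles are well-approximated by point masses rather than by limit profiles with slowly decaying necks; without such an estimate the model measure $dv_{g_0}+\sum_i 4\pi\alpha_i\delta_{p_i}$ would not capture $\mu$ closely enough in $(C^1)^*$ to close the argument.
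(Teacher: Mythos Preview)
The statement is labeled a \emph{Conjecture} in the paper and is explicitly left open; there is no proof to compare against. The authors introduce it immediately after Theorem~\ref{thm:stab2S2}, noting that ``the proof of Theorem~\ref{thm:stab2S2} relies on the min-max energy characterization of maximal metrics (see the key Lemma~\ref{main:lemma}) which has been so far proved only for $k=1,2$.''

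Your proposal is not a proof, and you do not claim it to be one: it is a plan that correctly identifies the intended strategy (extend the $k=2$ argument via a min-max characterization of $\Lambda_k(\mathbb{S}^2)$, balance by a conformal automorphism, extract a bubble tree with integer weights, and close the estimate in $(C^1)^*$) and correctly names the principal obstruction, namely that no min-max or harmonic-replacement characterization of $\Lambda_k(\mathbb{S}^2)$ is currently available for $k\geqslant 3$. This is precisely the obstacle the authors point to, and your secondary concern about quantitative no-neck control is also reasonable. In short, your assessment aligns with the paper's: the conjecture is open, the route is clear in outline, and the missing input is the higher-$k$ analogue of the results of \cite{KS}.
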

\begin{remark} The condition $ m\le k-1$ arises  since the bubbles can be attached at the same point (or, in a sense, one on top of the other, cf.  the 
``bubble tree'' construction in  \cite{KNPP2}).  Hence the number of distinct points at which the Dirac measures arise may be smaller than $k-1$. 
\end{remark}
\subsection{Maximal metrics, harmonic maps and minimal surfaces}  In order to provide the context for our results on a general surface $M$, we recall the connection between isoperimetric eigenvalue inequalities and harmonic maps to spheres. The map $u\colon (M,\mC)\to\mathbb{S}^n$ is called harmonic if it is a critical point of the Dirichlet energy functional
\begin{equation}
\label{eq:diren}
E(u)=\frac{1}{2} \int_M |du|_g^2 dv_g
\end{equation}
or, equivalently,
$$
\Delta_g u = |du|^2_gu,
$$
where $g\in\mC$ is any metric in the conformal class. The conformal covariance of $\Delta$ implies $\Delta_{g_u}u = 2u,$ where $g_u = \frac{1}{2}|du|^2_gg$. Therefore, $\lambda_k(M,g_u) = 2$ for some $k>0$ and $\bar\lambda_k(M,g_u) = 2E(u)$. Note that $g_u$ is smooth outside of finitely many conical singularities at zeroes of $du$. We say that the harmonic map $u\colon (M,\mC)\to\mathbb{S}^n$ is a {\em $\bar\lambda_k$-conformally maximal map} if $g_u$ is a $\bar\lambda_k$-conformally maximal metric or, equivalently, if components of $u$ are $k$-th eigenfunctions of $\Delta_{g_u}$ and $\Lambda_k(M,\mC) = 2E(u)$. If $g$ is a smooth (up to a finite number of conical singularities) $\bar\lambda_k$-conformally maximal metric, then $g=\alpha g_u$ for some constant $\alpha>0$ and a $\bar\lambda_k$-conformally maximal harmonic map $u$, see e.g~\cite{ESIextremal}. For $k=1$ the existence of such metrics is established in~\cite{Petrides}. At the same time, as was shown in \cite{KS}, if $\mu$ is a $\bar\lambda_1$-conformally maximal admissible measure, then $\mu = \alpha dv_{g_u}$ for some $\bar\lambda_1$-conformally maximal map $u$.
 
An analogous characterization holds for the globally $\bar\lambda_1$-maximal metrics. If $u\colon M\to\mathbb{S}^n$ is a branched minimal immersion and  $\mC_u = [u^*g_{\mathbb{S}^n}]$, then $u\colon (M,\mC_u)\to\mathbb{S}^n$ is a harmonic map and $u^*g_{\mathbb{S}^n} = g_u$. We say that a branched minimal immersion $u$ is a {\em $\bar\lambda_k$-maximal map} if $g_u$ is $\bar\lambda_k$-maximal metric. Similarly to the maximizers in a fixed conformal class, if $g$ is a smooth (up to a finite number of conical singularities) $\bar\lambda_k$-maximal metric, then $g=\alpha g_u$ for some constant $\alpha>0$ and a $\bar\lambda_k$-maximal immersion $u$, see~\cite{ESIextremal}. For $k=1$ the existence of such metrics is established in~\cite{MS17}. 

\subsection{Stability in the conformal class} Consider now surfaces other than the sphere. 
We have  the following qualitative stability result for  the maximizers  of the first Laplace eigenvalue in any conformal class.
\begin{theorem}
\label{thm:qualconf}
Let $M\ne \mathbb{S}^2$ be a closed surface with a fixed conformal class $\mC$ and  $g_\mC \in \mC$ be a background metric. Let $\mu_j$ be a sequence of admissible measures of unit area, such that $\bar\lambda_1(M, \mC,\mu_j)\to\Lambda_1(M, \mC)$. Then there exists a  smooth  $\bar\lambda_1$-conformally maximal measure $\mu$ of unit area  such that  up to a choice of a subsequence $\mu_j \to \mu$ strongly in $W^{-1,2}(M, g_\mC)$.
\end{theorem}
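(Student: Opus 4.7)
The plan is to extract a weak-$*$ subsequential limit $\mu$ of $\mu_j$, identify $\mu$ as a smooth $\bar\lambda_1$-conformally maximal measure, and then upgrade weak to strong convergence in $W^{-1,2}(M,g_\mC)$.

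Since each $\mu_j$ has unit total mass, Banach--Alaoglu in $(C^0(M))^*$ produces a subsequence with $\mu_j\rightharpoonup\mu$ weakly-$*$; testing against $1\in C^0(M)$ on the compact surface $M$, the total mass passes to the limit, so $\mu(M)=1$. The hypothesis $M\ne\mathbb{S}^2$ enters in the crucial substep of excluding atoms in $\mu$ (which would violate admissibility). On non-spherical closed surfaces the conformal automorphism group $\Conf(M,\mC)$ is compact, so there is no non-compact family of rescalings along which mass could escape; together with the absence of bubbling for $\bar\lambda_1$-maximizing sequences on such surfaces established in \cite{Petrides, MS}, this ensures that $\mu$ is admissible.

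Next, $\mu$ is identified as a maximizer. For any fixed $2$-dimensional $E_2\subset C^\infty(M)$, continuity of $f\mapsto\int_M f^2\,d\nu$ under weak-$*$ convergence of admissible measures, combined with an $\epsilon$-infimizer argument in \eqref{eq:eigmes}, yields the upper semicontinuity
$$\limsup_j \lambda_1(M,\mC,\mu_j)\leqslant \lambda_1(M,\mC,\mu);$$
together with the assumption $\bar\lambda_1(M,\mC,\mu_j)\to\Lambda_1(M,\mC)$ and $\mu(M)=1$, this forces $\bar\lambda_1(M,\mC,\mu)=\Lambda_1(M,\mC)$. By the regularity theory recalled in the preceding subsection (due to Kokarev--Stern \cite{KS} and Petrides \cite{Petrides}), any $\bar\lambda_1$-conformally maximal admissible measure has the form $\alpha\,dv_{g_u}$ for a harmonic map $u\colon M\to\mathbb{S}^n$, so $\mu$ is smooth up to conical singularities.

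Finally, to upgrade to strong $W^{-1,2}$-convergence, the uniform admissibility bound yields $\|\mu_j\|_{W^{-1,2}(M,g_\mC)}\leqslant C$; combined with the density of $C^\infty(M)$ in $W^{1,2}(M,g_\mC)$, the weak-$*$ convergence promotes to weak convergence in the Hilbert space $W^{-1,2}(M,g_\mC)$. Strong convergence then reduces to convergence of norms, which I would obtain from the conformal-class analogue of Theorem~\ref{S2stability:thm}: via the min-max energy characterization of Lemma~\ref{main:lemma}, one expects a bound
$$\|\mu_j-\mu\|^2_{W^{-1,2}(M,g_\mC)}\lesssim \Lambda_1(M,\mC)-\bar\lambda_1(M,\mC,\mu_j)\longrightarrow 0.$$
The main obstacle is precisely this quantitative step: weak $W^{-1,2}$-convergence alone does not suffice, and establishing the quadratic bound requires analyzing the second variation of the Rayleigh quotient at a maximizer, exploiting both the compactness of $\Conf(M,\mC)$ (ruling out the spherical anomaly) and the harmonic-map structure of the maximizers to convert the eigenvalue deficit into $W^{-1,2}$-distance.
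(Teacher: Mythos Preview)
Your proposal has two genuine gaps, and the paper's route is substantively different.

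\textbf{The no-atoms step.} Compactness of $\Conf(M,\mC)$ does not by itself rule out mass concentration: a sequence of admissible probability measures can concentrate at a point without any conformal automorphism being involved (just take $\mu_j$ supported in a shrinking disk). The references \cite{Petrides, MS} treat sequences of \emph{metrics}, not arbitrary admissible measures, so they do not apply directly. In the paper, the atom-free conclusion is obtained only after passing through auxiliary sphere-valued maps: one shows that if the weak-$*$ limit had an atom, the limiting eigenvalue would be at most $8\pi$ (via \cite[Lemma~2.1, 3.1]{Kokarev}), contradicting Petrides' rigidity $\Lambda_1(M,\mC)>8\pi$ for $M\ne\mathbb{S}^2$.

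\textbf{The strong $W^{-1,2}$ convergence.} The bound you write down,
\[
\|\mu_j-\mu\|_{W^{-1,2}}^2\lesssim \Lambda_1(M,\mC)-\bar\lambda_1(M,\mC,\mu_j),
\]
is precisely the \emph{quantitative} stability inequality~\eqref{eq:quantstabkey}, which the paper establishes only under extra hypotheses (a stable comparison family, or absence of nontrivial Jacobi fields; see Theorem~\ref{thm:quantstab} and Corollary~\ref{Jacobi:cor}). It is not available in general, and the qualitative result does not use it. Invoking it here is circular. Your proposed fix via ``second variation of the Rayleigh quotient at a maximizer'' is essentially the Jacobi-field analysis of Section~\ref{quantII:sec}, which again requires nondegeneracy assumptions that are known to fail in some conformal classes (Examples~\ref{Jacobi_torus:ex} and~\ref{Jacobi_Bolza:ex}).

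The paper instead proceeds as follows. For each $\mu_j$ one uses the min-max characterization of \cite{KS} (Proposition~\ref{existence:prop}) to produce a map $u_j\in W^{1,2}(M,\mathbb{S}^n)$ with vanishing $\mu_j$-average and $\bar\lambda_1(\mu_j)\le 2E(u_j)\le\Lambda_1(M,\mC)$. Lemma~\ref{main:lemma} then says $u_j$ is ``almost harmonic''. One shows $u_j\to u$ \emph{strongly in $W^{1,2}$} to a $\bar\lambda_1$-conformally maximal harmonic map (Proposition~\ref{ujconvergence:prop}), via a good/bad-point analysis and Proposition~\ref{wk.lim}; this is also where the no-bubble conclusion is actually proved. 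Finally, Proposition~\ref{convergence:prop} converts $W^{1,2}$-convergence of the maps into $W^{-1,2}$-convergence of the measures:
\[
\|\lambda_1(\mu_j)\mu_j-|du|_g^2\,dv_g\|_{W^{-1,2}}\leqslant C\bigl(\|u-u_j\|_{W^{1,2}}+\delta_j\bigr).
\]
The convergence rate is governed by $\|u_j-u\|_{W^{1,2}}$, not directly by the eigenvalue deficit; decoupling these two quantities is exactly what makes the qualitative theorem hold without the hypotheses needed for quantitative stability.
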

Theorem \ref{thm:qualconf} is proved in Section \ref{qual:sec}. We also note that under certain  assumptions a similar result can be  proved for the second nonzero eigenvalue, see Theorem~\ref{thm:higher_conf_stab}.

In order to state the results on the  quantitative stability  for maximizers of the first eigenvalue we will need the following definition. 
\begin{definition}
\label{def:quantstab}
 Let $M\ne \mathbb{S}^2$ be a surface.  We say that  the isoperimetric inequality
\begin{equation}
\label{eq:isopconf}
\bar \lambda_1(M, g) \leqslant \Lambda_1(M,\mC)
\end{equation}
 in a  conformal class $\mC=[g]$ on $M$ is {\it quantitatively stable} if there exist positive constants $\delta$ and $c$ with the following property:  for any admissible measure $\mu$ satisfying 
$\Lambda_1(M,\mC) - \bar \lambda_1(M,\mC,\mu)<\delta$ , there exists a $\bar\lambda_1$-conformally maximal metric $g_{\max}\in\mC$ (possibly with finitely many conical singularities) such that
\begin{multline}
\label{eq:quantstabkey}
\Lambda_1(M,\mC) - \bar \lambda_1(M,g_{\max})  \geqslant \\ \geqslant c \|\lambda_1(M,g_{\max})dv_{g_{\max}}-
\lambda_1(M,\mC,\mu)\mu\|^2_{W^{-1,2}(M,g)}.
\end{multline}
\end{definition}
The theorem below provides a sufficient condition for the quantitative stability of conformal maximizers for the first eigenvalue. 
\begin{theorem}
\label{thm:quantstab}
Let $(M, g)$ be a surface with a Riemannian metric, possibly with finitely many conical singularities.  Suppose there exists a branched minimal immersion $u\colon M \to \mathbb{S}^n$, $n \ge 3$, 
 by the first eigenfunctions of the Laplacian  $\Delta_g$, such that its image is not contained in a totally geodesic submanifold  $\mathbb{S}^2 \subset \mathbb{S}^n$.
Then the isoperimetric inequality \eqref{eq:isopconf} is quantitatively stable in the conformal class $\mC=[g]$. 
\end{theorem}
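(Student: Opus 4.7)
My plan combines the qualitative stability result (Theorem~\ref{thm:qualconf}) with a Hersch-type construction on the target sphere and a second-order analysis of the Rayleigh functional, exploiting the critical-point structure given by the branched minimal immersion~$u$.

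I argue by contradiction: if the conclusion fails, there is a sequence of admissible measures $\mu_j$, normalized by $\lambda_1(M,\mC,\mu_j)=\lambda_1(M,g)$, satisfying $\bar\lambda_1(\mu_j)\to\Lambda_1(M,\mC)$ yet
\begin{equation*}
\Lambda_1(M,\mC) - \bar\lambda_1(\mu_j) = o\bigl(\|\lambda_1(M,g)\,dv_g - \lambda_1(\mu_j)\mu_j\|^2_{W^{-1,2}(M,g)}\bigr).
\end{equation*}
Theorem~\ref{thm:qualconf} supplies a subsequence converging in $W^{-1,2}(M,g)$ to a smooth $\bar\lambda_1$-conformally maximal measure. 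By the rigidity of smooth maximizers \cite{ESIextremal} together with the compactness of the conformal automorphism group for $M\neq\mathbb{S}^2$, I may take this limit to be $\mu_0 := \lambda_1(M,g)\,dv_g$, the measure induced by the metric $g$ (which equals $\alpha g_u$ for some $\alpha>0$).

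The key quantitative step is a Hersch-type balancing on the target $\mathbb{S}^n$. For each $\mu_j$, I select a conformal automorphism $\gamma_j\in\text{M\"ob}(\mathbb{S}^n)$ such that $\int_M \gamma_j\circ u\,d\mu_j = 0\in\mathbb{R}^{n+1}$; existence follows from a standard degree argument, and $\gamma_j\to\mathrm{id}$ because minimal immersions into $\mathbb{S}^n$ are automatically $dv_{g_u}$-centered, and $\mu_j\to\mu_0$ in $W^{-1,2}$. With the test functions $\phi_j^i := (\gamma_j\circ u)^i$ (which satisfy $\sum_i(\phi_j^i)^2\equiv 1$ and $\int\phi_j^i\,d\mu_j = 0$), the Rayleigh characterization yields
\begin{equation*}
\bar\lambda_1(\mu_j) \;\le\; \sum_i \int_M |\nabla\phi_j^i|^2_g\,dv_g \;=\; 2E(\gamma_j\circ u).
\end{equation*}
The identity $\Lambda_1 = 2E(u)$ combined with a second-order Taylor expansion of $\gamma\mapsto E(\gamma\circ u)$ about $\gamma=\mathrm{id}$ (whose linear term vanishes thanks to the center-of-mass property of the minimal immersion) gives a refined upper bound on the Rayleigh defect. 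To extract the matching lower bound on $\Lambda_1 - \bar\lambda_1(\mu_j)$, I would construct quasi-optimal eigenfunctions for $\mu_j$ by perturbing the $u^i$ and compare the resulting Rayleigh quotients via an expansion involving Green-function-type operators and the spectral gap $\lambda_2(g) - \lambda_1(g)$.

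The final step converts these estimates into a $W^{-1,2}$ bound on $\lambda_1(g)\,dv_g - \lambda_1(\mu_j)\mu_j$, where the hypotheses enter essentially. The fullness of $u$ (image not in a totally geodesic $\mathbb{S}^2\subset\mathbb{S}^n$) ensures the relevant second-variation form is strictly positive definite, since otherwise degenerate directions in the Möbius orbit of $u$ would permit perturbations of $\mu$ with zero second-order deficit, violating stability. The condition $n\geq 3$ guarantees that the span of $\{u^i\}_{i=1}^{n+1}$ together with suitably chosen higher eigenfunctions of $g$ is rich enough for a duality argument recovering the full $W^{-1,2}$ norm. The main obstacle I anticipate is establishing strict positive-definiteness of this quadratic form and the careful coupling between the first-eigenspace projections and their orthogonal complement; this is precisely where the fullness and dimension hypotheses play an essential role.
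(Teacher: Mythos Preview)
Your outline has the right scaffolding --- the Hersch balancing on the target sphere, the canonical family $\gamma\circ u$ of test maps, and the second-order expansion of $\gamma\mapsto E(\gamma\circ u)$ at $\gamma=\mathrm{id}$ --- and you correctly identify that the fullness hypothesis forces nondegeneracy of the Hessian. This is exactly how the paper proceeds (see Example~\ref{canon:ex} and Proposition~\ref{stable:prop}).

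The genuine gap is in your steps 4--5. You propose to get the $W^{-1,2}$ control on $\lambda_1(g)\,dv_g-\lambda_1(\mu_j)\mu_j$ by constructing ``quasi-optimal eigenfunctions'' for $\mu_j$, expanding Rayleigh quotients via Green-type operators, and invoking the spectral gap $\lambda_2(g)-\lambda_1(g)$. This is neither necessary nor clearly workable. The paper bypasses all of it with a single observation (Lemma~\ref{main:lemma}): once $F:=\gamma_j\circ u$ is balanced with respect to $\mu_j$, the quadratic form $Q(v,w)=\int\langle dv,dw\rangle\,dv_g-\lambda_1(\mu_j)\int\langle v,w\rangle\,d\mu_j$ is positive semidefinite on the orthogonal complement of constants, so Cauchy--Schwarz applied to $Q(F,\varphi F)$ yields directly
\[
\bigl\||dF|_g^2\,dv_g-\lambda_1(\mu_j)\mu_j\bigr\|_{W^{-1,2}}\leq \|F\|_{W^{1,\infty}}\sqrt{2E(F)-\bar\lambda_1(\mu_j)}.
\]
Combined with the Hessian bound $\||dF|_g^2\,dv_g-|du|_g^2\,dv_g\|_{W^{-1,2}}\leq C\sqrt{\Lambda_1-2E(F)}$, the triangle inequality finishes the proof immediately (Theorem~\ref{stable:thm}). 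No contradiction argument, no appeal to Theorem~\ref{thm:qualconf}, no quasi-eigenfunctions, and no Green operators are needed.

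In short: replace your steps 4--5 with the Cauchy--Schwarz trick of Lemma~\ref{main:lemma}, and your proof becomes the paper's.
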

Note that  $g$ is a {\it globally} extremal metric for the first eigenvalue if and only if  the corresponding branched immersion by the first eigenfunctions is minimal (see \cite{NadirashviliT2, ESIextremal}). This is the case, for instance 
for the standard metric on the real projective plane that gives rise to the Veronese immersion into $\mathbb{S}^4$. Note that the conformal maximizer is also a global maximizer in this case, since  there is a unique conformal structure on on $\mathbb{RP}^2$. Also, by  the results of \cite{ ElIl00, NadirashviliT2, JNP, EGJ, CKM},  the square and equilateral flat metrics are the only globally extremal metrics for the first eigenvalue on the torus, and  the Lawson bipolar $\tilde\tau_{3,1}$ surface is the unique extremal metric  on the Klein bottle. In all those cases the images of the minimal immersions are spheres of dimension at least three. The following corollary is  immediate. 
\begin{corollary}
\label{cor:quantstab}
The inequality \eqref{eq:isopconf} is quantitatively stable on the real projective plane,   in the conformal classes of the square and equilateral tori, as well as in the conformal class of the unique globally 
$\lambda_1$-maximal metric on the Klein bottle.
\end{corollary}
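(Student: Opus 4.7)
The plan is to invoke Theorem~\ref{thm:quantstab} separately in each of the four cases. In each situation the metric $g$ is globally $\bar\lambda_1$-maximal, hence (by the characterization recalled just before the corollary) equals $\alpha g_u$ for some $\alpha>0$ and some branched minimal immersion $u\colon M \to \mathbb{S}^n$ by first eigenfunctions; moreover, as the paragraph preceding the corollary states, in each of these four cases one has $n \geqslant 3$. The only thing left to verify in order to apply Theorem~\ref{thm:quantstab} is that the image $u(M)$ is not contained in any totally geodesic $\mathbb{S}^2 \subset \mathbb{S}^n$, i.e.\ that the linear span of the component functions of $u$ in $\mathbb{R}^{n+1}$ has dimension at least four.

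For $\mathbb{RP}^2$ with the round metric, $u$ is the classical Veronese embedding into $\mathbb{S}^4$ built from an orthonormal basis of degree-$2$ spherical harmonics; its image is linearly full in $\mathbb{R}^5$, so it cannot lie in a totally geodesic $\mathbb{S}^2$. For the square flat torus $\mathbb{R}^2/\mathbb{Z}^2$, the first eigenspace is spanned by $\cos(2\pi x), \sin(2\pi x), \cos(2\pi y), \sin(2\pi y)$, producing the Clifford minimal immersion into $\mathbb{S}^3$; its image is a topological $2$-torus and therefore cannot sit inside a $2$-sphere. For the equilateral flat torus the first eigenspace has dimension $6$ (one $\cos/\sin$ pair for each of the three shortest nonzero vectors of the hexagonal lattice), so $u$ lands in $\mathbb{S}^5$ with image linearly full in $\mathbb{R}^6$. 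For the Klein bottle the unique global $\bar\lambda_1$-maximum is realized by Lawson's bipolar minimal immersion $\tilde\tau_{3,1}$, which is known to be linearly full in its ambient $\mathbb{R}^{n+1}$ with $n \geqslant 3$, and in particular its image is not contained in any totally geodesic $\mathbb{S}^2$.

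Having verified the hypothesis of Theorem~\ref{thm:quantstab} in each case, the corollary follows at once. There is no substantive obstacle in the argument: the proof reduces entirely to recognizing, for each of the four well-studied extremal metrics in the cited literature, the associated standard minimal immersion and observing that its image spans at least a $4$-dimensional subspace of the ambient Euclidean space.
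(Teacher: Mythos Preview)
Your proposal is correct and follows exactly the paper's approach: the paper states that the corollary is ``immediate'' from Theorem~\ref{thm:quantstab} together with the observation in the preceding paragraph that the relevant minimal immersions land in spheres of dimension at least three (i.e.\ are not contained in a totally geodesic $\mathbb{S}^2$). You have simply made explicit, case by case, the verification that the paper leaves to the reader.
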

The proof  of Theorem \ref{thm:quantstab} is given in Section \ref{quant_stability:sec}. In fact, we prove a more general result that also implies stability of the inequality \eqref{eq:isopconf}
in some conformal classes on a torus, for which 
the conformally maximal metrics for the first eigenvalue are flat \cite{ESIR}, see Proposition \ref{prop:rhombus}.  Moreover, in Section \ref{quantII:sec} we show that the quantitative stability holds under some conditions on the Jacobi fields along $\bar\lambda_1$-conformally maximal harmonic maps,  see also Section \ref{subsec:disc} for a discussion.
\begin{remark}
\label{rem:genustwo} The quantitative stability remains open for the conformal class of the Bolza surface on a surface of genus two, since the branched immersion by the first eigenfunctions is in this case into the sphere $\mathbb{S}^2$. Moreover, the required condition on the Jacobi fields is not satisfied either, see Example \ref{Jacobi_Bolza:ex}. 
Note that the genus two case is particularly difficult from the stability standpoint, because there is a continuous family of maximal metrics, see \cite{JLNNP, NayataniShoda};
in all examples covered by Corollary~\ref{cor:quantstab} the conformal maximizers are unique.
\end{remark}
\subsection{Stability of global maximizers}

Given a closed surface $M$, denote by $\Met_{\can}(M)$ the space of all constant curvature Riemannian metrics on $M$  of  unit area.
By the uniformization theorem, $\Met_{\can}(M)$ is in one-to-one correspondence with the space of conformal classes of metrics on $M$. Note that the diffeomorphism group ${\rm Diff}(M)$ acts naturally on pairs $(g,\mu)\in \Met_{\can}(M)\times C(M)^*$, by
$$\Phi\cdot (g,\mu)=(\Phi^*g, (\Phi^{-1})_*\mu),$$
such that
$$\bar{\lambda}_k(M, [\Phi^*g],  (\Phi^{-1})_*\mu)=\bar{\lambda}_k(M, [g],\mu).$$
It was shown in \cite{MS} that the equality in the isoperimetric inequality
\begin{equation}
\label{eq:isopglob}
\bar \lambda_1(M, g) \leqslant \Lambda_1(M)
\end{equation}
is attained on any surface $M$ by a metric $g=g_{\max}$ which is smooth  possibly except  a finite number of conical singularities. The following qualitative stability result holds.
\begin{theorem}\label{glob:qual:stab} Given a sequence $g_j\in \Met_{\can}(M)$ on $M\neq S^2$ and admissible measures of unit area $\mu_j$ such that
$$\lim_{j\to\infty}\lambda_1(M,[g_j],\mu_j)=\Lambda_1(M),$$
there exists a subsequence (un-relabelled) $(g_j,\mu_j)$, a sequence of diffeomorphisms $\Phi_j\in \Diff(M)$, and a globally $\bar{\lambda}_1$-maximizing unit area metric $g_{\max}$ on $M$ conformal to $g_0\in \Met_{\can}(M)$, such that the pairs 
$$(\tilde{g}_j,\tilde{\mu}_j):=\Phi_j\cdot (g_j,\mu_j)$$ 
satisfy
$$\tilde{g}_j\to g_0\text{ smoothly }$$
and
$$\tilde{\mu}_j\to dv_{g_{\max}}\text{  strongly  in }W^{-1,2}(M,g_0).$$
\end{theorem}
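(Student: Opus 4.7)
The plan is to combine compactness in the moduli space of constant curvature metrics with the conformal qualitative stability result (Theorem \ref{thm:qualconf}). The argument splits naturally into two stages: first extract diffeomorphisms $\Phi_j$ so that $\Phi_j^* g_j$ subconverges smoothly to some $g_0\in \Met_{\can}(M)$; then, with the conformal class essentially pinned down, apply the conformal stability result to the pulled-back measures.

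For the first stage, one uses that $\Met_{\can}(M)/\Diff(M)$ is the moduli space of conformal structures on $M$. One chooses $\Phi_j$ so that $\Phi_j^*g_j$ lies in a fundamental domain for the mapping class group. By Mumford compactness (when $M$ is hyperbolic) or the explicit structure of moduli in genus one (torus and Klein bottle), the sequence either subconverges smoothly in $\Met_{\can}(M)$ or the systole of $\Phi_j^* g_j$ tends to zero, forcing a collar-pinching degeneration. To rule out the degenerate case one uses a cutting argument on thin collars together with the fact that short collars contribute negligible mass to admissible measures in the sense of \eqref{eq:eigmes}; the limiting configuration of $\tilde{\mu}_j:=(\Phi_j^{-1})_*\mu_j$ yields admissible measures on one or several surfaces $\hat M$ of strictly smaller topological complexity with total $\bar\lambda_1$ at least $\Lambda_1(M)$. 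This contradicts the strict topological monotonicity $\Lambda_1(M)>\Lambda_1(\hat M)$, which is one of the cornerstones of the existence result in \cite{MS}. Hence degeneration is impossible and we obtain $\tilde g_j:=\Phi_j^*g_j\to g_0$ smoothly.

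For the second stage, since $\tilde g_j\to g_0$ smoothly, a slice theorem lets us further pull back by small diffeomorphisms $\Psi_j\to\mathrm{id}$ to arrange (after relabeling) that the conformal classes $[\tilde g_j]$ all equal $[g_0]$, while preserving smooth convergence of the metrics and leaving the stated conclusion intact. The measures $\tilde\mu_j$ are then admissible in the fixed conformal class $[g_0]$. Since $\bar\lambda_1(M,[g_0],\tilde\mu_j)=\bar\lambda_1(M,[g_j],\mu_j)\to\Lambda_1(M)$, and $\Lambda_1(M)\ge \Lambda_1(M,[g_0])\ge \limsup_j \bar\lambda_1(M,[g_0],\tilde\mu_j)$, we conclude $\Lambda_1(M)=\Lambda_1(M,[g_0])$. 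Theorem \ref{thm:qualconf} now applies and produces a subsequence with $\tilde\mu_j\to\mu$ strongly in $W^{-1,2}(M,g_0)$ for some unit area $\bar\lambda_1$-conformally maximal measure $\mu$ in $[g_0]$. Since $\bar\lambda_1(M,[g_0],\mu)=\Lambda_1(M,[g_0])=\Lambda_1(M)$, the measure $\mu$ is in fact globally maximal, and by the discussion connecting maximal measures with harmonic maps and minimal immersions in the previous subsection, $\mu=dv_{g_{\max}}$ for a globally $\bar\lambda_1$-maximal metric $g_{\max}\in [g_0]$, smooth outside finitely many conical singularities.

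The main obstacle is the exclusion of moduli-space degeneration in the first stage: one must rigorously control the asymptotics of both $\tilde g_j$ and $\tilde\mu_j$ in the thin collar regions, verify that the eigenvalue contribution localizes to the thick part, and invoke (or reprove) the strict topological monotonicity $\Lambda_1(M)>\Lambda_1(\hat M)$ for every surface $\hat M$ reachable by pinching. Once this is in place, the passage to the conformal class of $g_0$ and the appeal to Theorem \ref{thm:qualconf} are straightforward.
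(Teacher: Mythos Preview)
Your Stage 2 contains a genuine error. Diffeomorphisms act on metrics but cannot change the conformal class as a point in moduli space: if $[g_j]\neq [g_0]$ in $\mathcal{M}(M)$ (which is the generic situation when $\chi(M)\leq 0$), then no choice of $\Psi_j\in\Diff(M)$ will achieve $[\Psi_j^*\tilde g_j]=[g_0]$. A slice theorem only normalizes the representative within a $\Diff$-orbit; it does not collapse distinct orbits. Consequently your identity $\bar\lambda_1(M,[g_0],\tilde\mu_j)=\bar\lambda_1(M,[g_j],\mu_j)$ is unjustified.

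The paper's fix is elementary and avoids this entirely: once $\tilde g_j\to g_0$ smoothly, there is $\delta_j\to 0$ with $(1+\delta_j)^{-1}\tilde g_j\leq g_0\leq (1+\delta_j)\tilde g_j$, which gives a direct comparison of Dirichlet integrals and hence
\[
\lambda_1(M,[g_0],\tilde\mu_j)\geq (1+\delta_j)^{-2}\lambda_1(M,[\tilde g_j],\tilde\mu_j)\to \Lambda_1(M).
\]
This is enough to feed $\tilde\mu_j$ into Theorem~\ref{thm:qualconf} in the fixed class $[g_0]$.

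For Stage 1, your sketch is correct in spirit but more laborious than necessary. Since $\bar\lambda_1(M,[g_j],\mu_j)\leq \Lambda_1(M,[g_j])\leq \Lambda_1(M)$, the hypothesis already gives $\Lambda_1(M,[g_j])\to \Lambda_1(M)$, and the paper simply cites \cite{Petrides,MS17,MS} for compactness of such conformal classes modulo $\Diff(M)$. There is no need to track the specific measures $\mu_j$ through the degeneration analysis; the ``main obstacle'' you flag disappears once you pass to the conformal suprema first.
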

Theorem \ref{glob:qual:stab} is proved in Section~\ref{sec:glob_stab}.

Similar to  Definition \ref{def:quantstab}, let us introduce the notion of the global quantitative stability.
\begin{definition}
\label{def:quantstabglob} We say that  the isoperimetric inequality \eqref{eq:isopglob} for the first eigenvalue is {\it globally  quantitatively stable} on a surface $M \neq \mathbb{S}^2$  if  there exist constants 
$\delta, C>0$ with the following property. Given a metric  $g\in \Met_{\can}(M)$ and an admissible measure 
$\mu$ on $M$  satisfying 
$\bar{\lambda}_1(M, [g],\mu)\geqslant \Lambda_1(M)-\delta$,  there exists a 
$\bar{\lambda}_1$-maximizing metric $g_{\max} \in [g_0]$, $g_0 \in  \Met_{\can}(M)$,  
such that 
\begin{equation}
\label{eq:globstabf}
\begin{split}
&\|g-g_0\|_{C^1(g_0)}^2 + \|\lambda_1(M,[g],\mu)\mu-\lambda_1(M,g_{\max})dv_{g_{\max}}\|_{W^{-1,2}(g_0)}^2\leqslant \\
&\leqslant C\left(\Lambda_1(M)-\bar{\lambda}_1(M, [g],\mu)\right).
\end{split}
\end{equation}
\end{definition}

\medskip

Note that if $\mu=dv_h$ for some metric $h\in[g]$, then \eqref{eq:globstabf} implies
$$
\|\lambda_1(M,h)h-\lambda_1(M,g_{\max})g_{\max}\|_{W^{-1,2}(M,g_{\max})}^2\leqslant C \left( \Lambda_1(M)-\bar{\lambda}_1(M,h)\right).
$$
Here and above the $C^1$ and $W^{-1,2}$ distances between metrics are understood in the sense of  the corresponding norms on tensors, see Remark \ref{rem:tensornorms} for a formal definition.

\begin{remark}
Since $g,g_0\in\Met_{\can}(M)$, the $C^1$ norm in the left-hand side can be replaced by $C^k$ norm at the expense of a possibly different constant~$C$.
\end{remark}
Note that  the globally maximal metrics for the first eigenvalue are known only for the sphere, the real projective plane, the torus, the Klein bottle and the surface of genus two.  The first two cases have already been covered in the previous subsections, since the conformal maximizers coincide with the global ones.  The case of the surface of genus two is beyond our reach for the reasons explained in Remark \ref{rem:genustwo}.  At the same time, for the remaining cases of the torus and the Klein bottle, the quantitative stability of global maximizers can be shown. 
\begin{theorem} 
\label{thm:globstabtorus}
The isoperimetric inequality \eqref{eq:isopglob} is globally quantitatively stable on the torus and on the Klein bottle.
\end{theorem}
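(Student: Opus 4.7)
The plan is to argue by contradiction, combining the qualitative global stability of Theorem~\ref{glob:qual:stab} with the conformal quantitative stability of Corollary~\ref{cor:quantstab} and a transverse non-degeneracy property of the map $[g]\mapsto\Lambda_1(M,[g])$ at the global maximum. If \eqref{eq:globstabf} fails for every constant $C>0$, then there is a sequence $(g_j,\mu_j)$ with $g_j\in\Met_{\can}(M)$ and $\bar\lambda_1(M,[g_j],\mu_j)\to\Lambda_1(M)$ that violates the estimate. Theorem~\ref{glob:qual:stab} supplies diffeomorphisms $\Phi_j$ so that, along a subsequence, $g_j\to g_0$ smoothly in $\Met_{\can}(M)$ and $\mu_j\to dv_{g_{\max}}$ strongly in $W^{-1,2}(M,g_0)$, where $g_{\max}$ is a globally $\bar\lambda_1$-maximal metric conformal to $g_0$ -- the equilateral flat metric on $\mathbb{T}^2$, respectively the Lawson $\tilde\tau_{3,1}$ metric on the Klein bottle. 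This reduces the question to a purely local quantitative estimate near $g_0$.

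I would then decompose the deficit into a conformal part and a transverse part:
\[
\Lambda_1(M)-\bar\lambda_1(M,[g],\mu)\;=\;\underbrace{\bigl[\Lambda_1(M)-\Lambda_1(M,[g])\bigr]}_{\text{transverse}}\;+\;\underbrace{\bigl[\Lambda_1(M,[g])-\bar\lambda_1(M,[g],\mu)\bigr]}_{\text{conformal}}.
\]
The conformal bracket is bounded below by $c\,\|\lambda_1(M,[g],\mu)\mu-\lambda_1(M,\nu_{[g]})\nu_{[g]}\|^2_{W^{-1,2}}$, where $\nu_{[g]}$ is a $\bar\lambda_1$-conformally maximal measure in $[g]$, via a version of Theorem~\ref{thm:quantstab} that is uniform in $[g]$ close to $[g_0]$: its hypothesis (a branched minimal immersion by first eigenfunctions whose image is not contained in a totally geodesic $\mathbb{S}^2$) is an open condition in the conformal class, so the stability constant can be taken uniform in $[g]$ near $[g_0]$. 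A continuous-dependence argument for the family $[g]\mapsto \nu_{[g]}$ then allows the replacement of $\nu_{[g]}$ by $dv_{g_{\max}}$, at the cost of an error of order $\|g-g_0\|_{C^1}^2$ that can be absorbed by the transverse bracket. The transverse bracket requires the non-degenerate maximum estimate
\[
\Lambda_1(M)-\Lambda_1(M,[g])\;\geqslant\;c\,\|g-g_0\|_{C^1(g_0)}^2 \qquad \text{for } g\in\Met_{\can}(M)\text{ near }g_0.
\]
On the torus, this follows from the explicit description of $\Lambda_1(\mathbb{T}^2,[g_\tau])$ as a function of $\tau$ in terms of the shortest vectors of the associated flat lattice, which has a strict non-degenerate maximum at the hexagonal point modulo its stabilizer; a direct Hessian computation converts this into the desired $C^1$ bound on $\Met_{\can}(\mathbb{T}^2)$. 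On the Klein bottle, whose Teichmüller space near $\tilde\tau_{3,1}$ is likewise finite-dimensional, the analogous bound should follow from a second-variation analysis of the normalized Dirichlet energy of the harmonic map $u\colon M\to\mathbb{S}^4$ underlying the Lawson metric, using that its image is not contained in a totally geodesic $\mathbb{S}^2$ and that its Jacobi fields transverse to the conformal class are rigid in the sense implicit in Theorem~\ref{thm:quantstab}.

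The main obstacle is the transverse non-degeneracy on the Klein bottle: no closed-form expression for $\Lambda_1$ along Teichmüller directions at $\tilde\tau_{3,1}$ is available, so one must compute the Hessian by perturbing the harmonic-map equation directly, and rule out zero modes beyond those generated by the symmetry group of the Lawson metric. Once both bracketed estimates are established, summing them produces the required inequality \eqref{eq:globstabf} along the contradicting sequence, violating the assumed failure of quantitative stability and completing the proof.
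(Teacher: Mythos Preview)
Your decomposition into a transverse part $\Lambda_1(M)-\Lambda_1(M,[g])$ and a conformal part $\Lambda_1(M,[g])-\bar\lambda_1(M,[g],\mu)$ is natural, but the claim that the hypothesis of Theorem~\ref{thm:quantstab} is open in the conformal class is false, and this is a genuine gap. The existence of a branched \emph{minimal} immersion by first eigenfunctions is equivalent (by \cite{NadirashviliT2,ESIextremal}) to the metric being globally $\bar\lambda_1$-critical; for a conformal class $[g]\ne[g_0]$ near the equilateral class, the conformally maximal metric gives only a \emph{harmonic} map to a sphere, not a conformal (minimal) one, so Theorem~\ref{thm:quantstab} does not apply there. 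You might try to substitute Corollary~\ref{Jacobi:cor}, whose hypothesis \emph{is} open, but Example~\ref{Jacobi_torus:ex} exhibits a non-trivial Jacobi field along a $\bar\lambda_1$-conformally maximal map for the equilateral torus, so that route is also blocked. Without a uniform conformal stability estimate for nearby classes, the decomposition cannot be completed; the related difficulty of controlling $\nu_{[g]}-dv_{g_{\max}}$ Lipschitz in $[g]$ has the same origin.

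The paper avoids this entirely by not decomposing. It proves a general criterion (Theorem~\ref{max.ind.thm}): if the minimal immersion $u\colon M\to\mathbb{S}^n$ inducing $g_{\max}$ has \emph{maximal} area Morse index $\ind_A(u)=n+1+\dim\mathcal{M}(M)$, then global quantitative stability holds. The mechanism (Lemma~\ref{max.ind.lem}) is to build a single family of immersions $F_{\tau,a}=G_a\circ F_\tau$ with $(a,\tau)\in\mathbb{B}^{n+1}\times\mathcal{U}$, $\mathcal{U}$ a Teichm\"uller neighbourhood of $[g_0]$, so that the area Hessian at $u$ is negative definite on the full $(n+1+\dim\mathcal{M}(M))$-dimensional space of variations; this yields $\area(u)-\area(F_{\tau,a})\gtrsim\|g_\tau-g_0\|_{C^1}^2+\|F_{\tau,a}-u\|_{C^2}^2$ in one stroke, after which Lemma~\ref{main:lemma} finishes. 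The theorem then reduces to checking $\ind_A(u)=8$ for the equilateral torus in $\mathbb{S}^5$ (known, see \cite{KW}) and $\ind_A(u)=6$ for the Klein bottle in $\mathbb{S}^4$, the latter via an explicit computation with the normal section $N_1=\alpha(\partial_x,\partial_x)$ and the Jacobi operator. In particular the ``transverse non-degeneracy'' you identify as the main obstacle on the Klein bottle is handled not by analysing $\Lambda_1(M,[g])$ directly but by the area index of a concrete minimal surface.
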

Theorem \ref{thm:globstabtorus} is proved in  Section~\ref{sec:glob_stab}.
\subsection{Ideas of the proofs} 
\label{subsec:disc}
Our approach to a large extent relies on the characterization of the conformally maximal metrics in terms of the min-max energy of harmonic maps, see \cite{KS}. Suppose we have an admissible measure $\mu$  on a surface $M$ with a fixed conformal class  $\mC=[g]$,  
and assume that for some $n\ge 1$ there exists  a  sphere-valued map  $u \in W^{1,2}(M,\mathbb{S}^n)$ satisfying a ``balancing'' assumption, i.e. all of its components are  
orthogonal to constants in $L^2(M,\mu)$. Consider  the Dirichlet energy $E(u)$ of the map $u$ defined by \eqref{eq:diren}.  
It turns out that  the difference $2E(u)-\bar \lambda_1(M,\mC,\mu)$ is positive and, moreover, if it is small, then 
the map $u$ is in an appropriate sense close to being  harmonic, and its  components are close to being the first eigenfunctions corresponding to 
$\lambda_1(M,\mC,\mu)$. This result is proved  in Lemma \ref{main:lemma} which is the key technical statement of the paper. Recall that a map is harmonic if and only if its tension field
(see, for instance, \cite{EeSa64} for a definition)
vanishes. Informally, Lemma \ref{main:lemma} states that if $2E(u)-\bar \lambda_1(M,\mC,\mu)$ is small, then the tension field of  $u$ is small in the appropriate norm, see Remark \ref{rem:tensionfield} for further details.

At the same time,  as was shown in \cite{KS},  $\mu$ is a conformally maximal measure for the first eigenvalue if and only if it arises from the energy density of a harmonic map $u$ by the first eigenfunctions of $\Delta_{g_u}$, such that  
\begin{equation}
\label{eq:enerin}
2E(u)- \Lambda_1(M,\mC)=0. 
\end{equation}
In view of this characterization of conformally maximal measures, Lemma \ref{main:lemma} provides a tool to prove stability estimates. The main challenge is to find for each admissible measure $\mu$ an appropriate balanced sphere-valued map $u$, such that $2E(u)\leqslant \Lambda_1(M,\mC)$. This way
$$
2E(u)-\bar \lambda_1(M,\mC,\mu)\leqslant \Lambda_1(M,\mC)-\bar\lambda_1(M,\mC,\mu)
$$
and the stability estimate follows from Lemma~\ref{main:lemma}.
 For particular cases, it is possible to arrange these maps in a {\it comparison family}, see Definition \ref{def:comparfam}. For example, to prove Theorem \ref{S2stability:thm} we use Hersch's lemma: for  any measure $\mu$  there exists a balanced conformal automorphism  $\Phi\colon{\mathbb S}^2 \to {\mathbb S}^2$. Setting $u= \Phi$ and  applying Lemma \ref{main:lemma} yields the result. 

A more elaborate argument is required to prove Theorem \ref{thm:quantstab}. We construct a {\it canonical} comparison family (see Example \ref{canon:ex}) by composing the minimal immersion $u$ satisfying the assumptions of the theorem with conformal automorphisms of the target sphere 
$\mathbb{S}^n$.  Once again, Hersch's lemma is used to show that for each measure $\mu$ the automorphism can be chosen in such a way that that the resulting map is balanced.  The equality in \eqref{eq:enerin} is achieved, because a minimal immersion into a sphere by the first eigenfunctions (which, as was mentioned earlier, yields a {\it globally} extremal metric for the first eigenvalue) corresponds to a conformally maximal metric. Indeed,  by~\cite{LiYau, ESIconfvolume}, see also~\cite[Proposition 3.1]{CKM}, conformal automorphisms reduce the area of minimal surfaces, 
which equals the energy (since the map is conformal), and the energy in turn provides an upper bound for the normalized first eigenvalue. Moreover, one can show that inside the 
canonical comparison family, the maximal metric given by the minimal immersion is a {\it non-degenerate} maximum, and therefore  quantitative stability in the sense of \eqref{eq:quantstabkey} holds inside the canonical comparison family. Theorem \ref{thm:quantstab} then follows by an application of Lemma \ref{main:lemma}.

Intuitively, the quadratic dependence on the right-hand side of the stability estimate \eqref{eq:quantstabkey}  is of the same nature as the stability of the ``model'' nondegenerate maximum of the function $f(x)=-x^2$.  Note that the assumption  in the more general quantitative stability Theorem \ref{Jacobi:cor} that there are no  nontrivial Jacobi fields  along a conformally maximal harmonic map is of similar  nature: essentially, it provides a certain nondegeneracy condition on the maximum. The same can be said about the assumption on the maximality of the Morse index in the global quantitative stability result presented in Theorem \ref{max.ind.thm}. Note that for the surface of genus two the maximum is degenerate (see Remark \ref{rem:genustwo}) and hence the quantitative stability can not be shown by our methods. Still, we believe that the quantitative stability in a conformal class is a  rather general phenomenon (see also Remark \ref{rem:Jaco}):
\begin{open} 
\label{open:stab}
Let $M$ be a surface of negative Euler characteristic and $\mC$ be a conformal class such that no $\bar\lambda_1$-conformally maximal harmonic map is a conformal branched cover of $\mathbb{S}^2$. Show that the  inequality \eqref{eq:isopconf} is quantitatively stable in $\mC$.
\end{open}
\begin{remark}
We suspect that quantitative stability also holds for the conformal class of the Bolza surface and, more generally, for conformal classes with branched covers as  $\bar\lambda_1$-conformally maximal harmonic maps. For such conformal classes the maximal metric is not unique,  and more a appropriate model seems to be that of a ``plateau'' $f(x) = \min\{1-|x|,0\}$. However, 
new ideas are required to tackle this case.
\end{remark}
At the same time, it is possible to obtain a qualitative stability result in full generality. Theorem \ref{thm:qualconf} holds in any conformal class on any surface except the sphere (which has been treated separately). The analog of comparison family is constructed using the techniques of \cite{KS}, and the proof of the stability follows the approach of \cite[Section 4]{KNPP2}. In particular, we show in Proposition \ref{convergence:prop} that   to  each conformally maximizing sequence of admissible measures we can associate a sequence of  appropriately balanced sphere-valued maps that converge strongly in $W^{1,2}(M,\mathbb{S}^n)$ to a $\bar \lambda_1$-maximal harmonic map. Along the way we simplify  some of the arguments of \cite{KNPP2}: the main novelty here is Proposition \ref{wk.lim} that allows to avoid using the language of quasi-open sets, which significantly shortens the proof. The global qualitative stability result, Theorem \ref{glob:qual:stab}, 
is a relatively straightforward consequence of Theorem \ref{thm:qualconf} and the results of \cite{Petrides, MS} establishing compactness of  the moduli space of  conformal classes with $\Lambda_1(M,\mC)$ sufficiently close to $\Lambda_1(M)$. 

The stability of the isoperimetric inequality for the second eigenvalue on the sphere (Theorem \ref{thm:stab2S2}), as well as more general qualitative and quantitative stability results for the second eigenvalue obtained in Section \ref{sec:higher} are proved using the same set of ideas. The main difference is that in this case we have to take into account the bubbling phenomenon,
which dictates a more careful  choice of spaces in which stability estimates are shown, see  discussion following Theorem \ref{thm:stab2S2}. Lemma~\ref{kokarev:lemma} is the key new ingredient. It can be seen as a generalization of a non-concentration estimate for
 $\lambda_1$~\cite{Girouard,Kokarev} and could be of independent interest.  
 As we have already noted, 
our approach is based on the characterization of the conformally maximal metrics via min-max energy, which was proved in \cite{KS}  only for $k=1,2$. Since Lemma \ref{main:lemma} holds for any $k \geqslant 1$, extending the results of \cite{KS} to $k >2$  will open the path to proving stability of isoperimetric inequalities for higher eigenvalues.
\subsection{Plan of the paper} The paper is organized as follows. In Section \ref{quantI:sec}  we obtain quantitative stability results for conformally maximal metrics.
In particular, we prove the quantitative stability of Hersch's inequality for the first eigenvalue on the sphere, and introduce the notion of stable comparison families.  Qualitative stability of conformally maximal metrics is investigated in Section \ref{qual:sec}.
In Section \ref{quantII:sec}  we revisit the quantitative stability, and show that conformally maximal metrics are quantitatively stable provided the corresponding conformally maximal harmonic maps do not admit non-trivial Jacobi fields.  In Section \ref{sec:higher}  we explore stability for the second Laplace eigenvalue and, in particular, investigate stability of bubbling sequences. Results on global quantitative stability for the first eigenvalue are obtained in Section \ref{sec:glob_stab}.  Finally, Section \ref{sec:sharp}  is concerned with the sharpness of the quantitative stability estimates and the optimality of the choice of the Sobolev space $W^{-1,2}$.
\subsection*{Acknowledgments}  Research of MK is partially supported by the NSF grant DMS-2104254. 
Research of MN is partially supported by ANR SHAPO (ANR-18-CE40-0013);  this paper is part of his Ph.D. research at the Universit\'e Savoie Mont Blanc under the supervision of Dorin 
Bucur.  Research of IP is  partially supported by NSERC and FRQNT.  Research of DS is partially supported by the 
NSF fellowship DMS-2002055.

\section{Quantitative stability of conformally maximal metrics I: comparison families}
\label{quantI:sec}
\subsection{Key lemma}
\label{S2stability:sec}
In this section we assume that $M$ is a closed surface with a fixed conformal class $\mC$ and a distinguished choice of the background metric $g\in \mC$. 
To simplify notation, throughout this section we set  $W^{1,2}(M):=W^{1,2}(M,g)$. 
Consider an admissible measure $\mu$ on $M$ with the corresponding map $T_\mu\colon W^{1,2}(M)\to L^2(\mu)$ and denote the $L^2(\mu)$-normalized eigenfunctions of $(M,\mC,\mu)$ by $\phi_0\equiv \frac{1}{\sqrt{\mu(M)}}$, $\phi_1$, $\ldots$ $\phi_k$, $\phi_i\in W^{1,2}(M)$. In what follows,   for $f\in W^{1,2}(M)$ we write $\int f\,d\mu$ instead of $\int T_\mu(f)\,d\mu$ whenever it does not cause confusion. This way
 the collection $\{\phi_0,\phi_1,\ldots\}$ can be considered an orthonormal basis of $L^2(\mu)$.

\begin{lemma}
\label{main:lemma}
Let $u\in W^{1,2}(M,\mathbb{S}^n)$ be a sphere-valued map such that
$$
\int_M \phi_j u\,d\mu=0\in \mathbb{R}^{n+1}\text{ for each }0\leqslant j\leqslant k-1
$$
for some $k\in \mathbb{N}$. Then
$$
2E(u)\geqslant \bar\lambda_k(M,\mC,\mu)
$$
and for any $v\in W^{1,2}(M,\mathbb{R}^{n+1})$,
\begin{equation}
\label{main:eq}
\int_M \langle du,dv\rangle\,dv_g-\lambda_k(M,\mC,\mu)\int_M\langle u,v\rangle\, d\mu \leqslant [2E(u)-\bar\lambda_k(M,\mC,\mu)]^{1/2}\|dv\|_{L^2(M)}.
\end{equation}
In particular, 
\begin{equation}
\label{main:eq3}
\| |du|^2_g\,dv_g - \lambda_k(M,\mC,\mu)\mu \|_{(C^0\cap W^{1,2}(M))^*}\leqslant \|u\|_{W^{1,2}(M)}[2E(u)-\bar\lambda_k(M,\mC,\mu)]^{1/2}
\end{equation}
Moreover, if $u\in W^{1,\infty}(M,\mathbb{S}^n)$, then
\begin{equation}
\label{main:eq2}
\| |du|^2_g\,dv_g - \lambda_k(M,\mC,\mu)\mu \|_{W^{-1,2}(M)} \leqslant \|u\|_{W^{1,\infty}(M)}[2E(u)-\bar\lambda_k(M,\mC,\mu)]^{1/2}
\end{equation}
\end{lemma}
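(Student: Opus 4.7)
The plan is to treat the three statements in order: first the energy lower bound, then the bilinear form estimate \eqref{main:eq}, and finally the two dual-norm estimates \eqref{main:eq3} and \eqref{main:eq2} which follow from \eqref{main:eq} by a clever choice of test function.

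For the energy bound $2E(u)\geqslant \bar\lambda_k(M,\mC,\mu)$, I would use the variational characterization. The assumption $\int_M \phi_j u\,d\mu = 0$ for $j<k$ means that each component $u^\alpha$ of $u$ lies in the $L^2(\mu)$-orthogonal complement of $\{\phi_0,\ldots,\phi_{k-1}\}$, so the min-max principle gives $\int_M |\nabla u^\alpha|_g^2\,dv_g \geqslant \lambda_k(M,\mC,\mu)\int_M (u^\alpha)^2\,d\mu$. Summing over $\alpha=0,\ldots,n$ and using $|u|^2\equiv 1$ yields $2E(u)\geqslant \lambda_k(M,\mC,\mu)\cdot \mu(M)=\bar\lambda_k(M,\mC,\mu)$.

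For \eqref{main:eq}, the key is to observe that the bilinear form
\[
B(f,h)=\int_M \nabla f\cdot \nabla h\,dv_g - \lambda_k(M,\mC,\mu)\int_M fh\,d\mu
\]
is positive semidefinite on the $L^2(\mu)$-orthogonal complement of $\mathrm{span}\{\phi_0,\ldots,\phi_{k-1}\}$, as one sees by spectral expansion. I would decompose each component $v^\alpha = \sum_{i<k} c_i^\alpha \phi_i + w^\alpha$ with $w^\alpha$ in that orthogonal complement; the decomposition is also Dirichlet-orthogonal since $\int \nabla w^\alpha\cdot \nabla \phi_i\,dv_g=\lambda_i \int w^\alpha \phi_i\,d\mu=0$. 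For $i<k$ the contributions $B(u^\alpha,\phi_i)$ vanish because $u^\alpha$ is orthogonal to $\phi_i$ both in $L^2(\mu)$ (hypothesis) and in Dirichlet sense (since $\phi_i$ is an eigenfunction). Hence $B(u^\alpha,v^\alpha)=B(u^\alpha,w^\alpha)$, and Cauchy--Schwarz for the PSD form $B$, followed by Cauchy--Schwarz across components, gives
\[
\Bigl|\sum_\alpha B(u^\alpha,v^\alpha)\Bigr|\leqslant \Bigl(\sum_\alpha B(u^\alpha,u^\alpha)\Bigr)^{1/2}\Bigl(\sum_\alpha B(w^\alpha,w^\alpha)\Bigr)^{1/2}.
\]
The first factor equals $[2E(u)-\bar\lambda_k]^{1/2}$ by step one, and the second is bounded by $\|dv\|_{L^2(M)}$ since $B(w^\alpha,w^\alpha)\leqslant \int |\nabla w^\alpha|_g^2\,dv_g\leqslant \int |\nabla v^\alpha|_g^2\,dv_g$ by the two orthogonality properties.

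To obtain \eqref{main:eq3} and \eqref{main:eq2}, I would test the measure/density on the left-hand side against a scalar function $v$, but apply \eqref{main:eq} to the vector-valued test function $vu$. The identity $|u|^2\equiv 1$ gives $\langle u,du\rangle\equiv 0$, so $\langle du,d(vu)\rangle_g = v|du|_g^2$ and $\langle u,vu\rangle=v$. Therefore
\[
\int_M v\,|du|_g^2\,dv_g - \lambda_k(M,\mC,\mu)\int_M v\,d\mu = \int_M \langle du,d(vu)\rangle\,dv_g - \lambda_k(M,\mC,\mu)\int_M \langle u,vu\rangle\,d\mu.
\]
A direct computation using $\langle u,du\rangle=0$ yields $|d(vu)|_g^2 = |dv|_g^2 + v^2|du|_g^2$, so $\|d(vu)\|_{L^2}\leqslant \|u\|_{W^{1,2}}\|v\|_{C^0\cap W^{1,2}}$ in general, and $\|d(vu)\|_{L^2}\leqslant \|u\|_{W^{1,\infty}}\|v\|_{W^{1,2}}$ when $u\in W^{1,\infty}$. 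Combined with \eqref{main:eq}, this yields the two claimed dual-norm estimates.

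The main obstacle is step two, and specifically verifying that the cross-decomposition really is simultaneously orthogonal both in $L^2(\mu)$ and in the Dirichlet inner product; once this dual orthogonality is in hand, the PSD Cauchy--Schwarz argument is routine. The passage to the dual-norm estimates is then a short calculation using only $|u|^2\equiv 1$.
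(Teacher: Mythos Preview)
Your proof is correct and follows essentially the same approach as the paper: both arguments hinge on the positive semidefiniteness of the bilinear form $B(f,h)=\int \langle df,dh\rangle\,dv_g-\lambda_k\int fh\,d\mu$ on the $L^2(\mu)$-orthogonal complement of $\mathrm{span}\{\phi_0,\ldots,\phi_{k-1}\}$, the dual (Dirichlet and $L^2(\mu)$) orthogonality of the eigenfunction decomposition, and then the substitution $v\mapsto vu$ together with $|u|^2\equiv 1$ to pass to the dual-norm estimates. The only cosmetic difference is that the paper packages the form directly on vector-valued functions and applies Cauchy--Schwarz once, whereas you work componentwise and apply Cauchy--Schwarz twice; these are equivalent.
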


\begin{remark} 
\label{rem:tensionfield}
A harmonic map is characterized by vanishing of its tension field.
The tension field $\tau(u)$ of a map $u\colon (M,g)\to\mathbb{S}^n$ is given by $\tau(u) = \Delta_gu - |du|_g^2u$. Combining~\eqref{main:eq} with~\eqref{main:eq3} yields
$$
\|\tau(u)\|_{(C^0\cap W^{1,2}(M))^*}\leqslant (1+\|u\|_{W^{1,2}(M)})[2E(u)-\bar\lambda_k(M,\mC,\mu)]^{1/2}
$$ 
Thus, if $|2E(u)-\bar\lambda_k(M,\mC,\mu)|$ is small, then the lemma implies that the tension field is small in $(C^0\cap W^{1,2}(M))^*$ and, therefore, one can think of $u$ as an ``almost'' harmonic map.
\end{remark}

\begin{proof} 
Denote by $Q_k\colon W^{1,2}(M,\mathbb{R}^{n+1})\times W^{1,2}(M,\mathbb{R}^{n+1})\to \mathbb{R}$ the quadratic form
$$
Q_k(v_1,v_2):=\int_M\langle dv_1,dv_2\rangle dv_g-\lambda_k(M,\mC,\mu)\int_M\langle v_1,v_2\rangle d\mu.
$$
Letting
$$
V_k:=\left\{v\in W^{1,2}(M,\mathbb{R}^{n+1}) \left| \, \int_M\phi_jvd\mu=0\in \mathbb{R}^{n+1}\text{ for }0\leqslant j\leqslant k-1 \right.\right\},
$$
it is clear from the definition of $\lambda_k(M,\mC,\mu)$ that $Q_k$ is positive semi-definite on $V_k$, and the Cauchy-Schwarz inequality therefore gives
$$
Q_k(v_1,v_2)\leqslant \sqrt{Q_k(v_1,v_1)}\sqrt{Q_k(v_2,v_2)}\text{ for all }v_1,v_2\in V_k.
$$
Since $u\in W^{1,2}(M,\mathbb{S}^n)$ lies in $V_k$ by assumption, for any $v\in W^{1,2}(M,\mathbb{R}^{n+1})$, decomposing $v$ as $v=v_0+v_1$ where $v_1\in V_k$ and
$$
v_0=\sum_{j=0}^{k-1}\langle \phi_j, v\rangle_{L^2(\mu)}\phi_j,
$$
we see that $Q_k(u,v_0)=0$, so that
$$
Q_k(u,v)=Q_k(u,v_1)\leqslant \sqrt{Q_k(u,u)}\sqrt{Q_k(v_1,v_1)}.
$$
In particular, noting that
$$
Q_k(u,u)=\int_M|du|^2dv_g-\lambda_k(M,\mC,\mu)\int_M |u|^2d\mu=2E(u)-\bar{\lambda}_k(M,\mC,\mu)
$$
(since $|u|\equiv 1$) and
$$
Q_k(v_1,v_1)\leqslant \int_M |dv_1|^2\,dv_g\leqslant \int_M |dv|^2\,dv_g,
$$
it follows that
$$
\int_M\langle du,dv\rangle\,dv_g-\lambda_k(M,\mC,\mu) \int_M\langle u,v\rangle d\mu\leq \sqrt{2E(u)-\bar\lambda_k (M,\mC,\mu)} \|dv\|_{L^2(M)},
$$
as claimed in~\eqref{main:eq}.

To obtain~\eqref{main:eq2}, for any $\varphi\in W^{1,2}(M)$ one sets $v = \varphi u$ in~\eqref{main:eq}, which is possible due to the fact that $u\in W^{1,\infty}$ implies $\varphi u\in W^{1,2}(M,\mathbb{R}^{n+1})$. Since $|u|^2=1$,~\eqref{main:eq} reads 
\begin{equation}
\label{vineq}
\int_M \langle d(\varphi u),d u\rangle\,dv_g - \lambda_1(M,\mC,\mu )\int_{M}\varphi\,d\mu\leqslant \sqrt{2E(u)-\bar\lambda_1(\mu)}\|d(\varphi u)\|_{L^2(M)}.
\end{equation}
For the left-hand side one has 
$$
\int_{M} \langle d(\varphi u),d u\rangle\,dv_g = \int_{M} \left(\varphi|du|_g^2 + \frac{1}{2}\langle d\varphi, d|u|^2\rangle\right)\,dv_g = \int_{M} \varphi|du|_g^2\,dv_g.
$$
For the right-hand side one has
\begin{equation*}
	\begin{split}
		\int_{M} |d(\varphi u)|^2_g\,dv_g &= \int_M |d \varphi|^2_g 
		+ \langle \varphi d(\varphi),d|u|^2\rangle + \varphi^2|du|^2_g\,dv_g\\
		&\leqslant \|u\|^2_{W^{1,\infty}(M)}\|\varphi\|^2_{W^{1,2}(M)}.
	\end{split}
\end{equation*}
Substituting these two expressions into~\eqref{vineq} yields~\eqref{main:eq2}

The proof of~\eqref{main:eq3} is similar. One sets $v=\psi u$, where $\psi\in C^0\cap W^{1,2}(M)$. It is easy to see $v\in W^{1,2}(M,\mathbb{S}^n)$. One has the inequality similar to~\eqref{vineq} with the l.h.s equal to the l.h.s. of~\eqref{main:eq3}. For the r.h.s. one has 
\begin{equation*}
	\begin{split}
		\int_{M} |d(\psi u)|^2_g\,dv_g &= \int_M |d \psi|^2_g |u|^2
		+ \langle \psi d(\psi),d|u|^2\rangle + \psi^2|du|^2_g\,dv_g\\
		&\leqslant \|u\|^2_{W^{1,2}(M)}\left(\|\psi\|^2_{C^0(M)} + \|d\psi\|^2_{L^2(M)}\right).
	\end{split}
\end{equation*}
\end{proof}

In the following Lemma~\ref{main:lemma} is applied in the situation, where $2E(u)$ is close to $\Lambda_k(M,\mC)$, so that the r.h.s. of~\eqref{main:eq} is small when the measure $\mu$ is almost $\bar\lambda_k$-conformally maximal. For general conformal classes, the existence of such maps $u$ satisfying the conditions of Lemma~\ref{main:lemma} is far from obvious. In Section~\ref{existence:sec} we show how the min-max characterization of~\cite{KS} can be applied to this problem. For now, we focus on the particular examples, where the existence can be shown directly.

\subsection{Proof of Theorem \ref{S2stability:thm}} 
\label{sec:S2proof}
 By Hersch's lemma, there exists a conformal automorphism $\Phi\colon\mathbb{S}^2\to\mathbb{S}^2$ such that
$$
\int_{\mathbb{S}^2} \Phi\,d\mu=0\in\mathbb{R}^3.
$$

Thus, Lemma~\ref{main:lemma} can be applied with $k=1$, $u=\Phi$, $g=g_0$.  The rest of the proof is almost identical to the arguments after~\eqref{vineq}.
Namely, since $\Phi$ is smooth, we can set $v = \varphi\Phi\in W^{1,2}(\mathbb{S}^2,\mathbb{R}^3)$. As a result, since $|\Phi|^2\equiv 1$ the inequality~\eqref{main:eq} reads
\begin{multline}
\label{S2ineq1:eq}
\int_{\mathbb{S}^2} \langle d(\varphi\Phi),d\Phi\rangle\,dv_g - \lambda_1(\mathbb{S}^2, \mu)\int_{\mathbb{S}^2}\varphi\,d\mu \\ \leqslant \sqrt{2E(\Phi)-\bar\lambda_1(\mathbb{S}^2, \mu)}\|d(\varphi\Phi)\|_{L^2(\mathbb{S}^2)}.
\end{multline}
Furthermore, $\Phi$ is conformal, so one has
$$
\int_{\mathbb{S}^2} \langle d(\varphi\Phi),d\Phi\rangle\,dv_g = \int_{\mathbb{S}^2} \varphi|d\Phi|_g^2\,dv_g = 2\int_{\mathbb{S}^2} \varphi dv_{\Phi^*g}.
$$
Similarly,
\begin{equation*}
	\begin{split}
		\int_{\mathbb{S}^2} |d(\varphi\Phi)|^2_g\,dv_g &= \int_{\mathbb{S}^2} |d\varphi|^2_g 
		+ \langle \varphi d(\varphi),d|\Phi|^2\rangle + \varphi^2|d\Phi|^2_g\,dv_g\\
		&= \int_{\mathbb{S}^2}|d\varphi|^2_{\Phi^*g} + 2\varphi^2\,dv_{\Phi^*g}.
	\end{split}
\end{equation*}
Substituting these two equalities into \eqref{S2ineq1:eq} yields
\begin{multline}
\label{S2stability:ineq1}
\langle \varphi,2dv_{\Phi^*g}-\lambda_1(\mathbb{S}^2, \mu)\mu\rangle \\ \leqslant \sqrt{8\pi-\bar\lambda_1( (\mathbb{S}^2, \mu)}\cdot \left(\int_{\mathbb{S}^2}(|d\varphi|_{\Phi^*g}^2+2\varphi^2)dv_{\Phi^*g}\right)^{1/2}
\end{multline}
Applying this inequality to both $\varphi$ and $-\varphi$ and taking into account the normalization $\lambda_1(\mathbb{S}^2, \mu)=2$ we arrive at  \eqref{eq:S2stabnew2}.
The estimate~\eqref{eq:S2stabnew1} is obtained by applying~\eqref{S2stability:ineq1} directly to $\Phi_*\mu$ and noticing that the components of the identity map have vanishing $\Phi_*\mu$-average.
\qed
\begin{remark}
\label{rem:normaliz}
As was mentioned in Remark \ref{rem:normaliz0}, one can equivalently normalize the measure $\mu$ by the area instead of the first eigenvalue. Indeed,  by Cauchy-Schwarz inequality, 
$$
\left| \langle \varphi, dv_{\Phi^*g} \rangle \right| \le \left(4\pi \int_{\mathbb{S}^2} \varphi^2 dv_{\Phi^*g}\right)^{1/2},
$$
and combining this with \eqref{S2stability:ineq1} we get that 
$$
\| \lambda_1(\mathbb{S}^2, \mu) \mu \|_{W^{-1,2}(\mathbb{S}^2)} \le c_1,
$$
for some explicitly computable constant $c_1$.  Hence, if $\mu$ has area $4\pi$, from estimate   \eqref{eq:S2stabnew2} and  the triangle inequality we obtain
\begin{multline*}
\| 2 \mu - 2 dv_{\Phi^*g}\|_{W^{-1,2}(\mathbb{S}^2)} \le \| 2 dv_{\Phi^*g} - \lambda_1(\mathbb{S}^2,\mu)\mu\|_{W^{-1,2}(\mathbb{S}^2)} +\\  (2-\lambda_1(\mathbb{S}^2,\mu))\|\mu\|_{W^{-1,2}(\mathbb{S}^2)} \le \frac{c_2}{\lambda_1(\mathbb{S}^2, \mu)} \sqrt{8\pi-\bar\lambda_1( (\mathbb{S}^2, \mu)}
\end{multline*}
for some other explicitly computable constant $c_2$. Note that  the new normalization was used in the last inequality. Since we are interested in the case when 
$\lambda_1(\mathbb{S}^2, \mu)$ is close to $2$, the denominator on the right-hand side may be essentially absorbed in the constant.
\end{remark}
\subsection{Comparison families} 
\label{quant_stability:sec}
In order to effectively apply Lemma~\ref{main:lemma} to the stability of $\bar\lambda_1$-maximal measures it is convenient to have an explicit family of maps in $W^{1,2}(M,\mathbb{S}^n)$, such that for any admissible measure $\mu$ there is a member of the family with vanishing $\mu$-average. This way, by inequality~\eqref{main:eq2} the stability of a general measure $\mu$ follows from stability properties of the explicit family of absolutely continuous measures. Theorem~\ref{S2stability:thm} is a particular example of this principle, where the family of maps consists entirely of $\bar\lambda_1$-maximal harmonic maps and, as a result, the argument is very straightforward. 

\begin{definition}
\label{def:comparfam}
Let $Z$ be a smooth manifold, possibly with boundary. We say that a family $F\in C^0(Z,W^{1,\infty}(M,\mathbb{S}^n))$ is a {\em comparison family} if 
\begin{enumerate}
\item for any admissible measure $\mu$ there exists $z$ such that $\int_M F_z\,d\mu=0$;
\item $\max\limits_{z\in Z} E_g(F_z) = \frac{1}{2}\Lambda_1(M,[g])$.
\end{enumerate}
\end{definition} 
In what follows, we take $Z=\mathbb{B}^{n+1}$.
\begin{example}[Canonical family]
\label{canon:ex}
Recall the assumptions of Theorem \ref{thm:quantstab}: let $(M,g)$ be such that there exists a branched minimal immersion $u\colon M \to \mathbb{S}^n$, $n\geqslant 3$ by the first eigenfunctions, such that its image is not contained in the equatorial $\mathbb{S}^2\subset\mathbb{S}^n$. Let $G_a(x) = \frac{1-|a|^2}{|x+a|^2}(x+a)+a$, $a\in\mathbb{B}^{n+1}$ be a conformal automorphism of the unit sphere $\mathbb{S}^n$. Then the {\em canonical family} $\{G_a\circ u\}_{a\in \mathbb{B}^{n+1}}$ is a comparison family. Indeed, property $1)$ follows from the Hersch's trick~\cite{Hersch, ESIconfvolume, LiYau}. Property $2)$ is a consequence of the fact that conformal automorphisms decrease the area of minimal surfaces~\cite{LiYau, ESIconfvolume}, \cite[Proposition 3.1]{CKM}, see also subsection \ref{subsec:disc} for a discussion.
\end{example}

To any comparison family $F$ one can associate the corresponding comparison family of measures $\{|dF_z|^2_g\,dv_g\}_{z\in Z}$. Note that by~\eqref{main:eq2} this family of measures has to contain all $\bar\lambda_1$-maximal measures. In particular, for each $\bar\lambda_1$-maximal measure, it has to contain at least one corresponding $\bar\lambda_1$-maximal harmonic map. 
Furthermore,~\eqref{main:eq2} yields a stability estimate in terms of the distance to the comparison family of measures. In order to obtain the stability of $\bar\lambda_1$-maximal measures the comparison family has to satisfy additional assumptions.  

\begin{definition}
\label{stable:def}
Let $F$ be a comparison family. Then $F$ is called {\em stable} if the $\bar\lambda_1$-maximal measures are stable in the family $\{|dF_z|^2_g\,dv_g\}_{z\in Z}$ in the following sense. There exists $C,\delta_0>0$ such that as soon as $\Lambda_1(M,[g])-2E(F_z)\leqslant \delta_0$ then $z\in K$ --- a compact subset of $Z$, and there exists a $\bar\lambda_1$-maximal harmonic map $u\in F$ satisfying
\begin{equation}
\label{stabledef:ineq}
\||du|^2_g\,dv_g-|dF_z|^2_g\,dv_g\|_{W^{-1,2}(M,g)}\leqslant C\sqrt{\Lambda_1(M,c)-2E(F_z)}.
\end{equation}
\end{definition}

The following theorem is a straightforward application of Lemma~\ref{main:lemma}.

\begin{theorem}
\label{stable:thm}
Let $c=[g]$ be a conformal class, such that there exists a stable comparison family. Then there exist $\delta_0, C>0$ such that if $\mu$ is an admissible measure satisfying $\Lambda_1(M,c)-\bar\lambda_1(M,c,\mu)<\delta_0$, then there exists a $\lambda_1$-maximal measure $\mu_0$ such that
$$
\|\lambda_1(M,c,\mu_0)\mu_0 - \lambda_1(M,c,\mu)\mu\|_{W^{-1,2}(M,g)}\leqslant C\sqrt{\Lambda_1(M,c)- \bar\lambda_1(M,c,\mu)}. 
$$
\end{theorem}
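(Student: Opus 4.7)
The plan is to combine the min-max inequality of Lemma \ref{main:lemma} (applied to the member of the comparison family balanced against $\mu$) with the stability hypothesis of Definition \ref{stable:def}, and then conclude via the triangle inequality in $W^{-1,2}$.

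First, given $\mu$ admissible with $\Lambda_1(M,c)-\bar\lambda_1(M,c,\mu)<\delta_0$, I use property (1) of the comparison family to select $z\in Z$ with $\int_M F_z\,d\mu=0$. Property (2) of the comparison family gives $2E(F_z)\leqslant\Lambda_1(M,c)$, and combining with our hypothesis yields
\[
0\leqslant\Lambda_1(M,c)-2E(F_z)\leqslant\Lambda_1(M,c)-\bar\lambda_1(M,c,\mu)<\delta_0,
\]
so that in particular $2E(F_z)-\bar\lambda_1(M,c,\mu)\leqslant\Lambda_1(M,c)-\bar\lambda_1(M,c,\mu)$.

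Next I apply Lemma \ref{main:lemma} with $k=1$ and $u=F_z\in W^{1,\infty}(M,\mathbb{S}^n)$, which is allowed precisely because $F_z$ is balanced against $\mu$. The inequality \eqref{main:eq2} reads
\[
\||dF_z|^2_g\,dv_g-\lambda_1(M,c,\mu)\mu\|_{W^{-1,2}(M,g)}\leqslant\|F_z\|_{W^{1,\infty}(M)}\sqrt{\Lambda_1(M,c)-\bar\lambda_1(M,c,\mu)}.
\]
At the same time, since $\Lambda_1(M,c)-2E(F_z)<\delta_0$, the stability hypothesis asserts that $z$ lies in a fixed compact subset $K\subset Z$ and that there is a $\bar\lambda_1$-maximal harmonic map $u_0\in F$ with
\[
\||du_0|^2_g\,dv_g-|dF_z|^2_g\,dv_g\|_{W^{-1,2}(M,g)}\leqslant C\sqrt{\Lambda_1(M,c)-\bar\lambda_1(M,c,\mu)}.
\]
Because $F\in C^0(Z,W^{1,\infty}(M,\mathbb{S}^n))$, the quantity $M:=\sup_{z\in K}\|F_z\|_{W^{1,\infty}(M)}$ is finite, giving a uniform constant in the previous display.

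I then take $\mu_0:=dv_{g_{u_0}}=\tfrac12|du_0|^2_g\,dv_g$, so that $\lambda_1(M,c,\mu_0)=2$ and $\lambda_1(M,c,\mu_0)\mu_0=|du_0|^2_g\,dv_g$. The triangle inequality now yields
\[
\|\lambda_1(M,c,\mu_0)\mu_0-\lambda_1(M,c,\mu)\mu\|_{W^{-1,2}(M,g)}\leqslant(C+M)\sqrt{\Lambda_1(M,c)-\bar\lambda_1(M,c,\mu)},
\]
which is the desired conclusion with constant $C+M$.

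The only nontrivial point is securing the uniform bound on $\|F_z\|_{W^{1,\infty}}$ that makes the constant in Lemma \ref{main:lemma} independent of $\mu$; this is exactly why the stability hypothesis demands that $z$ fall into a compact set once $2E(F_z)$ is close to $\Lambda_1(M,c)$. Everything else is a direct chaining of the two estimates.
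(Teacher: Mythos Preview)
Your proof is correct and follows essentially the same route as the paper: pick the balanced member $F_z$ of the comparison family, use Lemma~\ref{main:lemma} (both the bound $\bar\lambda_1\leqslant 2E(F_z)$ and inequality~\eqref{main:eq2}) together with the stability hypothesis of Definition~\ref{stable:def}, and conclude via the triangle inequality, with the compactness of $K$ providing the uniform $W^{1,\infty}$ bound. The only cosmetic point is that your displayed chain $\Lambda_1(M,c)-2E(F_z)\leqslant\Lambda_1(M,c)-\bar\lambda_1(M,c,\mu)$ already uses $\bar\lambda_1(M,c,\mu)\leqslant 2E(F_z)$ from Lemma~\ref{main:lemma}, which you invoke explicitly only in the following paragraph; you may want to cite the lemma before writing that chain.
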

\begin{proof}
Let $F$ be a stable comparison family and let $\delta_0>0$, $K\Subset Z$ be as in Definition~\ref{stable:def}. Consider an admissible measure $\mu$ such that $\Lambda_1(M,c)-\bar\lambda_1(M,c,\mu)<\delta_0$ and let $z\in Z$ be such that
$\int_M F_z\,d\mu = 0$. By Lemma~\ref{main:lemma} and the definition of a comparison family one has $\bar\lambda_1(M,c,\mu)\leqslant 2E(F_z)\leqslant\Lambda_1(M,c)$. Therefore, $\Lambda_1(M,[g])-2E(F_z)\leqslant \delta_0$ and, in particular, $z\in K$. Thus, combining~\eqref{stabledef:ineq} with~\eqref{main:eq2}, one has that there exists a $\bar\lambda_1$-maximal measure $\mu_0$ such that 
\begin{equation*}
\begin{split}
&\|\lambda_1(\mu_0)\mu_0-\lambda_1(\mu)\mu\|_{W^{-1,2}}\leqslant \\
&\leqslant \|\lambda_1(\mu_0)\mu_0-|dF_z|^2_g\,dv_g\|_{W^{-1,2}}+\||dF_z|^2_g\,dv_g-\lambda_1(\mu)\mu\|_{W^{-1,2}} \leqslant\\
&\leqslant C\sqrt{\Lambda_1(M,c)-2E(F_z)} + \|F_z\|_{W^{1,\infty}}\sqrt{2E(F_z)-\bar\lambda_1(\mu)}\leqslant\\
&\leqslant 2(C+\max_{y\in K}\|F_y\|_{W^{1,\infty}})\sqrt{\Lambda_1(M,c)-\bar\lambda_1(\mu)}.
\end{split}
\end{equation*} 
In this computation we use a simplified notation $\lambda_1(\mu):=\lambda_1(M,\mC,\mu)$, and denote by $C$ different constants.
We also note that $\max_{y\in K}\|F_y\|_{W^{1,\infty}}$ can  be absorbed in the constant because $K$ is compact and $F_y \in W^{1,\infty}(M,\mathbb{S}^n)$,
see Definition \ref{def:comparfam}. This completes the proof of the theorem.
\end{proof}
In order to apply Theorem \ref{stable:thm} we need to have examples of comparison families. 
\begin{proposition}
\label{stable:prop}
The canonical family of Example~\ref{canon:ex} is a stable comparison family.
\end{proposition}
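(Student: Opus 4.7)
Properties (1) and (2) of Definition~\ref{def:comparfam} for the canonical family $F=\{G_a\circ u\}_{a\in\mathbb{B}^{n+1}}$ are already sketched in Example~\ref{canon:ex}: Hersch's topological trick supplies the balancing parameter, while Li--Yau's area-decreasing property of conformal automorphisms applied to the minimal first-eigenfunction immersion $u$ yields $\max_a E_g(G_a\circ u)=E_g(u)=\tfrac12\Lambda_1(M,[g])$. It remains to verify the three clauses of Definition~\ref{stable:def}: compactness of the near-maximizing super-level sets, identification of a $\bar\lambda_1$-maximal harmonic map in $F$, and the quantitative $W^{-1,2}$ estimate. The maximal map will turn out to be $u=G_0\circ u$ itself.

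The heart of the proof is a Hessian computation at $a=0$. Extending $G_a$ via its M\"obius representation with conformal factor $\varphi_a(x)=(1-|a|^2)/|x+a|^2$, and using conformality of $u$, set
\[A(a):=2E_g(G_a\circ u)=2\int_M\varphi_a(u)^2\,dv_{g_u}.\]
A direct Taylor expansion together with the balancing identity $\int_M u\,dv_{g_u}=0$ (automatic for minimal first-eigenfunction immersions, since $\Delta_{g_u}u=2u$) eliminates the linear term and yields
\[\Lambda_1(M,[g])-A(a)=8\bigl[|a|^2\area(M,g_u)-3\,a^T\mathcal{M}a\bigr]+O(|a|^3),\]
where $\mathcal{M}_{ij}:=\int_M u_iu_j\,dv_{g_u}$ satisfies $\mathrm{tr}(\mathcal{M})=\area(M,g_u)$. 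Li--Yau's inequality $A(a)\leq A(0)$ immediately forces $\lambda_{\max}(\mathcal{M})\leq\area(M,g_u)/3$. The central technical claim is that this inequality is strict under the hypothesis that $u(M)$ is not contained in any totally geodesic $\mathbb{S}^2\subset\mathbb{S}^n$: equality in some direction $v$ would cause the quadratic term in $A(tv)$ to vanish; analyticity of $A$ together with $A(tv)\leq A(0)$ would then force all higher-order terms to vanish as well, so $A(tv)\equiv A(0)$, from which one extracts that the image of $u$ lies in a 3-dimensional linear subspace, contradicting the hypothesis. The resulting strict Hessian negativity gives $\Lambda_1(M,[g])-A(a)\geq c|a|^2$ for $|a|\leq r_0$.

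For $|a|$ near $\partial\mathbb{B}^{n+1}$, the conformal factor $\varphi_a$ concentrates on a single point of $\mathbb{S}^n$; a short bubbling-type analysis (using $\dim u(M)=2$ and $n\geq 3$) shows the limsup of $A(a)$ is bounded above by $8\pi$, which is strictly below $\Lambda_1(M,[g])$ in the cases covered by the hypothesis. Combined with continuity of $A$ on $\mathbb{B}^{n+1}$ and the strict Li--Yau inequality on any compact annulus $r_0\leq|a|\leq R_0<1$, this yields a uniform gap, so $\{a:A(a)\geq\Lambda_1(M,[g])-\delta_0\}$ is compactly contained in $\mathbb{B}^{n+1}$ for sufficiently small $\delta_0$. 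Since $a\mapsto G_a\circ u$ is smooth into $W^{1,\infty}(M,\mathbb{S}^n)$ on compacta, the map $a\mapsto|d(G_a\circ u)|_g^2\,dv_g$ is Lipschitz into $W^{-1,2}(M,g)$, and one concludes
\[\||du|_g^2\,dv_g-|d(G_a\circ u)|_g^2\,dv_g\|_{W^{-1,2}(M,g)}\leq C|a|\leq C'\sqrt{\Lambda_1(M,[g])-A(a)},\]
which is the estimate~\eqref{stabledef:ineq}. The main obstacle throughout is the rigidity argument giving strict Hessian negativity; this is precisely where the geometric hypothesis on the image of $u$ is essential, and where the analogy with the degenerate $\mathbb{S}^2$ case (where the maximum is attained on the entire ball $\mathbb{B}^3$) breaks down.
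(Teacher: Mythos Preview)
Your Hessian computation matches the paper's, but the rigidity step you invoke to conclude strict negativity is flawed. From $A(tv)\le A(0)$, analyticity of $t\mapsto A(tv)$, and vanishing of the quadratic term, you \emph{cannot} conclude that all higher-order terms vanish: the function $t\mapsto -t^4$ satisfies all three hypotheses. At best you can conclude the cubic term vanishes and the quartic term is $\le 0$, which does not give $A(tv)\equiv A(0)$, and even if it did, you have not explained how to extract the geometric conclusion that $u(M)$ lies in a $3$-plane. The paper closes this gap by a direct identity: using $\Delta_g u=|du|_g^2 u$ and an integration by parts one shows
\[
\int_M\langle v,u\rangle^2\,|du|_g^2\,dv_g=\tfrac12\int_M|v^\shortparallel|^2\,|du|_g^2\,dv_g,
\]
where $v^\shortparallel$ is the projection of $v$ onto the tangent plane of $u(M)$. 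Plugging this into your Hessian yields $H_0(v,v)=-4\int_M|v^\perp|^2|du|_g^2\,dv_g\le 0$, with equality iff $v^\perp\equiv 0$, which forces $u(M)\subset\mathbb{S}^2$. This is the missing idea.

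Your compactness argument also has a gap. The claimed bound $\limsup_{|a|\to 1}A(a)\le 8\pi$ is not justified and is generally false: the boundary limit equals $4\Theta_u(-\alpha,0)$ (density of the immersed surface), which depends on the number and multiplicity of preimages of $-\alpha$ and can exceed $8\pi$. The paper avoids any boundary analysis by using the known fact (from \cite{CKM,ESIconfvolume}) that $t\mapsto E(G_{t\xi}\circ u)$ is \emph{strictly decreasing} for every direction $\xi$; combined with the local quadratic estimate near $a=0$, this immediately forces the super-level set $\{a:\Lambda_1-2E(F_a)\le\delta_0\}$ into a small ball.
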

\begin{proof}
As we discussed in Example~\ref{canon:ex} the conformal automorphisms decrease the area. More precisely, the results in~\cite{CKM,ESIconfvolume} imply that the area strictly decreases in the radial direction, i.e. if $\xi\in\mathbb{S}^n$, then 
$\area(F_0(M))>\area(F_{s\xi}(M))>\area(F_{t\xi}(M))$ whenever $0<s<t<1$. Moreover, since the family $F_a:=\{G_a\circ u\}$ consists of conformal maps, area agrees with energy and, therefore, one has $\Lambda_1(M,c)=2E(F_0)>2E(F_{s\xi})>2E(F_{t\xi})$ for $0<s<t<1$. 

We first claim that $a=0$ is a non-degenerate critical point of $E(F_a)$. This fact is implicitly proved in~\cite[page 11]{CKM}; we sketch below a more direct proof for completeness. The explicit formula for $G_a$ yields
$$
E(F_a) = \frac{1}{2}\int_M \frac{(1-|a|^2)^2}{(|a|^2+2\langle F_0, a \rangle + 1)^2}|dF_0|_g^2\,dv_g.
$$
Thus, the Hessian of $E(F_a)$ at $a=0$ is the quadratic form
$$
H_0(v,v) = 4\int_M (3\langle v, F_0\rangle^2 - |v|^2)|dF_0|^2_g\,dv_g.
$$
Assume for now that $F_0$ does not have branch points. Let $h = F_0^*g_{\mathbb{S}^n}$, $h\in [g]$ since $F_0$ is conformal. Denote by $v^{\shortparallel}$ the projection of $v$ onto the tangent space of $F_0(M)$, then one has
\begin{equation*}
\begin{split}
&\int_M \langle v, F_0\rangle^2|dF_0|^2_g\,dv_g =\\
&\text{(since $F_0$ is harmonic $\Delta_g F_0 = |dF_0|_g^2 F_0$)}= \int_M(\Delta_g\langle v, F_0\rangle)\langle v, F_0\rangle\,dv_g = \\
&\text{(integrate by parts)} = \int_M \langle d\langle v, F_0\rangle,d\langle v, F_0\rangle \rangle_g\,dv_g = \\
&\text{(conformal invariance)} = \int_M \langle d\langle v, F_0\rangle,d\langle v, F_0\rangle \rangle_h\,dv_h = \\
&=\int_M |v^\shortparallel|^2\,dv_h = \frac{1}{2}\int_M |v^\shortparallel|^2|dF_0|^2_h\,dv_h = \frac{1}{2}\int_M |v^\shortparallel|^2|dF_0|^2_g\,dv_g.
\end{split}
\end{equation*}
Thus, 
\begin{equation*}
\begin{split}
H_0(v,v) &= 4\int_M (3\langle v, F_0\rangle^2 - |v|^2)|dF_0|^2_g\,dv_g =\\
&= 4\int_M(\langle v, F_0\rangle^2 + |v^\shortparallel|^2 - |v|^2)|dF_0|_g^2\,dv_g = -4\int_M|v^\perp|^2|dF_0|_g^2\,dv_g \le 0,
\end{split}
\end{equation*}
where $v^\perp$ is a projection of $v$ onto the normal bundle of $F_0(M)$ in $\mathbb{S}^n$. If $F_0$ has branch points, this formula still holds, see~\cite{CKM}. In this case, one can define the tangent plane (and, hence, normal bundle) at branch points as a limit of nearby tangent planes, see~\cite{GOR}. Finally, it is easy to see that $v^\perp \equiv 0$ for some $v$ implies that $F_0(M)$ is contained in an equatorial $\mathbb{S}^2$, see~\cite{CKM, ESIconfvolume}, which is ruled out by our assumptions. Hence, $H_0(v,v)<0$ and this completes the proof of the claim.

Since $a=0$ is a non-degenerate maximum, there exists $C>0$, $1/2>r_0>0$ such that $|a|<2r_0$ implies that $E(F_0)-E(F_a)\geqslant C|a|^2$. Set $\delta_0 = 2Cr^2_0$. This way if $|a|<2r_0$ and $\Lambda_1(M,[g])-2E(F_a)\leqslant \delta_0$, then $|a|<r_0$. Furthermore, since the energy decreases radially, we have that $\Lambda_1(M,[g])-2E(F_a)\leqslant \delta_0$ implies $|a|<r_0$, i.e. we can take $K=\{|a|\leqslant r_0\}$ in Definition~\ref{stable:def}.

To show~\eqref{stabledef:ineq} we use the explicit formula for $G_a$ to obtain 
\begin{equation*}
\begin{split}
&\left ||dF_0|_g^2 - |dF_a|_g^2\right| = \left|1-\frac{(1-|a|^2)^2}{|F_0+a|^4}\right| |dF_0|^2_g =\\
&=4\left| \frac{\langle F_0,a\rangle + \langle F_0,a\rangle^2 + \langle F_0,a\rangle|a|^2 + |a|^2}{|F_0+a|^4}\right||dF_0|_g^2\leqslant C'|a||dF_0|_g^2,
\end{split}
\end{equation*}
where in the last step we use $|a|<r_0<1/2$ and $|F_0|\equiv 1$. Finally, assume $\Lambda_1(M,[g])-2E(F_a)\leqslant \delta_0$. Then $|a|<r_0$, $\frac{1}{2} \Lambda_1(M,[g])-E(F_a)=E(F_0)-E(F_a)\geqslant C|a|^2$ and $\forall\varphi\in W^{1,2}(M,g)$ one has
\begin{equation*}
\begin{split}
&\left|\int_M\varphi\left (|dF_0|_g^2 - |dF_a|_g^2\right)\,dv_g\right|\leqslant C'|a|\int_M|\varphi||dF_0|_g^2\,dv_g\leqslant \\
&\leqslant \frac{C'}{\sqrt{2C}}\left(\int_M|dF_0|^4_g\,dv_g\right)^{1/2}\sqrt{\Lambda_1(M,c)-2E(F_a)}\|\varphi\|_{L^2(M,g)}.
\end{split}
\end{equation*}
In particular, after relabelling the constants one has
$$
\||dF_0|_g^2\,dv_g - |dF_a|_g^2\,dv_g\|_{W^{-1,2}(M,g)}\leqslant C\sqrt{\Lambda_1(M,c)-2E(F_a)},
$$
i.e. $\{F_a\}$ is a stable comparison family.
\end{proof}
\begin{proof}[Proof of Theorem \ref{thm:quantstab}] The result then immediately follows from Proposition \ref{stable:prop} and Theorem \ref{stable:thm}.
\end{proof} 

\begin{remark} Let $\Sigma_2$ be a surface of genus $2$ and let $\mC$ be a conformal class of the Bolza surface. Then the only branched minimal immersion by the first eigenfunctions is the hyperelliptic projection $\Pi\colon (\Sigma_2,c)\to \mathbb{S}^2$.  However, the corresponding canonical family is not a stable comparison family, since the energy of all maps in the family equals $8\pi$. Therefore, in order to establish a quantitative stability estimate for $\Lambda_1(\Sigma_2,c)$ one needs to come up with an alternative argument, cf. Remark \ref{rem:genustwo}.
\end{remark}


Consider the following modification of Example~\ref{canon:ex}. It turns out that for some harmonic (not necessarily minimal) maps $u$ the associated canonical family is a stable comparison family.
Let $\Gamma\subset \mathbb{R}^2$ be a rhombic lattice:  $\Gamma = \mathbb{Z}(1,0) \oplus \mathbb{Z}(c,d)$, where $0~\le~c~\le \frac{1}{2}$, $d>0$ and $c^2+d^2=~1$. Let $g_{c,d}$ be the corresponding flat metric of unit area on $\mathbb{T}^2$.  It was shown in  \cite{ESIR} that  for any $g \in g_{c,d}$ we have
\begin{equation}
\label{eq:rhombus}
\bar \lambda_1(\mathbb{T}^2, g) \le  \lambda_1(\mathbb{T}^2, g_{c,d})= \Lambda_1(\mathbb{T}^2,[g_{c,d}]).
\end{equation}
The following proposition holds.
\begin{proposition} 
\label{prop:rhombus}
 The isoperimetric inequality \eqref{eq:rhombus} is quantitatively stable.
\end{proposition}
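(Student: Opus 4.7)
The plan is to construct a stable comparison family on $[g_{c,d}]$ (in the sense of Definition~\ref{stable:def}) and invoke Theorem~\ref{stable:thm}. I follow the canonical-family scheme of Example~\ref{canon:ex} and Proposition~\ref{stable:prop}, but with one twist: for strictly rhombic lattices ($0<c<\tfrac12$) the first-eigenfunction map fails to be a conformal immersion, so the Li--Yau / El Soufi--Ilias area-decreasing argument cannot be applied to it directly. The rescue is that the image of this map is still a minimal surface.

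First I would inspect the dual lattice $\Gamma^*$ and observe that for $0\le c\le\tfrac12$ its shortest vectors come in exactly two pairs (three in the equilateral case $c=\tfrac12$, which is already covered by Theorem~\ref{thm:quantstab}). The first eigenfunctions then assemble into a map $u\colon \mathbb{T}^2\to\mathbb{S}^3$ whose image is the Clifford torus $T^2_{\mathrm{Cliff}}=\{x_1^2+y_1^2=x_2^2+y_2^2=\tfrac12\}\subset\mathbb{S}^3$. Explicitly, $u$ factors as the linear isomorphism $A\colon(x_1,x_2)\mapsto(x_1-(c/d)x_2,\,x_2/d)$, which sends $\Gamma$ to the square lattice, followed by the standard Clifford parameterization; hence $u$ is a diffeomorphism with \emph{constant Jacobian}, satisfying $|du|_g^2\equiv\lambda_1$, $\int u\,dv_g=0$, and $u_*dv_g=\alpha\,dA_{\mathrm{Cliff}}$ with $\alpha=1/\mathrm{Area}(T^2_{\mathrm{Cliff}})$.

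Next I would define the canonical family $F_a=G_a\circ u$ for $a\in\mathbb{B}^4$ and verify the conditions of Definition~\ref{def:comparfam}. Property~(1) follows from Hersch's lemma. For Property~(2), since $|du|_g^2$ is constant, $2E(F_a)=\lambda_1\int\rho_a^2(u)\,dv_g=\alpha\lambda_1\int_{T^2_{\mathrm{Cliff}}}\rho_a^2\,dA_{\mathrm{Cliff}}$. The Li--Yau / ESI conformal-volume inequality for the minimal surface $T^2_{\mathrm{Cliff}}$ (whose center of mass is zero) gives $\int_{T^2_{\mathrm{Cliff}}}\rho_a^2\,dA_{\mathrm{Cliff}}\le\mathrm{Area}(T^2_{\mathrm{Cliff}})$, so $2E(F_a)\le\lambda_1=\Lambda_1(\mathbb{T}^2,[g_{c,d}])$, with equality at $a=0$.

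Stability of this family is then verified along the lines of Proposition~\ref{stable:prop}. Expanding $\rho_a^2(x)=1-4\langle x,a\rangle+12\langle x,a\rangle^2-4|a|^2+O(|a|^3)$ and using $\int u\,dv_g=0$ together with the orthonormality $\int u_iu_j\,dv_g=\tfrac14\delta_{ij}$ for the rescaled first eigenfunctions, one obtains $2E(F_a)-\Lambda_1=-\lambda_1|a|^2+O(|a|^3)$, so $a=0$ is a non-degenerate strict maximum. The strict radial monotonicity needed to localize the parameter to a compact subset $K\subset\mathbb{B}^4$ follows from \cite{CKM, ESIconfvolume} applied to $T^2_{\mathrm{Cliff}}$, which is not contained in any totally geodesic $\mathbb{S}^2\subset\mathbb{S}^3$. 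The $W^{-1,2}$ bound~\eqref{stabledef:ineq} then follows from the pointwise estimate $||du|_g^2-|dF_a|_g^2|\le C|a|\lambda_1$ exactly as in Proposition~\ref{stable:prop}, and Theorem~\ref{stable:thm} delivers the claim. The only genuinely new point is the verification of Property~(2) for a non-conformal eigenmap; the key insight --- that non-conformality of $u$ is harmless because $u$ is a constant-Jacobian cover of a minimal surface --- appears particular to this flat torus setting.
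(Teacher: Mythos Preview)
Your argument is correct and follows essentially the same route as the paper. Both hinge on the observation that the first-eigenfunction map $u=\Phi_{c,d}$ is a constant-Jacobian parametrization of the Clifford torus, so that $2E(G_a\circ u)$ is a fixed scalar multiple of $\int_{T^2_{\mathrm{Cliff}}}\rho_a^2\,dA_{\mathrm{Cliff}}=\area(G_a(T^2_{\mathrm{Cliff}}))$; the paper records this (citing \cite{ESIR}) as the identity $E_{g_{c,d}}(G_a\circ\Phi_{c,d})/E_{g_{0,1}}(G_a\circ\Phi_{0,1})=\mathrm{const}$, and then both proofs invoke the minimality of the Clifford torus to obtain the non-degenerate radial maximum at $a=0$ and conclude via Proposition~\ref{stable:prop} and Theorem~\ref{stable:thm}. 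Your geometric phrasing (``constant-Jacobian cover of a minimal surface'') makes the mechanism slightly more transparent than the energy-ratio formulation, but the content is the same.
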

\begin{proof}
 Consider the map $\Phi_{c,d}\colon (\mathbb{T}^2,[g_{c,d}])\to\mathbb{S}^3$ given by 
$$
\Phi_{c,d}(x,y) = \frac{1}{\sqrt{2}}\left(\sin\frac{2\pi y}{d}, \cos\frac{2\pi y}{d},\sin2\pi \left(\frac{c}{d}y - x\right),\cos2\pi \left(\frac{c}{d}y - x\right) \right).
$$
It is proven in~\cite{ESIR} that $\Phi_{c,d}$ is a $\bar\lambda_1$-maximal harmonic map. It is achieved by showing that the conformal automorphisms of $\mathbb{S}^3$ decrease the energy and, thus, the associated canonical family $\{G_a\circ\Phi_{c,d}\}$ is a comparison family. In particular, it is  shown that 
$$
\frac{E_{g_{c,d}}(G_a\circ \Phi_{c,d})}{E_{g_{0,1}}(G_a\circ \Phi_{0,1})} = \frac{E_{g_{c,d}}(\Phi_{c,d})}{E_{g_{0,1}}(\Phi_{0,1})}.
$$
Hence all the monotonicity properties of $E_{g_{0,1}}(G_a\circ \Phi_{0,1})$ also hold for $E_{g_{c,d}}(G_a\circ \Phi_{c,d})$. Since $\Phi_{0,1}$ is the minimal immersion of the Clifford torus, one has that $a=0$ is the non-degenerate global maximum of $E_{g_{c,d}}(G_a\circ \Phi_{c,d})$ and that $E_{g_{c,d}}(G_a\circ \Phi_{c,d})$ strictly decreases in the radial direction. One can then follow the proof of Proposition~\ref{stable:prop} to deduce that $\{G_a\circ\Phi_{c,d}\}$ is a stable comparison family and, as a result, the quantitative stability estimate holds for the inequality \eqref{eq:rhombus}. 
\end{proof}

\section{Qualitative stability of conformally maximal metrics}
\label{qual:sec}
\subsection{Min-max characterization}
\label{existence:sec}
The goal of this section is to prove the existence of suitable maps satisfying the conditions of Lemma~\ref{main:lemma}. Namely, we have the following proposition.

\begin{proposition}
\label{existence:prop}
Let $M$ be a surface with a fixed conformal class $\mC$. Then there exists $n>0$ such that for any admissible measure $\mu$ there is a map $u\in W^{1,2}(M,\mathbb{S}^n)$ such that 
$$
\int_M u\,d\mu = 0\in \mathbb{R}^{n+1}, 
$$
$$
\bar\lambda_1(M,\mC,\mu)\leqslant 2E(u)\leqslant \Lambda_1(M,\mC).
$$
\end{proposition}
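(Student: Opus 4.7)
The plan is to combine the min-max characterization of $\Lambda_1(M,\mC)$ from \cite{KS} with Hersch's barycenter trick applied to the canonical family of Möbius reparametrizations of a min-max harmonic map. Lemma \ref{main:lemma} will supply the lower bound $\bar\lambda_1(M,\mC,\mu) \leqslant 2E(u)$ for free, so the real task is to produce a single sphere-valued map $u$, depending on $\mu$, that is balanced against $\mu$ and whose energy is controlled from above by $\Lambda_1(M,\mC)/2$.

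By the results of \cite{KS}, there exist an integer $n \geqslant 2$ and a non-constant harmonic map $u_0 \in W^{1,2}(M,\mathbb{S}^n)$ arising from a $\bar\lambda_1$-conformally maximal metric with the sharp property that
$$
\Lambda_1(M,\mC) = 2 \sup_{a \in \mathbb{B}^{n+1}} E(G_a \circ u_0),
$$
where $G_a$ is the conformal automorphism of $\mathbb{S}^n$ determined by $a$, as in Example \ref{canon:ex}. In particular, every member of the canonical family $\{G_a \circ u_0\}_{a \in \mathbb{B}^{n+1}}$ satisfies $2E(G_a \circ u_0) \leqslant \Lambda_1(M,\mC)$. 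This is the deep input I would use as a black box.

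Given an admissible measure $\mu$, the next step is a Hersch-type center-of-mass argument applied to this family. The map $\overline{\mathbb{B}^{n+1}} \to \mathbb{R}^{n+1}$ defined by $a \mapsto \int_M G_a \circ u_0 \, d\mu$ is continuous on the open ball, and since $G_a$ concentrates at the antipode of $a/|a|$ as $|a| \to 1$, it extends continuously to the closed ball with boundary restriction of topological degree $\pm 1$. Admissibility of $\mu$ together with the non-constancy of $u_0$ rule out degeneracies such as $(u_0)_\ast\mu$ being a Dirac mass, so a standard degree argument produces $a^\ast \in \mathbb{B}^{n+1}$ with $\int_M G_{a^\ast} \circ u_0 \, d\mu = 0$. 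Setting $u := G_{a^\ast} \circ u_0$, the map $u$ belongs to $W^{1,2}(M,\mathbb{S}^n)$, is balanced against $\mu$ by construction, and satisfies $2E(u) \leqslant \Lambda_1(M,\mC)$ by the min-max bound. The complementary lower bound $\bar\lambda_1(M,\mC,\mu) \leqslant 2E(u)$ is immediate from Lemma \ref{main:lemma} with $k = 1$.

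The main obstacle is the very first step: the existence of a single harmonic map $u_0$ for which the energies $E(G_a \circ u_0)$ are bounded above by $\Lambda_1(M,\mC)/2$ uniformly in $a$, rather than merely approximating this value for some well-chosen sequence. This sharp form of the min-max characterization is precisely what \cite{KS} delivers, and it is where essentially all of the analytic difficulty resides. By contrast, the Hersch step is routine once one knows that $(u_0)_\ast\mu$ is sufficiently diffuse on $\mathbb{S}^n$, which follows from admissibility of $\mu$ together with the regularity and non-degeneracy of any harmonic map produced by the min-max construction.
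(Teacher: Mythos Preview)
Your approach has a genuine gap at the very first step. You assert that \cite{KS} supplies a $\bar\lambda_1$-conformally maximal harmonic map $u_0\colon (M,\mC)\to\mathbb{S}^n$ for which
\[
\Lambda_1(M,\mC)=2\sup_{a\in\mathbb{B}^{n+1}} E(G_a\circ u_0),
\]
but this is not what \cite{KS} proves. The min-max characterization recalled in Theorem~\ref{KS:thm} identifies $\Lambda_1(M,\mC)$ with $2\mE_n$, a min-max over the large class $\Gamma_n(M)$ of all admissible families, not over canonical families $\{G_a\circ u_0\}$. While a conformally maximal harmonic $u_0$ does satisfy $2E(u_0)=\Lambda_1(M,\mC)$, the inequality $E(G_a\circ u_0)\leqslant E(u_0)$ for all $a$ is \emph{false} in general when $u_0$ is not conformal. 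Conformal automorphisms of the target sphere decrease \emph{area}, and area equals energy only for conformal maps; this is exactly the extra hypothesis in Theorem~\ref{thm:quantstab} and Example~\ref{canon:ex}, not a general fact. For a concrete failure, take the rhombic torus and the conformally maximal harmonic map $u_0(x,y)=(\sin 2\pi x,\cos 2\pi x,0)$ into $\mathbb{S}^2$ from Example~\ref{Jacobi_torus:ex}: for $a\neq 0$ in the plane of the image circle, $G_a\circ u_0$ is a M\"obius reparametrization of that circle, and a direct application of Cauchy--Schwarz to $\int_0^1(\phi')^2\,dx\geqslant\bigl(\int_0^1\phi'\,dx\bigr)^2$ gives $E(G_a\circ u_0)>E(u_0)$. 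Thus the canonical family is not, in general, a comparison family.

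The paper's proof avoids this by never fixing a harmonic map. It takes, for each $\varepsilon>0$, a nearly optimal family $F^\varepsilon\in\Gamma_n(M)$ for the regularized energy $E_\varepsilon$, applies Hersch's lemma to that family (legitimate because membership in $\Gamma_n$ already encodes the required boundary behaviour) to obtain a balanced $u_\varepsilon=F^\varepsilon_{a_\varepsilon}$ with $E_\varepsilon(u_\varepsilon)\leqslant \mE_{n,\varepsilon}+\varepsilon$, and passes to a weak $W^{1,2}$ limit $u$. Admissibility of $\mu$ forces strong $L^2(\mu)$ convergence and hence $\int u\,d\mu=0$; the penalty term forces $|u|\equiv 1$; lower semicontinuity of energy and Theorem~\ref{KS:thm} give $2E(u)\leqslant\Lambda_1(M,\mC)$. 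No harmonicity of $u$, and no control of any canonical family, is needed.
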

In order to prove it we recall the min-max characterization of $\Lambda_1(M,\mC)$ obtained  in~\cite[Section 3.1]{KS}. 

For each $n\geqslant 2$ we define the collection $\Gamma_n(M)$ of continuous families 
$$
\overline{\mathbb{B}}^{n+1}\ni a\mapsto F_a\in W^{1,2}(M,\mathbb{R}^{n+1})\text{ such that } F_a\equiv a\text{ for }a\in\mathbb{S}^n. 
$$
For any $\varepsilon>0$ and any $u\in W^{1,2}(M,\mathbb{R}^{n+1})$ we further define the $\varepsilon$-energy
$$
E_\varepsilon(u) = \frac{1}{2}\left[\int_M|du|^2_g + \frac{1}{2\varepsilon^2}(1-|u|^2)^2\,dv_g \right]
$$
and the associated min-max quantities
$$
\mE_{n, \varepsilon}(M,g) = \inf_{F\in\Gamma_n}\sup_{a\in \overline{\mathbb{B}}^{n+1}} E_\varepsilon(F_a)
$$
$$
\mE_{n}(M,\mC) = \lim_{\varepsilon\to 0}\mE_{n,\varepsilon}(M,g).
$$
\begin{theorem}[~\cite{KS}] 
\label{KS:thm}
There exists $N=N(M,\mC)$ such that for all $n\geqslant N$ one has $2\mE_n(M,\mC)=\Lambda_1(M,\mC)$.
\end{theorem}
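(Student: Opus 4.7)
The plan is to establish the two inequalities $2\mE_n(M,\mC) \geq \Lambda_1(M,\mC)$ and $2\mE_n(M,\mC) \leq \Lambda_1(M,\mC)$ by complementary methods: an elementary Hersch-type degree argument valid for every $n \geq 1$ for the first, and a Ginzburg--Landau min-max construction that requires $n$ sufficiently large for the second.

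For $2\mE_n \geq \Lambda_1$, fix any admissible measure $\mu$ with $\mu(M)=1$ and any sweepout $F \in \Gamma_n$. Since $F_a \equiv a$ on $\partial \overline{\mathbb{B}}^{n+1} = \mathbb{S}^n$, the continuous map $a \mapsto \int_M F_a \, d\mu$ from $\overline{\mathbb{B}}^{n+1}$ to $\mathbb{R}^{n+1}$ is the identity on the boundary, so by Brouwer degree it vanishes at some $a_0 \in \mathbb{B}^{n+1}$. The balanced map $F_{a_0}$ is then a competitor in the Rayleigh quotient defining $\lambda_1(M,\mC,\mu)$:
\begin{equation*}
2E_\varepsilon(F_{a_0}) \;\geq\; \int_M |dF_{a_0}|_g^2 \, dv_g \;\geq\; \lambda_1(M,\mC,\mu) \int_M |F_{a_0}|^2 \, d\mu.
\end{equation*}
The Ginzburg--Landau potential enforces $\int_M (1-|F_{a_0}|^2)^2 \, dv_g \leq 4\varepsilon^2 E_\varepsilon(F_{a_0})$, and admissibility of $\mu$ --- i.e.\ the compact embedding $W^{1,2}(M,g) \hookrightarrow L^2(M,\mu)$, together with higher-integrability for bounded-energy $\varepsilon$-competitors --- upgrades this to $\int_M |F_{a_0}|^2 \, d\mu \geq 1 - o_\varepsilon(1)$ uniformly. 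Taking $\sup_a$, $\inf_F$, $\lim_{\varepsilon\to 0}$, and finally $\sup_\mu$ yields $2\mE_n(M,\mC) \geq \sup_\mu \bar\lambda_1(M,\mC,\mu) = \Lambda_1(M,\mC)$.

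For the reverse inequality, I would run a mountain-pass scheme for $E_\varepsilon$ on the class $\Gamma_n$ and let $\varepsilon \to 0$. Critical point theory produces an almost-critical sequence $u_\varepsilon$ with $E_\varepsilon(u_\varepsilon) \to \mE_n(M,\mC)$ and Morse index at most $n+1$. Ginzburg--Landau regularity and bubble-compactness theory then extract a non-constant harmonic limit $u\colon M \to \mathbb{S}^n$ together with an energy identity $E(u) + E_{\mathrm{bubbles}} = \mE_n(M,\mC)$. The associated conformal metric $g_u = \tfrac{1}{2}|du|_g^2\, g \in \mC$ satisfies $\Delta_{g_u} u^i = 2u^i$, so each component is a $\Delta_{g_u}$-eigenfunction with eigenvalue $2$; the uniform index bound from the $(n+1)$-parameter minimax forces $2$ to be the \emph{first} nontrivial eigenvalue, giving $\bar\lambda_1(M,g_u) = 2E(u)$. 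Since $g_u \in \mC$ one has $\Lambda_1(M,\mC) \geq 2E(u)$, and the threshold $N(M,\mC)$ is chosen so that for $n \geq N$ no energy is lost in bubbles (e.g.\ by no-neck analysis, or by comparison with the sweepout construction from a conformal maximizer produced above) and $E(u) = \mE_n(M,\mC)$, yielding $\Lambda_1(M,\mC) \geq 2\mE_n(M,\mC)$.

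The principal obstacle is this second step: ruling out energy loss in bubbles at the minimax level, and converting the topological index bound into the spectral statement that $2$ is the \emph{first} rather than some higher nontrivial $\Delta_{g_u}$-eigenvalue. Both points rely on a delicate interaction between the index-bounded Ginzburg--Landau machinery and the $\lambda_1$-balancing used in the first inequality; this interplay forms the technical core of the argument in \cite{KS}.
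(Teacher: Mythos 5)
The paper does not prove this statement; it cites it from~\cite{KS}, so there is no in-paper argument to compare against, and what follows assesses your sketch on its own terms.

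Your argument for the easy inequality $2\mE_n\ge\Lambda_1(M,\mC)$ is essentially the one the paper uses inside the proof of Proposition~\ref{existence:prop}: degree theory produces a $\mu$-balanced member $F_{a_0}$ of each sweepout, and the Ginzburg--Landau penalty controls the deviation of $|F_{a_0}|$ from $1$. However, your appeal to ``higher-integrability for bounded-energy $\varepsilon$-competitors'' is spurious: members of a sweepout $F\in\Gamma_n$ are arbitrary $W^{1,2}$ maps, not critical points, so there is no a priori regularity to invoke. The correct mechanism is elementary compactness: along a near-optimal sequence of sweepouts, $|F^\varepsilon_{a_\varepsilon}|-1$ is bounded in $W^{1,2}(M,g)$ and tends to zero in $L^2(M,dv_g)$ by the GL bound, hence tends to zero in $L^2(M,\mu)$ by admissibility; passing to the weak $W^{1,2}$-limit produces a genuine balanced sphere-valued map $u$ with $E(u)\le\mE_n$, and then Lemma~\ref{main:lemma} gives $\bar\lambda_1(\mu)\le 2E(u)\le 2\mE_n$. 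Applying the Rayleigh quotient directly at the $\varepsilon$-level, as you attempt, needs this detour.

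The hard inequality $2\mE_n\le\Lambda_1(M,\mC)$ for $n\ge N$ is the substantive content of the theorem, and here your proposal has a genuine gap: you correctly name the ingredients (index-bounded mountain-pass for $E_\varepsilon$, $\varepsilon\to 0$ bubble-compactness, conversion of the topological Morse-index bound $\le n+1$ into the spectral statement $\lambda_1(g_u)=2$, and ruling out energy loss in bubbles for $n$ large), but you explicitly defer every one of them to~\cite{KS}. None of these steps follows from standard GL or harmonic-map machinery---in particular, relating the min-max Morse index to the spectral index of the limiting harmonic map, and ensuring no energy escapes into necks and bubbles at the min-max level, is exactly the new technical core of~\cite{KS}. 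What you have written is a correct roadmap, not a proof.
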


\begin{proof}[Proof of Proposition~\ref{existence:prop}]
Let $n$ be as in Theorem~\ref{KS:thm}. For each $\varepsilon>0$ let $F^\varepsilon\in\Gamma_n$ be such that $\sup_aE_\varepsilon(F^\varepsilon_a)\leqslant \mE_{n,\varepsilon}+\varepsilon$.
By Hersch's lemma there exists $a_\varepsilon\in\overline{\mathbb{B}}^{n+1}$ such that $u_\varepsilon:=F^\varepsilon_{a_\varepsilon}\in W^{1,2}(M,\mathbb{R}^{n+1})$ satisfies 
$$
\int_M u_\varepsilon\,d\mu=0, \quad E(u_\varepsilon)\leqslant E_\varepsilon(u_\varepsilon)\leqslant \mE_{n,\varepsilon}+\varepsilon\to \mE_n,
$$
$$
 \int_M|1-|u_\varepsilon||\,dv_g \leqslant \int_M|1-|u_\varepsilon|^2|\,dv_g\leqslant 2\varepsilon(\area(M,g) E_\varepsilon(u_\varepsilon))^{1/2}\to 0.
$$
Thus, there exists a subsequence $u_{\varepsilon_i}$ which converges weakly in $W^{1,2}(M,\mathbb{R}^{n+1})$ and strongly in $L^2(M,\mathbb{R}^{n+1})$ to $u\in W^{1,2}(M,\mathbb{R}^n)$. Since $\mu$ is admissible, the subsequence can be chosen to also converge strongly in $L^2(\mu)$ and, thus, 
\begin{equation}
\label{eq0}
\int_M u\,d\mu = 0.
\end{equation}
Since $|1-|u||\leqslant |1-|u_n||+|u_n-u|$, one also has
$$
\int_M|1-|u||\,dv_g = 0,
$$
and, therefore, $u\in W^{1,2}(M,\mathbb{S}^n)$. Since the energy is upper-semicontinuous with respect to weak convergence in $W^{1,2}$ one has
$$
E(u)\leqslant\lim_{\varepsilon\to 0}E(u_\varepsilon)\leqslant \mE_n = \frac{1}{2}\Lambda_1(M,\mC),
$$
where Theorem~\ref{KS:thm} was used in the last step. Finally, the inequality $2E(u)\geqslant \bar\lambda_1(M,c,\mu)$ follows from~\eqref{eq0} and Lemma~\ref{main:lemma}. 
\end{proof}

For the remainder of this section we study the behaviour of almost $\bar\lambda_1$-conformally maximal measures. Let $\mu_\delta$ be a conformally maximizing family of admissible measures, such that $\bar\lambda_1(M,\mC,\mu_\delta) =  \Lambda_1(M,\mC)-\delta^2$. By Proposition~\ref{existence:prop} there exist corresponding maps $u_\delta\in W^{1,2}(M,\mathbb{S}^n)$ with vanishing $\mu_\delta$-average and $\bar\lambda_1(M,\mC,\mu_\delta)\leqslant 2E(u_\delta)\leqslant\Lambda_1(M,\mC)$. First, we show that the sequence $u_\delta$ converges strongly in $W^{1,2}(M,\mathbb{S}^n)$ to a $\bar\lambda_1$-conformally maximal harmonic map $u$. Second, we show that the measures $\lambda_1(M,\mC,\mu_\delta)\mu_\delta$ converge in $W^{-1,2}$ to the $\bar\lambda_1$-conformally maximal measure $|du|_g^2\,dv_g$. This constitutes a qualitative stability result for $\Lambda_1(M,\mC)$.

\subsection{Convergence of $u_\delta$}
Our goal in this section is to prove the following.
\begin{proposition}
\label{ujconvergence:prop}
Let $M\ne\mathbb{S}^2$. Then, up to a choice of subsequence, $u_\delta$ converge strongly in $W^{1,2}(M,\mathbb{S}^n)$ to a $\bar\lambda_1$-conformally maximal harmonic map $u$.
\end{proposition}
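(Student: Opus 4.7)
The plan is to extract a weak $W^{1,2}$ subsequential limit $u$, identify it as a $\bar\lambda_1$-conformally maximal harmonic map by passing to the limit in the almost-harmonicity inequality of Lemma \ref{main:lemma}, and then upgrade to strong $W^{1,2}$ convergence via the resulting energy identity.

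First, since $E(u_\delta)\leqslant \tfrac{1}{2}\Lambda_1(M,\mC) < \infty$ uniformly, the sequence $\{u_\delta\}$ is bounded in $W^{1,2}(M,\mathbb{R}^{n+1})$. Extracting a subsequence, $u_\delta \rightharpoonup u$ weakly in $W^{1,2}$ and strongly in $L^2$ (and a.e.) by Rellich; since $|u_\delta|\equiv 1$, one has $|u|\equiv 1$ a.e., so $u \in W^{1,2}(M,\mathbb{S}^n)$. Normalizing $\mu_\delta(M) = 1$ so that $\lambda_1(M,\mC,\mu_\delta) \to \Lambda_1(M,\mC) =: \Lambda$, we also extract a weak-$*$ subsequential limit $\mu_\delta \rightharpoonup \mu$ in the space of Radon measures.

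The heart of the argument is to pass to the limit in the inequality
\[
\int_M \langle du_\delta, dv\rangle\,dv_g - \lambda_1(M,\mC,\mu_\delta)\int_M \langle u_\delta, v\rangle\,d\mu_\delta \;\leqslant\; \delta\,\|dv\|_{L^2(M)},
\]
valid for all $v \in W^{1,2}(M,\mathbb{R}^{n+1})$ by Lemma \ref{main:lemma} with $k=1$. Weak $W^{1,2}$ convergence handles the gradient term, and once the limit of the measure term is taken, testing against $v=\varphi u$ (using $|u|^2\equiv 1$) yields $\int_M \varphi\,|du|^2\,dv_g = \Lambda \int_M \varphi\,d\mu$ for every $\varphi \in C^\infty(M)$. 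This forces $\mu = \Lambda^{-1}|du|^2\,dv_g$, so $\mu$ is absolutely continuous, $2E(u) = \Lambda\mu(M) = \Lambda$, and $u$ is weakly harmonic.

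The main obstacle is justifying the passage to the limit in the measure term, which requires $u_\delta \to u$ strongly in $L^2(\mu)$ and, in particular, that no mass of $\mu_\delta$ concentrates into atoms in the limit. This is precisely where the hypothesis $M\neq \mathbb{S}^2$ is used: the group of conformal automorphisms of $(M,\mC)$ is compact, and combined with non-concentration estimates in the spirit of Kokarev and Girouard this rules out the escape of mass to Dirac measures along a conformal degeneration. Executing this step follows the strategy of \cite[Section 4]{KNPP2}, streamlined by the weak-limit proposition (Proposition \ref{wk.lim}) that avoids the quasi-open set framework. Once the energy identity $E(u_\delta) \to E(u) = \tfrac{1}{2}\Lambda$ is in hand, weak $W^{1,2}$ convergence together with convergence of the norms $\|du_\delta\|_{L^2}\to\|du\|_{L^2}$ upgrades to strong convergence in the Hilbert space $W^{1,2}$, and the characterization of \cite{KS} identifies $u$ as a $\bar\lambda_1$-conformally maximal harmonic map.
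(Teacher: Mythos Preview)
Your outline correctly isolates the weak limit and correctly identifies that the entire difficulty is in ruling out concentration of mass and justifying the limit in the measure term. However, the justification you offer for that step is not the right one, and the order of operations you propose does not match what actually works.

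First, the role of $M\neq\mathbb{S}^2$ is \emph{not} compactness of the conformal automorphism group; nothing in the argument exploits conformal invariance of the measures $\mu_\delta$. The hypothesis enters only through Petrides' rigidity $\Lambda_1(M,\mC)>8\pi$. The paper proceeds by first proving (via the good/bad point analysis of Lemma~\ref{ujconvergence:lemma}, which is the substantial PDE content borrowed from \cite{KNPP2}) that $u_\delta\to u$ strongly in $W^{1,2}$ away from at most one point $p$, so that the weak-$*$ limit of the energy measures is $|du|^2_g\,dv_g+w\,\delta_p$. Inequality~\eqref{main:eq3} then forces $\lambda_1(\mu_\delta)\mu_\delta$ to have the same weak-$*$ limit. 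Only at this point do Kokarev's lemmas enter: if $w>0$ and $|du|^2\not\equiv 0$ then $\bar\lambda_1$ of the limit is zero, while if $u$ is constant the limit has $\bar\lambda_1\leqslant 8\pi$; both contradict $\Lambda_1(M,\mC)>8\pi$. Thus $w=0$, and strong $W^{1,2}$ convergence follows.

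Your proposed route---pass to the limit in~\eqref{main:eq} with $v=\varphi u$ to identify $\mu=\Lambda^{-1}|du|^2\,dv_g$ directly, and then read off the energy identity---requires $\int_M\langle u_\delta,\varphi u\rangle\,d\mu_\delta\to\int_M\varphi\,d\mu$. Writing $\langle u_\delta,u\rangle=1-\tfrac12|u_\delta-u|^2$, this would need $\int_M|u_\delta-u|^2\,d\mu_\delta\to 0$, which is not available from weak $W^{1,2}$ convergence and weak-$*$ convergence of $\mu_\delta$ alone (there is no uniform admissibility constant). In other words, the step you flag as the ``main obstacle'' cannot be executed before one has already done the local strong-convergence analysis; it is not a parallel route but the same route with its hardest piece left unwritten. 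Proposition~\ref{wk.lim} is indeed used to simplify the argument, but it supplies smoothness of the weak limit $u$, not the non-concentration of $\mu_\delta$.
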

The proof is heavily inspired by the ideas of~\cite{KNPP2} with only slight modifications. Moreover, we provide an additional observation which allows us to simplify some of the technical points in~\cite{KNPP2}, see Remark~\ref{simplification:rmk} below.

First of all, up to a choice of a subsequence $u_\delta$ converges weakly in $W^{1,2}(M,\mathbb{S}^n)$ and strongly in $L^2(M,\mathbb{S}^n)$ to some $u\in W^{1,2}(M,\mathbb{S}^n)$. Our goal is to show that the convergence is in fact strong and that $u$ is $\bar\lambda_1$-conformally maximal. We start with the following Lemma.
\begin{lemma}
\label{ujconvergence:lemma}
There exists a point $p\in M$ such that up to a choice of a subsequence for any compact $K\Subset M\setminus\{p\}$ the sequence $u_\delta|_K$ converges to $u|_K$ strongly in $W^{1,2}(K,\mathbb{S}^n)$.
\end{lemma}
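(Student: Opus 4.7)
My plan is to combine an $\epsilon$-regularity argument for almost-harmonic sphere-valued maps (using the small-tension estimate furnished by Lemma~\ref{main:lemma}) with a bubble-extraction analysis that, in the $\bar\lambda_1$-maximization setting, bounds the number of concentration points by one.

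The first step is to quantify the almost-harmonicity of $u_\delta$. Since $2E(u_\delta)\leqslant \Lambda_1(M,\mC)$ while $\bar\lambda_1(M,\mC,\mu_\delta)=\Lambda_1(M,\mC)-\delta^2$, the right-hand side of \eqref{main:eq} is at most $\delta\,\|dv\|_{L^2(M,g)}$ for every $v\in W^{1,2}(M,\mathbb{R}^{n+1})$. Combined with the pointwise constraint $|u_\delta|\equiv 1$, this says that the tension field of $u_\delta$ tends to zero in the appropriate dual norm as $\delta\to 0$ (cf.\ Remark~\ref{rem:tensionfield}), so $\{u_\delta\}$ is an almost-harmonic sequence of precisely the type to which H\'elein-style $\epsilon$-regularity theorems for maps into spheres apply.

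The second step is the identification of the concentration set. Let $\epsilon_0>0$ be the threshold in the $\epsilon$-regularity theorem for (almost-)harmonic maps into $\mathbb{S}^n$, and set
\[
S \;:=\; \Big\{x\in M\ :\ \forall\, r>0,\ \limsup_{\delta\to 0}\int_{B_r(x)} |du_\delta|_g^2\,dv_g \;\geqslant\; \epsilon_0 \Big\}.
\]
The uniform energy bound $2E(u_\delta)\leqslant \Lambda_1(M,\mC)$ combined with a Vitali covering argument forces $S$ to be finite; moreover, for any compact $K\Subset M\setminus S$ we may cover $K$ by finitely many balls on which the energy stays below $\epsilon_0$ for large $\delta$, and the $\epsilon$-regularity statement then delivers strong $W^{1,2}(K,\mathbb{S}^n)$ convergence $u_\delta\to u$ along a subsequence. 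Thus it suffices to prove $|S|\leqslant 1$.

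The main obstacle is ruling out two or more concentration points, and this is where the specific $\bar\lambda_1$-structure enters. Around each $p_i\in S$, a conformal blow-up of $u_\delta$ at the appropriate scale produces in the limit a nontrivial harmonic map $\omega_i\colon \mathbb{S}^2\to \mathbb{S}^n$; by the arguments of \cite{KNPP2}, each $\omega_i$ inherits a balancing condition with respect to the blow-up limit of $\mu_\delta$ and is itself a $\bar\lambda_1$-maximizing configuration on $\mathbb{S}^2$, so $2E(\omega_i)\geqslant 8\pi$ by Hersch's inequality \eqref{eq:hersch}. If $|S|\geqslant 2$, then a pair of cutoff functions supported in small disjoint neighbourhoods of $p_1,p_2$ (modified to be $\mu_\delta$-orthogonal to the constants) furnishes a two-dimensional subspace on which the Rayleigh quotient relative to $\mu_\delta$ tends to zero; this contradicts the uniform positive lower bound on $\lambda_1(M,\mC,\mu_\delta)$ coming from $\bar\lambda_1(M,\mC,\mu_\delta)\to \Lambda_1(M,\mC)$ and the boundedness of $\mu_\delta(M)$. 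Hence $|S|\leqslant 1$, and taking $p$ to be the unique element of $S$ (or any point if $S=\varnothing$) completes the proof.
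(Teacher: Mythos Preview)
Your outline has a genuine gap at the $\epsilon$-regularity step. The almost-harmonicity you extract from Lemma~\ref{main:lemma} is much weaker than what standard $\epsilon$-regularity theorems for Palais--Smale sequences require. Inequality~\eqref{main:eq} says
\[
\Delta_g u_\delta \;=\; \lambda_1(M,\mC,\mu_\delta)\,u_\delta\,\mu_\delta \;+\; f_\delta,\qquad \|f_\delta\|_{W^{-1,2}}\leqslant \delta,
\]
but the right-hand side still carries the \emph{arbitrary admissible measure} $\mu_\delta$, which need not lie in any $L^p$. The genuine tension field $\tau(u_\delta)=\Delta_g u_\delta-|du_\delta|_g^2 u_\delta$ is only controlled in $(C^0\cap W^{1,2}(M))^*$ (Remark~\ref{rem:tensionfield}), a strictly weaker space than $W^{-1,2}$ in two dimensions; the results of \cite{Be93} and the H\'elein framework are not stated for tension vanishing in this norm, and it is not clear that a small-energy ball plus this weak tension bound yields any improved integrability for $du_\delta$. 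The paper avoids this issue entirely: it never invokes $\epsilon$-regularity, but instead introduces the good/bad dichotomy of Definition~\ref{def:good}, which is phrased in terms of a local Poincar\'e inequality for $\mu_\delta$ (not energy concentration of $u_\delta$), and then uses the comparison Lemma~\ref{lemma_H0} to force local energy convergence directly.

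There is a second, related gap in your bound $|S|\leqslant 1$. Your set $S$ is defined via concentration of the \emph{energy measure} $|du_\delta|_g^2\,dv_g$, but your cutoff/Rayleigh quotient contradiction is run against the \emph{eigenvalue measure} $\mu_\delta$. You implicitly assume that energy concentration at $p_i$ forces $\mu_\delta$ to carry mass near $p_i$ at scales fine enough for a log-cutoff argument to drive the Rayleigh quotient to zero; this can indeed be extracted from~\eqref{main:eq3} (test with smooth bumps), but you have not said so, and the scales must be chosen with some care. The paper's notion of bad point is designed precisely so that ``at most one bad point'' is immediate from the variational characterization of $\lambda_1$: two disjoint neighbourhoods each admitting $f\in W^{1,2}_0$ with $\int|df|^2<\lambda_1(\mu_\delta)\int f^2\,d\mu_\delta$ already span a two-dimensional subspace violating the min-max for $\lambda_1$, with no need for log cutoffs or concentration scales. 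Finally, the bubble-extraction claim (that blow-ups $\omega_i$ inherit a balancing condition and are themselves $\bar\lambda_1$-maximal on $\mathbb{S}^2$) is neither proved nor used; you should drop it.
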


We present two versions of the proof of this lemma. One is a direct application of the ideas in~\cite[Proposition 4.7]{KNPP2}, but it uses the lesser known notions of quasi-continuous representatives of Sobolev functions and quasi-open sets. The other is a variation of the first proof, where we show that it is possible to use approximation methods in order to avoid appealing to these lesser known concepts.

\subsubsection*{First proof of Lemma~\ref{ujconvergence:lemma}} 
By inequality~\eqref{main:eq} for any $v\in W^{1,2}(M,\mathbb{R}^{n+1})$ one has
\begin{equation}
\label{udelta:ineq}
\int_M\langle du,dv\rangle\,dv_g - \lambda_1(M,c,\mu_\delta)\int_M\langle u, v\rangle\,d\mu_\delta\leqslant \delta \|dv\|_{L^2(M)}
\end{equation}

In the remainder of this proof we identify $u_\delta$, $u$ with their quasi-continuous representatives, see~\cite[Section 3]{KNPP2}. 

\begin{lemma}
\label{lemma_H0}
Let $\Omega\subset M$ be open. Let $A\subset \Omega$ be quasi-open and let $\mu$ be an admissible measure on $M$. Suppose that $\phi_\delta\in W^{1,2}(\Omega)$ is such that $\forall f\in W^{1,2}_0(A)$
\begin{equation}
\label{quasimode:ineq}
\int_\Omega\langle d\phi_\delta,df\rangle\,dv_g - \int_\Omega \phi_\delta f\,d\mu\leqslant C\delta \|df\|_{L^2(M)},
\end{equation}
\begin{equation}
\label{positive:ineq}
\int_\Omega|df|_g^2\,dv_g - \int_\Omega f^2\,d\mu\geqslant 0.
\end{equation}
Then for any $\psi\in W^{1,2}(\Omega)$ such that $(\phi - \psi)\in W^{1,2}_0(A)$ one has
$$
\int_\Omega|d\psi|^2 - \int_\Omega\psi^2\,d\mu \geqslant \int_\Omega|d\phi|^2 - \int_\Omega \phi^2\,d\mu - C\delta||d(\phi - \psi)||_{L^2(M)}
$$ 
\end{lemma}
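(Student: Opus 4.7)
The statement is a quasi-minimality property of the ``Rayleigh-type'' functional $Q(\psi):=\int_\Omega |d\psi|_g^2\,dv_g-\int_\Omega \psi^2\,d\mu$ among competitors $\psi$ that agree with $\phi_\delta$ outside $A$, up to an error of order $\delta$. The natural strategy is a one-line variational expansion around $\phi_\delta$, using $f:=\phi_\delta-\psi$ as a test function. By the assumption $\psi-\phi_\delta\in W^{1,2}_0(A)$ we have $f\in W^{1,2}_0(A)$, so both hypotheses \eqref{quasimode:ineq} and \eqref{positive:ineq} are available for $f$ (and for $-f$, which also lies in $W^{1,2}_0(A)$).

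Writing $\psi=\phi_\delta-f$ and expanding the two quadratic pieces of $Q(\psi)$ separately gives the identity
\[
Q(\psi)=Q(\phi_\delta)-2\Bigl[\int_\Omega\langle d\phi_\delta,df\rangle\,dv_g-\int_\Omega\phi_\delta f\,d\mu\Bigr]+\Bigl[\int_\Omega |df|_g^2\,dv_g-\int_\Omega f^2\,d\mu\Bigr].
\]
The third bracket is nonnegative by \eqref{positive:ineq}. For the middle bracket, applying \eqref{quasimode:ineq} to both $f$ and $-f$ yields the two-sided bound
\[
\Bigl|\int_\Omega\langle d\phi_\delta,df\rangle\,dv_g-\int_\Omega\phi_\delta f\,d\mu\Bigr|\leqslant C\delta\|df\|_{L^2(M)}=C\delta\|d(\phi_\delta-\psi)\|_{L^2(M)}.
\]
Substituting these two estimates into the identity above gives $Q(\psi)\geqslant Q(\phi_\delta)-2C\delta\|d(\phi_\delta-\psi)\|_{L^2(M)}$, which is the stated inequality (the harmless factor $2$ is absorbed into the constant $C$, as is implicit in the statement).

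\textbf{Expected obstacles.} There are essentially no technical obstacles at this step: the argument is purely algebraic expansion plus the two hypotheses. The one point worth flagging is that one must verify $f=\phi_\delta-\psi$ is a legitimate test function in \eqref{quasimode:ineq}, which is precisely guaranteed by the hypothesis $\psi-\phi_\delta\in W^{1,2}_0(A)$; no quasi-continuous representative machinery enters here. The quasi-open nature of $A$ and the quasi-continuity of $\phi_\delta$, $u$ will only be needed later, when this lemma is fed back into the proof of Lemma \ref{ujconvergence:lemma} to localize the almost-eigenequation \eqref{udelta:ineq} on appropriate quasi-open sets away from the concentration point $p$.
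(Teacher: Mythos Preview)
Your proof is correct and is essentially the same argument as the paper's: both set $f=\phi_\delta-\psi\in W^{1,2}_0(A)$, use \eqref{positive:ineq} on $f$ to bound the quadratic remainder from below, and use \eqref{quasimode:ineq} on $f$ to control the cross term, arriving at $Q(\psi)\geqslant Q(\phi_\delta)-2C\delta\|d(\phi_\delta-\psi)\|_{L^2}$ with the $2$ absorbed into $C$. The only cosmetic difference is that you write the quadratic expansion $Q(\psi)=Q(\phi_\delta)-2B(\phi_\delta,f)+Q(f)$ up front, whereas the paper reaches the same identity by summing $Q(f)\geqslant 0$ with twice the rearranged cross-term inequality; also note that applying \eqref{quasimode:ineq} to $-f$ is unnecessary, since only the upper bound $B(\phi_\delta,f)\leqslant C\delta\|df\|_{L^2}$ is needed.
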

\begin{proof}
The inequality~\eqref{positive:ineq} implies that
\begin{equation}
\label{H0:eq1}
\int\limits_\Omega |d(\phi-\psi)|^2\,dv_g - \int_\Omega (\phi-\psi)^2\,d\mu\geqslant 0.
\end{equation}
Moreover, pairing up the inequality~\eqref{quasimode:ineq} with $\phi-\psi$, we obtain
$$
\int_\Omega \langle d\phi,d(\phi-\psi)\rangle\,dv_g - \int_\Omega \phi(\phi-\psi)\,d\mu \leqslant C\delta||d(\phi-\psi)||_{L^2(M)},
$$
or, equivalently,
\begin{equation}
\label{H0:eq3}
\int_\Omega \langle d\phi,d\psi\rangle\,dv_g - \int_\Omega \phi\psi\,d\mu \geqslant \int_\Omega|d\phi|^2\,dv_g -\int_\Omega\phi^2\,d\mu - C\delta||d(\phi-\psi)||_{L^2(M)}.
\end{equation}

Summing up~\eqref{H0:eq1} and two copies of~\eqref{H0:eq3} yields
\begin{equation*}
\begin{split}
\int_\Omega |d\phi|^2 + |d\psi|^2\,dv_g -& \int_\Omega (\phi^2+\psi^2)\,d\mu\geqslant \\
 &\geqslant 2\int_\Omega|d\phi|^2\,dv_g- \int_\Omega \phi^2\,d\mu -2C\delta||d(\phi-\psi)||_{L^2(M)}.
\end{split}
\end{equation*}
Rearranging the terms completes the proof.
\end{proof}

\begin{definition}
\label{def:good}
We say that the point $p$ is {\em good} if there exists an open neighbourhood $\Omega_p$ and a subsequence $\delta_m\to 0$ such that $\forall f\in W^{1,2}_0(\Omega_p)$
$$
\int_{\Omega_p}|df|^2\,dv_g - \lambda_1(M,c,\mu_{\delta_m})\int_{\Omega_p}f^2\,d\mu_{\delta_m}\geqslant 0.
$$ 
Otherwise, we say that the point $p$ is {\em bad}.
\end{definition}

The following is essentially~\cite[Proposition 4.6]{KNPP2}. The proof is the same.
\begin{proposition}
There is at most one bad point.
\end{proposition}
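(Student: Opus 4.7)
\smallskip

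The plan is to argue by contradiction using the variational characterization \eqref{eq:eigmes} of $\lambda_1$. Suppose $p_1\neq p_2$ are both bad, and fix disjoint open neighborhoods $\Omega_1\ni p_1$ and $\Omega_2\ni p_2$. The negation of Definition~\ref{def:good} means that for each $\Omega_i$ ($i=1,2$), no subsequence along which the Dirichlet first eigenvalue of $\mu_\delta$ on $\Omega_i$ stays $\geqslant \lambda_1(M,\mC,\mu_\delta)$ can exist. By a standard diagonal extraction, I would find a single subsequence $\delta_m\to 0$ and functions $f_m^{(i)}\in W^{1,2}_0(\Omega_i)$, $i=1,2$, satisfying
\begin{equation*}
\int_{\Omega_i} |df_m^{(i)}|^2_g\,dv_g \;<\; \lambda_1(M,\mC,\mu_{\delta_m})\int_{\Omega_i} (f_m^{(i)})^2\,d\mu_{\delta_m}.
\end{equation*}

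Since $\Omega_1\cap\Omega_2=\emptyset$, the functions $f_m^{(1)}$ and $f_m^{(2)}$ have disjoint supports, so for any scalars $\alpha,\beta$ the cross terms vanish and the Rayleigh quotient of $\alpha f_m^{(1)}+\beta f_m^{(2)}$ is a convex combination of the two individual Rayleigh quotients, hence still strictly less than $\lambda_1(M,\mC,\mu_{\delta_m})$. Note that each $f_m^{(i)}$ must have nonzero norm in $L^2(\mu_{\delta_m})$ for its Rayleigh quotient to be finite, so $f_m^{(1)}$ and $f_m^{(2)}$ are $L^2(\mu_{\delta_m})$-orthogonal and span a genuine $2$-dimensional subspace both in $W^{1,2}(M)$ and in $L^2(M,\mu_{\delta_m})$.

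By density of $C^\infty_c(\Omega_i)$ in $W^{1,2}_0(\Omega_i)$ together with the admissibility of $\mu_{\delta_m}$ (which makes the embedding $W^{1,2}(M)\hookrightarrow L^2(\mu_{\delta_m})$ continuous), I can approximate each $f_m^{(i)}$ by a smooth, compactly supported function $\tilde f_m^{(i)}\in C^\infty_c(\Omega_i)$ while keeping the strict inequality on the Rayleigh quotient and the disjointness of supports. The resulting $2$-dimensional subspace $E_2:=\mathrm{span}(\tilde f_m^{(1)},\tilde f_m^{(2)})\subset C^\infty(M)$ is $2$-dimensional in $L^2(M,\mu_{\delta_m})$ and its supremum Rayleigh quotient is strictly less than $\lambda_1(M,\mC,\mu_{\delta_m})$. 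This contradicts the definition \eqref{eq:eigmes} of $\lambda_1$ as the infimum over such subspaces.

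The only delicate point I expect is the correct interpretation and negation of Definition~\ref{def:good} so that the diagonal extraction really produces a single subsequence serving both neighborhoods simultaneously; once that is settled, the remaining arguments (disjoint supports, density, min-max) are entirely standard.
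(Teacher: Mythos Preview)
Your proposal is correct and follows the standard argument (the paper does not give its own proof but refers to \cite[Proposition 4.6]{KNPP2}, which proceeds in exactly this way: two disjointly supported test functions with Rayleigh quotient below $\lambda_1$ yield a $2$-dimensional test space contradicting the variational characterization).

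One small simplification: the diagonal extraction you worry about is not actually needed. The negation of Definition~\ref{def:good} is that for \emph{every} neighborhood $\Omega$ of $p$ there is \emph{no} subsequence along which the quadratic form stays nonnegative; equivalently, the form is negative on some $f\in W^{1,2}_0(\Omega)$ for \emph{all sufficiently small} $\delta$ in the original sequence. Applying this to the fixed disjoint neighborhoods $\Omega_1,\Omega_2$ gives thresholds $\delta_0^{(1)},\delta_0^{(2)}$, and any $\delta<\min(\delta_0^{(1)},\delta_0^{(2)})$ works for both simultaneously. The rest of your argument (disjoint supports, $L^2(\mu_\delta)$-orthogonality, smooth approximation, min-max contradiction) is exactly right.
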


The following is an adaptation of~\cite[Proposition 4.7]{KNPP2}

\begin{proposition}
Let $p$ be a good point. Then there exists a neighbourhood $U_p$ and a sequence $\delta_m\to 0$ such that $u_{\delta_m}\to u$ strongly in $W^{1,2}(U_p)$.
\end{proposition}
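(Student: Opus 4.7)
The plan is to use the good point condition as the positivity hypothesis of Lemma~\ref{lemma_H0} and compare $u_{\delta_m}$ with a suitable patched map that equals $u$ on a neighbourhood of $p$. First I would choose $U_p \Subset \Omega_p$ together with a cutoff $\chi \in C_c^\infty(\Omega_p)$ satisfying $\chi \equiv 1$ on $U_p$ and $0 \leqslant \chi \leqslant 1$. Writing $\mu_m := \lambda_1(M,\mC,\mu_{\delta_m})\mu_{\delta_m}$, the inequality~\eqref{udelta:ineq} applied to $v = f e_i$ for $f \in W^{1,2}_0(\Omega_p)$ extended by zero shows that every component $(u_{\delta_m})^i$ satisfies hypothesis~\eqref{quasimode:ineq} on $A = \Omega_p$ with $\mu = \mu_m$ and $C = 1$; hypothesis~\eqref{positive:ineq} is precisely the good point condition at $p$ along the subsequence $\delta_m$.

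Next, I would set $\psi_m := \chi u + (1-\chi) u_{\delta_m}$, so that each component of $h_m := u_{\delta_m} - \psi_m = \chi(u_{\delta_m}-u)$ lies in $W^{1,2}_0(\Omega_p)$. Applying Lemma~\ref{lemma_H0} componentwise with $\phi = (u_{\delta_m})^i$ and $\psi = (\psi_m)^i$, summing, and absorbing the combinatorial factor from Cauchy--Schwarz yields
\begin{equation*}
\int_{\Omega_p}\bigl(|du_{\delta_m}|_g^2 - |d\psi_m|_g^2\bigr)\,dv_g - \int_{\Omega_p}\bigl(|u_{\delta_m}|^2 - |\psi_m|^2\bigr)\,d\mu_m \leqslant C_n \delta_m \|dh_m\|_{L^2(M)},
\end{equation*}
which is $o(1)$ since $\delta_m \to 0$ and $\{\|dh_m\|_{L^2}\}$ is bounded by the weak $W^{1,2}$ convergence of $u_{\delta_m}$. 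Now the sphere constraints $|u|^2 = |u_{\delta_m}|^2 = 1$ a.e.\ give the pointwise identity $|u_{\delta_m}|^2 - |\psi_m|^2 = \chi(1-\chi)|u_{\delta_m} - u|^2$, and expanding $d\psi_m = \chi\, du + (1-\chi)\, du_{\delta_m} + d\chi \otimes (u - u_{\delta_m})$ and using $u_{\delta_m} \to u$ strongly in $L^2(M,g)$ together with $du_{\delta_m} \rightharpoonup du$ weakly to discard the cross terms involving $d\chi$, I would obtain
\begin{equation*}
\int_{\Omega_p}\bigl(|du_{\delta_m}|_g^2 - |d\psi_m|_g^2\bigr)\,dv_g = \int_{\Omega_p}\chi(2-\chi)|du_{\delta_m}-du|_g^2\,dv_g + o(1).
\end{equation*}

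Combining the previous two displays, after passing to a further subsequence along which the measure-theoretic term satisfies $\int_{\Omega_p}\chi(1-\chi)|u_{\delta_m} - u|^2 \,d\mu_m \to 0$, gives
\begin{equation*}
\limsup_{m\to\infty}\int_{\Omega_p} \chi(2-\chi)|du_{\delta_m}-du|_g^2\,dv_g \leqslant 0.
\end{equation*}
Since $\chi(2-\chi) = 1$ on $U_p$ and is nonnegative elsewhere, this forces $du_{\delta_m} \to du$ strongly in $L^2(U_p)$, which combined with strong $L^2$ convergence of $u_{\delta_m}$ yields the claimed strong $W^{1,2}(U_p)$ convergence.

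The main obstacle will be verifying the convergence $\int\chi(1-\chi)|u_{\delta_m}-u|^2\,d\mu_m \to 0$: admissibility of $\mu_m$ provides only an $m$-dependent compact embedding $W^{1,2}(M,g) \hookrightarrow L^2(\mu_m)$, so one needs a uniform non-concentration argument in the spirit of~\cite{Kokarev,KNPP2} to extract a subsequence along which the $\mu_m$ behave tamely on $\mathrm{supp}(\chi(1-\chi))$ and pair well against the sequence $(u_{\delta_m}-u)$, which is only weakly null in $W^{1,2}$ and hence not a priori strongly null in $L^2(\mu_m)$.
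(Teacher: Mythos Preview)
Your route is genuinely different from the paper's. The paper argues by contradiction (assuming $p$ lies in the support of the defect measure), invokes \cite[Corollary~3.12]{KNPP2} to build approximants $w_m^i\to u^i$ with prescribed ordering on $\partial B_r$ and $\partial B_R$, and then uses \cite[Lemma~3.10]{KNPP2} to extract quasi-open sets $A_m^i$ on which $w_m^i-u_m^i\in W^{1,2}_0(A_m^i)$ so that Lemma~\ref{lemma_H0} applies. Your cutoff trick sidesteps all of this: since $\chi(u_{\delta_m}-u)\in W^{1,2}_0(\Omega_p)$ trivially, Lemma~\ref{lemma_H0} applies directly on $A=\Omega_p$. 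Your two identities, $|u_{\delta_m}|^2-|\psi_m|^2=\chi(1-\chi)|u_{\delta_m}-u|^2$ and $\int(|du_{\delta_m}|^2-|d\psi_m|^2)\,dv_g=\int\chi(2-\chi)|d(u_{\delta_m}-u)|^2\,dv_g+o(1)$, are both correct, so the scheme is sound and more elementary than either of the paper's proofs.

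The obstacle you flag, however, is not dispatched by merely ``passing to a further subsequence'': for a \emph{fixed} cutoff $\chi$ there is no reason $\int\chi(1-\chi)|u_{\delta_m}-u|^2\,d\mu_m\to 0$, because the measures $\mu_m$ may concentrate precisely on $\mathrm{supp}\,d\chi$. The paper's remedy (condition (C2)) transplants directly into your framework: pass to a subsequence along which $\mu_m\rightharpoonup^*\nu$, fix $U_p=B_{r_0}(p)$, and for each $k$ choose a thin annulus $A_k\subset B_{2r_0}\setminus B_{r_0}$ with $\nu(\overline{A_k})<1/k$ (possible since $\nu$ is finite and has at most countably many atomic circles). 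Running your argument with a cutoff $\chi_k$ whose transition region is $A_k$ gives, for each fixed $k$, $\limsup_m\int_{B_{r_0}}|d(u_{\delta_m}-u)|^2\leqslant\limsup_m 4\mu_m(\overline{A_k})\leqslant 4/k$; since $k$ is arbitrary the $\limsup$ vanishes. With this single addition your proof is complete, and is in fact cleaner than the paper's since it avoids the quasi-open machinery altogether.
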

\begin{proof}
The proof follows closely that of~\cite[Proposition 4.7]{KNPP2}. We outline the main steps below.  

Let $\delta_m$ be the sequence as in Definition~\ref{def:good} and set $u_{\delta_m}=u_m$, $\mu_{\delta_m} = \mu_m$. After a choice of a subsequence, the energy measures $|du_m|_g^2\, dv_g$ converge weakly in $(C^0)^*$ to a Radon measure $\nu$.
Arguing by contradiction, we may assume that $p$ is in the support of the defect measure $d\nu - |d u|_g^2 \, dv_g$. In particular, we can choose $B_r(p)\subset B_R(p)\subset\Omega_p$ and $\varepsilon>0$ such that  
$$
\nu(B_{r}(q)) - \int_{B_{r}}|d u|^2\,dv_g>2\varepsilon
$$
Furthermore, by passing to a subannulus of $A_{r,R}:=B_R(p)\setminus B_r(p)$ if necessary we may assume that for all large enough $m$ one has
\begin{equation}
\label{C1}
\int_{B_r}|d u_m|^2\,dv_g\geqslant \int_{B_R}|d u|^2\,dv_g + \varepsilon
\end{equation}
\begin{equation}
\label{C2}
\mu_m(A_{r,R})\leqslant \frac{\varepsilon}{9N\Lambda_1(M,c)},
\end{equation}
where $N$ was  defined in Theorem \ref{KS:thm}.

The rest of the proof is almost identical to~\cite[pp. 25-26, after (C4)]{KNPP2}. We apply~\cite[Corollary 3.12]{KNPP2} to each component $u^i_m$, $u^i$. This yields sequence $w^i_m$, such that $w^i_m \to u$ in $W^{1,2}(B_R)\cap L^\infty$ and $w^i_m > u^i_m$ on $\partial B_r$;  $w^i_m < u^i_m$ on $\partial B_R$. This allows us to apply~\cite[Lemma 3.10]{KNPP2} to get a quasi-open $B_r\subset A^i_m\subset B_R$ such that $w^i_m-u^i_m\in W^{1,2}_0(A^i_m)$, which in turn allows to apply Lemma~\ref{lemma_H0} with $A=A^i_m$, $\phi = u^i_m$, $\psi = w^i_m$ and $\mu = \lambda_1(\mu_m)\mu_m$. Note that the definition of a good point is chosen precisely  so that the non-negativity hypothesis~\eqref{positive:ineq} of Lemma~\ref{lemma_H0} is satisfied. All this results in the following inequality
$$
\int_{A^i_m}|dw^i_m|^2dv_g - \lambda_1(\mu_m)(w^i_m)^2\,d\mu_m\geqslant \int_{A^i_m}|du^i_m|^2dv_g - \lambda_1(\mu_m)(u^i_m)^2\,d\mu_m - C\delta_m,
$$
where in the last step we used that both $u^i_m$ and $w^i_m$ are uniformly bounded in $W^{1,2}\cap L^\infty$, so that the correction term involving $\|d(u^i_m-w^i_m)\|_{L^2(M)}$ can be absorbed in $C$. Rearranging the terms yields
$$
\int_{B_R}|dw^i_m|^2dv_g - \lambda_1(\mu_m)\int_{A^i_m} (w^i_m)^2- (u^i_m)^2\,d\mu_m\geqslant \int_{B_r}|du^i_m|^2dv_g  - C\delta_m
$$
Since $|(w_m^i)^2-(u_m^i)^2|$ is bounded in $L^\infty$ by $3$, by condition~\eqref{C2}, we can replace the domain of integration in the middle term by $B_R$ with a loss of at most $\frac{\varepsilon}{3N}$. Then, summing up for all $i$ yields
$$
\int_{B_R}|dw_m|^2dv_g - \lambda_1(\mu_m)\int_{B_R} |w_m|^2- |u_m|^2\,d\mu_m + \frac{\varepsilon}{3}\geqslant \int_{B_r}|du_m|^2dv_g  - C\delta_m
$$
Furthermore, since $w_m\to u$ in $W^{1,2}\cap L^\infty$ and $\delta_m\to 0$, so for large enough $m$ one has
$$
\int_{B_R}|du|^2dv_g - \lambda_1(\mu_m)\int_{B_R} |u|^2- |u_m|^2\,d\mu_m + \frac{2\varepsilon}{3}\geqslant \int_{B_r}|du_m|^2dv_g 
$$
Finally, recalling that $|u_m|^2= |u|^2=1$ we obtain that the integrand in middle term vanishes, so
$$
\int_{B_R}|du|^2dv_g + \frac{2\varepsilon}{3}\geqslant \int_{B_r}|du_m|^2dv_g, 
$$
which contradicts~\eqref{C1}.
\end{proof}

To complete the proof for a fixed compact $K$, one applies the previous proposition on the finite subcover of $K$. To show that there is a subsequence independent of the choice of $K$, one uses Cantor diagonal process on the countable compact exhaustion of $M\setminus\{p\}$.

\subsubsection*{Second proof of Lemma~\ref{ujconvergence:lemma}} The functions in $W^{1,2}(M)$ are only defined up to a measure zero set. As a result, in order to talk about fine properties of such functions one has to introduce the machinery of quasi-continuous representatives and quasi-open sets. If we could ensure that our maps $u_\delta$, $u$ are a priori continuous, there would be no need for quasi-continuous representatives and the quasi-open sets $A^i_m$ would simply be open (recall that quasi-open sets arise in the proof as superlevel sets of quasi-continuous functions). We show, in fact, that $u_\delta$ can be approximated by smooth maps and that $u$ is a priori smooth. 

Recall that by~\cite[Section 4]{SU} smooth sphere-valued maps are dense in $W^{1,2}(M^2,\mathbb{S}^n)$. Furthermore, since $u_\delta$ satisfy~\eqref{udelta:ineq} and the map $W^{1,2}\to L^2(\mu_\delta)$ is bounded it is possible to find $w_\delta\in C^\infty(M,\mathbb{S}^n)$ such that
\begin{equation}
\label{wdelta:ineq}
\int_M\langle dw_\delta,dv\rangle\,dv_g - \lambda_1(M,\mC,\mu_\delta)\int_M\langle w_\delta, v\rangle\,d\mu_\delta\leqslant 2\delta \|dv\|^2_{L^2(M)}
\end{equation}
and $\|u_\delta-w_\delta\|_{W^{1,2}}\to 0$ as $\delta\to 0$. Since~\eqref{udelta:ineq} is the only property of $u_\delta$ used in the first proof and the strong convergence of $u_\delta$ is equivalent to the strong convergence of $w_\delta$, it is sufficient to show Lemma~\eqref{ujconvergence:lemma} for smooth maps $w_\delta$.

It turns out that $u$ is automatically harmonic and, in particular, smooth. Indeed, since Lemma \ref{main:lemma} implies that the maps $u_{\delta}$ form a Palais--Smale-type sequence for the energy functional on sphere-valued maps, this should follow from a variant of the results in \cite{Be93}.
In our setting, however, it is possible to give a simple self-contained proof, via the following proposition.
\begin{proposition}
\label{wk.lim} 
Let $u_j \colon (M,g) \to \overline{\mathbb{B}}^{n+1}$ be a sequence of maps with $E_g(u_j)\leqslant E_0$ and  $\nu_j\ge 0$ be such that
\begin{equation}
\label{p.s}
Q(u_j)(v):=\int_M\langle du_j,dv\rangle\,dv_g-\nu_j\int_M \langle u_j,v\rangle d\mu_j\leq \epsilon_j \|dv\|_{L^2(M)}
\end{equation}
for all $v\in W^{1,2}(M,\mathbb{R}^{n+1})$, where $\epsilon_j\to 0$. Assume further that $u_j$ converge weakly in $W^{1,2}$ to a map $u\in W^{1,2}(M,\mathbb{S}^n)$. Then $u$ is harmonic.
\end{proposition}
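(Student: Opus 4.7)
The plan is to bypass any direct analysis of the measures $\nu_j\mu_j$ by exploiting the antisymmetric ``div-curl'' form of the harmonic map equation for sphere-valued maps, in the spirit of H\'elein's regularity proof. Recall that $u\colon M\to\mathbb{S}^n$ is weakly harmonic if and only if, for every pair $i\neq k$, the $1$-form $\omega^{ki}:=u^k du^i-u^i du^k$ is co-closed, i.e.\ $\delta\omega^{ki}=u^k\Delta u^i-u^i\Delta u^k=0$ in $\mathcal D'(M)$. Combined with $|u|\equiv 1$, this condition will imply $\Delta u=|du|^2 u$ in the weak sense, and hence smoothness by H\'elein's theorem.

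First, I would derive an antisymmetric version of (p.s). For fixed $\varphi\in C^\infty_c(M)$ and indices $i\neq k$, take $v^{i,k}_j\in W^{1,2}(M,\mathbb{R}^{n+1})$ to be the test map whose $i$-th component is $\varphi u_j^k$ and whose other components vanish; by the energy bound $E(u_j)\leqslant E_0$ and $|u_j|\leqslant 1$, the norm $\|dv^{i,k}_j\|_{L^2}$ is uniformly bounded. Applying (p.s) to $\pm v^{i,k}_j$ and to $\pm v^{k,i}_j$ and subtracting, both the quadratic terms $\int_M\varphi\langle du_j^i, du_j^k\rangle_g\,dv_g$ and the measure terms $\nu_j\int_M\varphi u_j^i u_j^k\,d\mu_j$ cancel by their manifest symmetry in $i\leftrightarrow k$, leaving
\begin{equation*}
\Bigl|\int_M\langle d\varphi,\omega^{ki}_j\rangle_g\,dv_g\Bigr|\leqslant C(E_0,\varphi)\,\epsilon_j, \qquad \omega^{ki}_j:=u_j^k du_j^i-u_j^i du_j^k.
\end{equation*}

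Next, I would pass to the limit using Rellich--Kondrachov in dimension two: up to a subsequence, $u_j\to u$ strongly in every $L^p$ with $p<\infty$ (and pointwise a.e.), while $du_j\rightharpoonup du$ weakly in $L^2$. Since $|u_j|\leqslant 1$, dominated convergence gives $u_j^a\eta\to u^a\eta$ strongly in $L^2$ for any fixed $\eta\in L^2$, whence $u_j^a du_j^b\rightharpoonup u^a du^b$ weakly in $L^2$ and $\omega^{ki}_j\rightharpoonup \omega^{ki}$ weakly in $L^2(T^*M)$. Sending $j\to\infty$ in the displayed inequality then yields $\delta\omega^{ki}=0$ in $\mathcal D'(M)$ for all $i,k$. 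Multiplying the resulting distributional identity $u^k\Delta u^i=u^i\Delta u^k$ by $u^k\in L^\infty$ and summing, then using $|u|^2\equiv 1$ together with $\Delta (u^k)^2=2u^k\Delta u^k-2|du^k|^2$ to rewrite $\sum_k u^k\Delta u^k=|du|^2$, produces the weak harmonic map equation $\Delta u=|du|^2 u$; H\'elein's regularity theorem then upgrades $u$ to a smooth harmonic map.

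The main obstacle is the weak-$L^2$ convergence $u_j^a du_j^b\rightharpoonup u^a du^b$, which crucially depends on the $2$-dimensional compact embedding $W^{1,2}\hookrightarrow L^p$ for all $p<\infty$ and would fail in higher dimensions. The antisymmetric trick of the first step is precisely what makes this level of compactness sufficient, by simultaneously eliminating the unwieldy measures $\nu_j\mu_j$ (for which no convergence is a priori available) and the quadratic energy densities $\langle du_j^i, du_j^k\rangle$ (which are notoriously hard to control under mere weak $W^{1,2}$ convergence).
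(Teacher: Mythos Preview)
Your proof is correct and follows essentially the same route as the paper: both exploit the antisymmetric div--curl form of the sphere-valued harmonic map equation, testing \eqref{p.s} against $\varphi u_j^k e_i$ and subtracting the $(i,k)\leftrightarrow(k,i)$ contributions so that the symmetric quadratic and measure terms cancel, then passing to the limit via Rellich to obtain $\delta\omega^{ki}=0$. The only cosmetic differences are that the paper cites H\'elein directly for the equivalence between $\delta\omega^{ki}=0$ and harmonicity (your final ``multiply by $u^k$ and sum'' step is formally written but is made rigorous by testing the weak identity against $u^k\psi\in W^{1,2}$), and the paper handles the limit by an integration-by-parts computation rather than your weak--strong pairing argument.
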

\begin{proof} Given a map $w\colon (M,g)\to \overline{\mathbb{B}}^{n+1}$ and $1\leqslant a<b\leqslant n+1$, consider the $1$-form
$$
\alpha^{ab}(w):=w^adw^b-w^bdw^a.
$$
It's not hard to check that if $w$ is a map to the sphere, then $w$ is harmonic if and only if the codifferential $\delta_g(\alpha^{ab}(w))=0$ for all $a$ and $b$ \cite[Section 3.5]{Hel02}.

One has that $\forall\varphi\in C^\infty(M)$
\begin{eqnarray*}
\int_M\langle \alpha^{ab}(w),d\varphi\rangle\,dv_g&=&\int_M\langle dw^b, w^ad\varphi\rangle-\langle dw^a,w^bd\varphi\rangle\,dv_g\\
&=&\int_M\langle dw^b, d(w^a\varphi)\rangle-\langle dw^a, d(w^b\varphi)\rangle\,dv_g\\
&=&Q(w)(w^a\varphi e_b)-Q(w)(w^b\varphi e_a),
\end{eqnarray*}
where $e_a,e_b$ are the corresponding base vectors in $\mathbb{R}^{n+1}$. Here the notation $w^a\phi e_b$ is used to denote a function $M \to \mathbb{R}^{n+1}$ with the $b$-th component equal to $w^a\phi$ and all other components equal to zero.
Thus, for maps $u_j$ satisfying \eqref{p.s}, one has
\begin{equation}
\label{alpha.div}
\int_M\langle \alpha^{ab}(u_j),d\varphi\rangle\,dv_g\leqslant \epsilon_j\|d(\varphi u_j)\|_{L^2}\leqslant C\epsilon_j (\|d\varphi\|_{L^2}+\|\varphi\|_{C^0}\|du_j\|_{L^2}).
\end{equation}

Up to a choice of a subsequence one can additionally assume that the convergence $u_j\to u$ is strong in $L^2$. Then for the limiting map $u$ one has the following
\begin{equation*}
\begin{split}
&\int_M\langle u_j^adu_j^b-u^adu^b,d\varphi\rangle\,dv_g=\int_M\langle (u_j^a-u^a)du_j^b+u^a d(u_j^b-u^b),d\varphi\rangle\,dv_g =\\
&=\int_M (u_j^a-u^a)\langle du_j^b,d\varphi\rangle+(u_j^b-u^b)(u^a\Delta\varphi-\langle d\varphi, du^a\rangle)\,dv_g \leqslant \\
&\leqslant \|u_j-u\|_{L^2}(\|du_j\|_{L^2}\|d\varphi\|_{L^\infty}+\|du\|_{L^2}\|d\varphi\|_{L^\infty}+\|\Delta \varphi\|_{L^2})
\leqslant \\
&\leqslant \|u_j-u\|_{L^2}(2E_0^{1/2}\|d\varphi\|_{L^\infty}+\|\Delta \varphi\|_{L^2}),
\end{split}
\end{equation*}
and the right-hand side clearly vanishes as $j\to\infty$ for fixed $\varphi\in C^{\infty}(M)$, since $u_j\to u$ in $L^2$. 
Combining this with~\eqref{alpha.div}, it's clear that $\delta_g(\alpha^{ab}(w))=0$ for all $a,b$, so $u\colon M\to \mathbb{S}^n$ is indeed a harmonic map.
\end{proof}

The rest rest of the proof proceeds as before, only now all quasi-open sets can be assumed to be open.
\begin{remark}
\label{simplification:rmk}
Proposition~\ref{wk.lim} also allows to remove the necessity for quasi-open sets in~\cite{KNPP2}. There, one has a sequence of maps $\phi_{N_m,k}$ by eigenfunctions to the closed ball $\overline{\mathbb{B}}^{n+1}$ such the limiting map $\phi_k$ is a map to the sphere $\mathbb{S}^n$. In particular, elliptic regularity implies that the maps $\phi_{N_m,k}$ are continuous and the reason quasi-open sets appeared is that it is not clear that the limiting map $\phi_k$ is also continuous. However, Proposition~\ref{wk.lim} ensures that $\phi_k$ is smooth and, as a result, it allows for some simplifications in the proofs of~\cite[Section 4]{KNPP2}.
\end{remark}

\subsubsection*{Proof of Proposition~\ref{ujconvergence:prop}}

Again, up to a choice of a subsequence, we can assume that the measures $|du_\delta|^2_g\,dv_g$ converge in $*$-weak topology. Let us denote the limit by $\mu$. Lemma~\ref{ujconvergence:lemma} implies that 
$$
\mu = |du|^2_g\,dv_g + w\delta_p 
$$
for some $w\geqslant 0$. 

First, we claim that after a further choice of a subsequence $\nu_\delta:=\lambda_1(\mu_\delta)\mu_\delta\rightharpoonup^*\mu$. Indeed, assume that  
$\nu_\delta\rightharpoonup^*\nu$. Then since $u_\delta$ are uniformly bounded in $W^{1,2}(M,\mathbb{S}^n)$ the inequality~\eqref{main:eq3} implies that $\forall \varphi\in C^1(M)$ one has $\int_M\varphi\,d\nu = \int_M\varphi\,d\mu$. Then since $C^{1}(M)$ is dense in $C^0(M)$ for any $\psi\in C^0(M)$ there exists a sequence $\varphi_k\in C^1(M)$ such that $\varphi_k\to \psi$ in $C^0(M)$ and thus
$$
\int_M\psi\,d\mu = \lim_{k\to\infty}\int_M\varphi_k\,d\mu = \lim_{k\to\infty}\int_M\varphi_k\,d\nu = \int_M\psi\,d\nu,
$$
i.e. $\mu=\nu$.

Second, we claim that $w=0$. By~\cite[Proposition 1.1] {Kokarev} the eigenvalues are upper-semicontinuous with respect to $*$-weak convergence. Since $\nu_\delta(M)\to\mu(M)$ one has 
\begin{equation}
\label{evlimit:eq}
\bar\lambda_1(M,\mC,\mu)\geqslant \lim_{\delta\to 0}\bar\lambda_1(M,\mC,\nu_\delta) = \bar\lambda_1(M,\mC,\mu_\delta) = \Lambda_1(M,\mC).
\end{equation}
At the same time, if $|du|^2_g\not\equiv 0$ and $w\ne0$, then by~\cite[Lemma 2.1]{Kokarev} one has $\bar\lambda_1(M,\mC,\mu) = 0$, which contradicts~\eqref{evlimit:eq}. If $|du|^2_g\equiv 0$, then by~\cite[Lemma 3.1]{Kokarev} one has $\Lambda_1(M,\mC)=\lim_{\delta\to 0}\bar\lambda_1(M,\mC,\mu_\delta) = 8\pi$. This is in contradiction with Petrides' rigidity result~\cite{Petrides}, which states that for $M\ne\mathbb{S}^2$ one has $\Lambda_1(M,\mC)>8\pi$. Thus, the only remaining option is $w=0$.

The rest of the proof easily follows. If $w=0$, then $|du_\delta|^2_g\,dv_g\rightharpoonup^*|du|^2_g\,dv_g$, i.e. $u_\delta$ converges strongly in $W^{1,2}(M,\mathbb{S}^n)$ to $u$. By Proposition~\ref{wk.lim} $u$ is harmonic, therefore, $\mu$ is admissible and $\bar\lambda_1(M,\mC,\mu)\leqslant \Lambda_1(M,\mC,\mu)$. Combining with~\eqref{evlimit:eq} one has $\bar\lambda_1(M,\mC,\mu)= \Lambda_1(M,\mC)$, therefore, $\mu$ is $\bar\lambda_1$-conformally maximal and, as a result, $u$ is a $\bar\lambda_1$-conformally maximal harmonic map.

\subsection{Convergence of $\mu_\delta$}
\label{mudeltaconvergence:sec}
 We have already seen in the proof of Proposition~\ref{ujconvergence:prop} that $\mu_\delta$ *-weakly converge to a $\bar\lambda_1$-conformally maximal measure. In this section we show how to improve this convergence to a strong $W^{-1,2}$-convergence.

\begin{proposition} 
\label{convergence:prop}
Let $\mu_{\delta}$ be an admissible measure with
 $\bar\lambda_1(M,\mC,\mu_{\delta})\geqslant \Lambda_1-\delta^2$, and let $u_{\delta}\colon M\to \mathbb{S}^n$
 be a map with vanishing $\mu_\delta$-average such that $2E(u_\delta)\leqslant \Lambda_1(M,\mC)$.
 If $u\colon M\to \mathbb{S}^n$ is a $\bar\lambda_1$-conformally maximal harmonic map, then there exists $C,\delta_0>0$ depending only on $n$, $(M,\mC)$ and $\|du\|_{L^\infty}$ such that
$$
\|\lambda_1(M,c,\mu_\delta) \mu_{\delta}-|du|_g^2dv_g\|_{W^{-1,2}(M,g)}\leqslant C(\|u-u_{\delta}\|_{W^{1,2}(M,g)}+\delta).
$$
whenever $\|u-u_{\delta}\|_{W^{1,2}(M,g)}+\delta\leqslant \delta_0$.
\end{proposition}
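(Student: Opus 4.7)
The strategy is to apply the key inequality \eqref{main:eq} of Lemma~\ref{main:lemma} to $u_\delta$ (which satisfies $\int_M u_\delta\,d\mu_\delta=0$) with two different test functions and combine the resulting estimates. Since $2E(u_\delta)-\bar\lambda_1(M,\mC,\mu_\delta)\leq \Lambda_1(M,\mC)-\bar\lambda_1(M,\mC,\mu_\delta)\leq \delta^2$, the right-hand side of \eqref{main:eq} is controlled by $\delta\,\|dv\|_{L^2}$ up to a universal constant. Throughout, write $\varepsilon=\|u-u_\delta\|_{W^{1,2}}+\delta$, and assume the normalization $\mu_\delta(M)=1$, which gives a uniform lower bound $\lambda_1(\mu_\delta)=\bar\lambda_1(\mu_\delta)\geq \Lambda_1(M,\mC)/2$ for $\delta$ small.

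\emph{Step 1 (auxiliary bound).} Test \eqref{main:eq} with the constant choice $v=u$ (admissible since $u\in W^{1,\infty}$). Using $\langle u_\delta,u\rangle=1-\tfrac12|u-u_\delta|^2$ (consequence of $|u|=|u_\delta|=1$) and $\int_M\langle du_\delta,du\rangle\,dv_g=2E(u)+O(\|u-u_\delta\|_{W^{1,2}})=\Lambda_1(M,\mC)+O(\varepsilon)$, together with $0\leq \Lambda_1(M,\mC)-\bar\lambda_1(M,\mC,\mu_\delta)\leq \delta^2$, rearrangement yields the intermediate estimate
\[\int_M|u-u_\delta|^2\,d\mu_\delta\leq C\,\varepsilon.\]

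\emph{Step 2 (main computation).} Next, apply \eqref{main:eq} with the test function $v=\varphi u$ for an arbitrary $\varphi\in W^{1,2}(M,g)$. Since $u\in W^{1,\infty}$, one has $\|d(\varphi u)\|_{L^2}\leq C_u\|\varphi\|_{W^{1,2}}$, and the inequality becomes
\[\left|\int_M\langle du_\delta,d(\varphi u)\rangle\,dv_g-\lambda_1(M,\mC,\mu_\delta)\int_M\varphi\langle u_\delta,u\rangle\,d\mu_\delta\right|\leq C_u\,\delta\,\|\varphi\|_{W^{1,2}}.\]
Expand the left-hand side using the sphere constraints $\langle du,u\rangle=\langle du_\delta,u_\delta\rangle=0$ and the identity $\langle du_\delta,u\rangle=-\tfrac12 d|u-u_\delta|^2+\langle u-u_\delta,du\rangle$ (which follows from $d\langle u_\delta,u\rangle=\langle du_\delta,u\rangle+\langle u_\delta,du\rangle$), so as to isolate the main term $\int\varphi|du|^2\,dv_g-\lambda_1(M,\mC,\mu_\delta)\int\varphi\,d\mu_\delta$. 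The remaining $dv_g$-error terms, namely $\int\varphi\langle du_\delta-du,du\rangle\,dv_g$, $\int\langle u-u_\delta,du\rangle\,d\varphi\,dv_g$, and $\int\langle d|u-u_\delta|^2,d\varphi\rangle\,dv_g$, are all controlled by $C_u\,\|u-u_\delta\|_{W^{1,2}}\|\varphi\|_{W^{1,2}}$ via Cauchy–Schwarz, the $L^\infty$ bound on $du$, and the elementary estimate $\|d|u-u_\delta|^2\|_{L^2}\leq 4\|u-u_\delta\|_{W^{1,2}}$ (which follows from $|u-u_\delta|\leq 2$).

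\emph{Step 3 (main obstacle).} The hardest step will be to bound the residual $\mu_\delta$-error $\tfrac{\lambda_1(M,\mC,\mu_\delta)}{2}\int_M\varphi|u-u_\delta|^2\,d\mu_\delta$ by $C\,\varepsilon\,\|\varphi\|_{W^{1,2}}$. I would handle this by invoking Step~1 together with the spectral characterization
\[\|\varphi-\mathrm{avg}_{\mu_\delta}(\varphi)\|_{L^2(\mu_\delta)}^2\leq \lambda_1(M,\mC,\mu_\delta)^{-1}\|d\varphi\|_{L^2}^2\]
applied via Cauchy–Schwarz, which reduces the problem to controlling the $\mu_\delta$-average of $\varphi$; this in turn is handled by comparing $\mu_\delta$ to its weak-$*$ limit $\lambda_1(\mu_{\max})^{-1}|du|^2\,dv_g$ from Proposition~\ref{ujconvergence:prop}, whose density is bounded by assumption $u\in W^{1,\infty}$. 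Applying the resulting bound to both $\pm\varphi$ and taking the supremum over $\|\varphi\|_{W^{1,2}}\leq 1$ then gives the desired $W^{-1,2}$ estimate by duality.
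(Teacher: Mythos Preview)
Your Steps~1 and~2 are correct, and the overall idea of testing \eqref{main:eq} with $v=\varphi u$ is natural. The gap is in Step~3, and it is twofold.

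First, the Cauchy--Schwarz argument you sketch only gives $C\sqrt{\varepsilon}\,\|\varphi\|_{W^{1,2}}$, not $C\varepsilon\,\|\varphi\|_{W^{1,2}}$. Indeed, your Step~1 yields $\int_M|u-u_\delta|^2\,d\mu_\delta\leq C\varepsilon$, so $\bigl\||u-u_\delta|^2\bigr\|_{L^2(\mu_\delta)}\leq 2\bigl(\int|u-u_\delta|^2\,d\mu_\delta\bigr)^{1/2}\leq C\sqrt{\varepsilon}$, and pairing with $\|\varphi-\mathrm{avg}_{\mu_\delta}(\varphi)\|_{L^2(\mu_\delta)}\leq \lambda_1^{-1/2}\|d\varphi\|_{L^2}$ loses a square root. (This \emph{could} be repaired: testing \eqref{main:eq} with $v=u-u_\delta$ instead of $v=u$ actually gives the sharper bound $\int_M|u-u_\delta|^2\,d\mu_\delta\leq C\varepsilon^2$, which would suffice here; but as written your Step~1 does not provide it.) Second, your bound on the $\mu_\delta$-average of $\varphi$ is not valid: weak-$*$ convergence from Proposition~\ref{ujconvergence:prop} gives no uniform operator bound $|\int\varphi\,d\mu_\delta|\leq C\|\varphi\|_{W^{1,2}}$, and invoking strong $W^{-1,2}$ convergence of $\mu_\delta$ to the limit measure is precisely the content of the proposition you are proving.

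The paper avoids both issues by a different choice of test function. It first shows that the $\mu_\delta$-average $(u)_\delta:=\mu_\delta(M)^{-1}\int u\,d\mu_\delta$ satisfies $|(u)_\delta|\leq C\varepsilon$ (by testing \eqref{main:eq} with $v=(u^i-u_\delta^i)u_\delta$), then replaces $u_\delta$ by $u-(u)_\delta$ in the $\mu_\delta$-pairing at the cost of another $C\varepsilon$ error via the Poincar\'e inequality for $\mu_\delta$. The key trick is then to test with
\[
v=\varphi\,\frac{u-(u)_\delta}{|u-(u)_\delta|^2},
\]
for which $\langle u-(u)_\delta,v\rangle=\varphi$ \emph{exactly}, so no $|u-u_\delta|^2$ residual ever appears in the $\mu_\delta$-integral. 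The $dv_g$-side then carries a harmless factor $(1-\langle u,(u)_\delta\rangle)/|u-(u)_\delta|^2=1+O(|(u)_\delta|)=1+O(\varepsilon)$, and the proof concludes. This bypasses entirely the need to control either $\int\varphi\,|u-u_\delta|^2\,d\mu_\delta$ or the $\mu_\delta$-average of $\varphi$.
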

\begin{remark}
\label{Linfty:rmk}
If $M\ne\mathbb{S}^2$, then the space of $\bar \lambda_1$-maximal maps $u$ is compact in $C^\infty$-topology \cite[Theorem E1]{Kokarev}  and, thus, the dependence on 
$\|du\|_{L^\infty}$ can be eliminated.
\end{remark}

\begin{proof} Since $u\colon M\to \mathbb{S}^n$ is harmonic, one has
$$
\int_M |du|^2\langle u,v\rangle\,dv_g=\int_M \langle du,dv\rangle\,dv_g
$$
for all maps $v$. Thus, it follows that
\begin{equation}
\label{star.bd}
\left|\int_M |du|_g^2\langle u,v\rangle dv_g-\int_M\langle du_{\delta},dv\rangle\,dv_g\right|\leqslant \|d(u-u_{\delta})\|_{L^2}\|dv\|_{L^2}.
\end{equation}
Combining this with Lemma \ref{main:lemma}, it follows that 
\begin{equation}
\label{pre.est}
\left|\int_M |du|_g^2\langle u,v\rangle dv_g-\lambda_1(\mu_\delta)\int_M \langle u_{\delta},v\rangle d\mu_{\delta}\right|\leqslant (\|d(u-u_{\delta})\|_{L^2}+\delta)\|dv\|_{L^2}.
\end{equation}
Let us define the average
$$
(u)_\delta\colon =\frac{1}{\mu_\delta(M)}\int_M u d\mu_{\delta}=\frac{1}{\mu_\delta(M)}\int_M (u-u_{\delta})d\mu_{\delta},
$$
Set $v=(u^i-u_{\delta}^i)u_{\delta}$, then  one has 
$$
\|dv\|^2_{L^2} \leqslant 2\left(\|d(u_{\delta}^i-u^i)\|_{L^2}^2+\|u_{\delta}^i-u^i\|_{L^{\infty}}\|du_{\delta}\|^2\right)\leqslant C,
$$
where we used $2E(u_\delta)\leqslant \Lambda_1(M,\mC)$ and $|u|^2 = |u_\delta|^2\equiv 1$. 
Similarly,
$$
\int_M |du|_g^2\langle u,v\rangle dv_g\leqslant C\|du\|^2_{L^\infty}\|(u^i-u_{\delta}^i)\|_{L^2}.
$$
Thus,~\eqref{pre.est} implies that once $\bar\lambda_1(\mu_\delta)\geqslant \frac{1}{2}\Lambda_1(M,\mC)$ one has
$$
|(u)_\delta^i|= \left|\frac{\lambda_1(\mu_\delta)}{\bar\lambda_1(\mu_\delta)}\int \langle u_{\delta},v\rangle d\mu_{\delta}\right|\leqslant C(\|(u^i-u_{\delta}^i)\|_{W^{1,2}} + \delta)
$$
We choose $\delta_0$ so that $|(u)_\delta|\leqslant \frac{1}{2}$ and $\bar\lambda_1(\mu_\delta)\geqslant \frac{1}{2}\Lambda_1(M,\mC)$.

Furthermore, for any $v\colon M\to \mathbb{R}^{n+1}$, one has 
\begin{eqnarray*}
\left|\int_M \langle u_{\delta}-(u-(u)_\delta),v\rangle d\mu_{\delta}\right|&=&
\left|\int \langle u_{\delta}-(u-(u)_\delta),v-(v)_\delta\rangle d\mu_\delta\right|\\
&\leqslant &\|u-(u)_\delta-u_{\delta}\|_{L^2(\mu_{\delta})}\|v-(v)_\delta\|_{L^2(\mu_{\delta})}\\
&\leqslant &\lambda_1(\mu_{\delta})^{-1}\|d(u-u_{\delta})\|_{L^2}\|dv\|_{L^2}.
\end{eqnarray*}
Thus, adding $\lambda_1(\mu_\delta)\int_M \langle u_{\delta}-(u-\bar{u}),v\rangle d\mu_{\delta}$ to the left-hand side of \eqref{pre.est}, we obtain the estimate
\begin{equation}
\label{final.est}
\left|\int_M |du|^2\langle u,v\rangle dv_g-\lambda_1(\mu_\delta)\int \langle (u-(u)_\delta),v\rangle d\mu_{\delta}\right|\leqslant C(\|u-u_{\delta}\|_{W^{1,2}}+\delta)\|dv\|_{L^2}
\end{equation}
for all $v$. 

Recall that $|(u)_\delta|\leqslant \frac{1}{2}$, so for any $\varphi \in W^{1,2}$ we can take
$$
v=\varphi \frac{u-(u)_\delta}{|u-(u)_\delta|^2}.
$$
Since $\frac{1}{2}\leqslant |u-(u)_\delta|\leqslant \frac{3}{2}$, we have $\|dv\|_{L^2}\leqslant C\|\varphi\|_{W^{1,2}}$.
As a result, inequality~\eqref{final.est} implies
\begin{equation}
\label{final.est2}
\left|\int_M |du|^2\frac{(1-\langle u,(u)_\delta\rangle)}{|u-(u)_\delta|^2}\varphi dv_g-\lambda_1(\mu_\delta)\int_M \varphi d\mu_{\delta}\right|\leqslant C(\|u-u_{\delta}\|_{W^{1,2}}+\delta)\|\varphi\|_{W^{1,2}}.
\end{equation}
Finally, since $\frac{1}{2}\leqslant |u-(u)_\delta|\leqslant \frac{3}{2}$ one has
$$
\left|\frac{1-\langle u,(u)_\delta\rangle}{|u-(u)_\delta|^2}-1\right|\leqslant C|(u)_\delta|\leqslant C(\|u-u_{\delta}\|_{W^{1,2}}+\delta).
$$
Applying this inequality in the first term on the l.h.s. of the inequality~\eqref{final.est2}, we deduce that
$$
\left|\int_M |du|^2\varphi dv_g-\lambda_1(\mu_\delta)\int\varphi d\mu_{\delta}\right|\leqslant C(\|u-u_{\delta}\|_{W^{1,2}}+\delta)\|\varphi\|_{W^{1,2}},
$$
as desired.
\end{proof}
\begin{proof}[Proof of Theorem \ref{thm:qualconf}]
Combining Proposition~\ref{convergence:prop} with Proposition~\ref{ujconvergence:prop} immediately yields the result.
\end{proof}


\section{Quantitative stability of conformally maximal metrics II: non-trivial Jacobi fields}
\label{quantII:sec}
\subsection{Slow convergence implies infinitesimal deformations}
We continue to study the conformally maximizing sequence $\mu_\delta$ of admissible measures satisfying 
$\bar\lambda_1(\mu_\delta)=\Lambda_1(M,\mC)-\delta^2$ and the corresponding sequence of maps $u_\delta\in W^{1,2}(M,\mathbb{S}^n)$ with zero $\mu_\delta$-average and $\bar\lambda_1(\mu_\delta)\leqslant 2E(u_\delta)\leqslant \Lambda_1(M,\mC)$. We already know that if $M\ne\mathbb{S}^2$, then up to a choice of a subsequence $u_\delta$ converge strongly in $W^{1,2}$ to a $\bar\lambda_1$-conformally maximal harmonic map $u\colon M\to\mathbb{S}^n$. By Proposition~\ref{convergence:prop}, the obstruction to the quantitative stability estimate is the slow convergence of $\|u_\delta-u\|_{W^{1,2}}$ to $0$. The goal of this section is to show that slow convergence implies the presence of non-trivial Jacobi fields along $u$.    

First of all, one has freedom in the choice of $u_\delta$. Up to the replacement $u_\delta\mapsto Au_\delta$, $A\in SO(n+1)$ we can assume that $A=id$ minimizes $\|u-Au_\delta\|_{L^2}$. This implies that for any $B\in\mathfrak so(n+1)$, 
\begin{equation}
\label{non-trivial:eq}
\int_M\langle Bu_\delta,u-u_\delta\rangle\,dv_g = 0.
\end{equation}

According to Lemma~\ref{main:lemma} one has the estimate
\begin{equation}
\label{pal.smal}
\left|\int_M\langle du_j,dv\rangle\, dv_g-\lambda_1(\mu_\delta)\int \langle u_j,v\rangle d\mu\right|\leqslant \delta\|dv\|_{L^2}
\end{equation}
for any $v\in W^{1,2}(M,\mathbb{R}^{n+1})$. 

Define 
$$
\beta_\delta^2:=\|u-u_\delta\|_{L^2}^2
$$
and note that
\begin{equation*}
\begin{split}
&\|d(u-u_\delta)\|_{L^2}^2=\int_M \left(|du|^2-2\langle du,du_\delta\rangle+|du_\delta|^2\right)\,dv_g =\\
&=\int_M \left( 2\langle du,d(u-u_\delta)\rangle+|du_\delta|^2-|du|^2\right)\,dv_g \leqslant 2\int\langle du,d(u-u_\delta)\rangle\,dv_g =\\
&=2\int_M |du|^2\langle u,u-u_\delta\rangle\,dv_g
=\int_M |du|^2|u-u_\delta|^2\,dv_g\leqslant \|du\|_{L^\infty}^2\beta_\delta^2.
\end{split}
\end{equation*}
Thus, setting 
$$
w_\delta:=\frac{u-u_\delta}{\beta_\delta},
$$
one has
$$
\|w_\delta\|_{L^2}=1\text{ and }\|w_\delta\|_{W^{1,2}}\leqslant C,
$$
where the constant $C$ does not depend on $u$ by Remark~\ref{Linfty:rmk}.
Furthermore, note that
\begin{eqnarray*}
\int_M |\langle u,w_\delta\rangle|\,dv_g&=&\beta_\delta^{-1}\int_M( 1-\langle u,u_\delta\rangle)\,dv_g\\
&=&\beta_\delta^{-1}\int_M\frac{1}{2}|u-u_\delta|^2\,dv_g=\frac{1}{2}\beta_\delta\to 0
\end{eqnarray*}
as $\delta \to 0$. Thus, passing to a subsequence, we find $w\in W^{1,2}(M,\mathbb{R}^{n+1})$ such that
$$
w_\delta\to w\text{ weakly in }W^{1,2}\text{, strongly in }L^2,\text{ and }\langle w,u\rangle \equiv 0.
$$

Now, suppose that the convergence $u_\delta\to u$ is slow, that is
$$
\lim_{\delta\to 0}\frac{\delta}{\beta_\delta}=0,
$$
and let $v\in W^{1,2}(M, \mathbb{R}^{n+1})$ be a variation field with $\langle v,u\rangle \equiv 0$. Recall that $v$ is called a Jacobi field along $u$ if $I_u(v,v')=0$ for all variation fields $v'\in C^\infty(M, \mathbb{R}^{n+1})$ with $\langle v',u\rangle \equiv 0$, where 
$$
I_u(v,w):=\int_M\langle dv,dw\rangle-|du|^2\langle v,w\rangle\,dv_g.
$$
We claim that under the slow convergence assumption $w$ is a Jacobi field along $u$. Indeed if $v\in C^\infty(M, \mathbb{R}^{n+1})$ one has 
\begin{equation*}
\begin{split}
&|I_u(w,v)|=\lim_{\delta\to 0}\beta_\delta^{-1}\left|\int_M\left(\langle d(u-u_\delta),dv\rangle-|du|^2\langle u-u_\delta,v\rangle\right)\,dv_g\right|=\\
&\text{($u$ is harmonic) }=\lim_{\delta\to 0}\beta_\delta^{-1}\left|\int_M \left(|du|^2\langle u_\delta,v\rangle-\langle du_\delta,dv\rangle\right)\,dv_g\right|\leqslant\\
&\text{(by \eqref{pal.smal}) }\leqslant \lim_{\delta\to 0}\left(\beta_\delta^{-1}\left|\int_M |du|^2\langle u_\delta,v\rangle dv_g-\lambda_1(\mu_\delta)\int_M \langle u_\delta,v\rangle d\mu_\delta\right|+\frac{\delta}{\beta_\delta}\|dv\|_{L^2}\right)=\\
&\text{(since $u\perp v$ and $\frac{\delta}{\beta_\delta}\to 0$)}=\lim_{\delta\to 0}\beta_\delta^{-1}\left|\int_M\langle u_\delta-u,v\rangle\left[|du|^2dv_g-\lambda_1(\mu_\delta)d\mu_\delta\right]\right|\leqslant\\
&\text{(since $v$ is smooth)}\leqslant \lim_{\delta\to 0}\||du|^2-\lambda_1(\mu_\delta)d\mu_\delta\|_{W^{-1,2}}\frac{\|u_\delta-u\|_{W^{1,2}}}{\beta_\delta}
=0,
\end{split}
\end{equation*}
since $|du|^2-\lambda_1(\mu_\delta)d\mu_\delta\to 0$ in $W^{-1,2}$ and $\|u_\delta-u\|_{W^{1,2}}\leqslant C\beta_\delta$.

For any harmonic map $u\colon M\to\mathbb{S}^n$ there are many trivial Jacobi fields generated by the rotations of the target sphere. Namely, if $B\in\mathfrak{so}(n+1)$, then $Bu$ is a Jacobi field along $u$. Nevertheless, by~\eqref{non-trivial:eq} one has $\forall B\in\mathfrak{so}(n+1)$
\begin{equation*}
\begin{split}
&\int_M\langle w,Bu\rangle\,dv_g = \lim_{\delta\to 0}\beta^{-1}_\delta \int_M\langle u-u_\delta,Bu\rangle\,dv_g =\\
&=\lim_{\delta\to 0}\beta^{-1}_\delta \int_M\langle u-u_\delta,B(u-u_\delta)\rangle + \langle u-u_\delta,Bu_\delta\rangle\,dv_g =\\
&\text{(since $B\in\mathfrak{so}(n+1)$)} = -\lim_{\delta\to 0}\beta^{-1}_\delta \int_M \langle B(u-u_\delta),u_\delta\rangle\,dv_g = \\
&\text{(by~\eqref{non-trivial:eq})} = \lim_{\delta\to 0}\beta^{-1}_\delta \int_M \langle u-u_\delta,u\rangle\,dv_g =  \int_M\langle w,u\rangle\,dv_g=0,
\end{split}
\end{equation*}
and hence $w$ is a non-trivial Jacobi field along $u$.
Combining these considerations with Proposition~\ref{convergence:prop} we arrive at the following.

\begin{proposition}
\label{Jacobi:prop}
Let $\mu_\delta$ be a sequence of admissible measures such that
$$
\bar\lambda_1(\mu_\delta)\geqslant \Lambda_1(M,\mC)-\delta^2
$$
and for some $\bar\lambda_1$-conformally maximal measure $\mu_0$ one has $\lambda_1(\mu_\delta)\mu_\delta\to \lambda_1(\mu_0)\mu_0$ strongly in $W^{-1,2}(M)$ as $\delta\to 0$. Then either
$$
\|\lambda_1(\mu_\delta)\mu_\delta-\lambda_1(\mu_0)\mu_0\|_{W^{-1,2}}\leqslant C\delta
$$
for some $C<\infty$, or there exists a $\bar\lambda_1$-conformally maximal harmonic map, such that $\lambda_1(\mu_0)\mu=|du|^2dv_g$ andt $u$ admits a nontrivial Jacobi field (i.e., a Jacobi field not of the form $Bu$ for $B\in \mathfrak{so}(n+1)$).
\end{proposition}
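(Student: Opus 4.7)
The plan is to argue by contrapositive: assume the first alternative fails along some subsequence $\delta_j \to 0$, i.e. $\|\lambda_1(\mu_{\delta_j})\mu_{\delta_j}-\lambda_1(\mu_0)\mu_0\|_{W^{-1,2}}/\delta_j\to\infty$, and produce a nontrivial Jacobi field. First I would use Proposition \ref{existence:prop} to attach to each $\mu_\delta$ a map $u_\delta \in W^{1,2}(M,\mathbb{S}^n)$ with vanishing $\mu_\delta$-average and $\bar\lambda_1(\mu_\delta)\leqslant 2E(u_\delta)\leqslant \Lambda_1(M,\mC)$. Invoking Proposition \ref{ujconvergence:prop} (extracting a further subsequence), the $u_\delta$ converge strongly in $W^{1,2}$ to a $\bar\lambda_1$-conformally maximal harmonic map $u\colon M\to \mathbb{S}^n$. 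Since Lemma \ref{main:lemma} gives $\||du_\delta|^2\,dv_g-\lambda_1(\mu_\delta)\mu_\delta\|_{W^{-1,2}}\leqslant C\delta\to 0$, while the strong $W^{1,2}$ convergence yields $|du_\delta|^2\,dv_g\to |du|^2\,dv_g$ in $W^{-1,2}$, combining this with the hypothesized convergence $\lambda_1(\mu_\delta)\mu_\delta\to \lambda_1(\mu_0)\mu_0$ forces $\lambda_1(\mu_0)\mu_0=|du|^2\,dv_g$, identifying the target harmonic map.

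Next I would normalize by composing each $u_\delta$ with an element of $SO(n+1)$ chosen to minimize $\|u-u_\delta\|_{L^2}$, which gives the orthogonality \eqref{non-trivial:eq}. Set $\beta_\delta:=\|u-u_\delta\|_{L^2}$ and $w_\delta:=(u-u_\delta)/\beta_\delta$. The harmonicity of $u$ combined with $|u|=|u_\delta|\equiv 1$ gives the pointwise identity
\[
\|d(u-u_\delta)\|_{L^2}^2\leqslant \int_M |du|^2|u-u_\delta|^2\,dv_g\leqslant \|du\|_{L^\infty}^2\beta_\delta^2,
\]
so $\|w_\delta\|_{W^{1,2}}$ is uniformly bounded. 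By Proposition \ref{convergence:prop}, failure of the first alternative means $\|u-u_\delta\|_{W^{1,2}}+\delta$ cannot be bounded by $C\delta$, so $\beta_\delta/\delta\to\infty$, i.e. $\delta/\beta_\delta\to 0$ along our subsequence. Passing to a further subsequence, $w_\delta\rightharpoonup w$ weakly in $W^{1,2}$ and strongly in $L^2$, with $\|w\|_{L^2}=1$, and the elementary identity $\int_M (1-\langle u,u_\delta\rangle)\,dv_g=\tfrac12\int_M|u-u_\delta|^2\,dv_g$ implies $\langle u,w\rangle\equiv 0$.

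Then I would verify that $w$ is a Jacobi field along $u$. For any smooth $v$ with $\langle v,u\rangle\equiv 0$, use harmonicity of $u$ to write
\[
I_u(w,v)=\lim_{\delta\to 0}\beta_\delta^{-1}\int_M \bigl(|du|^2\langle u_\delta,v\rangle - \langle du_\delta,dv\rangle\bigr)\,dv_g,
\]
then replace $\int\langle du_\delta,dv\rangle\,dv_g$ by $\lambda_1(\mu_\delta)\int\langle u_\delta,v\rangle\,d\mu_\delta$ at the cost of $\delta\|dv\|_{L^2}$ using \eqref{main:eq}, and use $\langle u,v\rangle=0$ to rewrite what remains as the pairing of $w_\delta$ with $|du|^2\,dv_g-\lambda_1(\mu_\delta)\mu_\delta$ tested against $v$; since this measure tends to zero in $W^{-1,2}$ by hypothesis and $w_\delta$ is bounded in $W^{1,2}$, while the residual $\delta/\beta_\delta$ term vanishes, we conclude $I_u(w,v)=0$.

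Finally, nontriviality: the rotational normalization \eqref{non-trivial:eq} passes to the limit to give $\int_M \langle w, Bu\rangle\,dv_g=0$ for every $B\in \mathfrak{so}(n+1)$ (the computation displayed just before the statement), so $w$ is $L^2$-orthogonal to the entire space of trivial Jacobi fields $Bu$; since $\|w\|_{L^2}=1$, in particular $w\neq Bu$ for any $B$. I expect the main subtle point to be the identification $\lambda_1(\mu_0)\mu_0=|du|^2\,dv_g$ for the subsequential limit $u$, since a priori only weak-$*$ convergence of energy densities is automatic; the step that bridges this to $W^{-1,2}$ convergence relies crucially on Lemma \ref{main:lemma} together with the hypothesized $W^{-1,2}$ convergence of $\lambda_1(\mu_\delta)\mu_\delta$.
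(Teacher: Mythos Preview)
Your argument is essentially the paper's proof, with the same normalization via $SO(n+1)$, the same definition of $w_\delta=(u-u_\delta)/\beta_\delta$, the same energy estimate $\|d(u-u_\delta)\|_{L^2}\leqslant \|du\|_{L^\infty}\beta_\delta$, and the same verification that the limit $w$ is a nontrivial Jacobi field via \eqref{main:eq} and the orthogonality \eqref{non-trivial:eq}.

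One small imprecision: in your identification step you invoke Lemma~\ref{main:lemma} to get $\||du_\delta|^2\,dv_g-\lambda_1(\mu_\delta)\mu_\delta\|_{W^{-1,2}}\leqslant C\delta$, but the $W^{-1,2}$ bound \eqref{main:eq2} requires $u_\delta\in W^{1,\infty}$, which Proposition~\ref{existence:prop} does not provide; likewise, strong $W^{1,2}$ convergence of $u_\delta$ only gives $|du_\delta|^2\to|du|^2$ in $L^1$, not in $W^{-1,2}$ on a surface. The clean route (which the paper takes implicitly, and which you already use for the slow-convergence dichotomy) is to invoke Proposition~\ref{convergence:prop} directly: it gives $\|\lambda_1(\mu_\delta)\mu_\delta-|du|^2\,dv_g\|_{W^{-1,2}}\leqslant C(\|u-u_\delta\|_{W^{1,2}}+\delta)\to 0$, and comparing with the hypothesis forces $\lambda_1(\mu_0)\mu_0=|du|^2\,dv_g$.
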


This yields the following quantitative stability estimate.
\begin{corollary}
\label{Jacobi:cor}
Let $M$ be a closed surface with a fixed conformal class $\mC$ and let $g\in \mC$ be a background metric. Assume that for any $n$ and any $\bar\lambda_1$-conformally maximal harmonic map $u\colon (M,\mC)\to\mathbb{S}^n$ there are no non-trivial Jacobi fields along $u$. Then there exist constants $C,\delta_0>0$ depending only on $M,\mC,g$ such that for any admissible measure $\mu$ satisfying $\Lambda_1(M,\mC)-\bar\lambda_1(M,\mC,\mu)\leqslant \delta_0$ there exists a $\bar\lambda_1$-conformally maximal measure $\mu_0$ such that 
$$
\|\lambda_1(M,\mC,\mu)\mu-\lambda_1(M,\mC,\mu_0)\mu_0\|_{W^{-1,2}(M,g)}\leqslant C\sqrt{\Lambda_1(M,\mC)-\bar\lambda_1(M,\mC,\mu)}.
$$ 
\end{corollary}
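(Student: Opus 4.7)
The plan is to argue by contradiction, combining the qualitative stability of Theorem \ref{thm:qualconf} with the dichotomy of Proposition \ref{Jacobi:prop}. Suppose the claimed estimate fails. Then there exists a sequence of admissible measures $\mu_j$ with $\delta_j^2 := \Lambda_1(M,\mC) - \bar\lambda_1(M,\mC,\mu_j) \to 0$ and
\[
\|\lambda_1(M,\mC,\mu_j)\mu_j - \lambda_1(M,\mC,\mu_0^{(j)})\mu_0^{(j)}\|_{W^{-1,2}(M,g)} \geqslant j\,\delta_j
\]
for \emph{every} $\bar\lambda_1$-conformally maximal measure $\mu_0^{(j)}$. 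After normalizing the areas appropriately (using Remark \ref{rem:normaliz}), we may assume the $\mu_j$ have fixed area, and then Theorem \ref{thm:qualconf} applies to give, along a subsequence, a $\bar\lambda_1$-conformally maximal measure $\mu_0$ with $\mu_j \to \mu_0$ strongly in $W^{-1,2}(M,g)$.

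With this subsequence, I would apply Proposition \ref{Jacobi:prop} directly to $\mu_j$: since the quantitative inequality $\|\lambda_1(\mu_j)\mu_j - \lambda_1(\mu_0)\mu_0\|_{W^{-1,2}} \leqslant C\delta_j$ fails by our contradiction hypothesis (the measure $\mu_0$ here is allowed to depend on $j$, but one can test against $\mu_0$ as the fixed strong limit), the alternative in Proposition \ref{Jacobi:prop} must hold. This produces a $\bar\lambda_1$-conformally maximal harmonic map $u\colon (M,\mC) \to \mathbb{S}^n$ (with $\lambda_1(\mu_0)\mu_0 = |du|_g^2\,dv_g$) that admits a non-trivial Jacobi field, directly contradicting the hypothesis of the corollary. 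This gives the constants $\delta_0, C$, although not yet uniformly in the choice of maximal measure.

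The main obstacle I anticipate is bookkeeping the uniformity: the constant $C$ in Proposition \ref{convergence:prop} a priori depends on $\|du\|_{L^\infty}$, and in the above argument the comparison maximizer $\mu_0$ and its associated harmonic map $u$ vary with $j$. This is precisely where Remark \ref{Linfty:rmk} is used: for $M \neq \mathbb{S}^2$, the space of $\bar\lambda_1$-maximal harmonic maps is compact in $C^\infty$ (by \cite{Kokarev}), so $\|du\|_{L^\infty}$ is bounded uniformly on this space, and the constants in the Proposition \ref{Jacobi:prop} dichotomy can be chosen uniformly. With this uniformity in hand, the argument closes: either the uniform dichotomy yields quantitative stability (with constants depending only on $M,\mC,g$), or it produces a limiting non-trivial Jacobi field along some conformally maximal harmonic map, which is ruled out by hypothesis. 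The case $M = \mathbb{S}^2$ is outside the scope here since Theorem \ref{thm:qualconf} is stated for $M \neq \mathbb{S}^2$ (and Hersch's inequality is handled separately by Theorem \ref{S2stability:thm}).
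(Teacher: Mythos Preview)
Your proposal is correct and follows essentially the same contradiction argument as the paper: assume a maximizing sequence violates the quantitative bound against every maximal measure, extract a subsequence converging in $W^{-1,2}$ to a fixed maximal measure $\mu_0$, and then invoke Proposition~\ref{Jacobi:prop} to produce a non-trivial Jacobi field, contradicting the hypothesis. The only differences are cosmetic: the paper cites Propositions~\ref{existence:prop} and~\ref{ujconvergence:prop} directly rather than the packaged Theorem~\ref{thm:qualconf}, and your discussion of uniformity via Remark~\ref{Linfty:rmk} is not actually needed once $\mu_0$ is the fixed subsequential limit (Proposition~\ref{Jacobi:prop} applies with that single $\mu_0$, so no uniformity over maximizers is required at that step). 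Also note that the paper observes the hypothesis itself forces $M\neq\mathbb{S}^2$, since the identity $\mathbb{S}^2\to\mathbb{S}^2$ admits non-trivial Jacobi fields from conformal automorphisms; your appeal to the scope of Theorem~\ref{thm:qualconf} reaches the same conclusion but is less intrinsic.
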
 
\begin{proof}
Since the harmonic map $\mathrm{Id}\colon\mathbb{S}^2\to\mathbb{S}^2$ admits non-trivial Jacobi fields generated by conformal automorphisms, the conditions of the Corollary imply that $M\ne\mathbb{S}^2$.

Assuming the contrary to the conclusion, there exists a sequence of admissible measures $\mu_\delta$ such that $\Lambda_1(M,\mC)-\bar\lambda_1(M,\mC,\mu_\delta)\leqslant\delta^2$, but for any $\bar\lambda_1$-conformally maximal measure $\mu_0$ one has 
$$
\delta^{-1}\|\lambda_1(M,\mC,\mu_\delta)\mu_\delta-\lambda_1(M,\mC,\mu_0)\mu_0\|_{W^{-1,2}(M,g)}\to \infty.
$$
Let $n$ be as in Proposition~\ref{existence:prop}, and let $u_\delta\in W^{1,2}(M,\mathbb{S}^n)$ be maps satisfying the conclusion of Proposition~\ref{existence:prop} with respect to $\mu_{\delta}$. By Proposition~\ref{ujconvergence:prop} $u_\delta$ in turn converge (up to a choice of a subsequence) strongly in $W^{1,2}(M,\mathbb{S}^n)$ to a harmonic map $u\colon M\to\mathbb{S}^n$ of spectral index $1$ such that $2E(u)=\Lambda_1(M,\mC)$ and the measures $\lambda_1(M,\mC,\mu_\delta)\mu_\delta$ converge in $W^{-1,2}$ to a $\bar\lambda_1$-conformally maximal measure $\mu_0 = |du|^2\,dv_g$. Finally, the application of Proposition~\ref{Jacobi:prop} results in a contradiction.
\end{proof}

\subsection{Examples} The intuition behind Proposition~\ref{Jacobi:prop} is the following: if the convergence is slow, then the map $u$ is not the closest $\bar\lambda_1$-conformally maximal harmonic map to $u_\delta$, i.e. the problem occurs when there are other maximal maps in the vicinity of $u$. If one indeed knows that this is the case, then one could try to perturb $u$ to a nearby maximal map in order to improve the stability estimate. Unfortunately, the presence of Jacobi fields does not necessarily imply that there are other harmonic maps close to $u$. For that one would need to show that the Jacobi field is integrable, which, in general, is a notoriously difficult problem.
Nevertheless, the conditions of Corollary~\ref{Jacobi:cor} hold for some conformal classes, which we demonstrate below. Recall that a harmonic map $u\colon(M,\mC)\to\mathbb{S}^n$ is called linearly full if its image linearly spans $\mathbb{R}^{n+1}$.

\begin{example}
Let $M$ be a projective plane $\mathbb{RP}^2$ with its unique (up to a diffeomorphism) conformal class. According to~\cite{LiYau} one has $\Lambda_1(\mathbb{RP}^2)=12\pi$. Then by~\cite{EjiriEquivariant} the only linearly full harmonic map from $\mathbb{RP}^2$ to $\mathbb{S}^n$ of energy $6\pi$ is the Veronese immersion $v\colon\mathbb{RP}^2\to\mathbb{S}^4$. Consider the antipodal lift $\tilde v\colon \mathbb{S}^2\to\mathbb{S}^4$. By~\cite[Corollary 10]{MU}, the dimension of the space of normal Jacobi fields along $\tilde v$ is $14$. Adding to it the dimension of the space of tangential Jacobi fields, see e.g.~\cite[Section 7]{Moore}, one obtains that  $\nul_E(\tilde v)=20$; here $\nul_E$ denotes the energy nullity \cite[Definition 3.1]{KRP2}, which is precisely
the dimension of the space of all Jacobi fields along a corresponding harmonic map.  Applying~\cite[Theorem 3.19]{KRP2} one obtains that $\nul_E(v)=10$, which is the same as $\dim\mathfrak{so}(5)$, i.e. there are no non-trivial Jacobi fields along $v$. This settles the case of linearly full $\bar\lambda_1$-conformally maximal maps. Let $v_m\colon\colon \mathbb{RP}^2\to\mathbb{S}^{m}$ be a non-linearly full maximal map, then $m\geqslant 4$ and $v_m = j_{4,m}\circ v$, where $j_{n,m}\colon \mathbb{S}^n\to\mathbb{S}^m$ is a totally geodesic embedding. By an argument analogous to~\cite[Proposition 3.11]{KRP2} one has that $\nul_E(v_m) = \nul_E(v) + (m-4)\nul_S(v)$, where $\nul_S$ denotes the spectral nullity \cite[Definition 2.7]{KRP2}. Since in our case 
$\nul_S(v)={\rm mult}(\lambda_1)=5$, one has $\nul_E(v_m) = 10 + 5(m-4) = \dim\mathfrak{so}(m+1)-\dim\mathfrak{so}(m-4)$, which is precisely the dimension of trivial Jacobi fields along $v_m$.
\end{example}

We remark that it is also possible to verify that the conditions of Corollary~\ref{Jacobi:cor} are satisfied for the conformal class containing the metric induced by the immersion $\tilde\tau_{3,1}$. However, since we have already shown the stability estimate in this case using the methods of Section~\ref{quant_stability:sec}, the argument is omitted.

At the same time, there are conformal classes  where this condition does not hold. We have already mentioned that it does not hold for $\mathbb{S}^2$. 

\begin{example}
\label{Jacobi_torus:ex}
Consider a torus $\mathbb{T}^2$ endowed with a conformal class of a flat metric $g_{a,b}$ on $\mathbb{R}^2/\Gamma$, where $\Gamma = \mathbb{Z}(1,0) + \mathbb{Z}(a,b)$, $a^2+b^2=1$. Then according to~\cite{ESIR} the flat metric is $\bar\lambda_1$-conformally maximal. Consider the harmonic map $u\colon\mathbb{T}^2\to \mathbb{S}^2$ by first eigenfunctions given by $u(x,y) = (\sin(2\pi x),\cos(2\pi x), 0)$. Then $u$ is $\bar\lambda_1$-maximal. At the same time, $v(x,y) = (0,0,\sin(2\pi y))$ is a non-trivial Jacobi field along $u$.  
\end{example}

\begin{example}
\label{Jacobi_Bolza:ex}
Let $M=\Sigma_2$ be a genus $2$ surface and $c$ be a conformal class of a Bolza surface. Then according to~\cite{NayataniShoda} the hyperelliptic projection $\Pi\colon(\Sigma_2,c)\to\mathbb{S}^2$ is a $\bar\lambda_1$-conformally maximal map. The map $\Pi$ has $6$ branch points on $\mathbb{S}^2$ and, therefore, perturbing branch points results in a $12$-dimensional family of deformations of $\Pi$. Since the Teichm\"uller space of genus $2$ surfaces has real dimension $6$, there is a $6$-dimensional family of hyperelliptic projections $\Pi_{\alpha}\colon (\Sigma_2,c)\to\mathbb{S}^2$. This family gives rise to a $6$-dimensional space of Jacobi fields along $\Pi$. Since $\dim\mathfrak{so}(3)=3$, there exist non-trivial Jacobi fields along $\Pi$.
\end{example}

\begin{remark}[Jacobi fields vs comparison families]
\label{rem:Jaco}
One can observe that in comparison with Theorem~\ref{stable:thm}, Corollary~\ref{Jacobi:cor} does not currently add to the list of conformal classes where the stability estimate holds. Nevertheless, in theory Corollary~\ref{Jacobi:cor} is easier to apply. Indeed, one only needs to study the Hessian of $\bar\lambda_1$-conformally maximal harmonic maps, whereas in order to apply Theorem~\ref{stable:thm} one needs to construct an explicit family and then additionally study the hessian of $\bar\lambda_1$-conformally maximal harmonic maps restricted to this family. However, the big downside of Corollary~\ref{Jacobi:cor} is that it deals with all $\bar\lambda_1$-conformally maximal {\em harmonic maps}, whereas Theorem~\ref{stable:thm} deals with all $\bar\lambda_1$-conformally maximal {\em measures}. Thus, Corollary~\ref{Jacobi:cor} might not be applicable if there are many $\bar\lambda_1$-conformally maximal harmonic maps corresponding to the same $\bar\lambda_1$-conformally maximal measure, which is exactly the situation in Example~\ref{Jacobi_torus:ex}. However, this situation seems to be specific to surfaces of lower genus. If one additionally restricts oneself to conformal classes where the degenerate situation of Example~\ref{Jacobi_Bolza:ex} does not occur (there are many of those, see e.g.~\cite[Corollary 1.8]{MS}), then Open problem \ref{open:stab}
stated in the introduction appears to be natural.
\end{remark}
\begin{remark} Finally, we remark that if the hypothesis of Corollary~\ref{Jacobi:cor} is satisfied for the conformal class $\mC_0$, then it is also satisfied for all conformal classes $\mC$ sufficiently close to $\mC_0$ in $C^\infty$-topology. Indeed, suppose that $\mC_m\to\mC_0$ and $u_m\colon (M,\mC_m)\to\mathbb{S}^n$ are $\bar\lambda_1$-conformally maximal harmonic maps. Note that $n$ can be chosen independently of $m$, because the multiplicity of $\lambda_1$ is bounded only in terms of the topology of $M$, see e.g.~\cite{Nadirashvili_mult}. Then, similarly to the proof of Proposition~\ref{ujconvergence:prop}, one can show that up to a choice of a subsequence $u_m\to u$, where $u\colon (M,\mC_0)\to \mathbb{S}^n$ is a $\bar\lambda_1$-conformally maximal harmonic map. If $u_m$ admit non-trivial Jacobi fields, then their limit is a non-trivial Jacobi field along $u$.
\end{remark}



\section{Stability for the second eigenvalue}
\label{sec:higher}
\subsection{Stability for higher eigenvalues: new challenges}
In the present section we discuss how our arguments can be adapted to prove stability for $\Lambda_2(M,\mC)$. In fact, we only present the results for the second eigenvalue, since the min-max energy characterization is not yet proved for $\Lambda_k(M,\mC)$ with $k\geqslant 3$. However, it is the only obstruction to obtaining the stability estimates for higher eigenvalues.

The main difference between the theory for $\Lambda_1(M,\mC)$ and $\Lambda_k(M,\mC)$, $k >1$,  is that there are examples for which 
 the maximizing sequence does not converge to an admissible measure. Namely, there are at least two distinct types of $\bar\lambda_k$-conformally maximizing sequences of admissible measures:
\begin{itemize}
\item[(1)] {\em Regular sequence} that converges to an admissible $\bar\lambda_k$-conformally maximal measure. In this case one expects the same behaviour as for $k=1$, i.e. that the convergence is strong in $W^{-1,2}$-norm.
\item[(2)] {\em Bubbling sequence} that converges to $\mu = \mu_l + \sum_{i=1}^{k-l}w_i\delta_{p_i}$, where $l<k$ and $\mu_l$ is a $\bar\lambda_l$-conformally maximal measure. In this case one can not have strong $W^{-1,2}$ convergence since $\mu\not\in W^{-1,2}$. 
\end{itemize}  

In this section we verify that for $k=2$, up to a choice of a subsequence, there are no other maximizing sequences. While the optimal type of convergence for bubbling sequences is not immediately clear, we present the results for the *-weak convergence that appears to be natural in this setting. 

In the exposition below we omit the proofs that are analogous to the case $k=1$. 

\subsection{Stability for $\Lambda_2(\mathbb{S}^2)$}
\label{subsec:stab2S2}
The proof of stability for $\Lambda_1(\mathbb{S}^2)$ in Section~\ref{S2stability:sec} relies on Hersch's trick~\cite{Hersch}. Similarly, the proof of stability for $\Lambda_2(\mathbb{S}^2)$ relies on Nadirashvili's generalization of Hersch's lemma obtained in~\cite{NadirashviliS2}, see also~\cite{GNP09, PetridesS2, KS, Kim}. We describe Nadirashvili's construction below.

Let $\mZ$ be a collection of spherical caps in $\mathbb{S}^2$, i.e. non-empty intersections of $\mathbb{S}^2$ with affine half-spaces in $\mathbb {R}^3$. For each $Z\in \mZ$ there exists a conformal reflection $\tau_{\partial Z}$ across $\partial Z$  interchanging $Z$ and $\mathbb{S}^2\setminus Z$. We define $R_Z\colon \mathbb{S}^2\to Z$ to be 
\begin{equation*}
R_Z(x) = 
\begin{cases}
x,&\text{ if } x\in \mathbb{S}^2\setminus Z,\\
\tau_{\partial Z}(x), &\text{ if } x\in Z.
\end{cases}
\end{equation*}

Let $\mu$ be an admissible measure and let $\phi_1$ be its first eigenfunction. Then one can show that there exists $Z'\in\mZ$ and a conformal automorhism $\Phi$ of $\mathbb{S}^2$ such that
$$
\int_{\mathbb{S}^2}\Phi\circ R_{Z'}\,d\mu = \int_{\mathbb{S}^2}\phi_1(\Phi\circ R_{Z'})\,d\mu = 0\in\mathbb{R}^3.
$$
Equivalently, one can phrase these conditions in terms of the measure $\Phi_*\mu$. Indeed, the first eigenfunction of $\Phi_*\mu$ is $(\Phi^{-1})^*\phi_1$ and one has
$$
0=\int_{\mathbb{S}^2}\Phi\circ R_{Z'}\,d\mu = \int_{\mathbb{S}^2}(\Phi\circ R_{Z'}\circ \Phi^{-1})\,\Phi_*d\mu,
$$
$$
0 = \int_{\mathbb{S}^2}\phi_1(\Phi\circ R_{Z'})\,d\mu = \int_{\mathbb{S}^2}((\Phi^{-1})^*\phi_1)(\Phi\circ R_{Z'}\circ\Phi^{-1})\,\Phi_*d\mu.
$$
Thus, the condition of Lemma~\ref{main:lemma} are satisfied for $\Phi_*\mu$ with $u=R_{Z}: = \Phi\circ R_{Z'}\circ \Phi^{-1}$.
\begin{proposition} 
\label{S2stability2:prop}
Let $\mu$ be an admissible measure on $\mathbb{S}^2$ and $g$ be a round metric on $\mathbb{S}^2$. Then there exists a conformal automorphism $\Phi\colon\mathbb{S}^2\to\mathbb{S}^2$ and a spherical cap $Z$ satisfying 
$$
\area(Z)\leqslant 4\pi-\frac{1}{4}\bar\lambda_2(\mu)
$$
such that the measure $\nu_Z = |dR_Z|^2_gdv_g$ satisfies
$$
\||dR_Z|^2_g\,dv_g - \lambda_2(\mu)\Phi_*\mu\|_{(C^0\cap W^{1,2}(\mathbb{S}^2))^*}\leqslant (12\pi)^{1/2}\sqrt{16\pi-\bar\lambda_2(\mu)}
$$
In particular, if $p\in \mathbb{S}^2$ is the center of the spherical cap $Z$, one has that 
\begin{equation}
\|2dv_g+8\pi \delta_p-\lambda_2(\mu)\Phi_*\mu\|_{(C^1(\mathbb{S}^2))^*}\leq C_1\sqrt{16\pi-\bar\lambda_2(\mu)}
\end{equation}
with an explicit constant $C_1$.
\end{proposition}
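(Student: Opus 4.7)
The plan is to apply Lemma \ref{main:lemma} with $k=2$ to the map $R_Z := \Phi \circ R_{Z'} \circ \Phi^{-1}$ and the pushforward measure $\Phi_*\mu$. By the Nadirashvili-type construction recalled just above the proposition, $Z'$ and the conformal automorphism $\Phi$ can be chosen so that $\Phi\circ R_{Z'}$ has vanishing $\mu$-mean and is $L^2(\mu)$-orthogonal to $\phi_1$. After conjugating by $\Phi$, these conditions translate via Remark \ref{rem:conformal} into the two orthogonality hypotheses of Lemma \ref{main:lemma} for $\Phi_*\mu$ with $u = R_Z$, since $(\Phi^{-1})^*\phi_1$ is the first eigenfunction of $\Phi_*\mu$.

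The next step is an explicit energy computation. Since $R_Z$ is piecewise conformal---equal to the identity on $\mathbb{S}^2\setminus Z$ and to the conformal reflection $\tau_{\partial Z}\colon Z\to\mathbb{S}^2\setminus Z$ on $Z$---the standard identity $\tfrac12\int|df|^2_g\,dv_g = \area(f(M))$ for conformal surface maps yields $2E(R_Z) = 4(4\pi - \area(Z))$. Combined with the inequality $2E(R_Z)\geqslant \bar\lambda_2(\mathbb{S}^2,\Phi_*\mu) = \bar\lambda_2(\mu)$ supplied by Lemma \ref{main:lemma} (and the conformal invariance of Remark \ref{rem:conformal}), this immediately gives the area bound $\area(Z)\leqslant 4\pi - \tfrac14\bar\lambda_2(\mu)$. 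The stability estimate in the $(C^0\cap W^{1,2})^*$ norm then follows directly from \eqref{main:eq3}: the deficit $2E(R_Z) - \bar\lambda_2(\mu)$ is bounded by $16\pi - \bar\lambda_2(\mu)$, and the factor $\|R_Z\|_{W^{1,2}}$ is controlled explicitly using $|R_Z|\equiv 1$ and $2E(R_Z)\leqslant 16\pi$. The clean constant $(12\pi)^{1/2}$ should emerge from the Cauchy--Schwarz step in the expansion of $\int|d(\psi R_Z)|^2$, exploiting the fact that $|R_Z|^2\equiv 1$ to eliminate the cross term.

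To promote this to the Dirac-mass approximation in $(C^1)^*$, the remaining task is to show that $|dR_Z|^2\,dv_g$ is close to $2\,dv_g+8\pi\delta_p$ with error of order $\sqrt{\area(Z)}$. On $\mathbb{S}^2\setminus Z$ the measure equals $2\,dv_g$ exactly, so the only error in the ambient-measure term is $2\int_Z\psi\,dv_g = O(\|\psi\|_{C^0}\area(Z))$. On $Z$, the energy measure is the pullback of $2\,dv_g|_{\mathbb{S}^2\setminus Z}$ by $\tau_{\partial Z}$, with total mass $2\area(\mathbb{S}^2\setminus Z)\to 8\pi$ but concentrated near the inversion pole at $p$. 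Writing $\psi(x) = \psi(p) + O(\|\psi\|_{C^1}\,d_g(x,p))$ on $Z$ and using $d_g(x,p)\leqslant\diam(Z)\lesssim\sqrt{\area(Z)}$ gives
\[
\Bigl|\int_Z \psi\,|dR_Z|^2\,dv_g - 2\area(\mathbb{S}^2\setminus Z)\,\psi(p)\Bigr| \lesssim \|\psi\|_{C^1}\sqrt{\area(Z)}.
\]
Adding the two contributions, combining with the first estimate via the triangle inequality and the continuous inclusion $(C^0\cap W^{1,2})^*\hookrightarrow (C^1)^*$, and invoking $\area(Z)\leqslant \tfrac14(16\pi - \bar\lambda_2(\mu))$ produces the final $(C^1)^*$ bound with an explicit constant $C_1$.

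The main technical obstacle I expect is the quantitative concentration analysis on the shrinking cap: although the Jacobian of $\tau_{\partial Z}$ has an integrable singularity at $p$, extracting clean constants together with the precise rate $\sqrt{16\pi - \bar\lambda_2(\mu)}$ requires carefully relating the diameter of the cap on the round sphere to its area, and controlling the first-order Taylor remainder of $\psi\in C^1$ against the singular Jacobian.
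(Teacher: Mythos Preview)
Your proposal is correct and follows essentially the same route as the paper: apply Lemma~\ref{main:lemma} with $k=2$ to $u=R_Z$ and $\Phi_*\mu$, compute $2E(R_Z)=4(4\pi-\area(Z))$ to get both the area bound and the $(C^0\cap W^{1,2})^*$ estimate via~\eqref{main:eq3}, then pass to the $(C^1)^*$ Dirac-mass approximation by splitting over $Z$ and $\mathbb{S}^2\setminus Z$ and Taylor-expanding the test function. The concentration step you flag as the main obstacle is handled in the paper exactly as you outline (the radius satisfies $r\leqslant c_0\sqrt{\area(Z)}$ once one assumes $16\pi-\bar\lambda_2(\mu)\leqslant 8\pi$, which is harmless), and your remark about an ``integrable singularity'' of the Jacobian is a slight overstatement---$\tau_{\partial Z}$ is a smooth M\"obius transformation, so $|dR_Z|^2_g$ is smooth on $Z$, merely large and concentrated as $Z$ shrinks.
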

\begin{proof} By the preceding discussion, we can find a conformally equivalent measure $\tilde{\mu}=\Phi_*\mu$ and a spherical cap $Z$ such that $\tilde{\mu}$ satisfies the conclusions of Lemma \ref{main:lemma} for $k=2$ with respect to the map $u=R_Z$.

Thus, the inequality~\eqref{main:eq3} implies that 
$$
\||dR_Z|^2_g\,dv_g - \lambda_2(\mu)\Phi_*\mu\|_{(C^0\cap W^{1,2}(\mathbb{S}^2))^*}\leqslant \|R_Z\|_{W^{1,2}(\mathbb{S}^2)}\sqrt{2E(R_Z)-\bar\lambda_2(\mu)}.
$$
It remains to note that $2E(R_Z)\leqslant 16\pi$ and $\|R_Z\|_{W^{1,2}}\leqslant 12\pi$. More precisely,
$$
2E(R_Z) =2\int_{\mathbb{S}^2\setminus Z}\,dv_g + \int_{Z}|d\tau_{\partial Z}|_g^2\,dv_g = 4\area (\mathbb{S}^2\setminus Z) = 16\pi - 4\area(Z).
$$
Since $\bar\lambda_2(\mu)\leqslant 2E(u)$, one arrives at $\area(Z)\leqslant 4\pi-\frac{1}{4}\bar\lambda_2(\mu)$.

To prove the final $(C^1(\mathbb{S}^2))^*$ estimate, we may assume that $16\pi-\bar\lambda_2(\mu)\leqslant 8\pi$, so that the spherical cap $Z$ is a geodesic disk $D_r(p)$ of area
$$
\area(D_r(p))\leqslant 4\pi-\frac{1}{4}\bar\lambda_2(\mu)\leqslant 2\pi,
$$
so that the radius $r\leq \pi$ satisfies an estimate of the form
\begin{equation}
\label{concentration_rate:eq}
r\leqslant c_0\sqrt{\area(Z)}\leqslant c_0\sqrt{16\pi-\bar\lambda_2(\mu)}.
\end{equation}
Then for any $\varphi \in C^1(S^2)$ one has that
$$
\left|\int_{\mathbb{S}^2\setminus Z} \varphi|dR_Z|_g^2\,dv_g - 2\int_{\mathbb{S}^2} \varphi\,dv_g \right|= 2\left|\int_Z\varphi\,dv_g\right|\leqslant 2\|\varphi\|_{C^0}\area(Z);
$$
\begin{equation*}
\begin{split}
& \left|\int_{Z} \varphi|dR_Z|_g^2\,dv_g - 8\pi\varphi(p)\right| \leqslant \int_{Z} |\varphi-\varphi(p)||dR_Z|_g^2\,dv_g +
2\area(Z)|\varphi(p)|\leqslant\\
&\leqslant r\|\varphi\|_{C^1}\int_C|dR_Z|_g^2\,dv_g + 2\area(Z)\|\varphi\|_{C^0}\leqslant C_2\sqrt{16\pi-\bar\lambda_2(\mu)}\|\varphi\|_{C^1}
\end{split}
\end{equation*}
Summing up the two inequalities yields the desired bound.
\end{proof}
\begin{proof}[Proof of Theorem \ref{thm:stab2S2}]
The result follows immediately from Proposition \ref{S2stability2:prop} in view of the normalization $\lambda_2(\mu):=\lambda_2(\mathbb{S}^2,\mu)=2$.
\end{proof}
We remark that it is possible to develop the theory of stable comparison families for $\Lambda_2(M,\mC)$ following Section~\ref{quant_stability:sec}. Unfortunately, we do not have any examples of such families apart from the Nadirashvili's family used to prove Proposition~\ref{S2stability2:prop}, which is already used to obtain quantitative stability for $\Lambda_2(\mathbb{S}^2)$. As a result, the details of these definitions are omitted.

\subsection{Min-max characterization}
Since the min-max characterization of $\Lambda_2(M,\mC)$ is proved in~\cite{KS} the proof of the following proposition is the same as the proof of Proposition~\ref{existence:prop}.

\begin{proposition}
\label{existence2:prop}
Let $M$ be a surface with a fixed conformal class $\mC$ and let $\mu$ be an admissible measure on $M$. Then there exists $n>0$ such that for any admissible measure $\mu$ and any function $\psi\in W^{1,2}(M)$ there is a map $u\in W^{1,2}(M,\mathbb{S}^n)$ such that 
$$
\int_M u\,d\mu = \int_M \psi u\,d\mu = 0\in \mathbb{R}^{n+1}
$$
and $2E(u)\leqslant \Lambda_2(M,\mC)$. In particular, if $\psi$ is chosen to be  the $\lambda_1(M,\mC,\mu)$-eigenfunction, then
$$
\bar\lambda_2(M,\mC,\mu)\leqslant 2E(u)\leqslant \Lambda_2(M,\mC).
$$
\end{proposition}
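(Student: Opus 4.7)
The plan is to adapt the proof of Proposition~\ref{existence:prop} to the second eigenvalue, relying on the min-max characterization of $\Lambda_2(M,\mC)$ established in~\cite{KS}, which serves as the direct analog of Theorem~\ref{KS:thm} for $k=2$. That characterization provides, for all $n$ larger than some $N=N(M,\mC)$, a class of $\varepsilon$-regularized min-max families (indexed by a compact parameter space $Z$ of appropriate dimension and topology, with prescribed boundary data) whose limit as $\varepsilon\to 0$ equals $\tfrac{1}{2}\Lambda_2(M,\mC)$. As in Proposition~\ref{existence:prop}, for every $\varepsilon>0$ I would first select a family $F^\varepsilon\in C^0(Z, W^{1,2}(M,\mathbb{R}^{n+1}))$ whose maximal $\varepsilon$-energy is within $\varepsilon$ of the min-max value.

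Next, I would apply the Hersch-type topological lemma attached to this min-max class to produce a parameter $z_\varepsilon\in Z$ such that $u_\varepsilon:=F^\varepsilon_{z_\varepsilon}$ satisfies the two balancing conditions
$$
\int_M u_\varepsilon\,d\mu=0\in\mathbb{R}^{n+1} \qquad \text{and} \qquad \int_M \psi u_\varepsilon\,d\mu=0\in\mathbb{R}^{n+1}.
$$
The inequality of Lemma~\ref{main:lemma} with $k=2$ is phrased precisely to exploit this pair of constraints, so the role of $Z$ is to have enough topological complexity for a simultaneous zero of the $2(n+1)$ linear functionals $v\mapsto(\int_M v\,d\mu,\int_M \psi v\,d\mu)$ on the family. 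The existence of such a $z_\varepsilon$ is exactly what the min-max construction of $\Lambda_2$ in~\cite{KS} is designed to guarantee, via a degree/linking argument in $Z$.

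From this point the argument parallels Proposition~\ref{existence:prop}: the $\varepsilon$-energy bound forces $\int_M(1-|u_\varepsilon|^2)^2\,dv_g\to 0$ and $E(u_\varepsilon)\leqslant \tfrac{1}{2}\Lambda_2(M,\mC)+\varepsilon$, so a subsequence converges weakly in $W^{1,2}(M,\mathbb{R}^{n+1})$, strongly in $L^2(M,g)$, and (by admissibility of $\mu$) strongly in $L^2(\mu)$ to some $u\in W^{1,2}(M,\mathbb{S}^n)$; the two balancing conditions pass to the limit, and weak lower-semicontinuity of the Dirichlet energy gives $2E(u)\leqslant \Lambda_2(M,\mC)$. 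For the final assertion I would specialize to $\psi=\phi_1$, the $L^2(\mu)$-normalized first eigenfunction of $\mu$: the two conditions then say exactly that $u$ is $L^2(\mu)$-orthogonal to $\phi_0\equiv \mu(M)^{-1/2}$ and to $\phi_1$, i.e. $u\in V_2$ in the notation of Lemma~\ref{main:lemma}, so applying that lemma with $k=2$ yields $\bar\lambda_2(M,\mC,\mu)\leqslant 2E(u)$. The main obstacle compared to $k=1$ is the Hersch-type balancing step, whose delicate topological content is now encoded in the min-max class of~\cite{KS}; once that is available, no new analysis is required.
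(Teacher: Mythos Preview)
Your proposal is correct and follows essentially the same approach as the paper, which simply notes that since the min-max characterization of $\Lambda_2(M,\mC)$ is proved in~\cite{KS}, the proof is the same as that of Proposition~\ref{existence:prop}. You have accurately identified that the only new ingredient is the Hersch-type balancing step for the two constraints $\int_M u\,d\mu=0$ and $\int_M \psi u\,d\mu=0$, which is furnished by the min-max construction for $\Lambda_2$ in~\cite{KS}, after which the limiting argument and the application of Lemma~\ref{main:lemma} with $k=2$ proceed verbatim.
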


\subsection{Convergence of the maps} Let $\mu_\delta$ be a sequence of admissible measures such that $\bar\lambda_2(M,\mC,\mu_\delta)\to \Lambda_2(M,\mC)$ as $\delta\to 0$. Then by Proposition~\ref{existence2:prop} there exist maps $u_\delta\colon (M,\mC)\to\mathbb{S}^n$ such that 
$$
\int_M u_\delta\,d\mu_\delta = \int_M \psi_\delta u_\delta\,d\mu_\delta = 0\in \mathbb{R}^{n+1},
$$
where $\psi_\delta$ is any  $\lambda_1(M,\mC,\mu_\delta)$-eigenfunction. As a result, Lemma~\ref{main:lemma} can be applied with $k=2$.

\begin{proposition}
\label{ujconvergence2:prop}
Let $M\ne\mathbb{S}^2$. Then up to a choice of a subsequence one of the following holds.
\begin{enumerate}
\item $u_\delta$ converge strongly in $W^{1,2}(M,\mathbb{S}^n)$ to a $\bar\lambda_2$-maximal harmonic map $u$.
\item There exists a point $p\in M$ such that $u_\delta|_K$ converge strongly in $W^{1,2}(K,\mathbb{S}^n)$ to a $\bar\lambda_1$-maximal harmonic map $u|_K$  for any compact $K\Subset M\setminus\{p\}$. Furthermore, $\lim_{\delta\to 0} E(u_\delta) = E(u) + 4\pi$.
\end{enumerate}
In particular, if $\Lambda_2(M,\mC)>\Lambda_1(M,\mC)+8\pi$, then assertion $(1)$ holds.
\end{proposition}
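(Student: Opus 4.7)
The strategy closely parallels the proof of Proposition~\ref{ujconvergence:prop}, with three adjustments dictated by the passage from $\bar\lambda_1$ to $\bar\lambda_2$: an additional orthogonality constraint in the definition of good point, the possible presence of a bubble in the energy limit, and the use of the new non-concentration Lemma~\ref{kokarev:lemma}. First I would pass to a subsequence so that $u_\delta\rightharpoonup u$ weakly in $W^{1,2}(M,\mathbb{S}^n)$ and strongly in $L^2$, and so that $|du_\delta|_g^2\,dv_g\rightharpoonup^* \nu = |du|_g^2\,dv_g+\sigma$ for some nonnegative singular defect measure $\sigma$. Applying Lemma~\ref{main:lemma} with $k=2$, together with $2E(u_\delta)-\bar\lambda_2(\mu_\delta)\to 0$, gives a Palais--Smale-type estimate with $\epsilon_\delta\to 0$, so Proposition~\ref{wk.lim} shows that $u$ is harmonic.

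Next I would adapt Definition~\ref{def:good} to the $\bar\lambda_2$ setting by requiring positivity on functions supported in $\Omega_p$ that are $L^2(\mu_\delta)$-orthogonal to the first two eigenfunctions of $\mu_\delta$. The same contradiction argument as in~\cite[Proposition 4.6]{KNPP2} then shows that there are at most two bad points and that $\supp\sigma$ is contained in the bad set. To rule out the two-bad-point scenario I would invoke the new Lemma~\ref{kokarev:lemma}, which generalizes the Girouard--Kokarev~\cite{Girouard,Kokarev} non-concentration estimate and directly prohibits simultaneous concentration of the measures $\lambda_2(\mu_\delta)\mu_\delta$ at two distinct points of a $\bar\lambda_2$-maximizing sequence. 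Hence at most one bad point $p$ can occur, and the proof of Lemma~\ref{ujconvergence:lemma} (using the simplified open-set version afforded by Proposition~\ref{wk.lim}) yields strong $W^{1,2}$-convergence on every compact $K\Subset M\setminus\{p\}$.

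In the no-bad-point case, global strong convergence, combined with the $\bar\lambda_2$-version of the $W^{-1,2}$ arguments of Section~\ref{mudeltaconvergence:sec} and with upper-semicontinuity of eigenvalues~\cite[Proposition 1.1]{Kokarev}, produces case (1): $u$ is $\bar\lambda_2$-conformally maximal. In the one-bad-point case, $\sigma=w\delta_p$ with $w>0$, and the limit $u$ has spectral index at most one (since one of the two index directions has been transferred to the concentrating Dirac). I would then combine three ingredients: the standard $\varepsilon$-regularity/concentration-compactness lower bound $w\geqslant 4\pi$ for non-trivial harmonic $\mathbb{S}^2$-bubbles; the upper bound $2E(u)\leqslant\Lambda_1(M,\mC)$ coming from the spectral index; and the bubbling inequality $\Lambda_2(M,\mC)\geqslant\Lambda_1(M,\mC)+8\pi$ obtained by attaching a small round-sphere bubble to a $\bar\lambda_1$-maximizer. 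Together with the energy identity $\Lambda_2(M,\mC)=\lim 2E(u_\delta)=2E(u)+2w$, these force $w=4\pi$ and $2E(u)=\Lambda_1(M,\mC)$, so $u$ is $\bar\lambda_1$-conformally maximal; this is case (2). The final ``in particular'' claim then follows, because case (2) gives the exact identity $\Lambda_2(M,\mC)=\Lambda_1(M,\mC)+8\pi$, contradicting the strict inequality.

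The hard part will be making the bubble analysis sharp enough to pin down $w=4\pi$ and simultaneously to force $u$ to be $\bar\lambda_1$-maximal rather than merely harmonic of index at most one. This is precisely the issue Lemma~\ref{kokarev:lemma} is designed to resolve, and it is what distinguishes the $\bar\lambda_2$-analysis here from the $\bar\lambda_1$-analysis of Proposition~\ref{ujconvergence:prop}, in which bubbling was ruled out outright by Petrides' rigidity.
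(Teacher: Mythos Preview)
Your outline matches the paper's architecture up through the reduction to at most two bad points and the use of Lemma~\ref{kokarev:lemma} to exclude the two-bubble configuration. The gap is in the one-bubble case, where you switch away from Lemma~\ref{kokarev:lemma} to a combination of harmonic-map $\varepsilon$-regularity and a ``spectral index'' claim.

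That substitute argument does not close. First, the assertion that the limit $u$ has spectral index at most one is not justified: the heuristic ``one of the two index directions has been transferred to the Dirac'' is precisely what the \emph{third} bullet of Lemma~\ref{kokarev:lemma} makes rigorous via its test-function construction, and without invoking it you have no mechanism to prove $\lambda_1(|du|^2_g\,dv_g)=1$ or $2E(u)\leqslant\Lambda_1$. Second, even granting both your bubble bound and the spectral-index bound, the system
\[
\Lambda_2=2E(u)+w,\qquad 2E(u)\leqslant\Lambda_1,\qquad w\geqslant 8\pi,\qquad \Lambda_2\geqslant\Lambda_1+8\pi
\]
(here $w$ is the defect mass in $|du_\delta|^2\,dv_g$; note your energy identity should read $2E(u)+w$, not $2E(u)+2w$) does \emph{not} force $w=8\pi$ and $2E(u)=\Lambda_1$: for instance $2E(u)=\Lambda_1-8\pi$, $w=16\pi$ is consistent with all four constraints when $\Lambda_2=\Lambda_1+8\pi$. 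Bubble quantization for the merely Palais--Smale-type sequence $u_\delta$ would not help either, since a single bubble can carry energy strictly greater than $4\pi$.

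The paper instead applies Lemma~\ref{kokarev:lemma} \emph{again} in the one-bubble case. Its third bullet yields the crucial upper bound $\Lambda_2=\limsup\bar\lambda_2(\nu_\delta)\leqslant\bar\lambda_1\bigl(M,\mC,|du|^2_g\,dv_g\bigr)+8\pi$, which chained with $\bar\lambda_1\leqslant\Lambda_1$ and $\Lambda_1+8\pi\leqslant\Lambda_2$ forces all three to be equalities; hence $u$ is $\bar\lambda_1$-maximal and $\Lambda_2=\Lambda_1+8\pi$. The equality clause of the lemma then gives $w\,\lambda_1(|du|^2_g\,dv_g)=8\pi$, and since $u$ is now known to be $\bar\lambda_1$-maximal one has $\lambda_1=1$, whence $w=8\pi$. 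No harmonic-map bubble analysis is needed. You correctly flag in your last paragraph that Lemma~\ref{kokarev:lemma} is ``designed to resolve'' exactly this; the fix is to actually deploy its third bullet here rather than the $\varepsilon$-regularity substitute.
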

The following result is analogous to Lemma \ref{ujconvergence:lemma}.
\begin{lemma}
\label{ujconvergence2:lemma}
There exist two points $p_1,p_2\in M$ such that up to a choice of a subsequence for any compact $K\Subset M\setminus\{p_1,p_2\}$ the sequence $u_\delta|_K$ converges to $u|_K$ strongly in $W^{1,2}(K,\mathbb{S}^n)$.
\end{lemma}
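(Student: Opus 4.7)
The plan is to adapt the two-part strategy of Lemma~\ref{ujconvergence:lemma} (i.e.~\cite[Propositions 4.6--4.7]{KNPP2}) from the first eigenvalue to the second. The starting point is the quasimode inequality furnished by Lemma~\ref{main:lemma} applied with $k=2$: for every $v\in W^{1,2}(M,\mathbb{R}^{n+1})$,
$$
\left|\int_M\langle du_\delta,dv\rangle\,dv_g-\lambda_2(M,\mC,\mu_\delta)\int_M\langle u_\delta,v\rangle\,d\mu_\delta\right|\leqslant \eta_\delta\|dv\|_{L^2(M)},
$$
where $\eta_\delta:=[2E(u_\delta)-\bar\lambda_2(\mu_\delta)]^{1/2}\to 0$. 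Next, I would modify Definition~\ref{def:good} by replacing $\lambda_1$ with $\lambda_2$: a point $p\in M$ is called \emph{good} if there exist a neighbourhood $\Omega_p$ and a subsequence $\delta_m\to 0$ along which the Poincaré-type inequality
$$
\int_{\Omega_p}|df|^2\,dv_g-\lambda_2(M,\mC,\mu_{\delta_m})\int_{\Omega_p}f^2\,d\mu_{\delta_m}\geqslant 0
$$
holds for every $f\in W^{1,2}_0(\Omega_p)$, and \emph{bad} otherwise.

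The combinatorial heart of the proof is to show that there are at most two bad points. Suppose for contradiction that $p_1,p_2,p_3$ were bad. Fix pairwise disjoint neighbourhoods $\Omega_i\ni p_i$. Applying the definition of badness to shrinking balls $B_{r_k}(p_i)\subset\Omega_i$ and iteratively refining the subsequence, a standard diagonal extraction produces a common subsequence $\delta_m\to 0$ together with concentrating test functions $f_i^m\in W^{1,2}_0(\Omega_i)$ satisfying
$$
\int|df_i^m|^2\,dv_g<\lambda_2(M,\mC,\mu_{\delta_m})\int(f_i^m)^2\,d\mu_{\delta_m}\qquad (i=1,2,3,\ m\geqslant 1).
$$
Because the supports of $f_1^m,f_2^m,f_3^m$ are pairwise disjoint, their span $E_3^m$ is three-dimensional in $C^\infty(M)$ and in $L^2(\mu_{\delta_m})$, and every non-zero $f=\sum a_i f_i^m\in E_3^m$ obeys
$$
\frac{\int|df|^2\,dv_g}{\int f^2\,d\mu_{\delta_m}}=\frac{\sum a_i^2\int|df_i^m|^2\,dv_g}{\sum a_i^2\int(f_i^m)^2\,d\mu_{\delta_m}}<\lambda_2(M,\mC,\mu_{\delta_m}),
$$
which directly contradicts the min-max characterization~\eqref{eq:eigmes} of $\lambda_2(M,\mC,\mu_{\delta_m})$.

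Having bounded the number of bad points by two, the remaining task---strong $W^{1,2}$-convergence of $u_\delta$ to $u$ on compact subsets of $M\setminus\{p_1,p_2\}$---proceeds essentially verbatim as in the $k=1$ case. Around any good point $q$, invoke Lemma~\ref{lemma_H0} componentwise on the quasimodes $u_\delta^i$, with the admissible measure taken to be $\lambda_2(\mu_{\delta_m})\mu_{\delta_m}$ and the approximating sequence $w_m^i\to u^i$ supplied by \cite[Corollary 3.12]{KNPP2}; the $\lambda_2$-positivity built into the definition of good points furnishes exactly hypothesis~\eqref{positive:ineq}. This rules out concentration of the defect measure $d\nu-|du|^2\,dv_g$ at $q$, and a finite cover of any compact $K\Subset M\setminus\{p_1,p_2\}$ by such good-point neighbourhoods, combined with a Cantor diagonal argument over a compact exhaustion, yields the claim. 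The main technical subtlety is the uniform control of $\lambda_2(\mu_{\delta_m})$ during the diagonal extraction---this is automatic from the hypothesis $\bar\lambda_2(\mu_\delta)\to\Lambda_2(M,\mC)<\infty$ together with $\mu_\delta(M)$ bounded below, so this step is routine but must be tracked carefully.
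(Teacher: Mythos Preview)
Your proposal is correct and follows precisely the approach the paper intends: the paper states that Lemma~\ref{ujconvergence2:lemma} is ``analogous to Lemma~\ref{ujconvergence:lemma}'' and omits the proof, and your adaptation---replacing $\lambda_1$ by $\lambda_2$ in the definition of good points and upgrading the ``at most one bad point'' argument to ``at most two'' via a three-dimensional test space---is exactly the intended modification. One minor simplification: the diagonal extraction over shrinking balls is unnecessary, since if $p_i$ is bad then for any fixed neighbourhood $\Omega_i$ the set of indices $m$ admitting a failing test function is cofinite (otherwise its complement would be a subsequence witnessing goodness), and the intersection of three cofinite sets already yields a common index.
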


\begin{proof}[Proof of Proposition~\ref{ujconvergence2:prop}]
Up to a choice of a subsequence, we can assume that $|du_\delta|^2_g\,dv_g\rightharpoonup^*\mu$. Lemma~\ref{ujconvergence2:lemma} implies that 
$$
\mu = |du|^2_g\,dv_g + w_1\delta_{p_1} + w_2\delta_{p_2} 
$$
for some $w_1,w_2\geqslant 0$. Similarly to the proof of Proposition~\ref{ujconvergence:prop} one has $\nu_\delta:=\lambda_2(M,\mC,\mu_\delta)\mu_\delta \rightharpoonup^*\mu$.

The following lemma is a generalization of~\cite[Lemma 2.1, 3.1]{Kokarev}.

\begin{lemma}
\label{kokarev:lemma}
Let $\mu_i$ be a sequence of admissible measures such that 
$$
\mu_i\rightharpoonup^*\mu = \mu_r + w_1\delta_{p_1} + w_2\delta_{p_2} ,
$$
where $\mu_r$ is a smooth absolutely continuous measure.
\begin{itemize}
\item If $w_1,w_2,\mu_r\ne 0$, then
$$
\limsup\bar\lambda_2(M,\mC,\mu_i) = 0.
$$
\item If $\mu_r=0$, $w_1\ne 0$ then
$$
\limsup\bar\lambda_2(M,\mC,\mu_i) \leqslant  16\pi.
$$
\item If $\mu_r\ne 0$, $w_1\ne 0$, $w_2=0$, then 
$$
\limsup\lambda_2(M,\mC,\mu_i)\leqslant \min\left\{\lambda_1(M,c,\mu_r), \frac{8\pi}{w_1}\right\}.
$$
In particular,
\begin{equation}
\label{kokarev:ineq1}
\limsup\bar\lambda_2(M,\mC,\mu_i) \leqslant  8\pi + \bar\lambda_1(M,\mC,\mu_r)
\end{equation}
with equality only if $8\pi = w_1\lambda_1(M,\mC,\mu_r)$.
\end{itemize}
\end{lemma}

We postpone the proof of the lemma until the end of the section. Let us use it to finish the proof of Proposition~\ref{ujconvergence:prop}. By Proposition~\ref{wk.lim} the map $u$ is harmonic, i.e. it is smooth. Thus, Lemma~\ref{kokarev:lemma} can be applied to the sequence $\nu_\delta$. Since Petrides' bound $\Lambda_1(M,\mC)>8\pi$ (see~\cite{Petrides}) implies that $\Lambda_2(M,\mC)>16\pi$ and $\bar\lambda_2(M,\mC,\nu_\delta)\to\Lambda_2(M,\mC)>16\pi$, Lemma~\ref{kokarev:lemma} implies that $u$ is not constant and at least one of $w_1,w_2$ is zero. Thus, there are two cases.

{\bf Case 1:} $w_1=w_2=0$. Then $u_\delta$ converge to $u$ strongly in $W^{1,2}(M,\mathbb{S}^n)$. By upper-semicontinuity of the eigenvalues one has $\bar\lambda_2(M,\mC,|du|^2_g\,dv_g) = \Lambda_2(M,\mC)$ and, thus, $u$ is a $\bar\lambda_2$-conformally maximal harmonic map.

{\bf Case 2:} $w_2=0$, $w_1\ne 0$ and $u$ is not constant. Then by Lemma~\ref{kokarev:lemma} one has
$$
\Lambda_2(M,\mC)\leqslant \bar\lambda_1(M,\mC,|du|^2_g\,dv_g) + 8\pi\leqslant \Lambda_1(M,\mC)+8\pi\leqslant \Lambda_2(M,\mC),$$
where the last inequality is a well-known property of conformal eigenvalues, see e.g.~\cite{CE}. In particular, one has $\bar\lambda_1(M,\mC,|du|^2_g\,dv_g)=\Lambda_1(M,\mC)$, i.e. $u$ is $\bar\lambda_1$-conformally maximal harmonic map; $\Lambda_1(M,\mC) + 8\pi = \Lambda_2(M,\mC)$; and the equality in~\eqref{kokarev:ineq1} holds. Thus, 
$8\pi = w_1\lambda_1(M,\mC,|du|^2_g\,dv_g) = w_1$, i.e. $w_1=8\pi$. Since $E(u)$ is half the total measure of $|du|^2_g\,dv_g$, the equality $\lim E(u_\delta) = E(u)+4\pi$ follows.
\end{proof}

\begin{proof}[Proof of Lemma~\ref{kokarev:lemma}]
The idea is to construct test-functions associated to the three components of $\mu$, such that for different components these test-function have disjoint support. 

For each point $p_j$, $j=1,2$ there exists a neighbourhood of $p_j$ and a conformally flat metric $g_j$ defined in this neighbourhood. In the following $B_r(p_j)$ denotes the (open) ball around $p_j$ of radius $r$ in the metric $g_j$. It is defined for $r$ small enough. It is easy to see that up to a choice of a subsequence there exist $\gamma^j_i,\varepsilon^j_i\to 0$ such that $\dfrac{\gamma^j_i}{\varepsilon^j_i}\to 0$,
$$
\mu_i\left(B_{\varepsilon^j_i}(p_j)\setminus B_{\gamma^j_i}(p_j)\right) = o(1);\, \mu_i\left(B_{\gamma^j_i}(p_j)\right) = w_j + o(1);\, \mu_r\left(B_{\varepsilon^j_i}(p_j)\right) = o(1)
$$
and for a fixed $i_0$ one has
\begin{equation}
\label{kineq1}
\mu_i\left( B_{\varepsilon^j_{i_0}}(p_j)\setminus B_{\gamma^j_i}(p_j) \right)\to \mu_r\left(B_{\varepsilon^j_{i_0}}(p_j)\right).
\end{equation}
This is a simple measure-theoretic fact, for the proof see e.g.~\cite[Lemma 5.1]{KNPP2}. Set $\delta^j_i = \sqrt{\gamma^j_i\varepsilon^j_i}$, then $\frac{\gamma^j_i}{\delta^j_i} = \frac{\delta^j_i}{\varepsilon^j_i}\to 0$. In particular, there exist cut-off functions $0\leqslant\psi^j_i\leqslant 1$, $j=1,2,r$ such that $\|d\psi^j_i\|_{L^2}\to 0$ and for $j=r$
\begin{equation*}
\psi^r_i = 
\begin{cases}
1\,\,\,\text{on  } M\setminus (B_{\varepsilon^1_i}(p_1)\cup B_{\varepsilon^2_i}(p_2)),\\
0\,\,\,\text{on  } B_{\delta^1_i}(p_1)\cup B_{\delta^2_i}(p_2),
\end{cases}
\end{equation*}
and for $j=1,2$
\begin{equation*}
\psi^j_i = 
\begin{cases}
1\,\,\,\text{on  } B_{\gamma^j_i}(p_j), \\
0\,\,\,\text{on  } M\setminus B_{\delta^j_i}(p_j).
\end{cases}
\end{equation*}
These cut-offs have disjoint support and serve as multipliers for our test-functions.

{\bf Test-functions associated to $\mu_r$.} Suppose that $\mu_r\ne 0$. Let $V^r_k$ be a direct sum of the eigenspaces corresponding to $\lambda_m(M,\mC,\mu_r)$, $m=0,\ldots, k$. Since $\mu_r$ is smooth and $V^r_k$ is finite-dimensional, there exists $C_m>0$ such that for any $f\in V^r_k$ one has $\|f\|_{L^\infty}\leqslant C_m\|f\|_{L^2(M,\mu_r)}$. Consider a function $\psi^r_i f$, then one has
\begin{equation*}
\begin{split}
&\|d(\psi^r_if)\|^2_{L^2}\leqslant (1+\alpha)\|df\|^2_{L^2} + (1+\alpha^{-1})\|f\|^2_{L^\infty}\|d\psi^r_i\|^2_{L^2}\leqslant\\
&\left[(1+\alpha)\lambda_k(M,c,\mu_r) + (1+\alpha^{-1})C^2_m\|d\psi^r_i\|^2_{L^2}\right]\|f\|^2_{L^2(M,\mu_r)}
\end{split}
\end{equation*}
Setting $\alpha=\|d\psi^r_i\|_{L^2} = o(1)$ one obtains
\begin{equation}
\label{kineq0}
\|d(\psi^r_if)\|^2_{L^2}\leqslant (\lambda_k(M,c,\mu_r)+o(1))\|f\|^2_{L^2(M,\mu_r)}.
\end{equation}
In order for $\psi^r_if$ to be a good test-function one needs to obtain the same estimate with $\|f\|^2_{L^2(M,\mu_r)}$ replaced by $\|\psi_i^rf\|^2_{L^2(M,\mu_i)}$. Fix $\eta>0$ and $i_0$ such that $\mu_r\left(B_{\varepsilon^j_{i_0}}(p_j)\right)<\eta$ for $j=1,2$. Then for $f_1,f_2\in V_k^r$ one has
\begin{equation*}
\begin{split}
&\left |\int f_1f_2(\psi^r_i)^2\,d\mu_i - \int f_1f_2\,d\mu_r\right|\leqslant 
\left |\int f_1f_2(\psi^r_i)^2\,d\mu_i - \int f_1f_2(\psi^r_{i_0})^2\,d\mu_i\right| + \\
&+ \left | \int f_1f_2(\psi^r_{i_0})^2\,d\mu_i-\int 
f_1f_2(\psi^r_{i_0})^2\,d\mu_r\right| + \left |\int f_1f_2(\psi^r_{i_0})^2\,d\mu_r - \int f_1f_2\,d\mu_r\right|\leqslant \\
&\leqslant \|f_1\|_{L^\infty}\|f_2\|_{L^\infty}\left(\sum_{j=1}^2\mu_i\left( B_{\varepsilon^j_{i_0}}(p_j)\setminus B_{\delta^j_i}(p_j) \right) + \mu_r\left(B_{\varepsilon^j_{i_0}}(p_j)\right)\right) +\\
&+\left | \int f_1f_2(\psi^r_{i_0})^2\,d\mu_i-\int f_1f_2(\psi^r_{i_0})^2\,d\mu_r\right| \to 2\|f_1\|_{L^\infty}\|f_2\|_{L^\infty}\mu_r\left(B_{\varepsilon^j_{i_0}}(p_j)\right)\leqslant C\eta,
\end{split}
\end{equation*}
where in the limit we used~\eqref{kineq1} and that the support of $f_1f_2(\psi^r_{i_0})^2$ is disjoint from $p_1,p_2$. Since $\eta>0$ is arbitrary we conclude that 
\begin{equation}
\label{kineq2}
\int f_1f_2\,d\mu_r= (1+o_{f_1,f_2}(1))\int f_1f_2(\psi^r_i)^2\,d\mu_i,
\end{equation}
where the index in the $o(1)$ indicates that the speed of convergence depends on $f_1,f_2$. Since $f_1, f_2$ lie in a finite-dimensional space, this estimate can be interpreted as the convergence of a sequence of quadratic forms on $V_k^r$. 
To be more precise,
let $f_i$, $i=1,\ldots, k+1$ be an $L^2(\mu_r)$-orthonormal basis of $V^r_k$ and $f=\sum a_if_i$ be a function in $V^r_k$. Then one has
\begin{equation*}
\begin{split}
&\int (\psi_i^rf)^2\,d\mu_i = \sum_{p,q=1}^{k+1} a_pa_q\int f_pf_q(\psi_i^r)^2\,d\mu_i =\\ 
& (1+o(1))\sum_{p,q=1}^{k+1}a_pa_q\int f_pf_q\,d\mu_r = (1+o(1))\|f\|^2_{L^2(\mu_r)},
\end{split}
\end{equation*}
where the advantage of taking the basis is that now we apply~\eqref{kineq2} to finitely many pairs and, thus, $o(1)$ does no longer depend on the function $f$. Combining with~\eqref{kineq0} one obtains 
\begin{equation}
\label{tfr}
\|d(\psi^r_if)\|^2_{L^2}\leqslant (\lambda_k(M,\mC,\mu_r)+o(1))\|\psi^r_if\|^2_{L^2(\mu_i)}.
\end{equation}
Note that this construction works for any number $k$ and the min-max characterisation is not required.

{\bf Test-functions associated to bubble points.} Fix $j=1,2$ and assume that $w_j\ne 0$. We show how to use the min-max characterization to construct good test-functions concentrated near the bubble point $p_j$. They are constructed inductively, such that each function is $L^2(\mu_i)$-orthogonal to the previous ones. The first function is $f^j_{i,0}:=\psi^j_i$, so that 
$$
\|df^j_{i,0}\|^2_{L^2} = \int |d\psi^j_i|^2_g\,dv_g = o(1).
$$
At the same time, 
$$
\|f^j_{i,0}\|^2_{L^2(\mu_i)}\geqslant\mu_i(B_{\gamma_i^j}(p_j))=w_j+o(1)>0. 
$$
Combining these two inequalities yields
\begin{equation}
\label{tfj0}
\|df^j_{i,0}\|^2_{L^2}\leqslant o(1)\|f^j_{i,0}\|^2_{L^2(\mu_i)}.
\end{equation}

To construct the second function $f^j_{i,1}$ one could use the Hersch trick. For example, this is the approach in 
~\cite[Lemma 3.1]{Kokarev},~\cite[Claim 12]{Petrides},~\cite[Theorem 1.1.3]{Girouard}. In order to emphasize the role of the min-max characterization, we phrase our construction in a different way. This makes the construction of $f^j_{i,2}$ an easy modification of the argument below. 

Let $\pi\colon B_{\delta^j_i}(p_j)\to\Omega^j_i\subset\mathbb{S}^2$ be the inverse stereographic projection and $g_0$ be the standard metric on $\mathbb{S}^2$. Define the sequence of measures $\mu^j_i$ on $\mathbb{S}^2$ by the formula $\mu^j_i = \pi_*((\psi^j_i)^2\mu_i)$. Since $\pi$ is conformal, it preserves the Dirichlet integrals. In particular, it is easy to see that admissibility of $\mu_i$ implies the admissibility $\mu^j_i$. 
By Proposition~\ref{existence:prop} there exists a map $u_{i,1}^j\colon \mathbb{S}^2\to \mathbb{S}^n$ with vanishing $\mu^j_i$-average and $2E(u_{i,1}^j)\leqslant \Lambda_1(\mathbb{S}^2)=8\pi$. In fact, Hersch trick implies that $n=2$ and $u_{i,1}^j$ is a conformal automorphism of $\mathbb{S}^2$, but it is not important for the argument. Since
$$
\int_{\mathbb{S}^2}|u_{i,1}^j|^2\,d\mu^j_i = \int_M(\psi^j_i)^2\,d\mu_i = w_j+o(1)
$$ 
there exists a component $v^j_{i,1}:=(u_{i,1}^j)^{m^j_{i,1}}$ of $u_{i,1}^j$ such that
\begin{equation}
\label{kineq3}
\int_{\mathbb{S}^2}|dv^j_{i,1}|_{g_0}^2\,dv_{g_0}\leqslant\frac{8\pi}{w_j+o(1)}\int_{\mathbb{S}^2}(v^j_{i,1})^2\,d\mu^j_i,\quad \int_{\mathbb{S}^2}(v^j_{i,1})^2\,d\mu^j_i\geqslant \|d\psi^j_i\|^{\frac{1}{2}}_{L^2}.
\end{equation}
Indeed, otherwise there exist $\eta>0$ such that for all $k=1,\ldots, n+1$ one has
$$
\int_{\mathbb{S}^2} \left[(u_{i,1}^j)^k\right]^2\,d\mu_i^j\leqslant \frac{w_j-\eta}{8\pi}\|d(u_{i,1}^j)^k\|^2_{L^2} + \|d\psi^j_i\|^{\frac{1}{2}}_{L^2}.
$$
Summing over all $k$ yields
$$
w_j +o(1) = \int_{\mathbb{S}^2} |u_{i,1}^j|^2\,d\mu_i^j\leqslant 2E(u^j_{i,1})\frac{w_j-\eta}{8\pi} + (n+1)\|d\psi^j_i\|^{\frac{1}{2}}_{L^2}\to w_j-\eta,
$$
which is a contradiction.

We define $f^j_{i,1}:= \psi^j_i\pi^*v^j_{i,1}$ on $B_{\delta_j^i}(p_j)$ and $0$ outside. Then one has
$$
\int_M f^j_{i,1}f^j_{i,0}\,d\mu_i = \int_M (\pi^*v^j_{i,1})(\psi^j_i)^2\,d\mu_i = \int_{\mathbb{S}^2}v^j_{i,1}\,d\mu^j_i = 0,
$$
where the last equality holds because $\mu^j_i$-average of $u_{i,1}^j$ vanishes.
Furthermore, since $\psi_i^j, |v^j_{i,1}|\leqslant 1$ one has
\begin{equation*}
\begin{split}
&\int_M|df^j_{i,1}|^2_g\,dv_g = \int_{\mathbb{S}^2}|d((\pi^{-1})^*\psi^j_i)v^j_{i,1}|^2_{g_0}\,dv_{g_0}\leqslant \\
&\leqslant (1+\alpha)\int_{\mathbb{S}^2} |dv^j_{i,1}|^2_{g_0}\,dv_{g_0} + \left(1+\frac{1}{\alpha}\right)\int_{M} |d\psi^j_i|_{g}^2\,dv_{g}.
\end{split}
\end{equation*}
Setting $\alpha = \|d\psi_i^j\|_{L^2}$ and using~\eqref{kineq3} one obtains
\begin{equation*}
\begin{split}
&\int_M|df^j_{i,1}|^2_g\,dv_g \leqslant \frac{8\pi}{w_j+o(1)}\int_{\mathbb{S}^2}(v^j_{i,1})^2\,d\mu^j_i + (1+o(1))\|d\psi_i^j\|_{L^2} = \\
& =(1+o(1))\frac{8\pi}{w_j}\int_{\mathbb{S}^2}(v^j_{i,1})^2\,d\mu^j_i = (1+o(1))\frac{8\pi}{w_j}\int_M(\pi^*v^j_{i,1})^2(\psi_i^j)^2\,d\mu_i = \\
& = (1+o(1))\frac{8\pi}{w_j}\int_M (f^j_{i,1})^2\,d\mu_i.
\end{split}
\end{equation*}
As a result, we arrive at a $2$-dimensional space $V^j_1:=\mathrm{Span}\{f^j_{i,0},f^j_{i,1}\}$ of functions supported in $B_{\delta^j_i}(p_j)$ such that for any $f\in V^j_1$ one has 
\begin{equation}
\label{tfj1}
\|df\|^2_{L^2}\leqslant (1+o(1))\frac{8\pi}{w_j}\|f\|^2_{L^2(\mu_i)}.
\end{equation}

The third function $f^j_{i,2}$ is constructed in the same way as $f^j_{i,1}$. We use Proposition~\ref{existence2:prop} with $\psi = v^j_{i,1}$, $\mu=\mu^j_i$ and find a map $u^j_{i,2}\colon \mathbb{S}^2\to\mathbb{S}^n$ such that 
\begin{equation}
\label{kineq5}
\int_{\mathbb{S}^2}u^j_{i,2}\,d\mu_i = \int_{\mathbb{S}^2}u^j_{i,2}v^j_{i,1}\,d\mu_i = 0
\end{equation}
and $2E(u)\leqslant \Lambda_2(\mathbb{S}^2)=16\pi$. Similarly, there exists a component $v^j_{i,2}:=(u^j_{i,2})^{m^j_{i,2}}$ such that
\begin{equation}
\label{kineq4}
\int_{\mathbb{S}^2}|dv^j_{i,2}|_{g_0}^2\,dv_{g_0}\leqslant\frac{16\pi}{w_j+o(1)}\int_{\mathbb{S}^2}(v^j_{i,2})^2\,d\mu^j_i,\quad \int_{\mathbb{S}^2}(v^j_{i,2})^2\,d\mu_{i}^j\geqslant \|d\psi^j_i\|^{\frac{1}{2}}_{L^2}.
\end{equation}
We define $f^j_{i,2}:= \psi^j_i\pi^*v^j_{i,2}$ on $B_{\delta_j^i}(p_j)$ and $0$ outside. Then all the arguments for $f^j_{i,1}$ carry over to $f^j_{i,2}$. In particular, $f^j_{i,2}\perp f^j_{i,0}$ in $L^2(\mu_i)$ and
$$
\int_M|df^j_{i,2}|^2_g\,dv_g \leqslant (1+o(1))\frac{16\pi}{w_j}\int_M (f^j_{i,2})^2\,d\mu_i.
$$
The additional property is that $f^j_{i,2}\perp f^j_{i,1}$ in $L^2(\mu_i)$. Indeed, by~\eqref{kineq5}
$$
\int_M f^j_{i,1}f^j_{i,2}\,d\mu_i = \int_M \pi^*(v^j_{i,1}v^j_{i,2})(\psi^j_i)^2\,d\mu_i = \int_{\mathbb{S}^2}v^j_{i,1}v^j_{i,2}\,d\mu^j_i = 0.
$$
As a result, we arrive at a $3$-dimensional space $V^j_2:=\mathrm{Span}\{f^j_{i,0},f^j_{i,1}, f^j_{i,2}\}$ of functions supported in $B_{\delta^j_i}(p_j)$ such that for any $f\in V^j_2$ one has 
\begin{equation}
\label{tfj2}
\|df\|^2_{L^2}\leqslant (1+o(1))\frac{16\pi}{w_j}\|f\|^2_{L^2(\mu_i)}.
\end{equation}

{\bf Estimates on the eigenvalues $\lambda_2(M,\mC,\mu_i)$.} Suppose that $\mu_r\ne 0$, $w_1,w_2\ne 0$. Consider the $3$-dimensional space $\psi_i^rV^r_0\oplus V_0^1\oplus V_0^2$. Since the supports of distinct summands are disjoint, the inequalities~\eqref{tfr},\eqref{tfj0} imply that 
$$
\limsup\lambda_2(M,\mC,\mu_i)=0.
$$
At the same time, $\mu_i(M)\to \mu_r(M) + w_1+w_2<\infty$, therefore, the same is true for the normalized eigenvalues $\bar\lambda_2(M,c,\mu_i)$.

Suppose $\mu_r=0$, but $w_1, w_2\ne 0$. Consider the $3$-dimensional spaces $V_0^1\oplus V_1^2$ and $V_2=V_1^1\oplus V^2_0$. Similarly to the previous case, the inequalities~\eqref{tfj0},\eqref{tfj1} imply that
$$
\limsup\lambda_2(M,\mC,\mu_i)\leqslant\min\left\{\frac{8\pi}{w_1},\frac{8\pi}{w_2}\right\}.
$$
In particular,
$$
\limsup\bar\lambda_2(M,\mC,\mu_i)\leqslant\min\left\{\frac{8\pi(w_1+w_2)}{w_1},\frac{8\pi(w_1+w_2)}{w_2}\right\}\leqslant 16\pi.
$$

Suppose $\mu_r=0$, $w_2=0$, but $w_1\ne 0$. Consider the $3$-dimensional space $V^1_2$. Then the inequality~\eqref{tfj2} implies that
$$
\limsup\lambda_2(M,\mC,\mu_i)\leqslant \frac{16\pi}{w_1}.
$$
Since $\mu_i(M)\to w_1$ one has
$$
\limsup\bar\lambda_2(M,\mC,\mu_i)\leqslant16\pi.
$$

Finally, suppose $\mu_r\ne 0$, $w_1\ne 0$, $w_2=0$. Consider the $3$-dimensional spaces $\psi^r_iV^r_1\oplus V^1_0$ and $\psi^r_iV^r_0\oplus V^1_1$. Then the inequalities~\eqref{tfr},\eqref{tfj0},\eqref{tfj1} imply
$$
\limsup\lambda_2(M,\mC,\mu_i)\leqslant\min\left\{\lambda_1(M,\mC,\mu_r),\frac{8\pi}{w_1}\right\}.
$$
Since $\mu_i(M)\to \mu_r(M) + w_1$ one has
\begin{equation*}
\begin{split}
&\limsup\bar\lambda_2(M,\mC,\mu_i)\leqslant\min\left\{\lambda_1(M,\mC,\mu_r),\frac{8\pi}{w_1}\right\}(\mu_r(M) + w_1)\leqslant\\
&\leqslant \lambda_1(M,\mC,\mu_r)\mu_r(M) + 8\pi.
\end{split}
\end{equation*}
\end{proof}
\begin{remark}
The only non-trivial part of the proof is the min-max characterization of $\Lambda_2(M,\mC)$. In particular, the same proof could be generalized to $k>2$ once the min-max characterization of $\Lambda_k(M,\mC)$ is proved. 
\end{remark}

\subsection{Convergence of $\mu_\delta$}

The contents of Section~\ref{mudeltaconvergence:sec} carry over with only minor modifications to the case of a regular $\bar\lambda_2$-conformally maximizing sequence. Below we state the analog of Theorem~\ref{thm:qualconf}. 

\begin{theorem} [Qualititative stability for $\Lambda_2(M,\mC)$]
\label{thm:higher_conf_stab}
Let $M\ne \mathbb{S}^2$ be a closed surface with a fixed conformal class $\mC$ and $g\in \mC$ be a background metric. Let $\mu_j$ be a sequence of admissible measures, such that $\bar\lambda_2(M,\mC,\mu_j)\to\Lambda_2(M,\mC)$. If $\Lambda_2(M,\mC)>\Lambda_1(M,\mC)+8\pi$ or if the sequence $\lambda_2(M,\mC,\mu_j)\mu_j$ converges to an absolutely continuous measure, then there exists  $\bar\lambda_2$-conformally maximal measure $\mu$ such that $\lambda_2(M,\mC,\mu_j)\mu_j$ converges to $\lambda_2(M,\mC,\mu)\mu$ strongly in $W^{-1,2}(M,g)$.
\end{theorem}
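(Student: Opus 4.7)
The plan is to adapt the proof of Theorem \ref{thm:qualconf} to the second-eigenvalue setting, combining Proposition \ref{existence2:prop}, Proposition \ref{ujconvergence2:prop}, and a $k=2$ analogue of Proposition \ref{convergence:prop}. Compared to the first-eigenvalue case, the new ingredient is to rule out the bubbling alternative supplied by Proposition \ref{ujconvergence2:prop}, and for this I shall use either one of the two hypotheses in the statement.

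First I would apply Proposition \ref{existence2:prop}, with $\psi$ chosen to be a first eigenfunction of $\mu_j$, to obtain a sequence of maps $u_j \in W^{1,2}(M,\mathbb{S}^n)$ satisfying
$$
\int_M u_j\, d\mu_j = \int_M \psi\, u_j\, d\mu_j = 0 \in \mathbb{R}^{n+1},\qquad \bar\lambda_2(M,\mC,\mu_j) \leqslant 2E(u_j) \leqslant \Lambda_2(M,\mC),
$$
so that $\delta_j^2 := 2E(u_j) - \bar\lambda_2(M,\mC,\mu_j) \to 0$. Invoking Proposition \ref{ujconvergence2:prop} and extracting a subsequence, I arrive at either (1) strong $W^{1,2}$ convergence $u_j \to u$ to a $\bar\lambda_2$-conformally maximal harmonic map, or (2) bubbling at a single point $p \in M$, with the weak limit $u$ being a $\bar\lambda_1$-conformally maximal harmonic map and $\lim_j 2E(u_j) = 2E(u) + 8\pi$.

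Next I would rule out alternative (2) under each of the two hypotheses. If $\Lambda_2(M,\mC) > \Lambda_1(M,\mC) + 8\pi$, then case (2) forces $\Lambda_2(M,\mC) = \lim_j 2E(u_j) = 2E(u) + 8\pi \leqslant \Lambda_1(M,\mC) + 8\pi < \Lambda_2(M,\mC)$, which is impossible; this is already observed at the end of Proposition \ref{ujconvergence2:prop}. Alternatively, if the weak-$*$ limit of $\lambda_2(M,\mC,\mu_j)\mu_j$ is absolutely continuous, I observe that $\lambda_2(\mu_j)\mu_j$ and $|du_j|^2_g\,dv_g$ must share the same weak-$*$ limit in $C^0(M)^*$: indeed, inequality \eqref{main:eq3} of Lemma \ref{main:lemma} together with the uniform $W^{1,2}$ bound on $u_j$ and $\delta_j \to 0$ shows they differ by a vanishing sequence in $(C^0\cap W^{1,2})^*$, and their Radon measure limits are then determined by testing against smooth functions. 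The identification carried out in the proof of Proposition \ref{ujconvergence2:prop} shows this common limit to be $|du|^2_g\,dv_g + 8\pi\delta_p$ in case (2), contradicting absolute continuity.

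Having excluded bubbling, I am in case (1) with $u_j \to u$ strongly in $W^{1,2}(M,\mathbb{S}^n)$ and $\mu := \lambda_2(M,\mC,|du|^2_g dv_g)^{-1}\,|du|^2_g\,dv_g$ a $\bar\lambda_2$-conformally maximal measure. The final step is to run the proof of Proposition \ref{convergence:prop} with $k=1$ replaced by $k=2$: the harmonicity of $u$ gives $\int |du|^2_g \langle u, v\rangle\,dv_g = \int \langle du, dv\rangle\,dv_g$ for all $v$; the $\mu_j$-average $(u)_j := \mu_j(M)^{-1}\int u\,d\mu_j = \mu_j(M)^{-1}\int (u-u_j)\,d\mu_j$ vanishes to leading order because $u_j \to u$ in $L^2(\mu_j)$ by admissibility; and Lemma \ref{main:lemma} for $k=2$ furnishes the control $|\int \langle du_j, dv\rangle - \lambda_2(\mu_j)\int \langle u_j, v\rangle\,d\mu_j| \leqslant \delta_j \|dv\|_{L^2}$. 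Plugging $v = \varphi(u-(u)_j)/|u-(u)_j|^2$ into this estimate and tracking the error terms as in Proposition \ref{convergence:prop} yields
$$
\|\lambda_2(M,\mC,\mu_j)\mu_j - \lambda_2(M,\mC,\mu)\mu\|_{W^{-1,2}(M,g)} \leqslant C(\|u-u_j\|_{W^{1,2}} + \delta_j),
$$
which tends to $0$ as $j \to \infty$. The main difficulty is thus the bubbling dichotomy: without additional assumptions, a $\bar\lambda_2$-maximizing sequence can produce a Dirac component in the limit which is not in $W^{-1,2}(M,g)$, and the two hypotheses in the theorem are precisely calibrated to exclude this degeneration.
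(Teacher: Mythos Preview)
Your proposal is correct and follows precisely the route the paper indicates: the paper does not give a self-contained proof of this theorem but simply states that ``the contents of Section~\ref{mudeltaconvergence:sec} carry over with only minor modifications to the case of a regular $\bar\lambda_2$-conformally maximizing sequence,'' and your outline makes those modifications explicit. In particular, your use of Proposition~\ref{ujconvergence2:prop} to obtain the dichotomy, your two arguments for excluding the bubbling alternative (the strict inequality hypothesis being handled exactly as in the last line of Proposition~\ref{ujconvergence2:prop}, and the absolute-continuity hypothesis being handled via the identification $\lambda_2(\mu_j)\mu_j\rightharpoonup^* |du|^2dv_g+8\pi\delta_p$ established in that proof), and your invocation of the $k=2$ analogue of Proposition~\ref{convergence:prop} all match the paper's intended argument.

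One small point worth tightening: in the adaptation of Proposition~\ref{convergence:prop}, the Poincar\'e-type step bounding $\|v-(v)_\delta\|_{L^2(\mu_\delta)}$ by $\lambda_1(\mu_\delta)^{-1/2}\|dv\|_{L^2}$ still involves $\lambda_1(\mu_\delta)$ rather than $\lambda_2(\mu_\delta)$, so the resulting constant picks up a factor $\lambda_2(\mu_\delta)/\lambda_1(\mu_\delta)$; for the qualitative statement this is harmless once one notes that in case~(1) the weak-$*$ limit $|du|^2_g\,dv_g$ is smooth and nondegenerate, forcing $\lambda_1(\mu_\delta)$ to stay bounded away from zero along the subsequence. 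Your phrase ``$u_j\to u$ in $L^2(\mu_j)$ by admissibility'' is a slight shortcut for the more careful test-function argument in the proof of Proposition~\ref{convergence:prop} (setting $v=(u^i-u_\delta^i)u_\delta$), but the conclusion is the same.
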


For bubbling sequence one has the *-weak convergence by definition. Since for any $1\geqslant\alpha>0$ the embedding $C^\alpha(M)\to C^0(M)$ is compact, *-weak convergence implies strong $(C^\alpha(M))^*$-convergence. The case of $\alpha=1$ is of particular importance, see Section~\ref{Bubble_quant:sec}.

\subsection{Quantitative stability for regular maximizing sequences via Jacobi fields} The content of Section~\ref{quantII:sec} easily carries over to the case of regular $\bar\lambda_2$-conformally maximizing sequences. We state the corresponding result.

\begin{corollary}
\label{Jacobi2:cor}
Let $(M,\mC)$ be such that $\Lambda_2(M,\mC)>\Lambda_1(M,\mC)+8\pi $ and $g\in \mC$ be a background metric. Assume that for any $n$ and any $\bar\lambda_2$-conformally maximal harmonic map $u\colon (M,\mC)\to\mathbb{S}^n$ there are no non-trivial Jacobi fields along $u$. Then there exist constants $C,\delta_0>0$ depending only on $M,\mC,g$ such that for any admissible measure $\mu$ satisfying $\Lambda_2(M,\mC)-\bar\lambda_2(M,\mC,\mu)\leqslant \delta_0$ there exists a $\bar\lambda_2$-conformally maximal measure $\mu_0$ such that 
$$
\|\lambda_2(M,\mC,\mu)\mu-\lambda_2(M,\mC,\mu_0)\mu_0\|_{W^{-1,2}(M,g)}\leqslant C\sqrt{\Lambda_2(M,\mC)-\bar\lambda_2(M,\mC,\mu)}.
$$ 
\end{corollary}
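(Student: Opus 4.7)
The plan is to mirror the proof of Corollary~\ref{Jacobi:cor} by arguing by contradiction, with the only substantive new ingredient being the use of the energy gap hypothesis $\Lambda_2(M,\mC) > \Lambda_1(M,\mC) + 8\pi$ to rule out bubbling in the limit. Suppose the conclusion fails: then there exists a sequence of admissible measures $\mu_\delta$ with $\Lambda_2(M,\mC) - \bar\lambda_2(M,\mC,\mu_\delta) \leqslant \delta^2$ such that for every $\bar\lambda_2$-conformally maximal measure $\mu_0$,
$$
\delta^{-1}\|\lambda_2(M,\mC,\mu_\delta)\mu_\delta - \lambda_2(M,\mC,\mu_0)\mu_0\|_{W^{-1,2}(M,g)} \to \infty.
$$
Apply Proposition~\ref{existence2:prop} with $\psi_\delta$ the first eigenfunction of $\mu_\delta$ to produce maps $u_\delta \in W^{1,2}(M,\mathbb{S}^n)$ satisfying the balancing conditions of Lemma~\ref{main:lemma} for $k=2$, with $\bar\lambda_2(M,\mC,\mu_\delta) \leqslant 2E(u_\delta) \leqslant \Lambda_2(M,\mC)$.

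Now invoke Proposition~\ref{ujconvergence2:prop}: since $\Lambda_2(M,\mC) > \Lambda_1(M,\mC) + 8\pi$, the bubbling alternative cannot occur (it would force $\lim 2E(u_\delta) = 2E(u) + 8\pi \leqslant \Lambda_1(M,\mC) + 8\pi < \Lambda_2(M,\mC)$). Thus, passing to a subsequence, $u_\delta \to u$ strongly in $W^{1,2}(M,\mathbb{S}^n)$ where $u$ is a $\bar\lambda_2$-conformally maximal harmonic map, and the corresponding $\bar\lambda_2$-conformally maximal measure $\mu_0 = \lambda_2(M,\mC,\mu_0)^{-1}|du|^2_g\, dv_g$ is the obvious candidate limit. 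By the analog of Proposition~\ref{convergence:prop} (whose proof uses only Lemma~\ref{main:lemma}, and goes through verbatim for $k=2$ upon replacing $\lambda_1$ by $\lambda_2$), one obtains
$$
\|\lambda_2(M,\mC,\mu_\delta)\mu_\delta - \lambda_2(M,\mC,\mu_0)\mu_0\|_{W^{-1,2}} \leqslant C(\|u-u_\delta\|_{W^{1,2}} + \delta).
$$
Consequently the contradiction hypothesis forces $\delta/\beta_\delta \to 0$, where $\beta_\delta := \|u - u_\delta\|_{L^2}$.

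Finally, replay the Jacobi field extraction from Proposition~\ref{Jacobi:prop}. After rotating $u_\delta$ by an element of $SO(n+1)$ minimizing $\|u - Au_\delta\|_{L^2}$, the blow-up $w_\delta := (u - u_\delta)/\beta_\delta$ is bounded in $W^{1,2}$ (using $2E(u_\delta) \leqslant \Lambda_2(M,\mC)$ and harmonicity of $u$), converges weakly to some $w \in W^{1,2}(M,\mathbb{R}^{n+1})$ with $\langle w, u\rangle \equiv 0$ and $\int_M \langle w, Bu\rangle\, dv_g = 0$ for every $B \in \mathfrak{so}(n+1)$, and for any smooth $v \perp u$ satisfies
\begin{equation*}
\begin{split}
|I_u(w,v)| &\leqslant \lim_{\delta \to 0} \beta_\delta^{-1}\left| \int_M |du|_g^2\langle u_\delta, v\rangle\, dv_g - \lambda_2(M,\mC,\mu_\delta)\int_M \langle u_\delta, v\rangle\, d\mu_\delta\right| + \frac{\delta}{\beta_\delta}\|dv\|_{L^2} \\
&\leqslant \lim_{\delta\to 0}\||du|_g^2\,dv_g - \lambda_2(M,\mC,\mu_\delta)\,d\mu_\delta\|_{W^{-1,2}}\cdot \frac{\|u_\delta - u\|_{W^{1,2}}}{\beta_\delta} = 0,
\end{split}
\end{equation*}
using $\int \langle u, v\rangle \cdot X = 0$ to subtract an $\int \langle u, v\rangle$ factor from each term. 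Hence $w$ is a nontrivial Jacobi field along the $\bar\lambda_2$-conformally maximal harmonic map $u$, contradicting the hypothesis. The main obstacle is showing the bubbling case is excluded, which is precisely why the energy gap assumption is imposed; everything else is a routine transcription of the $k=1$ argument, the Jacobi field identification being structurally identical since Lemma~\ref{main:lemma} is stated uniformly in $k$.
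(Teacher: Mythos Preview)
Your argument is correct and follows precisely the route the paper itself indicates: the paper states only that ``the content of Section~\ref{quantII:sec} easily carries over to the case of regular $\bar\lambda_2$-conformally maximizing sequences,'' and you have filled in those details faithfully, correctly identifying that the gap hypothesis $\Lambda_2(M,\mC)>\Lambda_1(M,\mC)+8\pi$ (which in particular excludes $M=\mathbb{S}^2$) forces alternative~(1) in Proposition~\ref{ujconvergence2:prop}, after which the analogs of Propositions~\ref{convergence:prop} and~\ref{Jacobi:prop} run verbatim with $k=2$.
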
 

\subsection{Quantitative stability for bubbling sequences}
\label{Bubble_quant:sec}
In this section we briefly discuss the quantitative stability estimates for bubbling $\bar\lambda_2$-conformally maximizing sequences. General estimates of this type do not immediately follow from our methods and require additional considerations. As a result, we do not present any concrete results, but, instead, outline the difficulties and formulate an open problem.

First, the proof of Proposition~\ref{S2stability2:prop} suggests that in order to obtain the stability estimate in terms of the natural quantity $\sqrt{\Lambda_2(M,\mC)-\bar\lambda_2(M,\mC,\mu)}$ one needs to consider the norm in $(C^1(M))^*$. Second, the same proof highlights one of the challenges for the general conformal class. Namely, that one needs to obtain a bound on the concentration scale of the maximizing sequence of the form~\eqref{concentration_rate:eq}. This observation raises another interesting question: does the location of the concentration point have any effect on the stability estimates, in particular, would attaching a bubble at a branch point affect the concentration scale? Finally, it is clear that any obstruction to stability of $\bar\lambda_1$-conformally maximal metrics should still be an obstruction to the stability of bubbling sequences. Putting all these observations together, one arrives at the following problem. 

\begin{open}
Let $(M,\mC)\ne\mathbb{S}^2$ be such that there are no $\bar\lambda_2$-conformally maximal metrics. Assume that for any $\bar\lambda_1$-conformally maximal harmonic map $u$ there are non non-trivial Jacobi fields along $u$. Then there exist constants $C,\delta_0>0$ depending only on $(M,\mC)$ such that for any admissible measure $\mu$ satisfying $\Lambda_2(M,\mC)-\bar\lambda_2(M,c,\mu)\leqslant \delta_0$ there exists a $\bar\lambda_1$-conformally maximal measure $\mu_0$ and a point $p\in M$ such that 
$$
\|\lambda_2(M,\mC,\mu)\mu-\lambda_1(M,\mC,\mu_0)\mu_0-8\pi\delta_p\|_{(C^1(M))^*}\leqslant C\sqrt{\Lambda_2(M,\mC)-\bar\lambda_2(M,\mC,\mu)}.
$$ 
\end{open}

\section{Stability of global maximizers for the first eigenvalue}

\label{sec:glob_stab}

In the preceding sections 
(see, in particular, Theorem \ref{thm:qualconf})  we have observed  that any $\bar\lambda_1$-conformally maximizing sequence of unit-area metrics $g_j\in \mC$ in a fixed conformal class $\mC$ on $M\neq S^2$ converges subsequentially in $W^{-1,2}$ to a conformally maximizing metric $g_{\max}\in \mC$. In this section, we establish an analogous stability result for globally $\bar\lambda_1$-maximizing sequences $g_j$ of varying conformal type, and describe some circumstances under which the rate of convergence can be quantified.

\subsection{Qualitative stability}
 Given a closed surface $M$, denote by $\Met_{\can}(M)$ the space 
$$
\Met_{\can}(M):=\{g\in \Met(M)\mid \area(M,g)=1,\text{ }K_g\equiv 2\pi \chi(M)\}
$$
of unit-area metrics on $M$ of constant curvature $K\equiv 2\pi \chi(M)$, where $\chi(M)$ denotes the Euler characteristic.
 By the uniformization theorem, $\Met_{\can}(M)$ is in one-to-one correspondence with the space of conformal classes of metrics on $M$. Note that the diffeomorphism group $\Diff(M)$ acts naturally on pairs $(g,\mu)\in \Met_{can}(M)\times \left(C^0(M)\right)^*$, by
$$
\Phi\cdot (g,\mu)=(\Phi^*g, (\Phi^{-1})_*\mu),
$$
such that
$$
\bar{\lambda}_k(\Phi\cdot (g,\mu))=\bar{\lambda}_k(g,\mu).
$$

As we'll see below, the global quantitative stability result in Theorem~\ref{glob:qual:stab} is a relatively straightforward consequence of Theorem \ref{thm:qualconf} and the results of Petrides \cite{Petrides} and Matthiesen--Siffert \cite{MS} establishing compactness of (the moduli space of) conformal classes with $\Lambda_1(M,\mC)$ sufficiently close to $\Lambda_1(M)$. 

%

\begin{proof}[Proof of Theorem~\ref{glob:qual:stab}] Since the moduli space of conformal classes on $\mathbb{RP}^2$ is trivial, the theorem reduces trivially to Theorem \ref{thm:qualconf} in this case, so in what follows we consider the case $\chi(M)\leqslant 0$. 

By the combined work of \cite[Theorem 2]{Petrides}, \cite{MS17}, and \cite{MS}, we know that for any sequence $g_j\in \Met_{\can}(M)$ satisfying $\Lambda_1(M,[g_j])\to \Lambda_1(M)$, there exists a subsequence (unrelabelled) and diffeomorphisms $\Phi_j\in \Diff(M)$ such that $\tilde{g}_j:=\Phi_j^*g_j$ converges smoothly to a metric $g_0\in \Met_{\can}(M)$ such that 
$$\Lambda_1(M,[g_0])=\Lambda_1(M).$$

Let $\tilde{\mu}_j:=(\Phi_j^{-1})_*\mu_j$, so that
$$\bar{\lambda_1}([\tilde{g}_j],\tilde{\mu}_j)=\bar{\lambda}_1([g_j],\mu_j)\to \Lambda_1(M).$$
By the smooth convergence $\tilde{g}_j\to g_0$, we see that there is a sequence $\delta_j\to 0$ for which
$$(1+\delta_j)^{-1}\tilde{g}_j\leq g_0\leq (1+\delta_j)\tilde{g}_j,$$
and in particular, it follows that
$$
\lambda_1(M,[g_0],\tilde{\mu}_j)\geq (1+\delta_j)^{-2}\lambda_1(M,[\tilde{g}_j],\tilde{\mu}_j).
$$
Thus, the admissible probability measures $\tilde{\mu}_j$ satisfy
$$\lim_{j\to\infty}\lambda_1(M,[g_0],\tilde{\mu}_j)\geq \lim_{j\to\infty}(1+\delta_j)^{-2}\lambda_1(M,c_j,\tilde{\mu}_j)=\Lambda_1(M);$$
i.e., $\tilde{\mu}_j$ is a $\bar\lambda_1$-conformally maximizing sequence within the maximizing conformal class $[g_0]$. It then follows from Theorem \ref{thm:qualconf} that $\tilde{\mu}_j\to dv_{g_{\max}}$ in $W^{-1,2}(M,g_0)$ for a 
$\bar\lambda_1$-maximizing metric $g_{\max}$ in $[g_0]$, completing the proof of the theorem.
\end{proof} 

\subsection{Quantitative stability} In some cases, we can upgrade the qualitative convergence statement to a quantitative one, bounding the distance from a given metric $g$ to a $\lambda_1$-maximizing metric $g_{\max}$ in terms of the spectral gap $\Lambda_1(M)-\overline{\lambda}_1(M,g)$. When the minimal immersion $M\to \mathbb{S}^n$ inducing the $\bar\lambda_1$-maximizing metric $g_{\max}$ is not contained in an equatorial $2$-sphere $\mathbb{S}^2\subset \mathbb{S}^n$, we have seen already in Theorem \ref{stable:thm} and Proposition \ref{stable:prop} that such a bound holds for metrics $g$ \emph{within the maximizing conformal class} $g\in [g_{max}]$. The question then becomes whether a metric $g$ \emph{not conformal} to any $\lambda_1$-maximizing metric lies a distance at most $C\sqrt{\Lambda_1(M)-\overline{\lambda}_1(M,g)}$ from some maximizing metric $g_{\max}$, in a $W^{-1,2}$ sense.

Our main result in this direction gives a positive answer provided the branched minimal immersion $u\colon M\to \mathbb{S}^n$ by first eigenfunctions realizing $\Lambda_1(M)$ has \emph{maximal Morse index}
\begin{equation}\label{max.ind}
\ind_A(u)=n+1+\dim(\mathcal{M}(M))
\end{equation}
as a critical point of the area functional, where 
$$
\mathcal{M}(M):=\Met_{\can}(M)/\Diff(M)
$$
denotes the moduli space of conformal structures on $M$. We postpone the proof to the end of the section.

\begin{theorem}\label{max.ind.thm} Suppose that the (possibly branched) minimal immersion $u\colon M\to \mathbb{S}^n$ by first eigenfunctions inducing each globally $\bar{\lambda}_1$-maximizing metric $g_{max}$ on $M$ has Morse index
$$
\ind_A(u)=n+1+\dim(\mathcal{M}(M)).
$$
Then the global quantitative stability estimate holds for $\Lambda_1(M)$ in the sense of Definition~\ref{def:quantstabglob}.
\end{theorem}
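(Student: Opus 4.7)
My plan is to argue by contradiction, combining the qualitative compactness from Theorem~\ref{glob:qual:stab} with the in-conformal-class quantitative stability of Theorem~\ref{thm:quantstab} and a new quadratic decay bound for $\Lambda_1(M,\cdot)$ on the moduli space $\mathcal{M}(M)$ near its maxima. Suppose \eqref{eq:globstabf} fails with every constant $C$; extract a sequence $(g_j,\mu_j)$ with $\bar\lambda_1([g_j],\mu_j)\to\Lambda_1(M)$ violating it. By Theorem~\ref{glob:qual:stab}, after applying diffeomorphisms $\Phi_j$ I may assume $g_j\to g_0$ smoothly in $\Met_{\can}(M)$ with $\Lambda_1(M,[g_0])=\Lambda_1(M)$, and $\mu_j\to dv_{g_{\max}}$ in $W^{-1,2}(M,g_0)$ for some $\bar\lambda_1$-maximizer $g_{\max}\in[g_0]$. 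Let $u\colon M\to\mathbb{S}^n$ denote the first-eigenfunction minimal immersion inducing $g_{\max}$.

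The central new step is to establish the moduli-space estimate
\begin{equation}
\label{moduli:est}
\Lambda_1(M)-\Lambda_1(M,[g])\geqslant c_1\inf_{\Phi\in\Diff(M)}\|\Phi^*g-g_0\|_{C^1(g_0)}^2
\end{equation}
for $g\in\Met_{\can}(M)$ near the $\Diff$-orbit of $g_0$. The number $n+1+\dim(\mathcal{M}(M))$ matches exactly the total dimension of the two canonical sources of negative directions in the area Hessian along $u$: the $(n+1)$-dimensional family of conformal automorphisms of the target $\mathbb{S}^n$ (as in Proposition~\ref{stable:prop}) and a $\dim(\mathcal{M}(M))$-dimensional family parametrized by source conformal deformations. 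Maximality $\ind_A(u)=n+1+\dim(\mathcal{M}(M))$ forces the area Hessian to be strictly negative definite on a transversal to the full isometry orbit. An implicit-function-theorem argument in the space of harmonic maps then produces a smooth family $u_t\colon(M,[g_t])\to\mathbb{S}^n$ of first-eigenfunction minimal immersions for $[g_t]$ near $[g_0]$, whose induced metrics $g_{t,\max}$ realize $\Lambda_1(M,[g_t])$ by the Li--Yau/El Soufi--Ilias bound; strict negativity of the Hessian on the Teichm\"uller directions then yields \eqref{moduli:est}. The maximal-index hypothesis also rules out $u(M)$ being contained in any totally geodesic $\mathbb{S}^2\subset\mathbb{S}^n$ (which would generate additional index from rotations fixing that $\mathbb{S}^2$), so a perturbed version of Theorem~\ref{thm:quantstab}, applied with background $g_j$ to the canonical family $\{G_a\circ u_j\}$ for $j$ large, gives
\[
\bigl\|\lambda_1([g_j],\mu_j)\mu_j-\lambda_1(g_{j,\max})\,dv_{g_{j,\max}}\bigr\|_{W^{-1,2}(M,g_j)}^2\leqslant C\bigl(\Lambda_1(M,[g_j])-\bar\lambda_1([g_j],\mu_j)\bigr).
\]

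To conclude, the bound \eqref{moduli:est} yields $\|g_j-g_0\|_{C^1(g_0)}^2\leqslant C(\Lambda_1(M)-\bar\lambda_1([g_j],\mu_j))$, while smooth dependence of $g_{t,\max}$ on $[g_t]$ via the implicit-function argument gives $\|g_{j,\max}-g_{\max}\|_{C^0}\leqslant C\|g_j-g_0\|_{C^1}\leqslant C\sqrt{\Lambda_1(M)-\bar\lambda_1([g_j],\mu_j)}$. The norms $W^{-1,2}(g_j)$ and $W^{-1,2}(g_0)$ are equivalent with constants $1+o(1)$, and $\|\lambda_1(g_{j,\max})dv_{g_{j,\max}}-\lambda_1(g_{\max})dv_{g_{\max}}\|_{W^{-1,2}(g_0)}\leqslant C\|g_{j,\max}-g_{\max}\|_{C^0}$; applying the triangle inequality and squaring absorbs the conformal-class-switching error into $C(\Lambda_1(M)-\bar\lambda_1([g_j],\mu_j))$. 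Summing with \eqref{moduli:est} produces \eqref{eq:globstabf} with a uniform constant for all large $j$, contradicting the failure assumption. The main obstacle is \eqref{moduli:est}: one must identify the tangent space to $\mathcal{M}(M)$ at $[g_0]$ with a Teichm\"uller subspace of the Jacobi field complement along $u$, construct the family $u_t$ via an implicit-function argument leveraging the maximal-index nondegeneracy, and compare the Hessian of $\Lambda_1$ on moduli with the restriction of the second variation of area of $u$ to this subspace --- a careful extension of the single-conformal-class computations of \cite{CKM, ESIconfvolume} to variations of the source conformal structure.
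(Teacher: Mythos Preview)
Your overall strategy---split the deficit $\Lambda_1(M)-\bar\lambda_1([g_j],\mu_j)$ into a moduli piece $\Lambda_1(M)-\Lambda_1(M,[g_j])$ and an in-class piece $\Lambda_1(M,[g_j])-\bar\lambda_1([g_j],\mu_j)$---is natural, but the hinge on which both pieces rest, the implicit-function construction of first-eigenfunction \emph{minimal} immersions $u_t\colon(M,[g_t])\to\mathbb{S}^n$ for nearby conformal classes, is not justified by the maximal-index hypothesis. An IFT in the space of minimal surfaces requires control of the \emph{kernel} of the area Jacobi operator (all area Jacobi fields must come from ambient isometries and reparametrizations), whereas $\ind_A(u)=n+1+\dim\mathcal{M}(M)$ constrains only the \emph{index}, the dimension of the negative subspace. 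Nothing in the hypothesis rules out nontrivial area Jacobi fields along $u$, and in their presence the IFT does not apply: nearby conformal classes need not carry any minimal immersion close to $u$, let alone one by first eigenfunctions. Without the family $u_t$, your moduli estimate collapses (you have no handle on $\Lambda_1(M,[g_t])$ from above), and Theorem~\ref{thm:quantstab} cannot be invoked in $[g_j]$ because its hypothesis demands a first-eigenfunction minimal immersion in that class.

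The paper sidesteps this entirely by \emph{not} constructing minimal or harmonic maps in nearby classes. In Lemma~\ref{max.ind.lem} it perturbs $u$ by arbitrary sections $v$ in a $\dim\mathcal{M}(M)$-dimensional negative subspace $\mathcal{W}$ of $\delta^2A(u)$, setting $F_v=(u+v)/|u+v|$. The maximal-index hypothesis is used only for a dimension count showing that the linearized conformal-class map $\mathcal{W}\ni v\mapsto T_2(v)\in S_2\cong T_{\langle g_0\rangle}\mathcal{M}_0(M)$ is an isomorphism (via the Ejiri--Micallef comparison of $\delta^2A$ and $\delta^2E$), so that the induced classes $[g_v]$ sweep out a Teichm\"uller neighborhood. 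Each $F_v$ is automatically conformal with respect to its own induced class, hence $\area(G_a\circ F_v)=E_{g_v}(G_a\circ F_v)\geqslant\tfrac{1}{2}\bar\lambda_1([g_v],\mu)$ directly from Hersch's trick and Lemma~\ref{main:lemma}; the quadratic area decay on $\mathcal{V}_{\conf}\oplus\mathcal{W}$ then controls $\|g_v-g_0\|_{C^1}$ and $\|F_{v,a}-u\|_{C^2}$ \emph{jointly}. This single comparison-family construction replaces your two separate steps and needs no information about the Jacobi kernel.
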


In general, if $u\colon M\to \mathbb{S}^n$ is a (branched) minimal immersion by first eigenfunctions, then by \cite[Proposition 1.6]{KRP2}, we know that $u$ has Morse index $\ind_E(u)$ at most $n+1$ as a critical point of the \emph{energy} functional in the conformal class $[u^*(g_{\mathbb{S}^n})]$, and we know from the proof of Proposition \ref{stable:prop} above that equality $\ind_E(u)=n+1$ holds provided $u(M)$ is not contained in a totally geodesic $\mathbb{S}^2\subset \mathbb{S}^n$, with the $(n+1)$-parameter family of conformal dilations of $\mathbb{S}^n$ giving an energy-decreasing family. By the results of Ejiri and Micallef in \cite{EjiriMicallef}, it follows that the area-index $\ind_A(u)$ of such a map is bounded above by
$$\ind_A(u)\leqslant n+1+\dim(\mathcal{M}(M));$$
moreover, equality implies that $u$ is a true immersion, with no branch points.

On the other hand, in view of the min-max characterization of conformally $\bar{\lambda}_1$-maximizing metrics in \cite{KS}, it is natural to expect that the minimal immersions arising from maximization of $\bar{\lambda}_1$ over all metrics may be constructed via a $(n+1)+\dim(\mathcal{M}(M))$-parameter min-max construction for the area functional, roughly corresponding to adding a Teichm\"{u}ller parameter in the construction of \cite{KS}. From this perspective, the maximal index assumption \eqref{max.ind} is a natural one, perhaps corresponding to a Morse-Bott nondegeneracy condition for the area functional at the critical value $\frac{1}{2}\Lambda_1(M)$. 

One can show directly that the maximal index condition \eqref{max.ind} is satisfied in all examples where the $\bar{\lambda}_1$-maximizing metric is known and the associated minimal immersion $u\colon M\to \mathbb{S}^n$ is not given by a branched cover of $\mathbb{S}^2$. In particular, we have the following corollaries. 

\begin{remark}
\label{rem:tensornorms}
In what follows, and throughout the section, we denote by $C^k(g_0)$ the norm
$$\|T\|_{C^k(g_0)}:=\max_{x\in M}\sum_{j=0}^k|D_{g_0}^jT|_{g_0}$$
on tensor fields $T$ (possibly taking values in a Euclidean space $\mathbb{R}^L$), where $D_{g_0}$ is the Levi-Civita connection for $g_0$; note that the norms $C^k(g_0)$ and $C^k(g_1)$ are equivalent for any pair of reference metrics $g_0,g_1$. We define the $L^p$ and Sobolev norms of $T$ similarly.
\end{remark}

\begin{corollary}\label{quant:nadirashvili} 
Let $g_1\in \Met_{\can}(\mathbb{T}^2)$ be a unit-area flat metric and $\mu$ an admissible measure on $\mathbb{T}^2$ with $\bar{\lambda}_1(\mathbb{T}^2,[g_1],\mu)\geqslant\frac{8\pi^2}{\sqrt{3}}-\delta_0(\mathbb{T}^2)$. Then there exists a metric $g_0$ isometric to $\mathbb{R}^2/\Gamma_{eq}$, where $\Gamma_{eq}$ is the lattice generated by $(1,0)$ and $\left(\frac{1}{2},\frac{\sqrt{3}}{2}\right)$ such that
$$
\left\|\lambda_1(\mathbb{T}^2,[g_1],\mu)\mu-\frac{16\pi^2}{3}dv_{g_0}\right\|_{W^{-1,2}(\mathbb{T}^2,{g_0})}^2\leqslant C \left(\frac{8\pi^2}{\sqrt{3}}-\bar{\lambda}_1(\mathbb{T}^2,[g_1],\mu)\right)
$$
and
$$
\left\|g_1-\frac{2}{\sqrt{3}}g_0\right\|_{C^1(g_0)}^2\leqslant C\left(\frac{8\pi^2}{\sqrt{3}}-\bar{\lambda}_1(\mathbb{T}^2,[g_1],\mu)\right).
$$
\end{corollary}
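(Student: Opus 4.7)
The plan is to apply Theorem~\ref{max.ind.thm} with $M=\mathbb{T}^2$, so the corollary reduces to verifying the maximal Morse index hypothesis \eqref{max.ind} for the minimal immersion inducing the unique globally $\bar\lambda_1$-maximizing metric on the torus. By the Nadirashvili / El~Soufi--Ilias theorem recalled in Corollary~\ref{cor:quantstab}, this maximizer is, up to the action of $\Diff(\mathbb{T}^2)$, the equilateral flat metric $\tfrac{2}{\sqrt{3}}g_0$, induced by the homogeneous minimal immersion $u\colon\mathbb{T}^2\to\mathbb{S}^5$ whose six components form an $L^2$-orthonormal basis of the first eigenspace, corresponding to the three pairs $\pm\gamma_1^*,\pm\gamma_2^*,\pm(\gamma_1^*+\gamma_2^*)$ of shortest vectors in the dual lattice. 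Since $\dim\mathcal{M}(\mathbb{T}^2)=2$, the corollary follows once $\ind_A(u)=6+2=8$ is established.

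The image $u(\mathbb{T}^2)$ is linearly full in $\mathbb{S}^5$ and in particular not contained in any totally geodesic $\mathbb{S}^2\subset\mathbb{S}^5$, so the Hessian computation from the proof of Proposition~\ref{stable:prop}, applied to the canonical family $\{G_a\circ u\}_{a\in\mathbb{B}^6}$, gives a strictly negative definite energy Hessian on a six-dimensional space of normal variations. Combined with the upper bound $\ind_E(u)\leqslant n+1$ from \cite[Proposition 1.6]{KRP2}, this yields $\ind_E(u)=6$, producing six independent area-decreasing variations already within the conformal class $[g_0]$. By the Ejiri--Micallef bound mentioned after the statement of Theorem~\ref{max.ind.thm}, one has
\begin{equation*}
\ind_A(u)\leqslant n+1+\dim\mathcal{M}(\mathbb{T}^2)=8,
\end{equation*}
so the remaining task is to produce two further independent area-decreasing variations coming from deformation of the conformal structure of $\mathbb{T}^2$.

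To this end I would exploit the strict isolated maximality of the map $c\mapsto\Lambda_1(\mathbb{T}^2,c)$ at the equilateral class $[g_0]\in\mathcal{M}(\mathbb{T}^2)$, itself a consequence of Nadirashvili's theorem together with the explicit eigenvalue formulas for flat tori. For a smooth chart $c_s$, $s\in\mathbb{R}^2$, of $\mathcal{M}(\mathbb{T}^2)$ around $[g_0]$, Proposition~\ref{prop:rhombus} and the explicit description of the conformally maximal flat tori in the rhombic range \cite{ESIR} yield a smooth family of minimal immersions $u_s\colon(\mathbb{T}^2,c_s)\to\mathbb{S}^5$ satisfying $2\,\area(u_s)=\Lambda_1(\mathbb{T}^2,c_s)$; strict $C^2$-concavity of $s\mapsto\Lambda_1(\mathbb{T}^2,c_s)$ at $s=0$ then furnishes two independent area-decreasing Teichm\"uller variations of $u$, necessarily transverse to the six conformal-dilation variations since these latter preserve the conformal class. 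This gives $\ind_A(u)\geqslant 8$, matching the Ejiri--Micallef bound, whence Theorem~\ref{max.ind.thm} applies. The main obstacle is the rigorous verification of the strict $C^2$-nondegeneracy of $\Lambda_1(\mathbb{T}^2,\cdot)$ at the equilateral class, which is delicate because the multiplicity of $\lambda_1$ drops generically on nearby flat classes, but can be carried out using the explicit spectra of flat rhombic tori together with the conformal maximality of flat representatives in each rhombic class.
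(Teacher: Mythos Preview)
Your reduction to Theorem~\ref{max.ind.thm} and your handling of the six conformal-dilation directions coincide with the paper. The gap is in the final step, where you manufacture the two remaining area-decreasing directions.

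The family you invoke does not exist. The maps $\Phi_{c,d}$ of Proposition~\ref{prop:rhombus} and \cite{ESIR} take values in $\mathbb{S}^3$, not $\mathbb{S}^5$; they are harmonic but not conformal away from the square lattice, so $\area\ne E$ and the identity $2\,\area(u_s)=\Lambda_1(\mathbb{T}^2,c_s)$ fails; and at the equilateral parameter $\Phi_{1/2,\sqrt{3}/2}$ uses only four coordinate functions, hence is not the immersion $u\colon\mathbb{T}^2\to\mathbb{S}^5$ whose index you must compute. More fundamentally, for $c_s\ne[g_0]$ the first eigenspace has dimension four, so no minimal immersion by \emph{first} eigenfunctions into $\mathbb{S}^5$ exists on nearby classes; any attempt to route the argument through $s\mapsto\Lambda_1(\mathbb{T}^2,c_s)$ is blocked by exactly the multiplicity drop you flag at the end, and that drop is an obstruction to the scheme, not a technicality to be cleaned up.

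The paper bypasses $\Lambda_1$ entirely. It cites \cite[Proposition~3.4, Remark~3.7]{KW} for the equality $\ind_A(u)=8$, and for concreteness exhibits the eight-dimensional negative space: the six conformal variations $\{e-\langle e,u\rangle u:e\in\mathbb{R}^6\}$ together with the two tangent vectors $(-\phi_1,0,\phi_3)$ and $(-\phi_1,2\phi_2,-\phi_3)$ to the Montiel--Ros family $\psi_{a,b}\colon T_{a,b}\to\mathbb{S}^5$ of homothetic minimal immersions by the \emph{first six} (not all $\lambda_1$-) eigenfunctions of $T_{a,b}$, with $u=\tfrac{1}{\sqrt{3}}(\phi_1,\phi_2,\phi_3)$. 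These two variations genuinely deform $u$ as a map to $\mathbb{S}^5$ while moving the induced conformal class, and the negativity of $\delta^2 A(u)$ on them is checked in \cite{KW,MR} by direct computation rather than via any eigenvalue functional on moduli space.
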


\begin{proof} By virtue of Nadirashvili's proof \cite{NadirashviliT2} that the equilateral flat torus $\mathbb{R}^2/\Gamma_{eq}$ is the unique global maximizer of $\bar{\lambda}_1(\mathbb{T}^2,g)$, the result will follow from Theorem \ref{max.ind.thm}, provided the homothetic minimal embedding
$$
u\colon \mathbb{R}^2/\Gamma_{eq}\to \mathbb{S}^5
$$
of the equilateral flat torus $\mathbb{R}^2/\Gamma_{eq}$ into $\mathbb{S}^5$ by first eigenfunctions has maximal Morse index
\begin{equation}\label{t.imax}
\ind_A(u)=(n+1)+\dim(\mathcal{M}(\mathbb{T}^2))=(5+1)+2=8.
\end{equation}
Indeed, \eqref{t.imax} is known to hold for the ``Bryant--Itoh--Montiel--Ros" torus $u\colon\mathbb{R}^2/\Gamma_{eq}\to \mathbb{S}^5$; see, for instance, \cite[Proposition 3.4, Remark 3.7]{KW}. More explicitly, viewing $\mathbb{S}^5$ as a unit sphere in $\mathbb{C}^3$, one can write
$$u(x)=\frac{1}{\sqrt{3}}(\phi_1,\phi_2,\phi_3),$$
where
$$\phi_1(x)=e^{2\pi i \left(x_1-\frac{x_2}{\sqrt{3}}\right)},\text{ }\phi_2(x)=e^{4\pi i \frac{x_2}{\sqrt{3}}},\text{ and }\phi_3(x)=e^{2\pi i\left(x_1+\frac{x_2}{\sqrt{3}}\right)}.$$
One can find an $8$-dimensional space of area-decreasing variations
$$\mathcal{V}_8:=\left\{(e-\langle u,e\rangle u)\mid e\in \mathbb{R}^6\right\}\oplus \mathrm{Span}\left\{ (-\phi_1,0,\phi_3), (-\phi_1,2\phi_2,-\phi_3)\right\}$$
corresponding to the deformations of $u$ by conformal dilations of $\mathbb{S}^5$ and the deformations along a natural two-parameter family of maps $\psi_{ab}\colon \mathbb{T}^2\to \mathbb{S}^5$ considered by Montiel and Ros in \cite{MR}.  These maps give  homothetic immersions of the flat metrics $T_{a,b}:=\mathbb{R}^2/\left(\mathbb{Z}(1,0)\oplus\mathbb{Z}(a,b)\right)$ into $\mathbb{S}^5$ by the first six nontrivial eigenfunctions of the Laplacian on $T_{a,b}$.

\end{proof}

\begin{corollary}\label{quant:klein} Let $g_1\in \Met_{\can}(\mathbb{K})$ be a unit-area flat metric and $\mu$ an admissible measure on the Klein bottle $\mathbb{K}$, with $\bar{\lambda}_1(\mathbb{K},[g_1],\mu)\geqslant \lambda_1(\mathbb{K})-\delta_0(\mathbb{K})$. Then there exists a unit-area $\bar{\lambda}_1$-maximizing metric $g_{\max}$ on $\mathbb{K}$ (as described in \cite{EGJ, JNP}) conformal to a unit-area flat metric $g_0\in \Met_{\can}(\mathbb{K})$ such that
$$
\|\lambda_1(\mathbb{K},[g_1],\mu)\mu-\Lambda_1(\mathbb{K})dv_{g_{\max}}\|_{W^{-1,2}(M,g_0)}^2\leqslant C\left(\Lambda_1(\mathbb{K})-\bar{\lambda}_1(\mathbb{K},[g_1],\mu)\right)
$$
and
$$
\|g_1-g_0\|_{C^1(g_0)}^2\leqslant C\left(\Lambda_1(\mathbb{K})-\bar{\lambda}_1(\mathbb{K},[g_1],\mu)\right).
$$
\end{corollary}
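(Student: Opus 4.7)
The plan is to verify the hypotheses of Theorem~\ref{max.ind.thm} for the Klein bottle $\mathbb{K}$ and then apply it directly. By the combined results of \cite{EGJ, JNP, CKM}, the globally $\bar{\lambda}_1$-maximizing metric on $\mathbb{K}$ is unique up to isometry and is induced by a minimal embedding $u = \tilde\tau_{3,1}\colon \mathbb{K}\to \mathbb{S}^n$ by first eigenfunctions (the Lawson bipolar surface), where $n\geqslant 3$ and the image is not contained in a totally geodesic $\mathbb{S}^2\subset \mathbb{S}^n$. In particular $u$ is a true immersion, with no branch points. Since the moduli space $\mathcal{M}(\mathbb{K})=\Met_{\can}(\mathbb{K})/\Diff(\mathbb{K})$ is one-dimensional (it is parametrised by a single modulus of a rectangular fundamental domain), the maximal Morse index condition~\eqref{max.ind} becomes
$$\ind_A(u) = (n+1) + 1.$$

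The verification would proceed along the lines of Corollary~\ref{quant:nadirashvili} by exhibiting an explicit $((n+1)+1)$-dimensional subspace of area-decreasing variations along $u$. The first $(n+1)$ dimensions arise from the normal components of the conformal dilations of the target $\mathbb{S}^n$, namely
$$\mathcal{V}_{n+1}:=\{e-\langle u,e\rangle u : e\in \mathbb{R}^{n+1}\},$$
which are area-decreasing by the computation in the proof of Proposition~\ref{stable:prop}, using that the image of $u$ is not contained in an equatorial $\mathbb{S}^2$. The remaining one-dimensional direction comes from a smooth $1$-parameter family of minimal immersions $\{u_t\colon \mathbb{K}_t\to \mathbb{S}^{N}\}$ associated to a deformation of the conformal structure of $\mathbb{K}$ through the moduli space $\mathcal{M}(\mathbb{K})$ (composed with a linear projection into $\mathbb{S}^n$). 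Since the $\bar{\lambda}_1$-maximizer realises a strict and unique maximum of $\Lambda_1(\mathbb{K},\mC)$ over $\mathcal{M}(\mathbb{K})$, the normal component of $\partial_t u_t|_{t=0}$ furnishes an additional area-decreasing variation transverse to $\mathcal{V}_{n+1}$.

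With the maximal index condition established, Theorem~\ref{max.ind.thm} applies directly and yields both the $W^{-1,2}$ estimate on the measures $\mu$ and the $C^1$ estimate on the metrics $g_1$, completing the proof.

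The principal obstacle is the rigorous construction of the additional area-decreasing variation coming from the deformation of conformal structure, since the relevant family of minimal immersions of Klein bottles of varying conformal type is less explicit than the Montiel--Ros family $\psi_{ab}$ exploited in the torus case. Alternative routes include a direct computation of $\ind_A(\tilde\tau_{3,1})$ using the explicit spectral decomposition of the Klein bottle $\tilde\tau_{3,1}$-metric, or an appeal to the Ejiri--Micallef machinery (after passing to the orientable double cover, if necessary) to upgrade the absence of branch points to equality in their index bound.
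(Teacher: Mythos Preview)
Your overall strategy is correct: reduce to Theorem~\ref{max.ind.thm} by verifying the maximal index condition $\ind_A(u)=(n+1)+\dim(\mathcal{M}(\mathbb{K}))=5+1=6$, and identify the $(n+1)$ conformal directions $\mathcal{V}_{n+1}$. The gap is in the construction of the remaining direction.

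Your proposed argument for the extra area-decreasing variation is not complete. First, the existence of a smooth family of \emph{minimal} immersions $u_t\colon\mathbb{K}_t\to\mathbb{S}^N$ through nearby conformal classes is not established; for non-maximizing conformal classes one only knows the $\bar\lambda_1$-conformally maximal map is harmonic, not conformal, so $\area(u_t)$ and $\tfrac{1}{2}\Lambda_1(\mathbb{K},\mC_t)$ need not agree. Second, and more seriously, even granting such a family, the fact that $\Lambda_1(\mathbb{K},\mC)$ attains a \emph{strict unique} maximum does not by itself yield $\delta^2A(u)(v,v)<0$ for $v=\partial_t u_t|_{t=0}$: a strict maximum can be degenerate (think $-t^4$), and what you need is precisely a nondegeneracy statement of the kind Theorem~\ref{max.ind.thm} is designed to exploit. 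Finally, your Ejiri--Micallef alternative goes the wrong way: their result gives the upper bound $\ind_A(u)\le n+1+\dim(\mathcal{M}(M))$, and equality forces the absence of branch points, but the converse implication (no branch points $\Rightarrow$ equality) does not hold in general.

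The paper's proof of the extra direction is quite different and fully explicit. Following \cite{KW}, one forms the normal sections $N_1=\alpha(\partial_x,\partial_x)$ and $N_2=\alpha(\partial_x,\partial_y)$ from the second fundamental form of the lifted immersion $v\colon\mathbb{R}^2\to\mathbb{S}^4$. One checks that $N_1$ is $\Gamma_a$-invariant and descends to $\bar N_1\in\Gamma(\mathcal{N}(u))$, while $N_2$ changes sign under the glide reflection and does not descend; this forces $\langle N_1,N_2\rangle\equiv 0$ and $|N_1|^2-|N_2|^2\equiv A$ constant. Using the explicit coordinate form of $u$ from \cite{EGJ,JNP} one computes $A=9/4>0$, whence $\mathcal{L}\bar N_1=(2+\tfrac{9}{2}\rho^{-4})\bar N_1$ and $\delta^2A(u)(\bar N_1,\bar N_1)<-2\|\bar N_1\|^2$. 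Thus the lowest eigenvalue of $\delta^2A(u)$ is strictly below $-2$, and since the conformal variations span a $5$-dimensional subspace sitting exactly at eigenvalue $-2$, one obtains $\ind_A(u)\ge 6$. This computation is what your argument is missing.
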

\begin{proof} Again, by Theorem \ref{max.ind.thm}, it suffices to show that the minimal immersion $u\colon \mathbb{K}\to \mathbb{S}^4$ by first eigenfunctions inducing the maximizing metric $g_{\max}$ has maximal Morse index
\begin{equation}\label{k.imax}
\ind_A(u)=(n+1)+\dim(\mathcal{M}(\mathbb{K}))=5+1=6.
\end{equation}

To prove \eqref{k.imax}, we employ the arguments of \cite{KW}, with minor modifications for the nonorientable setting. Let $u\colon\mathbb{K}\to \mathbb{S}^4$ be the minimal immersion described in \cite{EGJ, JNP} inducing the $\bar{\lambda}_1$-maximizing metric $g_{\max}$, and choose $a>0$ such that $g_{\max}$ is conformal to the flat metric
$$
(\mathbb{K},g_0)=\mathbb{R}^2/\Gamma_a,
$$
where $\Gamma_a\subset \mathrm{Isom}(\mathbb{R}^2)$ is the group generated by
$$
\gamma_1\cdot (x,y)=(x,y+a)\text{ and }\gamma_2\cdot (x,y)=(x+\pi,-y).
$$
Denote by $v\colon \mathbb{R}^2\to \mathbb{S}^4$ the induced conformal minimal immersion $v=u\circ \pi,$
where $\pi\colon \mathbb{R}^2\to \mathbb{R}^2/\Gamma_a$ is the obvious projection. 

Now, let $\mathcal{N}(v)\subset v^*(TS^4)$ denote the normal bundle associated with $v$, and let $\alpha$ be the second fundamental form. Following \cite{KW}, consider the normal sections $N_1,N_2\in\Gamma(\mathcal{N}(v))$ given by
$$N_1=\alpha(\partial_x,\partial_x)=-\alpha(\partial_y,\partial_y),\text{ and }N_2=\alpha(\partial_x,\partial_y).$$
In the notation of \cite{KW}, $N_1$ and $N_2$ coincide (up to a constant factor) with the sections $\Omega_1$ and $\Omega_2$ giving the real and imaginary parts, respectively, of the section $\Omega$ of the complexified normal bundle given by the normal projection of $\frac{\partial^2v}{\partial z^2}$. As in \cite{KW}, we note that the Codazzi and Ricci equations (together with the minimality of $v$) yield
\begin{equation}\label{codazzi}
D_{\partial_x}^{\perp}N_1+D_{\partial_y}^{\perp}N_2=D_{\partial_y}^{\perp}N_1-D_{\partial_x}^{\perp}N_2=0,
\end{equation}
and
\begin{equation}\label{jac.n}
\mathcal{L}N_1=2N_1+2\rho^{-4}\left[(|N_1|^2-|N_2|^2)N_1+2\langle N_1,N_2\rangle N_2\right],
\end{equation}
where $\rho:=|\partial v/\partial x|=|\partial v/\partial y|$ and $\mathcal{L}\colon \Gamma(\mathcal{N}(v))\to \Gamma(\mathcal{N}(v))$ is the Jacobi operator associated to the minimal immersion $v$.

Since 
$$(\gamma_1)_*(\partial_x)=(\gamma_2)_*(\partial_x)=\partial_x$$
and
$$(\gamma_1)_*(\partial_y)=-(\gamma_2)_*(\partial_y)=\partial_y,$$
we see that $N_1$ is invariant under the action of $\Gamma_a$, and therefore descends to a section $\bar{N}_1\in \Gamma(\mathcal{N}(u))$ of the normal bundle for the embedding $u\colon \mathbb{K}\to S^4$. The section $N_2$ is invariant under the translations $(x,y)\mapsto (x,y+a)$ and $(x,y)\mapsto (x+2\pi,y)$, but changes by a sign $N_2\circ \gamma_2=-N_2$ under the action of $\gamma_2$, and therefore does not descend to a section on the Klein bottle $\mathbb{K}$.

It is a standard consequence of \eqref{codazzi} that
$$(\partial_x-i\partial_y)\left[\frac{1}{2}(|N_1|^2-|N_2|^2)+i\langle N_1,N_2\rangle\right]=0,$$
and since $N_1$ and $N_2$ are both periodic with respect to the translations $(x,y)\mapsto (x,y+a)$ and $(x,y)\mapsto (x+2\pi,y)$, it follows that
$$(|N_1|^2-|N_2|^2)+2i\langle N_1,N_2\rangle\equiv A+Bi\in \mathbb{C}$$
is constant. Since $\langle N_1,N_2\rangle\circ \gamma_2=\langle N_1,-N_2\rangle=-\langle N_1,N_2\rangle$, it follows immediately that $B=\langle N_1,N_2\rangle\equiv 0$, and \eqref{jac.n} gives
\begin{equation}
\mathcal{L}N_1=2N_1+2A\rho^{-4}N_1.
\end{equation}

For the minimal Klein bottle given by maximization of $\bar{\lambda}_1$, we can compute the constant $A=|N_1|^2-|N_2|^2$ explicitly. Recalling from \cite{EGJ, JNP} that the minimal immersion $v\colon \mathbb{R}^2\to \mathbb{S}^4$ has the form
$$
v(x,y)=(\varphi_0(y),\varphi_1(y)e^{ix},\varphi_2(y)e^{2ix}),
$$
where $\varphi_0^2+\varphi_1^2+\varphi_2^2=1$ and $\varphi_1^2+4\varphi_2^2=(\varphi_0')^2+(\varphi_1')^2+(\varphi_2')^2$, one may check by direct computation that
$$A=|N_1|^2-|N_2|^2=\varphi_1^2+16\varphi_2^2-(\varphi_1^2+4\varphi_2^2)^2-(\varphi_1')^2-4(\varphi_2')^2.$$
That is, in the notation of \cite[p. 7]{JNP}, we have
$$A=-\kappa_0=-3E_1=4\varphi_2(0)^2(3-4\varphi_2(0)^2)=9/4,$$
since the solutions satisfy $\varphi_2(0)=\sqrt{3/8}$.

Hence, for the minimal Klein bottle in question, the section $N_1\in \Gamma(\mathcal{N}(v))$ satisfies
$$
\mathcal{L} N_1=2N_1+\frac{9}{2}\rho^{-4}N_1,
$$
and, in particular, descends to a normal section $\bar{N}_1\in \Gamma(\mathcal{N}(u))\subset \Gamma(u^*(T\mathbb{S}^4))$ over the embedding $u\colon\mathbb{K}\to \mathbb{S}^4$ satisfying
\begin{equation}
\mathcal{L}\bar{N}_1=2\bar{N}_1+\frac{9}{2}\rho^{-4}\bar{N}_1.
\end{equation}
Thus, the second variation of area $\delta^2A(u)(\bar{N}_1,\bar{N}_1)$ at $u$ along $\bar{N}_1$ satisfies 
\begin{eqnarray*}
\delta^2A(u)(\bar{N}_1,\bar{N}_1)&=&-\int_{\mathbb{K}} \langle \bar{N}_1,\mathcal{L}\bar{N}_1\rangle\,dv_{g_{\max}}\\
&=&-\int_{\mathbb{K}}\left[(2+\frac{9}{2}\rho^{-4})\right]|\bar{N}_1|^2 \,dv_{g_{\max}}\\
&<&-2\|\bar{N}_1\|_{L^2(g_{\max})}^2.
\end{eqnarray*}
Arguing as in \cite[Theorem 3.1]{KW}, we therefore deduce that the lowest eigenvalue $\lambda_0$ of the second variation $\delta^2A(u)$ must satisfy $\lambda_0<-2$. On the other hand, (cf. \cite[Lemma 2.2]{KW}), the conformal deformations give a $5$-dimensional subspace of $\Gamma(\mathcal{N}(u))$ within the $(-2)$-eigenspace of $\delta^2A(u)$, which together with the $\lambda_0$-eigenspace gives a $6$-dimensional subspace of $\Gamma(\mathcal{N}(u))$ on which $\delta^2A(u)$ is negative definite. In particular, it follows that $\ind_A(u)\geq 6$, as desired.
\end{proof}

We turn to the proof of Theorem \ref{max.ind.thm}. The main ingredient is the following lemma, showing that the assumption of maximal index gives rise to a nice family of comparison maps associated to conformal classes $[g]$ sufficiently close to the $\bar\lambda_1$-maximizing conformal class. It is convenient to formulate the result of this lemma in terms of the ``Teichm\"uller" space of conformal classes $\mM_0(M):=\Met_{\can}(M)/\Diff_0(M)$, where $\Diff_0(M)$ is the group of diffeomorphisms isotopic to the identity. If $M$ is oriented, then the space $\mM_0(M)$ agrees with the classical Teichm\"uller space of complex structures on $M$. The advantage of $\mM_0(M)$ is that it is a manifold whereas $\mM(M)$ is only an orbifold.

\begin{lemma}\label{max.ind.lem} Let $u\colon M\to \mathbb{S}^n$ be a minimal immersion by the first eigenfunctions whose induced metric $u^*(g_{S^n})$ is conformal to the constant curvature metric $g_0\in \Met_{\can}(M)$. Assume that $u$ has the maximal Morse index, i.e. that
$$
\ind_A(u)=n+1+\dim(\mathcal{M}(M)).
$$
Then there is a neighborhood $\mathcal{U}$ of the class $\langle g_0\rangle$ in the 
Teichm\"uller space $\mathcal{M}_0(M)$ and a family of immersions
$$
\mathcal{U}\ni \tau \mapsto F_{\tau}\in C^{\infty}(M,\mathbb{S}^n)
$$
such that the constant curvature metrics $g_{\tau}\in \Met_{\can}(M)$ conformal to $F_{\tau}^*(g_{\mathbb{S}^n})$ satisfy $\langle g_{\tau}\rangle=\tau$, for which the following holds: denoting by $F_{\tau,a}:=G_a\circ F_{\tau}$ the composition with the conformal automorphism $G_a\in \mathrm{Conf}(\mathbb{S}^n)$ as in Remark~\ref{canon:ex}, there exist $C<\infty$ and $\delta_1>0$ such that for every $(a,\tau)\in \mathbb{B}^{n+1}\times \mathcal{U}$, either
$$
\area(F_{\tau,a})\leqslant \area(u)-\delta_1,
$$
or
\begin{equation}\label{main.est.1}
\|g_{\tau}-g_0\|_{C^1(g_0)}^2+\|F_{\tau,a}-u\|_{C^2(g_0)}^2\leqslant C [\area(u)-\area(F_{\tau,a})].
\end{equation}
\end{lemma}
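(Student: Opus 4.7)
My strategy is to build the family $F_\tau$ via an implicit function theorem argument centered at the minimal immersion $u$, and then verify that on the resulting finite-dimensional family the area has a strict non-degenerate maximum at $(\tau, a) = (\langle g_0\rangle, 0)$. Writing perturbations of $u$ as $F = \exp_u(v)$ with $v \in \Gamma(u^* T\mathbb{S}^n)$, tangential components give only reparametrizations (and hence preserve the conformal class), so effectively one considers $v \in \Gamma(\mathcal{N}(u))$. The linearization $\ell\colon \Gamma(\mathcal{N}(u)) \to T_{\langle g_0\rangle}\mathcal{M}_0(M)$ of $v \mapsto [F^*g_{\mathbb{S}^n}]$ is given, modulo Lie derivatives, by the trace-free part of $-2\alpha(v, \cdot)$, where $\alpha$ is the second fundamental form of $u$. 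Since the maximal-index assumption forces $u$ to be a genuine immersion (no branch points, by Ejiri--Micallef), I would verify that $\ell$ is surjective onto $T_{\langle g_0\rangle}\mathcal{M}_0(M)$, and then the implicit function theorem yields a smooth map $\tau \mapsto v_\tau$ with $F_\tau := \exp_u(v_\tau)$ satisfying $F_{\langle g_0\rangle} = u$ and $[F_\tau^* g_{\mathbb{S}^n}] = \tau$.

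\textbf{Non-degeneracy from maximal index.} The maximal-index hypothesis combined with the Ejiri--Micallef upper bound gives that the negative eigenspace $V^-$ of the Jacobi operator $\mathcal{L}_u$ on $\Gamma(\mathcal{N}(u))$ has dimension exactly $n+1 + \dim \mathcal{M}(M)$. It decomposes as $V^- = V_{\mathrm{conf}} \oplus V_{\mathrm{Teich}}$: the $(n+1)$-dimensional subspace $V_{\mathrm{conf}} = \{e - \langle u, e\rangle u : e \in \mathbb{R}^{n+1}\}$ coming from conformal dilations of $\mathbb{S}^n$ (in the $(-2)$-eigenspace of $\mathcal{L}_u$, by the computation recalled in the proof of Proposition~\ref{stable:prop}), and its $L^2$-orthogonal complement $V_{\mathrm{Teich}}$ of dimension $\dim \mathcal{M}(M)$. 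Since the conformal automorphisms $G_a$ preserve the pullback conformal class we have $\ell(V_{\mathrm{conf}}) = 0$; by dimension count, $\ell|_{V_{\mathrm{Teich}}}$ is then an isomorphism onto $T_{\langle g_0\rangle}\mathcal{M}_0(M)$, so $v_\tau$ can be arranged (after IFT correction by terms in $\ker\ell$) to lie in $V_{\mathrm{Teich}}$ to leading order. Computing the Hessian of $(\tau, a) \mapsto \area(G_a \circ F_\tau)$ at $(\langle g_0\rangle, 0)$, the $a$-block is strictly negative definite with bound $-c|a|^2$ (exactly as in Proposition~\ref{stable:prop}, with linear fullness of $u$ guaranteed by the maximal-index hypothesis); the $\tau$-block is negative definite since $V_{\mathrm{Teich}} \subset V^-$; and the cross-terms vanish by $L^2$-orthogonality of $V_{\mathrm{conf}}$ and $V_{\mathrm{Teich}}$ as distinct eigenspaces (or at least orthogonal components) of $\mathcal{L}_u$.

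\textbf{Completing the argument.} The preceding yields a non-degenerate maximum, giving the quadratic lower bound $\area(u) - \area(F_{\tau, a}) \geqslant c(d(\tau, \langle g_0\rangle)^2 + |a|^2)$ in a neighborhood of $(\langle g_0\rangle, 0)$. Smoothness of the constructed maps provides Lipschitz-type estimates $\|F_{\tau,a} - u\|_{C^2(g_0)} \leqslant C(|a| + d(\tau, \langle g_0\rangle))$ and $\|g_\tau - g_0\|_{C^1(g_0)} \leqslant C d(\tau, \langle g_0\rangle)$; squaring and combining with the quadratic bound produces \eqref{main.est.1} in that neighborhood. Outside the neighborhood (but inside $\overline{\mathcal{U}} \times \overline{\mathbb{B}}^{n+1}$), $\area(F_{\tau, a})$ is strictly less than $\area(u)$ by a uniform gap $\delta_1 > 0$, by compactness (after shrinking $\mathcal{U}$ if necessary), which delivers the dichotomy in the lemma. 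The main obstacle I anticipate is establishing that $\ell|_{V_{\mathrm{Teich}}}$ is genuinely an isomorphism: this is where the maximal-index hypothesis is crucial, and the argument must bridge the abstract Morse-theoretic input with the concrete infinitesimal deformation theory of conformal structures on $M$. Without this identification, the finite-dimensional family $F_{\tau, a}$ might fail to exhaust the unstable directions of area at $u$, and the quadratic lower bound in the $\tau$-direction could fail.
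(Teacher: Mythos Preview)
Your overall strategy matches the paper's, and you have correctly identified the central difficulty: showing that $\ell|_{V_{\mathrm{Teich}}}$ is an isomorphism. However, your ``dimension count'' does not close this gap. Knowing $\ell(V_{\conf})=0$ and $\dim V_{\mathrm{Teich}}=\dim T_{\langle g_0\rangle}\mathcal{M}_0(M)$ says nothing about $\ker\ell\cap V_{\mathrm{Teich}}$; a priori, $\ell$ could kill further directions of $V^-$. The paper resolves this via the Ejiri--Micallef identity
\[
\delta^2A(u)(v,v)=\delta^2E_{g_0}(u)(v,v)-\tfrac12\int_M |du|_{g_0}^{-2}\,|\eta(v)^T|_{g_0}^2\,dv_{g_0},
\]
where $\eta(v)=du^t\,dv+dv^t\,du$ and $\eta(v)^T$ is its trace-free part. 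If $v$ lies in $\ker\ell$ (i.e.\ the $S_2$-component of $\eta(v)$ vanishes), then after subtracting a tangential field $du(X_v)$ one has $\eta(v-du(X_v))^T=0$, and hence $\delta^2A(u)(v,v)=\delta^2E_{g_0}(u)(v-du(X_v),v-du(X_v))$. Since $\ind_E(u)=n+1$ (this is where one uses that $u$ is a map by first eigenfunctions), negativity of $\delta^2A$ on $\ker\ell\cap V^-$ forces $\dim(\ker\ell\cap V^-)\leqslant n+1$, hence equals $n+1$ and coincides with $V_{\conf}$. This is the missing bridge between the Morse-theoretic input and the deformation theory that you flagged.

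A second, more technical gap is the uniform area drop $\delta_1>0$ outside a neighborhood of $(\langle g_0\rangle,0)$. You invoke compactness on $\overline{\mathcal U}\times\overline{\mathbb B}^{n+1}$, but the area functional $a\mapsto\area(G_a\circ F_\tau)$ does not extend continuously to $\partial\mathbb B^{n+1}$, where $G_a$ degenerates. The paper handles this via a monotonicity/density argument (its Section~6.3): for a sequence $a_j\to\beta\in\mathbb S^n$ and $F_j\to F$ in $C^2$, one shows $\limsup_j\area(G_{a_j}\circ F_j)\leqslant 4\Theta_F(-\beta,0)=\lim_{t\to1}\area(G_{t\beta}\circ F)$, and the latter is strictly below $\area(u)$ by the strict radial monotonicity of the canonical family. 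Without this upper-semicontinuity step, your compactness argument fails at the boundary. Your remark that the cross-terms vanish because $V_{\conf}$ and $V_{\mathrm{Teich}}$ are ``distinct eigenspaces'' is also not justified ($V_{\mathrm{Teich}}$ need not be an eigenspace of $\mathcal L_u$), but this is inessential: the paper works directly with $\delta^2A$ on the full $\mathcal V$ and never needs a block-diagonal Hessian.
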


\begin{remark} The constants $C<\infty$ and $\delta_1>0$ in the lemma depend only on the conformal structure $\langle g_0\rangle \in \mathcal{M}(M)$, since each conformal structure carries at most one minimal immersion by first eigenfunctions, up to isometries (by \cite[Theorem 1]{MR}). In particular, for those minimal immersions $u$ arising from maximization of $\bar{\lambda}_1$, the constants $C=C(M)$ and $\delta_1(M)$ may be taken independent of the conformal structure $\langle g_0\rangle \in \mathcal{M}(M)$ as well, since the collection of conformal structures achieving $\Lambda_1(M)$ is compact (recall that we are assuming $M\ne\mathbb{S}^2$ throughout this section).
\end{remark}

\begin{proof} 
Recall that the maximal index condition implies by~\cite{EjiriMicallef} that $du$ never vanishes, so the assumption that $u$ is unbranched follows from the index condition.

Denote by $g_0\in \Met_{\can}(M)$ the unit-area constant curvature metric conformal to $u^*(g_{\mathbb{S}^n})$. As discussed above, since $u$ is a map by first eigenfunctions for $\Delta_{u^*(g_{\mathbb{S}^n})}$, we know that $u$ has Morse index 
\begin{equation}\label{e-ind.char}
\ind_E(u)=n+1
\end{equation}
as a critical point of the energy functional $E_{g_0}$, with an $(n+1)$-parameter family of area-decreasing deformations given by the conformal variations
\begin{equation}\label{vconf}
\mathcal{V}_{conf}:=\left\{e-\langle e,u\rangle u \mid e\in \mathbb{R}^{n+1}\right\}.
\end{equation}

Now, for any variation vector field $v\in \Gamma(u^*T\mathbb{S}^n))$, denote by $\eta(v)$ the symmetric two-tensor
$$
\eta(v)=du^tdv+dv^tdu=\left.\frac{d}{dt}\right|_{t=0}\left[\left(\frac{u+tv}{|u+tv|}\right)^*(g_{\mathbb{S}^n})\right],
$$
and denote by $\eta(v)^T$ its $g_0$-traceless part
$$\eta(v)^T:=\eta(v)-\langle du,dv\rangle_{g_0}g_0.$$
A straightforward computation (cf., e.g., \cite{Moore}) shows that the second variation $\delta^2A(u)(v,v)$ of area along $v\in \Gamma(u^*(T\mathbb{S}^n))$ is given by
\begin{equation}\label{area.2var}
\delta^2A(u)(v,v)=\delta^2E_{g_0}(u)(v,v)-\frac{1}{2}\int_M\frac{1}{2}|du|_{g_0}^{-2}\left|\eta(v)^T\right|^2_{g_0}\,dv_{g_0}.
\end{equation}
Recall that $\delta^2A(u)(v,v)$ depends only on the component of $v$ orthogonal to the tangent space $du(TM)$; indeed, for any tangent vector field $X\in \Gamma(TM)$ generating a family of diffeomorphisms $\Phi_t\in \Diff_0(M)$, we have
\begin{equation*}
\begin{split}
&\delta^2A(u)\left(v+du(X),v+du(X)\right)=\left.\frac{d^2}{dt^2}\right|_{t=0}\area\left(\frac{u+tv}{|u+tv|}\circ\Phi_t\right)=\\
&=\left.\frac{d^2}{dt^2}\right|_{t=0}\area\left(\frac{u+tv}{|u+tv|}\right)=\delta^2A(u)(v,v),
\end{split}
\end{equation*}
by the invariance of the area functional under reparametrization.

By the assumption of maximal index
$$
\ind_A(u)=I_{\max}:=n+1+\dim(\mathcal{M}(M)),
$$
we can find a space of sections $\mathcal{V}\subset \Gamma(u^*(T\mathbb{S}^n))$ of dimension $\dim(\mathcal{V})=I_{\max}$ such that
\begin{equation}\label{2var.bds}
\max_{0\neq v\in \mathcal{V}}\frac{\delta^2A(u)(v,v)}{\|v\|_{C^2(g_0)}^2}\leqslant -c<0
\end{equation}
for some $c=c(u(M))>0$. Moreover, we take $\mathcal{V}$ to be of the form
$$
\mathcal{V}=\mathcal{V}_{\conf}\oplus \mathcal{W},
$$
where $\mathcal{V}_{\conf}$ is the $(n+1)$-dimensional space of conformal variations given by \eqref{vconf} and $\mathcal{W}\subset \Gamma(u^*(T\mathbb{S}^n))$ is a complementary $\dim(\mathcal{M}(M))$-dimensional space of variations.

With $\mathcal{V}$ as above, consider the linear map
$$
T\colon \mathcal{V}\to S:=\Gamma(\mathrm{Sym}^2(T^*M))
$$
onto the symmetric two-tensors, given by 
$$
T(v):=2|du|_{g_0}^{-2}\eta(v)=2|du|_{g_0}^{-2}(du^tdv+dv^tdu).
$$
Now, recall (e.g., from \cite{FM}) that the space $S$ of symmetric $2$-tensors admits a direct sum decomposition (determined by $g_0$)
$$
S=S_0\oplus S_1\oplus S_2
$$
into the components
$$
S_0:=\{\varphi g_0\mid \varphi \in C^{\infty}(M)\},
$$
$$
S_1:=\{\mathcal{L}_Xg_0-\mathrm{div}_{g_0}(X)g_0 \mid X\in \Gamma(TM)\},
$$
and
$$
S_2:=\{h\in S\mid \langle h,g_0\rangle \equiv 0,\text{ }\mathrm{div}_{g_0}(h)=0\}.
$$
The latter summand $S_2$ corresponds to those variations of $g_0$ within the unit-area constant curvature metrics $\Met_{\can}(M)$ which are $L^2(g_0)$-orthogonal to the orbit $\{\Phi^*g_0\mid \Phi\in \Diff_0(M)\}$ of $g_0$ under the action of $\Diff_0(M)$, and may, in particular, be identified with the tangent space $T_{\langle g_0\rangle}\mathcal{M}_0(M)$ to the Teichm\"uller space $\mathcal{M}_0(M)$ at $\langle g_0\rangle$.

Denote by $T_2\colon \mathcal{V}\to S_2$ the composition of $T\colon\mathcal{V}\to S$ with the projection $S\to S_2$. We claim now that $T_2$ is \emph{surjective}. To see this, consider the kernel $\mathcal{V}_0:=\ker(T_2)$. It follows from the definition of $T_2$ that each $v\in \mathcal{V}_0$ satisfies
$$
\eta(v)=\varphi g_0+\frac{1}{2}|du|_{g_0}^2\mathcal{L}_Xg_0
$$
for some $\varphi\in C^{\infty}(M)$ and $X\in \Gamma(TM)$ which are determined uniquely (and linearly) by $v$, provided we select $X$ in the $L^2(g_0)$-orthogonal complement to the space of conformal Killing vector fields for $g_0$. Moreover, it is straightforward to check that
$$
\mathcal{L}_X(\rho g_0)=\rho\mathcal{L}_X(g_0)+X(\rho)g_0
$$
for any conformal factor $0<\rho\in C^{\infty}(M)$, so we can write
$$
\eta(v)=\varphi_1g_0+\frac{1}{2}\mathcal{L}_X\left(|du|_{g_0}^2g_0\right)=\varphi_1g_0+\mathcal{L}_X\left(u^*(g_{\mathbb{S}^n})\right)
$$
for some $\varphi_1\in C^{\infty}(M)$ and $X\in \Gamma(TM)$. On the other hand, it is easy to check that
$$\eta(du(X))=\mathcal{L}_X(u^*(g_{S^n})),$$
so that
$$\eta(v-du(X_v))=\eta(v)-\eta(du(X_v))=\varphi_1g_0\in S_0.$$
In particular, since $du(X)$ is tangential to $du(M)$, it follows from \eqref{area.2var} that
\begin{equation*}
\begin{split}
&\delta^2A(u)(v,v)=\delta^2A(u)\left(v-du(X),v-du(X)\right)=\\
&=\delta^2E_{g_0}(u)\left(v-du(X),v-du(X)\right)-\frac{1}{2}\int_M|du|_{g_0}^{-2}\left|\eta(v-du(X))^T\right|^2_{g_0}\,dv_{g_0}\\
&=\delta^2E_{g_0}(u)\left(v-du(X),v-du(X)\right).
\end{split}
\end{equation*}
Thus, it follows that the second variation of energy $\delta^2E_{g_0}(u)$ is negative definite on the space
$$
\widetilde{\mathcal{V}}_0:=\left\{v-du(X_v)\mid v\in \mathcal{V}_0\right\}\subset \Gamma(u^*(T\mathbb{S}^n)),
$$
and since $\ind_E(u)=n+1$, it follows that $\dim\left(\widetilde{\mathcal{V}}_0\right)\leqslant n+1$. Moreover, it is easy to see that the map $v\mapsto du(X_v)$ is injective on $\mathcal{V}_0$, since $\delta^2A(u)(\cdot,\cdot)$ vanishes for the tangent vector field $du(X_v)$. Hence,
$$
\dim(\mathcal{V}_0)=\dim(\ker(T_2))\leqslant n+1.
$$
In particular, it follows that
$$
\ker(T_2)=\mathcal{V}_{\conf},
$$
and since $\dim(S_2)=\dim (\mathcal{M}(M))$, we deduce that
$$T_2: \mathcal{W}\to S_2$$
gives an isomorphism.

Now, for each $v\in \mathcal{W}$, let 
$$
F_v:=\frac{u+v}{|u+v|}\colon M\to \mathbb{S}^n,
$$
and for $v$ in a small neighborhood $0\ni W\subset \mathcal{W}$ of $0$ (so that $F_v$ remains an isomorphism), let $g_v\in \Met_{\can}(M)$ be the unique constant-curvature unit-area metric conformal to $F_v^*(g_{\mathbb{S}^n})$. A straightforward computation reveals that
\begin{equation}
\left.\frac{d}{dt}\right|_{t=0}(g_{tv_0})=2\left(|du|_{g_0}^{-2}\eta(v_0)\right)^T
\end{equation}
In particular, for $v$ in a sufficiently small neighborhood $0\ni W'\subset \mathcal{W}$, since $T_2\colon \mathcal{W}\to S_2$ is an isomorphism, the assignment
$$
U\ni v\mapsto \langle g_v\rangle \in \mathcal{M}_0(M)
$$
gives a homeomorphism onto a neighborhood of $\langle g_0\rangle$ in the Teichm\"uller space. 

Letting $\mathbb{B}^{n+1}\ni a \mapsto G_a$ denote the family of conformal dilations of $\mathbb{S}^n$, note that the subspace of variations in $\Gamma(u^*(T\mathbb{S}^n))$ given by differentiating the families
$G_a\circ F_v$ at $(a,v)=0\in \mathbb{B}^{n+1}\times \mathcal{W}$ is precisely
$$
\mathcal{V}=\mathcal{V}_{\conf}+\mathcal{W}.
$$
In particular, it follows from \eqref{2var.bds} that for $(a,v)$ in a neighborhood $0\in \mathcal{O}\subset \mathcal{V}$ of $0$ in $\mathcal{V}$,
$$
\|G_a\circ F_v -u\|_{C^2(g_0)}^2\leqslant C[\area(u)-\area(G_a\circ F_v)].
$$
Moreover, since the maps $G_a$ are conformal, for any $(a_0,v_0)\in \mathbb{B}^{n+1}\times \mathcal{W}$, we see that
\begin{eqnarray*}
\left.\frac{d}{dt}\right|_{t=0}g_{tv_0}=2\left(|du|_{g_0}^{-2}\eta(v_0)\right)^T
=2|du|_{g_0}^{-2}\eta\left(\left.\frac{d}{dt}\right|_{t=0}[G_{t a_0}\circ F_{tv_0}]\right)^T
\end{eqnarray*}
from which it follows that 
$$
\|g_v-g_0\|_{C^1(g_0)}\leqslant C \|G_a\circ F_v-u\|_{C^2(g_0)}
$$
for $(a,v)$ in a sufficiently small neighborhood $\mathcal{O}$ of $0$ in $\mathbb{B}^{n+1}\times \mathcal{W}$. In particular, for $(a,v)\in \mathcal{O}\subset \mathcal{W}$, it follows that
\begin{equation}\label{dist.ctrl}
\|g_v-g_0\|_{C^1(g_0)}^2+\|G_a\circ F_v-u\|_{C^2(g_0)}^2\leqslant C[\area(u)-\area(G_a\circ F_v)].
\end{equation}

Finally, letting $\mathcal{O}$ be such a neighborhood of $0$ in $\mathbb{B}^{n+1}\times \mathcal{W}$, we claim that there exists $\delta_1>0$ and a neighborhood $U$ of $0$ in $\mathcal{W}$ such that if $(a,v)\in \mathbb{B}^{n+1}\times U$ and $\area(G_a\circ F_v)\geqslant \area(u)-\delta_1$, then $(a,v)\in \mathcal{O}$. Indeed, if no such $\delta_1$ and $W$ existed, then we could find $v_j\to 0$ in $\mathcal{W}$ and $a_j\to a\neq 0$ in $\overline{\mathbb{B}}^{n+1}$ for which $\lim_{j\to\infty}\area(G_{a_j}\circ F_{v_j})\geqslant \area(u)$. If $a\in \mathbb{B}^{n+1}$, then $\lim_{j\to\infty}\area(G_{a_j}\circ F_{v_j}) = \area(G_a\circ u)$, which contradicts the fact that $u$ is the unique maximizer of area among the maps $G_a\circ u$. The case $a\in \partial\mathbb{B}^{n+1}$ is more subtle and we postpone the detailed argument until Section~\ref{sec:can_family}. In Corollary~\ref{cor:usc.} we show that in this case $\lim_{j\to\infty}\area(G_{a_j}\circ F_{v_j})\leqslant \lim_{t\to 1}\area(G_{ta}\circ u)$, which contradicts the fact that $\area(G_{ta}\circ u)$ is strictly decreasing in $t$, see the proof of Proposition~\ref{stable:prop}.

Combining the conclusions of the preceding three paragraphs, we see that we can find a neighborhood $U$ of $0$ in $\mathcal{W}$ such that 
$$U\ni v\mapsto \langle g_v\rangle \in \mathcal{M}_0(M)$$
gives a homeomorphism onto its image $\mathcal{U}\subset \mathcal{M}_0(M)$, and such that for every $v\in U$ and $a\in \mathbb{B}^{n+1}$, either
$$
\|g_v-g_0\|_{C^1(g_0)}^2+\|G_a\circ F_v-u\|_{C^2(g_0)}^2\leqslant C[\area(u)-\area(G_a\circ F_v)]
$$
or
$$
\area(G_a\circ F_v)\leqslant \area(u)-\delta_1.
$$
Thus, we see that the family of maps
$$
\mathcal{U}\ni \langle g_v\rangle \mapsto F_v\in C^{\infty}(M,\mathbb{S}^n)
$$
satisfies all the desired properties, completing the proof of the lemma.

\end{proof}

With the preceding lemma in place, we can now complete the proof of Theorem \ref{max.ind.thm}.

\begin{proof}[Proof of Theorem \ref{max.ind.thm}]

Let $\mathcal{C}_{\max}\subset \mathcal{M}_0(M)$ denote the set of (equivalence classes of) conformal structures $\langle g\rangle$ achieving the maximum $\Lambda_1(M,[g])=\Lambda_1(M)$. By assumption, for each $\langle g_0\rangle \in \mathcal{C}_{\max}$, the minimal immersion $u\colon M\to \mathbb{S}^n$ arising from maximization of $\bar{\lambda}_1$ satisfies the hypotheses of Lemma \ref{max.ind.lem}. Thus, for each $\langle g_0\rangle \in \mathcal{C}_{\max}$, there is a neighborhood $\mathcal{U}_{\langle g_0\rangle}$ of $\langle g_0\rangle$ in $\mathcal{M}_0(M)$ and a family of maps $\mathcal{U}\ni \tau \mapsto F_{\tau}\in C^{\infty}(M,\mathbb{S}^n)$ satisfying the conclusions of the lemma. Namely, the constant curvature metric $g_{\tau}\in \Met_{\can}(M)$ conformal to $F_{\tau}^*(g_{\mathbb{S}^n})$ lies in $\tau\in \mathcal{M}_0(M)$, $F_{\langle g_0\rangle}=u_0$ is the minimal immersion inducing the $\bar{\lambda}_1$-maximizing metric in $\langle g_0\rangle$, and for every $(a,\tau)\in \mathbb{B}^{n+1}\times \mathcal{U}$ with
$$
\area(G_a\circ F_{\tau})\geq \area(u_0)-\frac{\delta_0(M)}{2}=\frac{1}{2}[\Lambda_1(M)-\delta_0(M)],
$$
we have
$$
\|g_{\tau}-g_0\|_{C^1(g_0)}^2+\|G_a\circ F_{\tau}-u_0\|_{C^2(g_0)}^2\leqslant C\left[\frac{1}{2}\Lambda_1(M)-\area(G_a\circ F_{\tau})\right].
$$

Now, let $g_1\in \Met_{\can}(M)$ and let $\mu$ be an admissible measure such that
$$
\bar{\lambda}_1(M,[g_1],\mu)\geqslant\Lambda_1(M)-\delta_0.
$$
By the qualitative convergence result of Theorem \ref{glob:qual:stab}, provided $\delta_0=\delta_0(M)>0$ sufficiently small, it follows that $\langle g_1\rangle\in \mathcal{U}_{\langle g_0\rangle}$ for some $\langle g_0\rangle\in \mathcal{C}_{\max}$. Thus, we have a map $F=F_{\langle g_1\rangle}\colon M\to \mathbb{S}^n$ and a representative $g_0\in \Met_{\can}(M)$ of $\langle g_0\rangle \in \mathcal{C}_{\max}$ (possibly after replacing $\left(F_{\langle g_1\rangle},g_0\right)$ with $\left(F_{\langle g_1\rangle}\circ \Phi, \Phi^*g_0\right)$ for an appropriate $\Phi\in \Diff(M)$) such that for every $a\in \mathbb{B}^{n+1}$, if
$$
\area(G_a\circ F)\geqslant \area(u_0)-\frac{\delta_0(M)}{2}=\frac{1}{2}[\Lambda_1(M)-\delta_0(M)],
$$
then
\begin{equation}\label{map.comps}
\|g_1-g_0\|_{C^1(g_0)}^2+\|G_a\circ F-u_0\|_{C^2(g_0)}^2\leqslant C[\Lambda_1(M)-2\area(G_a\circ F)].
\end{equation}

Since $\mu$ is admissible, we know that there exists $a\in \mathbb{B}^{n+1}$ for which $F_a:=G_a\circ F$ satisfies
$$
\int_MF_a\,d\mu=0\in \mathbb{R}^{n+1},
$$
and since $u_a\colon (M,g_1)\to \mathbb{S}^n$ is conformal, it follows that
$$
\area(F_a)=E_{g_1}(F_a)\geqslant \frac{1}{2}\bar{\lambda}_1(M,[g_1],\mu) \geqslant \frac{1}{2}[\Lambda_1(M)-\delta_0].
$$
Hence, \eqref{map.comps} holds, and we have
\begin{equation}\label{pre.main.est}
\|g_1-g_0\|_{C^1(g_0)}^2+\|F_a-u_0\|_{C^2(g_0)}^2 \leqslant C[\Lambda_1(M)-\bar{\lambda}_1(M,[g_1],\mu)].
\end{equation}
Moreover, by Lemma \ref{main:lemma}, we see that
\begin{equation*}
\begin{split}
&\left\||dF_a|_{g_1}^2dv_{g_1}-\lambda_1([g_1],\mu)\mu\right\|_{W^{-1,2}(g_1)}^2 \leqslant \\
&\leqslant\|dF_a\|_{L^{\infty}(g_1)}^2[2\area(F_a)-\bar{\lambda}_1(M,[g_1],\mu)]\leqslant\\
&\leqslant \|dF_a\|_{L^{\infty}(g_1)}^2[\Lambda_1(M)-\bar{\lambda}_1(M,[g_1],\mu)].
\end{split}
\end{equation*}
Finally, it follows from \eqref{pre.main.est} that, provided $\Lambda_1(M)-\bar{\lambda}_1(M,[g_1],\mu)$ is sufficiently small,
$$
\|dF_a\|_{L^{\infty}(g_1)}^2\leqslant C\|dF_a\|_{L^{\infty}(g_0)}^2\leqslant C,
$$
$$
\|\cdot\|_{W^{-1,2}(g_0)}\leqslant C\|\cdot\|_{W^{-1,2}(g_1)},
$$
and
\begin{equation*}
\begin{split}
&\left\||dF_a|^2_{g_1}dv_{g_1}-|du_0|_{g_0}^2dv_{g_0}\right\|_{W^{-1,2}(g_0)}\leqslant \\
&\leqslant C(\|g_1-g_0\|_{C^0_{g_0}}+\|F_a-u_0\|_{C^1(g_0)}) \leqslant C[\Lambda_1(M)-\bar{\lambda}_1(M,[g_1],\mu)]^{\frac{1}{2}}.
\end{split}
\end{equation*}
Combining the preceding observations, we therefore see that
$$
\left\||du_0|_{g_0}^2dv_{g_0}-\lambda_1([g_1],\mu)\mu\right\|_{W^{-1,2}(g_0)}\leqslant C \sqrt{\Lambda_1(M)-\bar{\lambda}_1(M,[g_1],\mu)}.
$$
In particular, since 
$$
|du_0|_{g_0}^2dv_{g_0}=\Lambda_1(M)dv_{g_{\max}},
$$
where $g_{\max}$ is the globally $\bar{\lambda}_1$-maximizing unit area metric conformal to $g_0$, by combining the preceding estimate with \eqref{pre.main.est}, we see that
\begin{equation*}
\begin{split}
\|\lambda_1(M,[g_1],\mu)\mu-\Lambda_1(M)dv_{g_{\max}}\|_{W^{-1,2}(g_0)}^2+\|g_1-g_0\|_{C^1_{g_0}}^2\leqslant \\\leqslant C[\Lambda_1(M)-\bar{\lambda}_1(M,[g_1],\mu)],
\end{split}
\end{equation*}
as desired. 

Finally, note that in the special case in which $\mu=dv_g$ for a metric $g$ conformal to $g_1$, it follows that
$$
\|\lambda_1(M,g)g-\Lambda_1(M)g_{\max}\|^2_{W^{-1,2}(g_0)}\leqslant C [\Lambda_1(M)-\bar{\lambda}_1(M,[g_1],\mu)].
$$

\end{proof}

\subsection{Limiting behaviour of the canonical family}
\label{sec:can_family}
Let $F\colon M\to \mathbb{S}^n$ be an immersion with induced metric $g_F=F^*(g_{\mathbb{S}^n})$. Given $y\in \mathbb{S}^n$ and $r>0$, consider the area density
$$
\Theta_F(y,r):=\frac{\area_{g_F}(\{|F-y|<r\})}{r^2},
$$
and recall the following standard monotonicity result for immersions $M\to \mathbb{R}^{n+1}$ with bounded mean curvature.

\begin{lemma}[see e.g. ~\cite{Simon}, section 17]
\label{mono} 
Let $F\colon M\to \mathbb{R}^{n+1}$ be an immersion with induced metric $g_F=F^*(g_{\mathbb{S}^n})$. Then for $r>0$ one has
$$
\frac{d}{dr}\left[e^{r\|H_F\|_{\infty}}\Theta_F(y,r)\right]\geqslant 0,
$$
where $H_F=\Delta_{g_F}F$ is the mean curvature of $F$ as an immersion into $\mathbb{R}^{n+1}$ and $\|H_F\|_{\infty}$ is its $L^\infty$-norm.
\end{lemma}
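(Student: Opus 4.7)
The plan is to derive the claim from the classical first-variation argument for immersed surfaces with bounded mean curvature, following the template in Simon's lectures. Set $\rho(x) := |F(x) - y|$ for $x\in M$, $\Omega_r := \{\rho < r\}$, and $A(r) := \area_{g_F}(\Omega_r)$, so $\Theta_F(y,r) = A(r)/r^2$. Writing $H := \|H_F\|_\infty$ and expanding $\tfrac{d}{dr}\bigl[e^{rH}A(r)/r^2\bigr]$, the desired inequality is equivalent to the pointwise-a.e.\ differential inequality
\begin{equation*}
r A'(r) \ \geqslant\ (2 - rH)\,A(r),
\end{equation*}
which is what I will establish.

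The first step is to apply the first variation of area with the radial vector field $X(x) := F(x) - y$. Decomposing $X = X^T + X^\perp$ along $F(M)$ and using that the ambient derivative of the position vector is the identity, so that $\mathrm{div}_M X = 2$, a direct computation from the definition of the mean curvature vector $\vec H = \sum_i (\nabla^{\mathbb{R}^{n+1}}_{e_i}e_i)^\perp$ together with the assumption $H_F = \Delta_{g_F}F$ gives $\mathrm{div}_M X^T = 2 + \langle F-y, H_F\rangle$. Approximating $\chi_{\Omega_r}$ by a Lipschitz cut-off of $\rho$ and passing to the limit (valid for almost every $r$), the divergence theorem on $\Omega_r$ applied to the tangential field $X^T$ yields
\begin{equation*}
\int_{\Omega_r}\bigl(2 + \langle F-y, H_F\rangle\bigr)\,dv_{g_F}\ =\ \int_{\{\rho = r\}}\langle F-y, \nu\rangle\, d\mathcal{H}^1,
\end{equation*}
where $\nu$ is the outward conormal along $\partial \Omega_r$.

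For the two sides, differentiating $\rho^2 = |F-y|^2$ along tangent vectors gives $(F-y)^T = \rho\,\nabla_{g_F}\rho$, so on $\{\rho = r\}$ we have $\nu = \nabla_{g_F}\rho/|\nabla_{g_F}\rho|$ and $\langle F-y,\nu\rangle = r|\nabla_{g_F}\rho|$. Combining with the coarea formula $A'(r) = \int_{\{\rho = r\}}|\nabla_{g_F}\rho|^{-1}\, d\mathcal{H}^1$ and the bound $|\nabla_{g_F}\rho| \leqslant 1 \leqslant |\nabla_{g_F}\rho|^{-1}$, the right-hand side of the above identity is at most $r A'(r)$. On the left, the pointwise bound $|\langle F-y, H_F\rangle| \leqslant \rho H \leqslant rH$ on $\Omega_r$ yields the lower bound $(2-rH)A(r)$. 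Combining these two estimates gives the required differential inequality.

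No significant obstacle is expected: the whole computation is standard. The only point requiring care is the integration by parts at the rough boundary $\{\rho = r\}$, handled in the usual way by Lipschitz mollification of $\chi_{\Omega_r}$ and the coarea formula, valid for almost every $r > 0$.
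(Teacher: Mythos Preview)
Your argument is correct and is essentially the standard proof of the monotonicity formula for immersed surfaces with bounded mean curvature, exactly as in Simon's lectures. The paper does not give its own proof of this lemma at all; it simply cites the reference, so there is nothing further to compare.
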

An immediate corollary is the well-definedness of the density
$$
\Theta_F(y,0):=\lim_{r\to 0}\Theta_F(y,r),
$$
and for any $0<r<s$ one has
$$
\Theta_F(y,r)\leqslant e^{\|H_F\|_{\infty}(s-r)}\Theta_F(y,s).
$$

Recall that the canonical family $\mathbb{B}^{n+1}\ni a\mapsto G_a\in \Conf(\mathbb{S}^n)$ is given by
$$
G_a(x)=(1-|a|^2)\frac{x+a}{|x+a|^2}+a.
$$
We then have the following.

\begin{proposition} Let $F\colon M\to \mathbb{S}^n$ be an immersion with mean curvature $H_F=\Delta_{g_F}F$ as a submanifold of Euclidean space, and let $a\in \mathbb{B}^{n+1}$ with $1-|a|<\delta^3<1/2$. Then there exists a universal constant $C_0$ (independent of $F$, $n$) such that, letting $\alpha=-a/|a|$, we have
\begin{equation}
\label{l.bd}
\area(G_a\circ F)\geqslant (1+|a|)^2\frac{\Theta_F(\alpha,0)}{e^{\|H_F\|_{\infty}\delta}}-C_0\left(1+\delta^2e^{C_0\|H_F\|_{\infty}}\right)\frac{\delta^2}{|a|^2}\area(F)
\end{equation}
and
\begin{equation}\label{u.bd}
\area(G_a\circ F)\leqslant 4e^{\|H_F\|_{\infty}\delta}\Theta_F(\alpha,\delta)+C_0\left(1+\delta e^{C_0\|H_F\|_{\infty}}\right)\frac{\delta^2}{|a|^2}\area(F).
\end{equation}
\end{proposition}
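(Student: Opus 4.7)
The plan is to reduce both bounds to a single integral expression, then split it into a near-concentration-point piece and a far-away piece, and apply the monotonicity formula to the near piece via a Stieltjes integration by parts.

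Since $G_a$ restricts to a conformal self-map of $\mathbb{S}^n$ with conformal factor $\frac{1-|a|^2}{|x+a|^2}$ on $T_x\mathbb{S}^n$, one has $\area(G_a\circ F)=\int_M\left(\frac{1-|a|^2}{|F+a|^2}\right)^2 dv_{g_F}$. A direct computation using $|F|=|\alpha|=1$ and $a=-|a|\alpha$ gives $|F+a|^2=\epsilon^2+|a|\,r(x)^2$, where $\epsilon:=1-|a|$ and $r(x):=|F(x)-\alpha|$. Consequently
\[\area(G_a\circ F)=(1+|a|)^2\epsilon^2\int_M\frac{dv_{g_F}}{(\epsilon^2+|a|\,r(x)^2)^2}.\]
Writing $A_F(r):=\area_{g_F}\{r(x)<r\}=r^2\Theta_F(\alpha,r)$, I split this integral as $\int_{B_F(\delta)}+\int_{M\setminus B_F(\delta)}$. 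On $M\setminus B_F(\delta)$ the integrand is bounded by $|a|^{-2}\delta^{-4}$, so the outer piece contributes at most $(1+|a|)^2\epsilon^2|a|^{-2}\delta^{-4}\area(F)\leqslant 4\delta^2|a|^{-2}\area(F)$, using $\epsilon<\delta^3$ and $|a|\leqslant 1$. This accounts for the $\area(F)$ correction on the upper side and is controlled by the "1" in the error factor; on the lower side, this piece is nonnegative and can be dropped.

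For the inner piece, Stieltjes integration by parts (the boundary term at $r=0$ vanishes since $F$ is an immersion and $\alpha$ has at most finitely many preimages) gives
\[\int_{B_F(\delta)}\frac{dv_{g_F}}{(\epsilon^2+|a|r^2)^2}=\frac{A_F(\delta)}{(\epsilon^2+|a|\delta^2)^2}+\int_0^\delta\frac{4|a|r\,A_F(r)}{(\epsilon^2+|a|r^2)^3}dr.\]
Lemma~\ref{mono} yields $e^{-\delta\|H_F\|_\infty}\Theta_F(\alpha,0)\leqslant\Theta_F(\alpha,r)\leqslant e^{(\delta-r)\|H_F\|_\infty}\Theta_F(\alpha,\delta)$ for $r\in[0,\delta]$, hence $r^2 e^{-\delta\|H_F\|_\infty}\Theta_F(\alpha,0)\leqslant A_F(r)\leqslant r^2 e^{\delta\|H_F\|_\infty}\Theta_F(\alpha,\delta)$. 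Substituting and using the explicit evaluation $\int_0^\delta \frac{4|a|r^3\,dr}{(\epsilon^2+|a|r^2)^3}=\frac{|a|\delta^4}{\epsilon^2(\epsilon^2+|a|\delta^2)^2}$ (via $u=\epsilon^2+|a|r^2$), the inner contribution simplifies both above and below to a common factor $\frac{(1+|a|)^2\delta^2}{\epsilon^2+|a|\delta^2}\cdot e^{\pm\delta\|H_F\|_\infty}\Theta_F(\alpha,\cdot)$.

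The lower bound is then immediate once I observe that $\frac{(1+|a|)^2\delta^2}{\epsilon^2+|a|\delta^2}=\frac{(1+|a|)^2}{|a|+\epsilon^2/\delta^2}\geqslant(1+|a|)^2\bigl(1-O(\delta^4)\bigr)$ under the assumption $\epsilon<\delta^3$; the residual loss is bounded by $C\delta^4\Theta_F(\alpha,0)$, and applying monotonicity at the maximal scale $r=2$ (where $A_F(2)=\area(F)$) converts this into $C\delta^4 e^{2\|H_F\|_\infty}\area(F)$, which fits inside the $C_0\delta^2 e^{C_0\|H_F\|_\infty}\cdot\delta^2|a|^{-2}\area(F)$ slot. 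For the upper bound, the same manipulation yields $\tfrac{(1+|a|)^2}{|a|}e^{\delta\|H_F\|_\infty}\Theta_F(\alpha,\delta)$, and a Taylor expansion shows $\frac{(1+|a|)^2}{|a|}=4+O(\epsilon^2)$, so the leading coefficient is $4$ with an $O(\epsilon^2)\Theta_F(\alpha,\delta)$ perturbation, again absorbed into the stated error via $\Theta_F(\alpha,\delta)\leqslant e^{2\|H_F\|_\infty}\area(F)/4$. The main obstacle is bookkeeping of constants: getting the clean $4$ in \eqref{u.bd} and fitting every residual into the precise form $C_0(1+\delta^{1\text{ or }2}e^{C_0\|H_F\|_\infty})\frac{\delta^2}{|a|^2}\area(F)$ requires tracking the sharp Taylor remainder $\tfrac{(1+|a|)^2}{|a|}-4=O(\epsilon^2)$ and the crude density bound via monotonicity at scale $2$ in precisely the right places.
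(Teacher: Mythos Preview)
Your proof is correct and follows essentially the same approach as the paper: both start from the conformal-factor formula for $\area(G_a\circ F)$, rewrite $|F+a|^2=\epsilon^2+|a|\,|F-\alpha|^2$, split into the region $\{|F-\alpha|<\delta\}$ and its complement, integrate by parts against $A_F(r)$ on the near piece, and then apply the monotonicity lemma together with an explicit antiderivative computation. Your organization is slightly cleaner in one respect: you keep the boundary term $A_F(\delta)/(\epsilon^2+|a|\delta^2)^2$ together with the integral term and observe that their sum collapses to $\Theta\cdot\delta^2/[\epsilon^2(\epsilon^2+|a|\delta^2)]$, whereas the paper moves the boundary term to the error side and computes the remaining integral $\int_0^\delta s^2 v_a'/v_a^3\,ds$ separately; your exact identity $\frac{(1+|a|)^2}{|a|}=4+\frac{\epsilon^2}{|a|}$ is also a bit sharper than the paper's cruder bound using $\epsilon<\delta^3$. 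These are cosmetic differences only.
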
  

\begin{proof}
By a direct computation, we have
\begin{equation*}
\begin{split}
\area(G_a\circ F)&=\int_M\frac{(1-|a|^2)^2}{|F+a|^4}\,dv_{g_F}=\int_M\frac{(1-|a|^2)^2}{\left(|a||F-\alpha|^2+(1-|a|)^2\right)^{2}}dv_{g_F}=\\
&=\int_{\{|F-\alpha|<\delta\}}\frac{(1-|a|^2)^2}{\left(|a||F-\alpha|^2+(1-|a|)^2\right)^{2}}dv_{g_F}+\\
&+\int_{\{|F-\alpha|\geqslant\delta\}}\frac{(1-|a|^2)^2}{\left(|a||F-\alpha|^2+(1-|a|)^2\right)^{2}}dv_{g_F},
\end{split}
\end{equation*}
so that
\begin{equation*}
\begin{split}
&\left|\area(G_a\circ F)-\int_{\{|F-\alpha|<\delta\}}\frac{(1-|a|^2)^2}{\left(|a||F-\alpha|^2+(1-|a|)^2\right)^{2}}\right|\leqslant \\
&\leqslant \frac{(1-|a|^2)^2}{|a|^2\delta^4}\area(F)\leqslant \frac{4\delta^2}{|a|^2}\area(F),
\end{split}
\end{equation*}
where in the last line we used the bound $1-|a|<\delta^3$.
Next, applying the coarea formula for $|F-\alpha|$ and integrating by parts, we see that
\begin{equation*}
\begin{split}
&\int_{\{|F-\alpha|<\delta\}}\left(|a||F-\alpha|^2+(1-|a|)^2\right)^{-2}dv_{g_F}=\\
&=\int_0^{\delta}(|a|s^2+(1-|a|)^2)^{-2}\frac{d}{ds}\area(\{|F-\alpha|<s\}) ds=\\
&=\frac{\Theta_F(\alpha,\delta)\delta^2}{(|a|\delta^2+(1-|a|)^2)^2}
-\int_0^{\delta}\Theta_F(\alpha,s)s^2\frac{d}{ds}(|a|s^2+(1-|a|)^2)^{-2} ds.
\end{split}
\end{equation*}
Set $v_a(s):=|a|s^2+(1-|a|)^2>0$. Combining the previous computations gives 
\begin{equation}
\label{eq:idk}
\begin{split}
&\left|\area(G_a\circ F) - 2(1-|a|^2)^2\int_0^{\delta}\Theta_F(\alpha,s)s^2v_a^{-3}v_a'(s) ds\right|\leqslant 
\frac{4\delta^2}{|a|^2}\area(F)+\\
&+(1-|a|^2)^2\frac{\Theta_F(\alpha,\delta)\delta^2}{(|a|\delta^2+(1-|a|)^2)^2}\leqslant \frac{4\delta^2}{|a|^2}\area(F)+\frac{4\delta^4}{|a|^2}\Theta_F(\alpha,\delta),
\end{split}
\end{equation}
where we used the bound $(1-|a|^2)^2\leqslant 4(1-|a|)^2<4\delta^6$ in the last step. 
By Lemma~\ref{mono} one has
$$
\Theta_F(\alpha,\delta)\leqslant e^{2\|H_F\|_{\infty}}\Theta_F(\alpha,2)=e^{2\|H_F\|_{\infty}}\frac{\area(F)}{4}.
$$
Substituting this into~\eqref{eq:idk} yields
\begin{equation*}
\begin{split}
&\left|\area(G_a\circ F) - 2(1-|a|^2)^2\int_0^{\delta}\Theta_F(\alpha,s)s^2v_a^{-3}v_a'(s) ds\right|\leqslant \\
&\leqslant \left(4+\delta^2e^{2\|H_F\|_{\infty}}\right)\frac{\delta^2}{|a|^2}\area(F).
\end{split}
\end{equation*}

Finally, we estimate the integral term in the l.h.s. The monotonicity statement of Lemma~\ref{mono} gives that for all $s\in [0,\delta]$ one has
\begin{equation}
\label{eq:idk2}
e^{-\|H_F\|_{\infty}\delta}\Theta_F(\alpha,0)\leqslant\Theta_F(\alpha,s)\leq e^{\|H_F\|_{\infty}\delta}\Theta_F(\alpha,\delta).
\end{equation}
At the same time, 

\begin{eqnarray*}
\int_0^{\delta}s^2\frac{v_a'(s)}{v_a^3}ds&=&\frac{1}{|a|}\int_0^{\delta}(v_a(s)-(1-|a|)^2)\frac{v_a'(s)}{v_a^3}ds=\\
&=&\frac{1}{|a|}\int_0^{\delta}\frac{d}{ds}\left(-\frac{1}{v_a(s)}+\frac{(1-|a|)^2}{2v_a(s)^2}\right)ds=\\
&=&\frac{1}{|a|}\left(\frac{(1-|a|)^2}{2v_a(\delta)^2}-\frac{1}{v_a(\delta)}+\frac{1}{v_a(0)}-\frac{(1-|a|)^2}{2v_a(0)^2}\right)=\\
&=&\frac{1}{2|a|(1-|a|)^2}+\frac{(1-|a|)^2-2[|a|\delta^2+(1-|a|)^2]}{2|a|[|a|\delta^2+(1-|a|)]^2},
\end{eqnarray*}
so that
$$
\left|\int_0^{\delta}s^2\frac{v_a'(s)}{v_a^3}ds-\frac{1}{2|a|(1-|a|)^2}\right|\leqslant \frac{1}{|a|^2\delta^2}.
$$
Combining this with~\eqref{eq:idk2} yields (note that $v_a', v_a>0$)
\begin{equation*}
\begin{split}
&\area(G_a\circ F)\geqslant -\left(4+\delta^2e^{2\|H_F\|_{\infty}}\right)\frac{\delta^2}{|a|^2}\area(F) +\\
&+2(1-|a|^2)^2\int_0^{\delta}\Theta_F(\alpha,s)s^2\frac{v_a'(s)}{v_a^3}ds\geqslant -\left(4+\delta^2e^{2\|H_F\|_{\infty}}\right)\frac{\delta^2}{|a|^2}\area(F)+\\
& +2(1-|a|^2)^2e^{-\|H_F\|_{\infty}\delta}\Theta_F(\alpha,0)\left(\frac{1}{2|a|(1-|a|)^2}-\frac{1}{|a|^2\delta^2}\right)\\
&\geqslant (1+|a|)^2e^{-\|H_F\|_{\infty}\delta}\Theta_F(\alpha,0)-C_0\left(1+\delta^2e^{C_0\|H_F\|_{\infty}}\right)\frac{\delta^2}{|a|^2}\area(F),
\end{split}
\end{equation*}
where in the last line we used Lemma~\ref{mono} to bound the term $\dfrac{\Theta_F(\alpha,0)}{|a|^2\delta^2}$ in terms of $\area(F)$.
Similarly, we find that
$$
\area(G_a\circ F)\leqslant \frac{(1+|a|)^2}{|a|}e^{\|H_F\|_{\infty}\delta}\Theta_F(\alpha,\delta)+4\left(1+\delta^2e^{2\|H_F\|_{\infty}}\right)\frac{\delta^2}{|a|^2}\area(F).
$$
To complete the proof of~\eqref{u.bd} we write
$$
\frac{(1+|a|)^2}{|a|} \leqslant \frac{(\delta^3 + 2|a|)^2}{|a|}\leqslant 4 + 4\delta^3 + \frac{\delta^6}{|a|}
$$
and we once again use Lemma~\ref{mono} to bound the term $\Theta_F(\alpha,\delta)\left(4\delta^3 + \dfrac{\delta^6}{|a|}\right)$ in terms of $\area(F)$.
\end{proof} 

As an immediate corollary, we see that for any fixed immersion $F\colon M\to \mathbb{S}^n$ and $\alpha\in \mathbb{S}^n$, 
\begin{equation}\label{conf.dens.char}
\lim_{t\to 1}\area(G_{t\alpha}\circ F)=4\Theta_F(-\alpha,0).
\end{equation}

Finally, we can use this to analyze the boundary behavior of the area of the canonical family for a family of immersions converging in $C^2$.

\begin{corollary}
\label{cor:usc.}
 Let $F_j\colon M\to \mathbb{S}^n$ be a sequence of immersions converging in $C^2$ to an immersion $F\colon M\to \mathbb{S}^n$, and let $a_j\in \mathbb{B}^{n+1}$ be a sequence of points such that
$$
a_j\to \beta\in \mathbb{S}^n.
$$
Then 
\begin{equation}
\label{main.est.1}
4\Theta_F(-\beta,0)\geqslant \limsup_{j\to\infty}\area(G_{a_j}\circ F_j).
\end{equation}
In particular,
\begin{equation}\label{main.est.2}
\lim_{t\to 1}\area(G_{t\beta}\circ F)\geqslant \limsup_{j\to\infty}\area(G_{a_j}\circ F_j).
\end{equation}
\end{corollary}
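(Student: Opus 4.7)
The plan is to apply the upper bound \eqref{u.bd} to each map $F_j$ with the given $a_j$, pass to the limit as $j\to\infty$ with $\delta$ fixed, and then let $\delta\to 0$. Since $F_j\to F$ in $C^2$, the mean curvatures $\|H_{F_j}\|_\infty$, areas $\area(F_j)$, and energies are uniformly bounded by constants depending only on $F$; denote uniform upper bounds by $K$ and $A$ respectively. Set $\alpha_j := -a_j/|a_j|$, so that $\alpha_j\to -\beta$ as $j\to\infty$ by hypothesis.

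Fix $\delta\in(0,1/2)$. Since $|a_j|\to 1$, for all sufficiently large $j$ we have $1-|a_j|<\delta^3<1/2$, so \eqref{u.bd} yields
$$\area(G_{a_j}\circ F_j)\leqslant 4e^{K\delta}\,\Theta_{F_j}(\alpha_j,\delta) + C_0\bigl(1+\delta e^{C_0 K}\bigr)\frac{\delta^2}{|a_j|^2}A.$$
The next key step is to show that for almost every $\delta>0$, one has $\Theta_{F_j}(\alpha_j,\delta)\to \Theta_F(-\beta,\delta)$. Writing $\area_{g_{F_j}}(\{|F_j-\alpha_j|<\delta\}) = \int_M \chi_{\{|F_j-\alpha_j|<\delta\}}\,dv_{g_{F_j}}$, the $C^2$ (hence $C^1$) convergence $F_j\to F$ gives uniform convergence $dv_{g_{F_j}}\to dv_{g_F}$ as densities, while the $C^0$ convergence $F_j-\alpha_j\to F+\beta$ ensures pointwise convergence $\chi_{\{|F_j-\alpha_j|<\delta\}}\to \chi_{\{|F+\beta|<\delta\}}$ off the boundary set $\{|F+\beta|=\delta\}$. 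For all $\delta$ outside a countable exceptional set, this boundary set has vanishing $g_F$-measure (the function $\delta\mapsto\Theta_F(-\beta,\delta)\delta^2$ is monotone, hence continuous away from countably many points), and dominated convergence gives the claimed limit.

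Taking $\limsup_{j\to\infty}$ in the displayed estimate and using these facts then yields, for every such admissible $\delta$,
$$\limsup_{j\to\infty}\area(G_{a_j}\circ F_j)\leqslant 4e^{K\delta}\,\Theta_F(-\beta,\delta) + C_0\bigl(1+\delta e^{C_0 K}\bigr)\delta^2 A.$$
Letting $\delta\to 0$ along admissible values, the second term vanishes and $\Theta_F(-\beta,\delta)\to \Theta_F(-\beta,0)$ by the very definition of the density; this establishes \eqref{main.est.1}. The refinement \eqref{main.est.2} follows immediately by applying \eqref{conf.dens.char} with $F$ and $\alpha=-\beta$, which identifies $\lim_{t\to 1}\area(G_{t\beta}\circ F)=4\Theta_F(-\beta,0)$.

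The main technical hurdle is the density convergence $\Theta_{F_j}(\alpha_j,\delta)\to\Theta_F(-\beta,\delta)$; the issue is that the indicator functions have a moving discontinuity set, and one must restrict to ``good'' radii $\delta$ for which the limiting level set $\{|F+\beta|=\delta\}$ is negligible. Everything else is bookkeeping with the explicit bound \eqref{u.bd}, and the uniform $C^2$ convergence provides all the needed stability of mean curvature, area, and volume form.
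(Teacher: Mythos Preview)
Your proof is correct and follows essentially the same approach as the paper's: apply \eqref{u.bd} with uniform bounds on mean curvature and area coming from $C^2$ convergence, pass to the limit in $j$ for fixed $\delta$, then send $\delta\to 0$. Two minor differences worth noting: you are more careful than the paper about the convergence $\Theta_{F_j}(\alpha_j,\delta)\to\Theta_F(-\beta,\delta)$, correctly restricting to radii $\delta$ for which the limiting level set is null (the paper simply asserts this convergence for fixed $\delta$); and for \eqref{main.est.2} you invoke the equality \eqref{conf.dens.char} directly, whereas the paper re-derives the needed inequality from \eqref{l.bd}---both are fine since \eqref{conf.dens.char} is already established.
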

\begin{proof}
Since the maps $F_j$ are converging in $C^2$ to an immersion $F\colon M\to \mathbb{S}^n$, it is easy to see that the induced metrics $g_{F_j}\to g_F$ converge in $C^1$, and the mean curvatures (as immersions in $\mathbb{R}^{n+1}$)
$$
H_{F_j}=\Delta_{g_{F_j}}F_j=-\left(\det(g_{F_j})\right)^{-1/2}\partial_a\left(\det(g_{F_j})^{1/2}g_{F_j}^{ab}\partial_bF_j\right)
$$
converge $H_{F_j}\to H_F$ in $C^0$. In particular, there exists $K>0$ such that
$$
\|H_{F_j}\|_{L^{\infty}}\leqslant K\text{ and }\|H_F\|_{L^{\infty}}\leqslant K
$$
and
$$
\area(F_j)\leqslant K.
$$
Thus, applying \eqref{u.bd} to $F_j$, we see that there exists $C$ independent of $j$ such that
$$
\area(G_{a_j}\circ F_j)\leqslant 4 e^{C\delta}\Theta_{F_j}(-a_j/|a_j|,\delta)+C\delta^2
$$
for any fixed $\delta>0$ and $j$ sufficiently large that $1-|a_j|<\delta^3$. Moreover, for fixed $\delta>0$, it is easy to see that
$$
\lim_{j\to\infty}\Theta_{F_j}(-a_j/|a_j|,\delta)=\Theta_F(-\beta,\delta),
$$
so that passing to the limit as $j\to\infty$ gives
$$
\limsup_{j\to\infty}\area(G_{a_j}\circ F_j)\leqslant 4e^{C\delta}\Theta_F(-\beta,\delta)+C\delta^2.
$$
Taking $\delta\to 0$, we then find
$$
\limsup_{j\to\infty}\area(G_{a_j}\circ F_j)\leqslant 4\Theta_F(-\beta,0),
$$
giving \eqref{main.est.1}. 

Finally, it follows from \eqref{l.bd} (taking $\delta\to 0$) that
$$
\lim_{t\to 1}\area(G_{t\beta}\circ F)\geqslant 4\Theta_F(-\beta,0),
$$
which together with \eqref{main.est.1} gives the desired semicontinuity estimate \eqref{main.est.2}.

\end{proof}

%

\section{Sharpness of  the quantitative stability estimates}
\label{sec:sharp}
\subsection{Optimality of the exponent}
The goal of this section is to show that the quantitative stability estimates 
obtained in the previous sections are sharp. 
As a test case, we will revisit Theorem \ref{S2stability:thm} that gives the quantitative stability of Hersch's inequality on the sphere, and
show that  neither the exponent two on the right-hand side of \eqref{eq:S2stabnew1} nor the norm $W^{-1,2}$ can be  improved.
For the purposes of this section we denote by $g$ a round metric on a sphere $\mathbb{S}^2$ of {\it unit} area embedded in $\mathbb{R}^3$. Unless specified otherwise, all the functional spaces throughout this section will be considered on $\mathbb{S}^2$ with respect to this metric, and  the dependence on $(\mathbb{S}^2,g)$  will be omitted to simplify notation.
As before, we say that  a measure $\mu$ on $\Sp$ is balanced whenever $\int_{\Sp}F d\mu=0\in\mathbb{R}^3$, where $F \colon\mathbb{S}^2\to\mathbb{S}^2\subset\mathbb{R}^3$ is the identity map. We also denote by $E_1$ the eigenspace  corresponding to the eigenvalue $\lambda_1(\Sp,g)$ spanned by the coordinate functions.

As was mentioned in Remark \ref{rem:stabcont}, the eigenvalue functional $\mu\mapsto \lambda_1(\mathbb{S}^2, \mu)$ is not continuous in
 $W^{-1,2}:=W^{-1,2}(\Sp,g)$ norm.  
Following \cite[Proposition 4.11]{GKL} we introduce the  Orlicz-Sobolev space below.
\begin{definition}
\label{def:Orlicz}
For a function $u\in L^1(\Sp,g)$ let 
\[
\|u\|_{L^2(\mathrm{Log}L)^{-\frac{1}{2}}} : =\inf\left\{\eta>0:\ \int_{\Sp}\frac{|u/\eta|^2}{\log(2+|u/\eta|)}\,d v_g\leqslant 1\right\}.
\]
The {\it Orlicz-Sobolev space} $W^{1,2,-\frac{1}{2}}:=W^{1,2,-\frac{1}{2}}(\Sp,g)$ is defined to be the space of functions such that
$$
\|u\|_{W^{1,2,-\frac{1}{2}}} := \|u\|_{L^2(\mathrm{Log}L)^{-\frac{1}{2}}} + \|du\|_{L^2(\mathrm{Log}L)^{-\frac{1}{2}}}<\infty.
$$
\end{definition}
 In what follows,  the only property property of $W^{1,2,-\frac{1}{2}}$ we are using is that 
 there exists a constant $C_{\text{Or}}$ such that for any $u\in W^{1,2}$ one has $\| u^2\|_{W^{1,2,-\frac{1}{2}}}\leqslant C_{\text{Or}}\Vert u\Vert_{W^{1,2}}^2$, which implies the continuity of $\mu\mapsto\lambda_1(\mu):=\lambda_1(\mathbb{S}^2,\mu)$ in the dual of $W^{1,2,-\frac{1}{2}}$ (see \cite{GKL} for more details). In the next proposition, we prove that if a balanced measure $\mu$ is close enough to $v_g$ in the dual of $W^{1,2,-\frac{1}{2}}$ (and hence in $W^{-1,2}$ as well), then the exponent two  in the stability estimate \eqref{eq:S2stabnew1} is sharp on a finite codimension subspace of admissible measures. 
\begin{proposition}
\label{prop:optpower}
There exist $\eps_0,c>0$ such that, for any balanced admissible measure $\mu$, if
 \begin{equation}
 \label{ineq:as1}
 \int_{\Sp}\varphi^2\,d(\mu-v_g)\leqslant \eps_0\Vert \varphi\Vert_{W^{1,2}}^2
 \end{equation}
 for any $\varphi\in W^{1,2}$,
then 
\begin{equation}
\label{ineq:conc1}
\lambda_1\left(\mu\right)\geqslant \frac{8\pi}{1+c\left(\Vert\mu- dv_g \Vert_{W^{-1,2}} + \Vert\mu- dv_g \Vert^2_{W^{-1,2}}\right)}.
\end{equation}
Moreover, if, in addition, for any $w \in E_1$ one has that  $\int_{\Sp} w^2\,d(\mu- v_g )=0$, then
\begin{equation}
\label{ineq:conc2}
\lambda_1\left(\mu\right)\geqslant \frac{8\pi}{1+c\Vert\mu- dv_g \Vert^2_{W^{-1,2}}}.
\end{equation}
In particular, the exponent in the stability estimate~\eqref{eq:S2stabnew1} is sharp.
\end{proposition}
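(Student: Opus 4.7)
The plan is to lower-bound $\lambda_1(\mu)$ via the Rayleigh principle applied to a first eigenfunction decomposed along the unperturbed spectrum of $g$, combined with a bootstrap exploiting hypothesis~\eqref{ineq:as1}. Let $\varphi$ be a first eigenfunction of $\mu$ with $\|\varphi\|_{L^2(\mu)}=1$ and $\int\varphi\,d\mu = 0$, so $D:=\lambda_1(\mu) = \|d\varphi\|^2_{L^2(g)}$. I would decompose $\varphi = \alpha + \langle a, F\rangle + \psi$ along $\mathbb{R}\oplus E_1\oplus(\mathbb{R}\oplus E_1)^\perp$ in $L^2(v_g)$, and set $b^2:=|a|^2/3$, $c^2:=\|\psi\|^2_{L^2(g)}$, $\nu := \mu - dv_g$, $\eta := \|\nu\|_{W^{-1,2}}$. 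One has $\|d\varphi\|^2 = 8\pi b^2 + \|d\psi\|^2 \geqslant 8\pi b^2 + 24\pi c^2$ (since $\lambda_2(\Sp,g) = 24\pi$), and balance of $\mu$ combined with $\int\varphi\,d\mu = 0$ forces $\alpha\mu(\Sp) = -\int\psi\,d\nu$, hence $|\alpha|\leqslant C\eta\|\psi\|_{W^{1,2}}$.

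Expanding $1 = \|\varphi\|^2_{L^2(\mu)}$ and using the identity $\alpha\mu(\Sp) = -\int\psi\,d\nu$ to simplify the $\alpha$-cross terms yields
$$b^2 + c^2 = 1 + \alpha^2\mu(\Sp) - I_1 - I_2 - I_3,$$
where $I_1:=\int\langle a,F\rangle^2\,d\nu$, $I_2:=2\int\langle a,F\rangle\psi\,d\nu$, and $I_3:=\int\psi^2\,d\nu$. Substituting into $D\geqslant 8\pi(b^2+c^2) + 16\pi c^2$ and dropping the nonnegative $\alpha^2\mu(\Sp)$ and $c^2$ terms gives
$$8\pi - D \leqslant 8\pi(|I_1|+|I_2|+|I_3|).$$
Under the orthogonality hypothesis, polarization yields $\int w_1 w_2\,d\nu = 0$ for all $w_1,w_2\in E_1$, so $I_1 = 0$; moreover $\mu(\Sp) = 1 + \int 1\,d\nu = 1 + \sum_i \int F_i^2\,d\nu = 1$, which sharpens Hersch's inequality to $D\leqslant 8\pi$.

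The key bootstrap combines $D\leqslant 8\pi$ with the identity above to bound $\|d\psi\|^2 = D-8\pi b^2 \leqslant 8\pi c^2 + 8\pi(|I_2|+|I_3|)$; together with $\|d\psi\|^2\geqslant 24\pi c^2$ this gives $\|\psi\|^2_{W^{1,2}}\leqslant C(|I_2|+|I_3|)$. The pairing estimate $|I_2|\leqslant C|a|\|\psi\|_{W^{1,2}}\eta$ (standard $W^{-1,2}$-duality with the smooth multiplier $\langle a,F\rangle$) and the hypothesis bound $|I_3|\leqslant\varepsilon_0\|\psi\|^2_{W^{1,2}}$ (direct application of~\eqref{ineq:as1} with $\varphi = \psi$) feed back to produce
$$\|\psi\|^2_{W^{1,2}}\leqslant C(|a|\|\psi\|_{W^{1,2}}\eta + \varepsilon_0\|\psi\|^2_{W^{1,2}}).$$
For $\varepsilon_0$ small the $\varepsilon_0$-term on the right is absorbed into the left, yielding $\|\psi\|_{W^{1,2}}\leqslant C|a|\eta$; since $|a|^2\leqslant 3D/(8\pi) = O(1)$, one has $\|\psi\|_{W^{1,2}} = O(\eta)$, and substituting back gives $|I_2|,|I_3| = O(\eta^2)$. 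Hence $8\pi - D = O(\eta^2)$, which is equivalent to~\eqref{ineq:conc2}. Without the orthogonality hypothesis, $I_1$ reappears with $|I_1|\leqslant C|a|^2\eta = O(\eta)$, yielding the linear contribution in~\eqref{ineq:conc1}; the small perturbation $\mu(\Sp) = 1+O(\eta)$ in Hersch still allows the bootstrap to close.

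The main technical obstacle is the estimate on $I_3$: in dimension two $W^{1,2}\not\hookrightarrow L^\infty$, so $\|\psi^2\|_{W^{1,2}}$ is not controlled by $\|\psi\|^2_{W^{1,2}}$ and a direct $W^{-1,2}$-pairing is unavailable. Hypothesis~\eqref{ineq:as1} is precisely tailored to this defect, supplying the bound $|I_3|\leqslant\varepsilon_0\|\psi\|^2_{W^{1,2}}$ whose small coefficient allows the decisive absorption step above. The sharpness statement is then immediate: under orthogonality $\mu(\Sp) = 1$ gives $\bar\lambda_1(\mu) = \lambda_1(\mu)$, so~\eqref{ineq:conc2} reads $8\pi - \bar\lambda_1(\mu)\leqslant C\eta^2$, matching the quadratic lower bound $8\pi - \bar\lambda_1(\mu)\geqslant 2\eta^2$ of Theorem~\ref{S2stability:thm} and precluding any smaller exponent in~\eqref{eq:S2stabnew1}.
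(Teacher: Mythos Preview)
Your proof is correct and shares the paper's essential ingredients: the spectral decomposition $\varphi = (\text{const}) + w + \psi$ with $w\in E_1$ and $\psi\perp\{1,E_1\}$, the $W^{-1,2}$-duality estimate for the cross term $\int w\psi\,d(\mu-v_g)$ via the smooth-multiplier bound $\|w\psi\|_{W^{1,2}}\le C\|w\|_{L^2}\|d\psi\|_{L^2}$, and the crucial use of hypothesis~\eqref{ineq:as1} to handle $\int\psi^2\,d(\mu-v_g)$. Where you diverge is in how the pieces are assembled. The paper argues over \emph{all} test functions $\varphi\in 1^\perp$ and shows the Rayleigh-quotient bound $\int\varphi^2\,d\mu\le\frac{1+\delta}{8\pi}\|d\varphi\|^2$ directly, using the spectral gap $\lambda_4=24\pi$ to create a $\frac{1}{12\pi}\|d\psi\|^2$ budget on the right-hand side and then a weighted AM--GM to pay for the cross term; no appeal to Hersch is needed. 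You instead specialise to the eigenfunction, obtain the identity $8\pi-D\le 8\pi(|I_1|+|I_2|+|I_3|)$, and then run a bootstrap in which Hersch's upper bound $D\le 8\pi$ (or its $O(\eta)$-perturbation) is fed back to control $\|d\psi\|^2$ self-consistently, with the $\eps_0$-smallness used for an absorption rather than for fitting under the $\frac{1}{12\pi}$ budget.

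Both routes yield the result with comparable constants. The paper's approach is slightly cleaner in that it is self-contained (no Hersch input) and delivers the bound in the form $\lambda_1(\mu)\ge\frac{8\pi}{1+\delta}$ in one step, whereas your bootstrap needs a short extra argument to pass from $8\pi-D\le C\eta^2$ to $D\ge\frac{8\pi}{1+c\eta^2}$ (namely, a uniform lower bound on $D$, which follows easily from~\eqref{ineq:as1} itself). Your route has the appealing feature of making the role of each error term $I_1,I_2,I_3$ fully explicit.
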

\begin{remark}
Assumption~\eqref{ineq:as1} is verified whenever $\Vert \mu-v_g\Vert_{\left(W^{1,2,-\frac{1}{2}}\right)^*}$ is smaller than $C_\text{Or}^{-1}\eps_0$. 
\end{remark}
\begin{remark}
Inequality~\eqref{ineq:conc1} can be interpreted as continuity of $\lambda_1(\mu)$ at $\mu=dv_g$ with respect to the $W^{-1,2}$ distance  in the class of measures satisfying~\eqref{ineq:as1}. We will see in Corollary~\ref{cor:optpower_counter} below that an additional assumption~\eqref{ineq:as1} is necessary.
\end{remark}
\begin{proof}
Since 
\[\lambda_1(\mu)=\inf_{\varphi\in W^{1,2}}\frac{\int_{\Sp} |d \varphi|_{g}^2 \,d v_g }{\int_{\Sp} \left(\varphi-\int_{\Sp}\varphi \,d\mu\right)^2 \,d\mu},\]
and both numerator and denominator are invariant up to the addition of a constant to $\varphi$, $\lambda_1(\mu)$ may be written as
\[\lambda_1(\mu)=\inf_{\varphi\in 1^\bot}\frac{\int_{\Sp} |d \varphi|_{g}^2 \,d v_g }{\int_{\Sp} \varphi^2 \,d\mu-\left(\int_{\Sp}\varphi \,d\mu\right)^2},\]
where $1^\bot=\{\varphi: \int_{\Sp} \varphi \,d v_g =0\rbrace$.  For any  $\delta>0$, in order to prove that $\lambda_1(\mu)\geqslant \dfrac{8\pi}{1+\delta}$ it suffices to show that for any $\varphi\in 1^\bot$ one has
\begin{equation}
\label{eq:g1}
\int_{\Sp} \varphi^2\,d\mu\leqslant \frac{1+\delta}{8\pi}\int_{\Sp} |d \varphi|_{g}^2\,d v_g. 
\end{equation}
We write $\varphi=w+\psi$, where $w\in E_1$, 
$\psi\in\{1, E_1\}^\bot$. Notice that, since $\lambda_1(dv_g) = \lambda_3(dv_g)=8\pi$ and $\lambda_4(dv_g)=24\pi$, it follows that  $24\pi\int_{\Sp} \psi^2\,d v_g\leqslant \int_{\Sp}|d\psi|_g^2\,d v_g$ and $8\pi \int_{\Sp}w^2\,dv_g = \int_{\Sp}|dw|_g^2\,dv_g$. Thus, one obtains
\begin{equation*}
\begin{split}
&\frac{1+\delta}{8\pi}\int_{\Sp} |d \varphi|_{g}^2\,d v_g = \frac{1+\delta}{8\pi}\int_{\Sp}\left( |d w|_{g}^2 + |d \psi|_{g}^2\right) \,d v_g\geqslant\\
&\geqslant  (1+\delta)\int_{\Sp} \left(w^2+\psi^2\right)\,dv_g + \frac{1+\delta}{12\pi}\int_{\Sp}|d\psi|_g^2\,d v_g\geqslant\\
&\geqslant \int_{\Sp}\left(\varphi^2 + \delta w^2\right)\,dv_g + \frac{1}{12\pi} \int_{\Sp}|d\psi|_g^2\,d v_g
\end{split}
\end{equation*}
%
%
Thus, to obtain~\eqref{eq:g1} it is sufficient to show that 
\begin{equation}
\label{eq:g2}
\int_{\Sp} \varphi^2\,d(\mu- v_g )\leqslant \delta\int_{\Sp} w^2\,d v_g +\frac{1}{12\pi}\int_{\Sp} |d\psi|_g^2\,d v_g.
\end{equation}
Before proving this inequality, let us first remark that since $E_1$ is finite dimensional, there exists a constant $C$ such that $\|w\|_{W^{1,\infty}}\leqslant C\|w\|_{L^2}$. Similarly, for any 
$\psi \in \{1, E_1\}^\bot$, since $\lambda_4(dv_g)=24\pi$ we have $\Vert \psi\Vert_{W^{1,2}}^2\leqslant \left(1+(24\pi)^{-1}\right)\Vert d\psi\Vert_{L^2}^2$, and for a certain constant $C>0$
$$
\|w\psi\|_{W^{1,2}}\leqslant C\|w\|_{L^2}\|d\psi\|_{L^2}.
$$

Let us first verify~\eqref{ineq:conc2}. With the hypothesis on $\mu$, $\int_{\Sp}  w^2\,d(\mu- v_g )=0$, thus,  the left-hand side of~\eqref{eq:g2} may be estimated as follows,
\begin{equation*}
\begin{split}
&\int_{\Sp}  (w+\psi)^2 \,d(\mu- v_g )=\int_{\Sp}\left( 2 w\psi+ \psi^2\right)\,d(\mu- v_g )\leqslant\\
&\leqslant 2\Vert\mu- dv_g \Vert_{W^{-1,2} }\Vert w\psi \Vert_{W^{1,2} }+\eps_0\Vert \psi\Vert_{W^{1,2}}^2\leqslant \\
&\leqslant 2C\Vert\mu- dv_g \Vert_{W^{-1,2} }\Vert w\Vert_{L^2 }\Vert d\psi \Vert_{L^2 } +\left(1+\frac{1}{24\pi}\right)\eps_0\Vert d\psi\Vert_{L^2}^2\leqslant\\
&\leqslant 24\pi C^2\Vert \mu- dv_g\Vert_{W^{-1,2}}^2\Vert w\Vert_{L^2}^2+\left(\frac{1}{24\pi}+\left(1+\frac{1}{24\pi}\right)\eps_0\right)\Vert d\psi\Vert_{L^2}^2,
\end{split}
\end{equation*}
where we used the 
arithmetic-geometric mean  inequality in the last step. As a result, we obtain~\eqref{eq:g2} with $\delta = 24\pi C^2\Vert \mu-v_g\Vert_{W^{-1,2}}^2$ as long as we choose $\eps_0=\dfrac{1}{1+24\pi}$. Substituting this $\delta$ in~\eqref{eq:g1} completes the proof.

To show~\eqref{ineq:conc1} it is sufficient to note that
$$
\int_{\Sp}  w^2\,d(\mu- v_g)\leqslant \|\mu- dv_g \|_{W^{-1,2}}\|w^2\|_{W^{1,2}}\leqslant C\|\mu- dv_g \|_{W^{-1,2}}\|w\|^2_{L^2}.
$$ 
Adding this term to the computation above, we obtain~\eqref{eq:g2} with $\delta = C\Vert \mu-dv_g\Vert_{W^{-1,2}}(1+ 24\pi C\Vert \mu-dv_g\Vert_{W^{-1,2}})$.
\end{proof}

\subsection{Optimality of the $W^{-1,2}$ norm}
We now show that the Hersch inequality is not stable in $\left(W^{1,2,-\frac{1}{2}}\right)^*$, and, consequently, is also not stable in
$\left(W^{1,2-\eps}\right)^*$  for any $\varepsilon>0$, cf. Remark \ref{rem:stabcont}. Note that  by Sobolev embedding theorem, it is thus not stable in $\left(W^{1-\varepsilon,2}\right)^*$ either. We claim that in order to show this it is sufficient to prove the following theorem.
\begin{theorem}
\label{thm:spacesharp}
There exists a sequence $(\mu_\eps)_\eps$ of admissible, balanced probability measures on $\Sp$, such that $\lambda_1(\mu_\eps)\underset{\eps\to 0}{\longrightarrow}8\pi$ and
\[\liminf_{\eps\to 0}\Vert \mu_\eps-v_g\Vert_{\left(W^{1,2,-\frac{1}{2}}\right)^*}>0.\]
\end{theorem}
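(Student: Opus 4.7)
The strategy is to exploit the strict inclusion $(W^{1,2,-\frac12})^*\subsetneq W^{-1,2}$. A sequence of signed measures can tend to zero in $W^{-1,2}$-norm while remaining uniformly bounded away from zero in the Orlicz-Sobolev dual, because critical test functions such as $f(x) = \chi(d(x,p))\sqrt{\log(1/d(x,p))}$ lie in $W^{1,2,-\frac12}\setminus W^{1,2}$. So the plan is to find signed perturbations $\sigma_\varepsilon$ with $\|\sigma_\varepsilon\|_{W^{-1,2}}\to 0$ but $|\int f\,d\sigma_\varepsilon|\geqslant c_0>0$, and to set $\mu_\varepsilon$ to be $dv_g+\sigma_\varepsilon$ plus a small correction enforcing the balance, unit-mass, and positivity conditions.

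Concretely, I would fix an antipodal pair $\{p,-p\}\subset \Sp$ and build $\sigma_\varepsilon$ as a multiscale alternating sum of uniform measures on thin antipodal geodesic circles of geometrically shrinking radii $\varepsilon_k = \varepsilon^{2^k}$, with coefficients $c_k^{(\varepsilon)}$ engineered so that $\int u\,d\sigma_\varepsilon$ vanishes uniformly on the unit ball of $W^{1,2}(\Sp)$ (by cancellation across scales) while $\int f\,d\sigma_\varepsilon$ remains of order one (since $f$ lies in the strictly larger space $W^{1,2,-\frac12}$, for which no such cancellation can be enforced by the chosen coefficients). The antipodal symmetry automatically kills the first moments of $\sigma_\varepsilon$, and a bounded smooth correction (a combination of low spherical harmonics of amplitude $o(1)$) restores $\mu_\varepsilon(\Sp)=1$ and positivity while affecting none of the key bounds.

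With $\sigma_\varepsilon$ in hand, the three verifications are: admissibility of $\mu_\varepsilon$ (inherited from the trace inequality $W^{1,2}(\Sp)\hookrightarrow L^2(\text{circle})$ applied to each $\tau^\pm_{\varepsilon_k}$); balance and unit mass (by construction); convergence $\lambda_1(\mu_\varepsilon)\to 8\pi$ (using classical perturbation theory for the generalized eigenvalue problem $\int |du|^2\,dv_g=\lambda\int u^2\,d\mu_\varepsilon$: the moment-cancellation conditions make the first-order correction to the degenerate eigenvalue $8\pi$ of $v_g$ vanish along $\mathrm{span}\{x_1,x_2,x_3\}$, and the smallness of $\|\sigma_\varepsilon\|_{W^{-1,2}}$ controls the remainder); and finally the Orlicz-Sobolev lower bound, which is immediate by testing the definition~\eqref{eq:sobolnorm} against $f$:
\[
\|\mu_\varepsilon-dv_g\|_{(W^{1,2,-\frac12})^*}\;\geqslant\;\frac{|\int f\,d(\mu_\varepsilon-dv_g)|}{\|f\|_{W^{1,2,-\frac12}}}\;\geqslant\;\frac{c_0}{\|f\|_{W^{1,2,-\frac12}}}.
\]

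Main obstacle: The heart of the argument is producing the multiscale $\sigma_\varepsilon$ with the required asymmetric norm behavior, i.e.\ $\|\sigma_\varepsilon\|_{(W^{1,2,-\frac12})^*}/\|\sigma_\varepsilon\|_{W^{-1,2}}\to\infty$. This is a manifestation of the borderline failure of the embedding $W^{1,2}(\Sp)\hookrightarrow L^{\infty}$ by a single logarithmic factor, and pinning down the coefficients $c_k^{(\varepsilon)}$ so that cancellation holds uniformly against all $u$ in the unit ball of $W^{1,2}$ (but not against the log-critical $f$) requires careful quantitative wavelet-type estimates, analogous in spirit to those underlying the sharpness of Adams--Moser--Trudinger inequalities. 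Abstract duality guarantees that such distributions exist, but realizing them as genuine admissible Radon measures (and controlling the perturbation-theory remainder for $\lambda_1$) is the delicate point and would be the bulk of the rigorous proof.
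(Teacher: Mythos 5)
Your high-level diagnosis of the phenomenon is correct --- the gap between $W^{-1,2}$ and $\bigl(W^{1,2,-\frac12}\bigr)^*$ is a borderline logarithmic one, and concentrating mass near a point exploits it --- but there are two substantial problems with the proposed argument, one of efficiency and one of correctness.

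First, the multiscale alternating construction is unnecessary and left incomplete. The paper's construction is a one-scale, sign-definite perturbation: take $\nu_\eps=\dfrac{\mathbf{1}_{B_g(N,\eps)\cup B_g(S,\eps)}}{\eps^2\log(1/\eps)}\,dv_g$ and set $\mu_\eps^M=\dfrac{dv_g+M\nu_\eps}{1+M\nu_\eps(\mathbb{S}^2)}$. Since $\nu_\eps\geqslant 0$, positivity, total mass, and admissibility are immediate, and antipodal symmetry gives balance for free. The separation between the two dual norms does not come from cancellation across scales but from the mismatch between the scaling of $\sup_{\|\varphi\|_{W^{1,2}}=1}\int_{B_\eps}\varphi\,dv_g\sim \eps^2\sqrt{\log(1/\eps)}$ (which makes $\|\nu_\eps\|_{W^{-1,2}}\to 0$) and $\sup_{\|\varphi\|_{W^{1,2}}=1}\int_{B_\eps}\varphi^2\,dv_g\sim\eps^2\log(1/\eps)$ (which makes the Orlicz dual norm bounded below, after testing against $\varphi^2$ and using $\|\varphi^2\|_{W^{1,2,-1/2}}\lesssim \|\varphi\|_{W^{1,2}}^2$). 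Your heuristic, that ``abstract duality guarantees such distributions exist,'' is an evasion; the point of the exercise is to exhibit one explicitly and check it is an admissible probability measure, which the multiscale $\sigma_\eps$ is never shown to be.

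Second, and more seriously, your argument for $\lambda_1(\mu_\eps)\to 8\pi$ is wrong as stated, and in fact would prove something the paper shows to be false. You claim that moment cancellation kills the first-order term and that ``the smallness of $\|\sigma_\eps\|_{W^{-1,2}}$ controls the remainder.'' Were this true, $\lambda_1$ would be continuous at $dv_g$ in $W^{-1,2}$ among balanced measures --- but Corollary~\ref{cor:optpower_counter} establishes precisely the opposite, using the very same family $\mu_\eps^M$ with $M$ large: Lemma~\ref{lemma:mps} shows $\|\mu_\eps^M-dv_g\|_{W^{-1,2}}\to 0$ for every fixed $M$, yet Proposition~\ref{prop:counterexample}(1) shows $\limsup_{\eps\to 0}\lambda_1(\mu_\eps^M)\leqslant C/M$, which is strictly less than $8\pi$ once $M$ is large. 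In other words, $W^{-1,2}$-smallness of the perturbation plus balance plus antipodal symmetry does \emph{not} control the remainder, because the eigenvalue functional is not continuous in that norm. What makes the argument go through is the additional quantitative hypothesis~\eqref{ineq:as1} of Proposition~\ref{prop:optpower} --- a uniform bound on $\int\varphi^2\,d(\mu-v_g)$ against $\|\varphi\|_{W^{1,2}}^2$ --- which for $\mu_\eps^M$ is verified precisely when $M<M_0$, and only then does~\eqref{ineq:conc1} yield $\lambda_1(\mu_\eps^M)\to 8\pi$. This threshold in $M$ is the crux; your sketch does not see it, and any completion of your approach would have to supply the analogue of Proposition~\ref{prop:optpower}.
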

Indeed, assume that there exist conformal automorphisms $\Phi_\eps\in\Conf(\Sp)$ such that $(\Phi_\eps)_*\mu_\eps\to dv_g$ in $\left(W^{1,2,-\frac{1}{2}}\right)^*$. Note that since $\bar\lambda_1(\mu_\eps)\to 8\pi$ and $\mu_\eps$ are balanced, an application of Lemma~\ref{main:lemma} with $u=\mathrm{id}\colon \Sp\to\Sp$ yields that $\mu_\eps \to dv_g$ in $W^{-1,2}$. Suppose that $\Phi_\eps$ converges (smoothly up to a choice of a subsequence) to a conformal automorphism 
$\Phi_0$. Then $\mu_\eps\to (\Phi_0^{-1})_*dv_g$ in $\left(W^{1,2,-\frac{1}{2}}\right)^*$ and, at the same time, $\mu_\eps\to dv_g$ in $W^{-1,2}$. 
Since the space $\left(W^{1,2,-\frac{1}{2}}\right)^*$ embeds in $W^{-1,2}$, one has $(\Phi_0^{-1})_*dv_g = dv_g$ in contradiction with the conclusion of Theorem~\ref{thm:spacesharp}. If $\Phi_\eps$ does not converge, then (up to a choice of a subsequence) $\Phi_\eps$ sends most of $\Sp$ into a shrinking neighbourhood of a single point. In particular, 
$\mu_\eps\rightharpoonup\delta_p$ for some $p\in\Sp$, which contradicts  $\mu_\eps \to dv_g$ in $W^{-1,2}$.

In order to prove Theorem~\ref{thm:spacesharp} we need the following lemma, where we use the notation $a_\eps\underset{\eps\to 0}{\sim} b_\eps$ to mean that $\lim\limits_{\eps\to 0}\dfrac{a_\eps}{b_\eps}=1$.
\begin{lemma}
Let $B_\eps$ be 
a disk of radius $\eps$ in $\R^2$, then
\begin{align}
\sup_{u\in W^{1,2}_0(B_1),\ \Vert d u\Vert_{L^2(B_1)}=1}\int_{B_\eps}u^2\,d x&\underset{\eps\to 0}{\sim}\frac{1}{2}\eps^2\log(1/\eps)\label{asymptot1},\\
\sup_{u\in W^{1,2}_0(B_1),\ \Vert d u\Vert_{L^2(B_1)}=1}\int_{B_\eps}u\,d x&\underset{\eps\to 0}{\sim}\eps^2\sqrt{\frac{\pi}{2}\log(1/\eps)}\label{asymptot2},
\end{align}
where $W^{1,2}_0(B_1)$ is the completion of $C^\infty_0(B_1,\R)$ with its respective norm.
\end{lemma}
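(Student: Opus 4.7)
\medskip

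\noindent\textbf{Plan.} Both suprema are classical capacity-type extremal problems, and the idea is to treat them by explicit radial computation after reducing to radial functions via Schwarz symmetrization. In both cases the extremal is attained (by compactness of the embedding $W^{1,2}_0(B_1)\hookrightarrow L^2(B_\eps)$ for the quadratic functional, and by the Riesz representation for the linear one) and is nonnegative and radial; I would verify this up front.

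For the second asymptotic \eqref{asymptot2}, the key observation is that the linear functional $u\mapsto\int_{B_\eps}u\,dx$ on $W^{1,2}_0(B_1)$ has operator norm equal to $\Vert \chi_{B_\eps}\Vert_{H^{-1}(B_1)}$, which by Riesz duality equals $\Vert \nabla G_\eps\Vert_{L^2(B_1)}$, where $G_\eps\in W^{1,2}_0(B_1)$ is the Green's potential solving $-\Delta G_\eps=\chi_{B_\eps}$ in $B_1$ with Dirichlet boundary condition. The first step is to write $G_\eps$ down in closed form: radial symmetry and matching across $r=\eps$ give
\[
G_\eps(r)=\begin{cases}\dfrac{\eps^2-r^2}{4}+\dfrac{\eps^2}{2}\log\dfrac{1}{\eps}, & 0\leqslant r\leqslant \eps,\\[1ex] \dfrac{\eps^2}{2}\log\dfrac{1}{r}, & \eps\leqslant r\leqslant 1.\end{cases}
\]
Integrating by parts yields $\Vert \nabla G_\eps\Vert_{L^2}^2=\int_{B_\eps}G_\eps\,dx$, and a direct computation gives $\int_{B_\eps}G_\eps\,dx=\frac{\pi}{2}\eps^4\log(1/\eps)+O(\eps^4)$. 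Taking the square root produces the claimed asymptotic.

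For the first asymptotic \eqref{asymptot1}, I would prove matching upper and lower bounds. For the upper bound, Schwarz symmetrization (Pólya--Szegő for $\Vert\nabla u\Vert_{L^2}$, Hardy--Littlewood for $\int_{B_\eps}u^2$) reduces to nonnegative radial decreasing $u$, and then the identity $u(r)=\int_r^1(-u'(s))\,ds$ combined with Cauchy--Schwarz applied to the measure $ds/s$ gives the pointwise estimate
\[
u(r)^2\leqslant \log(1/r)\int_r^1 u'(s)^2 s\,ds\leqslant \frac{\log(1/r)}{2\pi},
\]
the last step using $\Vert\nabla u\Vert_{L^2}=1$. Integrating over $B_\eps$ yields
\[
\int_{B_\eps}u^2\,dx\leqslant \int_0^\eps r\log(1/r)\,dr=\frac{\eps^2}{2}\log(1/\eps)+\frac{\eps^2}{4}.
\]
For the matching lower bound, I would test against the capacitary cutoff
\[
u_\eps(r):=\frac{\min(\log(1/r),\log(1/\eps))}{\sqrt{2\pi\log(1/\eps)}}\in W^{1,2}_0(B_1),
\]
which is designed so that $\Vert\nabla u_\eps\Vert_{L^2}=1$ while $\int_{B_\eps}u_\eps^2\,dx=\frac{\eps^2}{2}\log(1/\eps)$ \emph{exactly}; combining the two bounds gives the asymptotic.

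The computation is essentially explicit, so there is no substantial obstacle; the main care required is the bookkeeping of the lower-order $O(\eps^4)$ terms for \eqref{asymptot2} and $O(\eps^2)$ terms for \eqref{asymptot1}, to confirm that they do not contaminate the leading $\eps^2\log(1/\eps)$ and $\eps^2\sqrt{\log(1/\eps)}$ behavior. A minor technical point is that $u_\eps$ is only Lipschitz, so one should approximate it by functions in $C^\infty_0(B_1)$, which is routine.
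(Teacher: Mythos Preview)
Your argument is correct. For \eqref{asymptot2} your approach coincides with the paper's: both solve $-\Delta G_\eps=\chi_{B_\eps}$ with Dirichlet boundary (the paper reaches this via the Euler--Lagrange equation, you via Riesz duality), write the solution explicitly, and compute.

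For \eqref{asymptot1} the routes genuinely differ. The paper identifies the reciprocal of the supremum as a Robin eigenvalue: the radial extremizer is harmonic on $B_1\setminus B_\eps$, so on $\partial B_\eps$ it satisfies $\partial_r u_\eps/u_\eps=-1/(\eps\log(1/\eps))$, whence $\lambda_\eps=\lambda_1\bigl(B_\eps;\frac{1}{\eps\log(1/\eps)}\bigr)=\eps^{-2}\lambda_1\bigl(B_1;\frac{1}{\log(1/\eps)}\bigr)$, and the paper then invokes the cited small-Robin asymptotic $\lambda_1(\Omega;\alpha)\sim\alpha|\partial\Omega|/|\Omega|$. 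Your approach is more direct and entirely self-contained: the Cauchy--Schwarz pointwise bound $u(r)^2\leqslant\frac{1}{2\pi}\log(1/r)$ gives the sharp upper bound after integration, and the capacitor function saturates it to leading order. Your argument avoids the external reference, at the cost of not identifying the exact extremizer; the paper's approach, by contrast, exhibits the eigenvalue structure behind the problem, which is conceptually informative even if it relies on a quoted asymptotic.
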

\begin{proof}
We start with the proof of  \eqref{asymptot1}. The inverse of this quantity may be rewritten as the eigenvalue problem 
\[\lambda_\eps:=\inf_{u\in W^{1,2}_0(B_1)}\frac{\int_{B_1}|d u|^2\,dx}{\int_{B_\eps}u^2\,dx}.\]
By compactness of $W^{1,2}_0(B_1)\hookrightarrow L^2(B_1)$ the infimum is attained  for some function $u_\eps\in W^{1,2}_0(B_1)$.  Its spherical mean $\overline{u}_\eps(x)=\frac{1}{|\partial B_{|x|}|}\int_{\partial B_{|x|}}u_\eps\,dx$ has lower energy, so we may suppose without loss of generality that $u_\eps(x)=\psi_\eps(|x|)$ for a certain function $\psi_\eps (r)$ defined for $r\in[0,1]$.
The Euler-Lagrange equation associated to this problem is 
\[\psi_\eps''+\frac{1}{r}\psi_\eps'=-\lambda_\eps1_{r<\eps} \psi_\eps,\]
which implies that $\psi_\eps\in C^1((0,1])$ by elliptic regularity. On $B_1\setminus B_\eps$ we know that for a certain constant $k>0$:
\[u_\eps(x)=k\frac{\log(1/|x|)}{\log(1/\eps)}.\]
Notice also that at $|x|=\eps$ we have  $\frac{\partial_r u_\eps}{u_\eps}=-\frac{1}{\eps\log(1/\eps)}$, thus $u_{\eps}|_{B_{\eps}}$ is an eigenfunction associated to the first eigenvalue of the Laplacian with Robin condition of parameter $\frac{1}{\eps\log(1/\eps)}$, denoted $\lambda_1\left(B_\eps;\frac{1}{\eps\log(1/\eps)}\right)$. Now by applying Stokes' formula to $u_\eps$ on $B_1\setminus B_\eps$ we see that
\[\lambda_\eps=\frac{\int_{B_1}|\nabla u_\eps|^2}{\int_{B_\eps}u_\eps^2}=\frac{\int_{B_\eps}|\nabla u_\eps|^2+\frac{1}{\eps\log(1/\eps)}\int_{\partial B_{\eps}}u_\eps^2}{\int_{B_\eps}u_\eps^2}=\lambda_1\left(B_\eps;\frac{1}{\eps\log(1/\eps)}\right).\]
At the same time,  $\lambda_1\left(B_\eps;\frac{1}{\eps\log(1/\eps)}\right)=\eps^{-2}\lambda_1\left(B_1;\frac{1}{\log(1/\eps)}\right)\sim \frac{2}{\eps^2\log(1/\eps)}$, where we use in the first step the general scaling property
\[\lambda_1(r\Omega;\alpha)=r^{-2}\lambda_1(\Omega;r\alpha),\]
and in the second step the well-known asymptotic formula  $\lambda_1(\Omega;\alpha)\underset{\alpha\to 0^+}{\sim}\alpha \frac{|\partial\Omega|}{|\Omega|}$ (\cite{S73}, see also
\cite{LOS98, GS08}).

Similarly, let us now prove  \eqref{asymptot2}.
By compactness of $W^{1,2}_0(B_1)\hookrightarrow L^1(B_1)$, we find that the supremum is attained for a positive function $u_\eps$. Using the same spherical mean argument as earlier, we may suppose that $u_\eps$ is radial and satisfies the Euler-Lagrange equation $-\Delta u_\eps=c_\eps 1_{B_\eps}$ for a certain constant $c_\eps$. In particular, $u\in{C}^1(B_1)$ by elliptic regularity. 
Thus, up to multiplication by a scalar, $u_\eps$ is given by:
\[u_\eps(x)=\begin{cases}\log(1/|x|)&\text{ if }\eps\leqslant|x|\leqslant 1\\ \log(1/\eps)+\frac{\eps^2-|x|^2}{2\eps^2}&\text{ if }|x|\leqslant \eps.\end{cases}\]
An explicit computation yields the result.  
\end{proof}

Consider two antipodal points $x_1,x_2$ on $\Sp$, e.g. the north pole $N=x_1$ and south pole $S=x_2$. Let $r>0$ be such that $B_g(x_i,2r)$ is the corresponding hemisphere. Using the stereographic projection from $x_i$ onto the equatorial plane one can construct conformal flat metrics $g_i:=e^{2w_i}g$ on $B_g(x_i,2r)$, such that 
\begin{itemize}
\item $w_i(x)$ is bounded and only depends on $\dist_g(x,x_i)$;
\item $B_g(x_i,r) = B_{g_i}(x_i,1)$;
\item if $\rho_i(\eps)$ is such that $B_g(x_i,\eps)=B_{g_i}(x_i,\rho_i(\eps))$, then $\rho_i(\eps)\underset{\eps\to 0}{\sim}e^{w_i(x_i)}\eps$.
\end{itemize}

\begin{corollary} There exists a constant $C_0>0$ such that for any $\eps>0$ small enough one has
\begin{align}
C_0^{-1}\eps^2\log(1/\eps)&\leqslant\sup_{\| u\|_{W^{1,2}}=1}\int_{\cup_{i=1}^{2}B_g(x_i,\eps)}u^2\,d v_g\leqslant C_0\eps^2\log(1/\eps)\label{asymptot_sphere_1},\\
C_0^{-1}\eps^2\sqrt{\log(1/\eps)}&\leqslant \sup_{\Vert u\Vert_{W^{1,2}}=1}\int_{\cup_{i=1}^{2}B_g(x_i,\eps)}u\,d v_g\leqslant C_0\eps^2\sqrt{\log(1/\eps)}\label{asymptot_sphere_2}.
\end{align}

\end{corollary}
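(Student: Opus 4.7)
The plan is to reduce both inequalities to the flat Euclidean asymptotics \eqref{asymptot1}--\eqref{asymptot2} via the conformal stereographic charts $g_i=e^{2w_i}g$ near each antipodal point $x_i$, exploiting the conformal invariance of the Dirichlet energy in dimension two together with the fact that $w_i$ is bounded on $B_g(x_i,2r)$ (so $e^{w_i}$ is uniformly pinched between two positive constants on a neighbourhood of $x_i$). By the listed properties of the stereographic chart, $B_g(x_i,\epsilon)=B_{g_i}(x_i,\rho_i(\epsilon))$ with $\rho_i(\epsilon)\sim e^{w_i(x_i)}\epsilon$, so $\rho_i(\epsilon)^2\log(1/\rho_i(\epsilon))\sim e^{2w_i(x_i)}\epsilon^2\log(1/\epsilon)$ and likewise $\rho_i(\epsilon)^2\sqrt{\log(1/\rho_i(\epsilon))}\sim e^{2w_i(x_i)}\epsilon^2\sqrt{\log(1/\epsilon)}$; these scale factors will be absorbed into $C_0$.

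For the \emph{lower bound}, I would take the radial extremizer $u_\epsilon$ from the previous lemma on the flat unit disk and transplant it to $\mathbb{S}^2$ via the stereographic chart at (say) $x_1$, extending by zero outside $B_{g_1}(x_1,1)=B_g(x_1,r)$ to obtain $\tilde u_\epsilon\in W^{1,2}(\mathbb{S}^2)$. Since the Dirichlet energy is conformally invariant, $\|d\tilde u_\epsilon\|_{L^2(\mathbb{S}^2,g)}=\|du_\epsilon\|_{L^2(B_1)}=1$; the $L^2$ part of $\|\tilde u_\epsilon\|_{W^{1,2}}$ is controlled by Poincar\'e on the disk and boundedness of $e^{-2w_1}$, so $\|\tilde u_\epsilon\|_{W^{1,2}}\leqslant C$. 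The integrals $\int_{B_g(x_1,\epsilon)}\tilde u_\epsilon^2\,dv_g$ and $\int_{B_g(x_1,\epsilon)}\tilde u_\epsilon\,dv_g$ equal the corresponding integrals against $e^{-2w_1}\,dv_{g_1}$ on $B_{g_1}(x_1,\rho_1(\epsilon))$, and since $e^{-2w_1}$ stays bounded away from zero on that ball, the flat asymptotics \eqref{asymptot1}--\eqref{asymptot2} give the required lower bounds (after rescaling $\tilde u_\epsilon$ to have unit $W^{1,2}$-norm and using the scaling of $\rho_i(\epsilon)$).

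For the \emph{upper bound}, given $u\in W^{1,2}(\mathbb{S}^2)$ with $\|u\|_{W^{1,2}}=1$, pick smooth cut-offs $\chi_i$ on $\mathbb{S}^2$ with $\chi_i\equiv 1$ on $B_g(x_i,r)$ and $\supp\chi_i\subset B_g(x_i,2r)$. Since the two supports are disjoint and $B_g(x_i,\epsilon)\subset\{\chi_i=1\}$ for small $\epsilon$, it suffices to bound $\int_{B_g(x_i,\epsilon)}(\chi_i u)^k\,dv_g$ for $k=1,2$ and each $i$. Via the chart, these equal $\int_{B_{g_i}(x_i,\rho_i(\epsilon))}(\chi_iu)^k e^{-2w_i}\,dv_{g_i}$, and the functions $\chi_iu$ belong to $W^{1,2}_0(B_{g_i}(x_i,1))$. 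Using conformal invariance of Dirichlet together with boundedness of $e^{\pm 2w_i}$, one has $\|d(\chi_iu)\|_{L^2(B_{g_i}(x_i,1))}\leqslant C\|u\|_{W^{1,2}(\mathbb{S}^2)}=C$, so the flat lemma supplies the matching upper bounds, with the additional factor $e^{-2w_i}$ harmlessly absorbed into $C_0$.

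The only mild subtlety will be bookkeeping the equivalence between $\|\cdot\|_{W^{1,2}(\mathbb{S}^2)}$ and the homogeneous norm $\|d\cdot\|_{L^2(B_{g_i}(x_i,1))}$ used in the flat lemma; the Poincar\'e inequality on the unit disk and the boundedness of $w_i$ on $B_g(x_i,2r)$ handle this uniformly in $\epsilon$. No step requires a fundamentally new idea; the proof is essentially a change of variables combined with the previous lemma, and the constant $C_0$ is finite and independent of $\epsilon$ for $\epsilon$ sufficiently small.
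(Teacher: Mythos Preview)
Your proposal is correct and follows essentially the same route as the paper: localize with cut-offs, pass to the flat stereographic chart using conformal invariance of the Dirichlet integral and boundedness of $w_i$, then invoke the Euclidean asymptotics \eqref{asymptot1}--\eqref{asymptot2}. One minor inconsistency: you take $\supp\chi_i\subset B_g(x_i,2r)$ but then claim $\chi_iu\in W^{1,2}_0(B_{g_i}(x_i,1))=W^{1,2}_0(B_g(x_i,r))$; you should instead choose $\supp\chi_i\Subset B_g(x_i,r)$ (as the paper does), after which everything goes through.
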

\begin{proof}

We prove \eqref{asymptot_sphere_1}, the proof of  \eqref{asymptot_sphere_2} is identical. Let $\chi_i\in C^\infty(\Sp,[0,1])$ be such that $\{\chi_i\neq 0\}\Subset B_g(x_i,r)$ and $B_g(x_i,r/2)\Subset \{\chi_i=1\}$.
The estimate \eqref{asymptot1} implies that there exists a constant $C_1>0$ such that for any small enough $\eps>0$,

\[
C^{-1}_1\eps^2\log(1/\eps)\leqslant \sup_{u\in W^{1,2}_0(B_1),\ \Vert d u\Vert_{L^2(B_1)}=1}\int_{B_\eps}u^2\,d x\leqslant C_1\eps^2\log(1/\eps).
\]
Since the functions $w_i$ are bounded, for any  $\varphi\in W^{1,2}$, one has 
\begin{align*}
&\int_{\cup_{i=1}^{2}B_g(x_i,\eps)}\varphi^2\, dv_g\leqslant C\sum_{i=1}^{2}\int_{B_{g_i}(x_i,\rho_i(\eps))}(\chi_i\varphi)^2\, dv_{g_i}\leqslant\\
&\leqslant CC_1\sum_{i=1}^{2}\rho_i(\eps)^2\log(1/\rho_i(\eps))\Vert d(\chi_i \varphi)\Vert_{L^2(\Sp,g_i)}^2\leqslant\\
&\leqslant CC_1\left(1+\Vert d\chi_i\Vert^2_{L^\infty}\right)\eps^2\log(1/\eps)(1+o(1))\Vert\varphi\Vert_{W^{1,2}}^2,
\end{align*}
where $C$ is a constant depending only on $\|w_i\|_{L^\infty}$, possibly changing from line to line.

Conversely, for each $\eps>0$ there is a function $\varphi_\eps\in C^\infty_0(B_g(x_1,r))$ such that $\int_{B_{g_1}(x_1,\rho_1(\eps))}\varphi^2_\eps\, dv_{g_1}\geqslant C_1^{-1}\rho_1(\eps)^2\log(1/\rho_1(\eps))\Vert d\varphi_\eps\Vert_{L^2}^2$. As a result,
\begin{align*}
&\int_{\cup_{i=1}^{2}B_g(x_i,\eps)}\varphi^2_\eps\, dv_{g}\geqslant CC_1^{-1}\rho_1(\eps)^2\log(1/\rho_1(\eps))\Vert d\varphi_\eps\Vert_{L^2}^2\geqslant\\
&\geqslant \frac{CC^{-1}(1+o(1))}{(1+\lambda_*(B_g(x_1,r),g))}\eps^2\log(1/\eps)\Vert \varphi_\eps\Vert_{W^{1,2}}^2,
\end{align*}
where $\lambda_*(\Omega,g)$ is the first eigenvalue of the Laplacian with Dirichlet boundary conditions on $\partial\Omega$.

The proof of~\eqref{asymptot_sphere_2} is the same, except $\varphi^2$ is replaced with $\varphi$.

\end{proof}

We are now ready to define our sequence of measures. For any $M,\eps>0$ set
\[
\nu_\eps=\frac{1_{\cup_{i=1}^2 B_g(x_i,\eps)}}{\eps^2\log(1/\eps)}dv_g,\ \mu_\eps^M=\frac{dv_g+M \nu_\eps}{1+M\nu_\eps(\Sp)}.
\]
The following lemma motivates the definition of $\nu_\eps$.
\begin{lemma}
\label{lemma:mps}
For any $M,\eps>0$ the measures $\nu_\eps$, $\mu^M_\eps$ possess the following properties,
\begin{enumerate}
\item $\|\nu_\eps\|_{W^{-1,2}}\to 0$ as $\eps\to 0$. In particular, one has 
$$
\|\mu^M_\eps-dv_g\|_{W^{-1,2}}\to 0;
$$
\item There exists $c>0$ such that
$$
\liminf_{\eps\to 0}\Vert \mu_\eps^M-dv_g\Vert_{\left(W^{1,2,-\frac{1}{2}}\right)^*}>cM>0.
$$
\end{enumerate}
\end{lemma}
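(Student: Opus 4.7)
The plan is to derive both (1) and (2) directly from the two asymptotic estimates~\eqref{asymptot_sphere_1} and~\eqref{asymptot_sphere_2}. In both parts it suffices to analyze $\nu_\eps$, since
\[
\mu_\eps^M - dv_g = \frac{M\bigl(\nu_\eps - \nu_\eps(\Sp)\, dv_g\bigr)}{1 + M\nu_\eps(\Sp)},
\]
and since $v_g\bigl(\cup_{i=1}^{2} B_g(x_i,\eps)\bigr) = O(\eps^2)$, one has $\nu_\eps(\Sp) = O(1/\log(1/\eps)) \to 0$ as $\eps \to 0$.

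For (1), using the explicit form of $\nu_\eps$ together with the upper bound in~\eqref{asymptot_sphere_2}, for any $f \in W^{1,2}$ with $\|f\|_{W^{1,2}} = 1$ we expect
\[
\int_{\Sp} f\, d\nu_\eps \;=\; \frac{1}{\eps^2 \log(1/\eps)} \int_{\cup_{i=1}^{2} B_g(x_i,\eps)} f\, dv_g \;\leqslant\; \frac{C_0\, \eps^2 \sqrt{\log(1/\eps)}}{\eps^2 \log(1/\eps)} \;=\; \frac{C_0}{\sqrt{\log(1/\eps)}},
\]
so that $\|\nu_\eps\|_{W^{-1,2}} \to 0$. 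Combining this with the elementary Cauchy--Schwarz bound $\|dv_g\|_{W^{-1,2}} \leqslant v_g(\Sp)^{1/2} = 1$ and the decay of $\nu_\eps(\Sp)$ in the displayed formula above immediately gives $\|\mu_\eps^M - dv_g\|_{W^{-1,2}} \to 0$.

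For (2), the idea is to use squares of near-optimal test functions for the supremum in~\eqref{asymptot_sphere_1}. For each sufficiently small $\eps > 0$, pick $\varphi_\eps \in W^{1,2}$ with $\|\varphi_\eps\|_{W^{1,2}} = 1$ and
\[
\int_{\cup_{i=1}^{2} B_g(x_i,\eps)} \varphi_\eps^2\, dv_g \;\geqslant\; \frac{\eps^2 \log(1/\eps)}{2 C_0},
\]
and set $u_\eps := \varphi_\eps^2$. By the Orlicz--Sobolev property recalled just after Definition~\ref{def:Orlicz}, $\|u_\eps\|_{W^{1,2,-\frac{1}{2}}} \leqslant C_{\mathrm{Or}}$. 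Then
\[
\int_{\Sp} u_\eps\, d\nu_\eps \;\geqslant\; \frac{1}{2 C_0}, \qquad \int_{\Sp} u_\eps\, dv_g \;\leqslant\; \|\varphi_\eps\|_{L^2}^2 \;\leqslant\; 1,
\]
so that
\[
\int_{\Sp} u_\eps\, d(\mu_\eps^M - dv_g) \;\geqslant\; \frac{M}{1 + M \nu_\eps(\Sp)}\left(\frac{1}{2 C_0} - \nu_\eps(\Sp)\right) \;=\; \frac{M}{2 C_0}\bigl(1 + o(1)\bigr).
\]
Dividing by $\|u_\eps\|_{W^{1,2,-\frac{1}{2}}} \leqslant C_{\mathrm{Or}}$ and taking liminf yields (2) with $c := (2 C_0 C_{\mathrm{Or}})^{-1}$.

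No serious obstacle arises; the substance of the lemma is the gap of order $\sqrt{\log(1/\eps)}$ between the two asymptotic estimates, which quantifies the difference between testing against linear pairings (controlled by the $W^{-1,2}$ norm, where $\nu_\eps \to 0$) and testing against quadratic pairings $\varphi^2$ (which lie in $W^{1,2,-\frac{1}{2}}$ with controlled norm by the Orlicz embedding, and against which $\nu_\eps$ stays uniformly bounded away from $0$).
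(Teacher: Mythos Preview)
Your proof is correct and follows essentially the same approach as the paper's: part~(1) uses the upper bound in~\eqref{asymptot_sphere_2} to control $\|\nu_\eps\|_{W^{-1,2}}$, and part~(2) tests against $\varphi_\eps^2$ for a near-optimizer of~\eqref{asymptot_sphere_1}, invoking the Orlicz bound $\|\varphi_\eps^2\|_{W^{1,2,-\frac{1}{2}}}\leqslant C_{\mathrm{Or}}\|\varphi_\eps\|_{W^{1,2}}^2$. Your write-up is in fact slightly cleaner than the paper's, which writes the same estimate as a supremum over $\varphi$ rather than choosing a specific near-optimizer.
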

\begin{proof} The upper bound~\eqref{asymptot_sphere_2} implies that
$$
\|\nu_\eps\|_{W^{-1,2}}\leqslant C_0\left(\log(1/\eps)\right)^{-\frac{1}{2}}\to 0.
$$
As a result,
$$
\|\mu^M_\eps-dv_g\|_{W^{-1,2}} = \left\|\frac{M\nu_\eps(\Sp)dv_g + M\nu_\eps}{1 +M\nu_\eps(\Sp)}\right\|_{W^{-1,2}}\to 0,
$$
since $\nu_\eps(\Sp)\leqslant C\left(\log(1/\eps)\right)^{-1}\to 0$. This completes the proof of $(1)$.

To show $(2)$ we write
\begin{align*}
&\Vert \mu_\eps^M-dv_g\Vert_{\left(W^{1,2,-\frac{1}{2}}\right)^*}\geqslant \sup_{\varphi\in W^{1,2}}\frac{\int_{\Sp}\varphi^2\, d(\mu_\eps^M-v_g)}{\Vert \varphi^2\Vert_{W^{1,2,-\frac{1}{2}}}}\geqslant \\
&\geqslant \sup_{\varphi\in W^{1,2}}\frac{M}{1+M\nu_\eps(\Sp)}\left(\frac{\int_{\Sp}\varphi^2\, d\nu_\eps}{C_\text{Or}\Vert \varphi\Vert_{W^{1,2}}^2}
-\frac{\nu_\eps(\Sp)\int_{\Sp}\varphi^2dv_g}{\Vert \varphi^2\Vert_{\left(W^{1,2,-\frac{1}{2}}\right)^*}}
\right)\geqslant\\
&\geqslant \frac{cM}{1+M\nu_\eps(\Sp)}(1-C\nu_\eps(\Sp)),
\end{align*}
where in the last step we used the lower bound~\eqref{asymptot_sphere_1}. Since $\nu_\eps(\Sp)\to 0$, the proof is complete.

\end{proof}

\begin{proposition}\label{prop:counterexample}
There are constants $M_0,C>0$ such that
\begin{enumerate}
\item $\limsup_{\eps\to 0}\lambda_1(\mu_\eps^M)\leqslant \frac{C}{M}$;
\item If $M<M_0$, then $\lambda_1(\mu_\eps^M)\to 8\pi$ as $\eps\to 0$.
\end{enumerate}
\end{proposition}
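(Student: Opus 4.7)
For part $(1)$, I would construct a cheap antisymmetric test function. Specifically, let $\varphi_\eps\in W^{1,2}$ be $+1$ on $B_g(x_1,\eps)$, $-1$ on $B_g(x_2,\eps)$, and, on each fixed hemisphere around $x_i$, an appropriately signed logarithmic cut-off:
\[
\varphi_\eps(x)=\pm\min\left\{1,\,\frac{\log(r_0/d_g(x,x_i))}{\log(r_0/\eps)}\right\}\quad\text{on }B_g(x_i,r_0).
\]
This is the capacity extremal and satisfies $\int_{\Sp}|d\varphi_\eps|_g^2\,dv_g\sim\dfrac{C_1}{\log(1/\eps)}$. Thanks to the antipodal symmetry interchanging $x_1$ and $x_2$, one has $\int_{\Sp}\varphi_\eps\,d\mu_\eps^M=0$, while $\varphi_\eps^2\equiv 1$ on $\supp(\nu_\eps)$ gives
\[
\int_{\Sp}\varphi_\eps^2\,d\mu_\eps^M\geqslant\frac{M\,\nu_\eps(\Sp)}{1+M\nu_\eps(\Sp)}\sim\frac{2\pi M}{\log(1/\eps)}.
\]
Plugging $\varphi_\eps$ into the variational characterization of $\lambda_1(\mu_\eps^M)$, the logarithmic factors cancel to yield $\limsup_{\eps\to 0}\lambda_1(\mu_\eps^M)\leqslant C/M$ with $C=C_1/(2\pi)$.

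For part $(2)$, the plan is to apply Proposition~\ref{prop:optpower} to the balanced measure $\mu_\eps^M$ --- balance being inherited from the $x_1\leftrightarrow x_2$ symmetry of $\nu_\eps$. To verify hypothesis~\eqref{ineq:as1}, I estimate, for every $\varphi\in W^{1,2}$, that
\[
\int_{\Sp}\varphi^2\,d(\mu_\eps^M-dv_g)\leqslant\frac{M}{1+M\nu_\eps(\Sp)}\int_{\Sp}\varphi^2\,d\nu_\eps\leqslant MC_0\|\varphi\|_{W^{1,2}}^2,
\]
where the second inequality uses the upper bound~\eqref{asymptot_sphere_1} together with the normalizing factor $\eps^{-2}\log(1/\eps)^{-1}$ in the definition of $\nu_\eps$. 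Hence, setting $M_0:=\eps_0/C_0$, for every $M<M_0$ Proposition~\ref{prop:optpower} applies and yields
\[
\lambda_1(\mu_\eps^M)\geqslant\frac{8\pi}{1+c\bigl(\|\mu_\eps^M-dv_g\|_{W^{-1,2}}+\|\mu_\eps^M-dv_g\|_{W^{-1,2}}^2\bigr)}.
\]
Combined with the $W^{-1,2}$ convergence $\mu_\eps^M\to dv_g$ from Lemma~\ref{lemma:mps}$(1)$, this gives $\liminf_{\eps\to 0}\lambda_1(\mu_\eps^M)\geqslant 8\pi$.

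For the matching upper bound, I would use the $\ast$-weak convergence $\mu_\eps^M\rightharpoonup^\ast dv_g$, which is immediate from $\int_{\Sp}f\,d\nu_\eps=O(1/\log(1/\eps))\to 0$ for any $f\in C^0(\Sp)$, together with the upper semicontinuity of $\bar\lambda_1$ under $\ast$-weak convergence \cite[Proposition~1.1]{Kokarev} and the normalization $\mu_\eps^M(\Sp)\to 1$. This yields $\limsup_{\eps\to 0}\lambda_1(\mu_\eps^M)\leqslant 8\pi$, so $\lambda_1(\mu_\eps^M)\to 8\pi$. The one step that will require a little care is the sharp capacity asymptotics in part $(1)$: the logarithmic divergences in $\|d\varphi_\eps\|_{L^2}^2$ and in $\nu_\eps(\Sp)$ must match up exactly, and the remainder $O(1/\log(1/\eps))$ contribution of $dv_g$ to $\int\varphi_\eps^2\,d\mu_\eps^M$ must be absorbed without introducing extra $M$-dependence.
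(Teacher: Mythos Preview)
Your proposal is correct and follows essentially the same approach as the paper. For part~(1) both arguments build an antisymmetric test function concentrated near the two poles; the paper uses the optimizer from the asymptotic lemma (a Robin eigenfunction inside $B_\eps$ glued to a logarithm outside), whereas your pure logarithmic capacitor is slightly simpler and yields the same $C/M$ bound. For part~(2) the arguments are identical up to the upper bound: the paper tacitly invokes Hersch's inequality $\bar\lambda_1(\mu_\eps^M)=\lambda_1(\mu_\eps^M)\leqslant 8\pi$ (since $\mu_\eps^M$ is a probability measure), which is more direct than your appeal to $\ast$-weak upper semicontinuity.
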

\begin{proof}

To prove $(1)$ we write
\[\lambda_1(\mu_\eps^M)^{-1}=\sup_{\int_{\Sp}|d\varphi|^2dv_g=1}\left(\int_{\Sp}\varphi^2d\mu_\eps^M-\left(\int_{\Sp}\varphi d\mu_\eps^M\right)^2\right)\]
This formula is invariant up to the addition of a constant to $\varphi$, so we may take $\varphi\in 1^\perp:=\left\{\psi:\int_{\Sp}\psi dv_g=0\right\}$ without loss of generality. Let $H=\{\varphi\in 1^\perp,\,\int_{\Sp}|d\varphi|^2dv_g=1\}$, this may be rewritten
\begin{align*}
&\lambda_1(\mu_\eps^M)^{-1}=\\
&=\frac{1}{1+M\nu_\eps(\Sp)}\sup_{\varphi\in H}\left(\int_{\Sp}\varphi^2d(v_g+M\nu_\eps)-\frac{M^2}{1+M\nu_\eps(\Sp)}\left(\int_{\Sp}\varphi d\nu_\eps\right)^2\right)\geqslant\\
&\geqslant \frac{M}{1+M\nu_\eps(\Sp)}\sup_{\varphi\in H}\int_{\Sp}\varphi^2d\nu_\eps-\frac{(1+(8\pi)^{-1})M^2\Vert \nu_\eps\Vert_{W^{-1,2}}^2}{(1+M\nu_\eps(\Sp))^2}.
\end{align*}
The second term goes to $0$ by Lemma~\ref{lemma:mps}. Moreover, according to the estimate \eqref{asymptot1}, we may find a function $\psi_\eps$ with support on $B_g(x_1,r)$ such that $\Vert d\psi_\eps\Vert_{L^2(\Sp,g_1)}=1$ and
\[
\int_{B_{g_1}(x_1,\rho_1(\eps))}\psi_\eps^2 dv_{g_1}\geqslant \frac{1}{2}C_1^{-1}\eps^2\log(1/\eps).
\]
Let $\phi_\eps$ be the same function on $B_{g}(x_2,r)$ (which is isometric to $B_{g}(x_1,r)$), then $\frac{\phi_\eps-\psi_\eps}{\sqrt{2}}\in H$ and there is a constant $c>0$ such that
\[\sup_{\varphi\in H}\int_{\Sp}\varphi^2d\nu_\eps\geqslant \int_{\Sp}\left(\frac{\phi_\eps-\psi_\eps}{\sqrt{2}}\right)^2d\nu_\eps\geqslant c.\]
Thus, as $\eps\to 0$, we obtain $\lambda_1(\mu_\eps^M)^{-1}\geqslant cM$. 

To show $(2)$ we note that~\eqref{asymptot_sphere_1} implies that for any $\varphi\in W^{1,2}$,
\[
\int_{\Sp}\varphi^2\, d\nu_\eps\leqslant C_0M\Vert \varphi\Vert_{W^{1,2}}.
\]
Therefore, the assumption~\eqref{ineq:as1} of Proposition~\ref{prop:optpower} is satisfied for $M<M_0:=C_0^{-1}\eps_0$. An application of~\eqref{ineq:conc1} concludes the proof.
%
%
%
%
\end{proof}

Let $M$ be large enough so that $\limsup_{\eps\to 0}\lambda_1(\mu_\eps^M)\leqslant \frac{C}{M}<8\pi$. Then by Lemma~\ref{lemma:mps} part (1) the sequence $(\mu_\eps^M)_{\eps\to 0}$ gives the proof of the following corollary.
\begin{corollary}\label{cor:optpower_counter}
$\mu\mapsto \lambda_1(\mu)$ is not continuous in $W^{-1,2}$ and the Proposition~\ref{prop:optpower} fails without the assumption~\eqref{ineq:as1}.
\end{corollary}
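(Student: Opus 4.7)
The plan is to verify that the family $\mu_\eps^M$ constructed just above the corollary provides a counterexample for both statements, once $M$ is chosen large enough. First I would check that $\mu_\eps^M$ meets the standing assumptions: each $\mu_\eps^M$ is an admissible probability measure (it differs from $dv_g$ by a bounded-density perturbation supported on two caps), and it is balanced, because $\nu_\eps$ is supported symmetrically on two antipodal geodesic balls, so the identity map $F\colon \Sp\to\Sp\subset\mathbb{R}^3$ integrates to zero against $\nu_\eps$ (and against $dv_g$), hence against $\mu_\eps^M$.

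Next I would fix $M$ large enough that $C/M < 8\pi$, where $C$ is the constant from Proposition \ref{prop:counterexample}(1). Then by that proposition,
\[
\limsup_{\eps\to 0}\lambda_1(\mu_\eps^M)\leqslant \frac{C}{M}<8\pi=\lambda_1(dv_g).
\]
On the other hand, Lemma \ref{lemma:mps}(1) gives
\[
\mu_\eps^M\to dv_g\text{ strongly in }W^{-1,2}(\Sp,g)
\]
as $\eps\to 0$. This pair of facts immediately shows that $\mu\mapsto\lambda_1(\mu)$ fails to be continuous in the $W^{-1,2}$ topology at the reference measure $dv_g$, proving the first assertion of the corollary.

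For the second assertion, observe that $\mu_\eps^M$ is a balanced admissible probability measure with $\|\mu_\eps^M-dv_g\|_{W^{-1,2}}\to 0$. If Proposition \ref{prop:optpower} were valid without the hypothesis \eqref{ineq:as1}, then \eqref{ineq:conc1} would force $\lambda_1(\mu_\eps^M)\to 8\pi$, contradicting the upper bound $\limsup_{\eps\to 0}\lambda_1(\mu_\eps^M)\leqslant C/M<8\pi$. The only real content beyond reassembling prior statements is the observation that \eqref{ineq:as1} does in fact fail for $\mu_\eps^M$: using the lower bound \eqref{asymptot_sphere_1}, there exists $\varphi_\eps\in W^{1,2}$ with $\|\varphi_\eps\|_{W^{1,2}}=1$ and $\int_{\Sp}\varphi_\eps^2\,d\nu_\eps$ bounded away from zero, so $\int_{\Sp}\varphi_\eps^2\,d(\mu_\eps^M-v_g)\gtrsim M$ up to negligible corrections, which violates \eqref{ineq:as1} once $M>\eps_0/c$. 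Thus the assumption \eqref{ineq:as1} (or some analogue controlling $\int \varphi^2 \, d\mu$ by $\|\varphi\|_{W^{1,2}}^2$) cannot be dropped. The only mildly delicate step is the balance verification and the identification of the correct threshold $M$; both are immediate from the symmetric construction and the explicit constant in Proposition \ref{prop:counterexample}.
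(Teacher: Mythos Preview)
Your proposal is correct and follows essentially the same approach as the paper: choose $M$ large so that Proposition~\ref{prop:counterexample}(1) gives $\limsup_{\eps\to 0}\lambda_1(\mu_\eps^M)<8\pi$, while Lemma~\ref{lemma:mps}(1) gives $\mu_\eps^M\to dv_g$ in $W^{-1,2}$, yielding both assertions. Your additional verifications (balance via antipodal symmetry, and the direct check that~\eqref{ineq:as1} fails for large $M$ via~\eqref{asymptot_sphere_1}) are correct but not strictly needed, since the failure of~\eqref{ineq:as1} already follows indirectly from the validity of Proposition~\ref{prop:optpower} together with the failure of its conclusion.
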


At the same time, when $M<M_0$ we obtain Theorem~\ref{thm:spacesharp}.

\begin{proof}[Proof of Theorem \ref{thm:spacesharp}]
Consider $M<M_0$ and $(\mu_\eps^M)_{\eps}$ as defined in proposition \ref{prop:counterexample}, then $\lambda_1(\mu_\eps^M)\goto 8\pi$. An application of Lemma~\ref{lemma:mps}, part (2) completes the proof the corollary.

\end{proof}



\begin{thebibliography}{ABCDEF}
\bibitem[Be93]{Be93} F. Bethuel, {\em Weak limits of Palais-Smale sequences for a class of critical functionals},
Calc. Var. Partial Differential Equations {\bf 1 }(1993), no. 3, 267–310. 
\bibitem[BdP16]{BrasDePhil16} L. Brasco, G. De Philippis, {\em Spectral inequalities in quantitative form}, in:
{\em Shape optimization and spectral theory}, A. Henrot, Editor, De Gruyter Open
(2017), 201–281.
\bibitem[BuNa20]{BuNa} D. Bucur, M. Nahon, {\em Stability and instability issues of the Weinstock inequality}, Trans. Amer. Math. Soc. {\bf 374} (2021), no. 3, 
2201--2223. 
\bibitem[CKM19]{CKM} D.~Cianci, M.~Karpukhin, V.~Medvedev, {\em On branched minimal immersions of surfaces by first eigenfunctions}, Ann.  Global Anal. 
Geom. \textbf{56} (2019), no. 4,  667-690.
\bibitem[CE03]{CE} B.~Colbois, A.~El Soufi, {\em Extremal eigenvalues of the Laplacian in a conformal class of metrics: "The Conformal Spectrum"}, 
 Ann. Global Anal.  Geom.  {\bf 24}:4 (2003), 337--349.
\bibitem[EeSa64]{EeSa64} J.~Eells, J.~H.~Sampson,
{\em Harmonic mappings of Riemannian manifolds},
Amer. J. Math., \textbf{86} (1964), 109--160.
\bibitem[Ej86]{EjiriEquivariant} N.~Ejiri, {\em Equivariant minimal immersions of $\mathbb{S}^2$ into $\mathbb{S}^{2m}(1)$},  Trans.  Amer. Math. Soc.
 {\bf 297}:1 (1986), 105--124.
\bibitem[EjMi08]{EjiriMicallef} N. ~Ejiri, M. ~Micallef, {\em Comparison between second variation of area and second variation of energy of a minimal surface}, Adv. Calc. Var. {\bf 1}:3 (2008), 223--239.
\bibitem[EGJ06]{EGJ} A.~El Soufi, H.~Giacomini, M.~Jazar, {\em A unique extremal metric for the least eigenvalue of the Laplacian on the Klein bottle}, 
 Duke Math. J.  {\bf 135}:1 (2006), 181--202.
\bibitem[ESI86]{ESIconfvolume} A.~El Soufi, S.~Ilias, {\em Immersions minimales, premi\`ere valeur propre du Laplacien et volume conforme},  Math. Ann., {\bf 275}:2 (1986) 257--267.
\bibitem[ESI00]{ElIl00} A. El Soufi, S. Ilias, {\em  
Riemannian manifolds admitting isometric
immersions by their first eigenfunctions},
Pacific J.~Math., \textbf{195}:1 (2000), 91--99.
\bibitem[ESI08]{ESIextremal} A.~El Soufi, S.~Ilias, {\em Laplacian eigenvalues functionals and metric deformations on compact manifolds}, 
J. Geom. Phys. {\bf 58}:1 (2008), 89--104. 
\bibitem[ESIR96]{ESIR} A.~El Soufi, S.~Ilias, A.~Ros, {\em Sur la premi\`ere valeur propre des tores}, 
S\'eminaire de th\'eorie spectrale et g\'eom\'etrie {\bf 15} (1996), 17--23.

\bibitem[FiMa77]{FM} A.~E.~Fischer, J.~E.~Marsden, {\em The manifold of conformally equivalent metrics},  Canad. J. Math. {\bf 29}:1 (1977), 193--209.
\bibitem[FMP08]{FusMagPrat08} N. Fusco, F. Maggi,  A. Pratelli, {\em The sharp quantitative isoperimetric inequality}, Ann.  Math. {\bf 168},
(2008), 941-980.
\bibitem[GiSm08]{GS08} T. Giorgi and R. Smits, {\em Bounds and monotonicity for the generalized Robin problem}, Z. Angew. Math. Phys. {\bf 59} (2008), 
600--618.
\bibitem[Gir06]{Girouard} A.~Girouard, {\em Fundamental tone, concentration of density, and conformal degeneration on surfaces},  Canad. J. Math. 
{\bf 61}:3 (2009), 548--565.
\bibitem[GKL20]{GKL} A.~Girouard, M.~Karpukhin, J.~Lagacé, {\em Continuity of eigenvalues and shape optimisation for Laplace and Steklov problems},
 Preprint {\tt arXiv:2004.10784}.
\bibitem[GNP09]{GNP09} A. Girouard, N. Nadirashvili, I. Polterovich, {\em Maximization of the second positive
Neumann eigenvalue for planar domains}, J. Diff. Geom., 83 (2009), 637-661.
\bibitem[GOR73]{GOR} R.~D.~Gulliver, II, R.~Osserman, H.~L.~Royden, {\em A theory of branched immersions of surfaces},  Amer. J. Math., {\bf 95} (1973), 750--812.
\bibitem[Ha11]{Has11} A. Hassannezhad, {\em  Conformal upper bounds for the eigenvalues of the Laplacian and Steklov problem},  J. Funct. Anal. \textbf{261} (2011), no. 12, 3419–3436.
\bibitem[Hel02]{Hel02} F. Helein, {\em Harmonic maps, conservation laws and moving frames}, Cambridge
Tracts in Mathematics, 150. Cambridge University Press, Cambridge, 2002.
\bibitem[Her70]{Hersch} J. Hersch, {\em Quatre propri\'et\'es isop\'erim\'etriques de membranes sph\'eriques homog\`enes},  C. R.
Acad. Sci. Paris S\'er A-B {\bf 270} (1970), A1645--A1648.
\bibitem[JLNNP05]{JLNNP} D.~Jakobson, M.~Levitin, N.~Nadirashvili,
N.~Nigam, I.~Polterovich,
{\em How large can the first eigenvalue be
on a surface of genus two?},
Int. Math. Res. Not. \textbf{63} (2005), 3967--3985.
\bibitem[JNP06]{JNP} D.~Jakobson, N.~Nadirashvili, I.~Polterovich, {\em Extremal metric for the first eigenvalue on a Klein bottle}, Canad. J. of Math. {\bf 58}:2 (2006), 381--400.
\bibitem[Kar20]{KRP2} M.~Karpukhin, {\em Index of minimal spheres and isoperimetric eigenvalue inequalities},  Invent. Math. {\bf 223} (2021),  
335--377.
\bibitem[KS20]{KS} M.~Karpukhin, D.~L.~Stern, {\em Min-max harmonic maps and a new characterization of conformal eigenvalues},  Preprint {\tt arXiv:2004.04086}.
\bibitem[KNPP17]{KNPP1} M. Karpukhin, N. Nadirashvili, A. Penskoi and I. Polterovich, {\em An isoperimetric inequality for Laplace eigenvalues on the sphere}, 
J. Diff. Geom. {\bf 118} (2021), 313--333. 
\bibitem[KNPP20]{KNPP2} M. Karpukhin, N. Nadirashvili, A. Penskoi, I. Polterovich, {\em Conformally maximal metrics for Laplace eigenvalues on surfaces}, Preprint {\tt arXiv:2003.02871}. 
\bibitem[Kim20]{Kim} H.~N.~Kim, {\em Maximization of the second Laplacian eigenvalue on the sphere}, Preprint {\tt arXiv:2011.11494}.
\bibitem[Kok14]{Kokarev} G.~Kokarev, {\em Variational aspects of Laplace eigenvalues on Riemannian surfaces}, Adv. Math. {\bf 258} (2014), 191--239.
\bibitem[Kor93]{Kor93} N. Korevaar, {\em Upper bounds for eigenvalues of conformal metrics},  J. Differential Geom. \textbf{37} (1993), no. 1, 73–93.
\bibitem[KBI19]{KurBurIv} Y. Kurylev, D. Burago, S. Ivanov, {\em Spectral stability of metric-measure Laplacians}, Israel J. Math. {\bf 232} (2019),  
125--158.
\bibitem[KW18]{KW} R.~Kusner, P.~Wang, {\em On the index of minimal $2$-tori in the $4$-sphere},  Preprint {\tt arXiv:1803.01615}.
 \bibitem[LOS98]{LOS98} A. A. Lacey, J. R. Ockendon, J. Sabina, {\em Multidimensional reaction diffusion equations
with nonlinear boundary conditions}, SIAM J. Appl. Math. {\bf 58} (1998), 1622--1647.
\bibitem[LY82]{LiYau} P.~Li, S.-T.~Yau, {\em A new conformal invariant and its applications to the Willmore conjecture and the first eigenvalue of compact surfaces},  Invent. Math., {\bf 69}:2 (1982), 269--291.
\bibitem[MS17]{MS17} H.~Matthiesen, A.~Siffert, {\em Existence of metrics maximizing the first eigenvalue on nonorientable surfaces},  Preprint {\tt arXiv:1703.01264}
\bibitem[MS19]{MS} H.~Matthiesen, A.~Siffert, {\em Handle attachment and the normalized first eigenvalue}, Preprint {\tt arXiv:1909.03105}.
\bibitem[MR85]{MR} S.~Montiel, A.~Ros, {\em Minimal immersions of surfaces by the first eigenfunctions and the conformal area}, Invent.  Math.
 {\bf 83}:1 (1985), 153-166.
\bibitem[MU97]{MU} S.~Montiel, F.~Urbano, {\em Second variation of superminimal surfaces into self-dual Einstein four-manifolds}, Trans. Amer. Math. Soc. {\bf 349}:6 (1997), 2253--2269.
\bibitem[Mo07]{Moore} J.~D.~Moore, {\em Second variation of energy for minimal surfaces in Riemannian manifolds},  Matem.  Contem.  {\bf 33} (2007), 215--243.
\bibitem[Na96]{NadirashviliT2} N.~Nadirashvili, {\em Berger's isoperimetric problem and minimal immersions of surfaces},  Geom. Func. Anal. {\bf 6}:5 (1996), 877--897.
\bibitem[Na88]{Nadirashvili_mult} N.~Nadirashvili,  {\em Multiple eigenvalues of the Laplace operator.} Mathematics of the USSR-Sbornik {\bf 61}:1 (1988), 225--238.
\bibitem[Na02]{NadirashviliS2} N.~Nadirashvili, {\em Isoperimetric inequality for the second eigenvalue of a sphere},  J. Diff. Geom. {\bf 61}:2 (2002), 335--340.
\bibitem[NaSi15]{NaSi2} N. Nadirashvili and Y. Sire, {\em Maximization of higher order eigenvalues and applications},
Mosc. Math. J., \textbf{15}:4 (2015), 767--775.
\bibitem[NaSh19]{NayataniShoda} S.~Nayatani, T.~Shoda, {\em Metrics on a closed surface of genus two which maximize the first eigenvalue of the Laplacian} Comptes Rendus Math., {\bf 357}:1 (2019), 84--98.
\bibitem[Pet14a]{PetridesS2} R.~Petrides, {\em Maximization of the second conformal eigenvalue of spheres},  Proc. Amer. Math. Soc. {\bf 142}:7 (2014), 2385--2394.
\bibitem[Pet14b]{Petrides} R.~Petrides,  {\em Existence and regularity of maximal metrics for the first Laplace eigenvalue on surfaces},  Geom. Funct. Anal., 
{\bf 24}:4 (2014), 1336--1376.
\bibitem[Pet18]{Petrides2} R. Petrides, {\em On the existence of metrics that maximize Laplace eigenvalue on surfaces},  Int. Math. Res. Not. IMRN {\bf 2018} (2018), no. 14, 4261–4355.
\bibitem[Pey18]{Peyre} R. Peyre, {\em Comparison between {$ W_2$} distance and {$\dot {
              H}^{-1}$} norm, and localization of Wasserstein distance},   ESAIM Control Optim. Calc. Var. {\bf 24}:4 (2018), 1489--1501.
\bibitem[SaSt20]{SS} A.~Sagiv, S.~Steinerberger, {\em Transport and interface: an uncertainty principle for the Wasserstein distance}, SIAM J. Math. Anal.
 {\bf 52}:3 (2020), 3039--3051.

\bibitem[Sp73]{S73} Sperb, Ren\'{e} P. {\em Untere und obere {S}chranken f\"{u}r den tiefsten {E}igenwert der elastisch gest\"{u}tzten {M}embran}, Zeitschrift f\"{u}r Angew. Math. Phys. 
{\bf 23} (1973), 231--244.

\bibitem[SU83]{SU} R.~Schoen, K.~Uhlenbeck, {\em Boundary regularity and the Dirichlet problem for harmonic maps}, J. Diff. Geom., {\bf 18}:2 (1983), 253--268.
\bibitem[Si83]{Simon}L.~Simon, {\em Lectures on geometric measure theory}. The Australian National University, Mathematical Sciences Institute, Centre for Mathematics \& its Applications, 1983.
\end{thebibliography}
\end{document}